\documentclass[a4paper, 11pt]{amsart}
\usepackage[utf8]{inputenc}
\usepackage[T1]{fontenc}
\usepackage[english]{babel}
\usepackage{graphicx}
\usepackage{stmaryrd}
\usepackage{tikz}
\usepackage{tikz-cd} 
\usepackage[all]{xy}
\usepackage{comment}
\usepackage{bm}
\usepackage{comment}
\usepackage{amsmath,amsfonts,amssymb}
\usepackage[a4paper]{geometry}
\geometry{hmargin=3cm,vmargin=4cm}

\usepackage{amsthm}
\usepackage{float}
\usepackage{enumitem}
\usepackage{hyperref}
\usepackage{xcolor}
\hypersetup{
    colorlinks=true,
    linkcolor={blue},
    citecolor={blue},
    urlcolor={blue}}
\newtheorem{te}{Theorem}[section]
\newtheorem{st}[te]{Standing Assumption}

\newtheorem{prop}[te]{Proposition}

\newtheorem{co}[te]{Corollary}

\newtheorem{qu}[te]{Question}
\newtheorem{lemme}[te]{Lemma}
\newtheorem*{lemme2}{Lemma}
\theoremstyle{definition}

\newtheorem{de}[te]{Definition}
\newtheorem{ex}[te]{Example}

\theoremstyle{remark}
\newtheorem{rque}[te]{Remark}

\newlength{\plarg}
\setlength{\plarg}{12cm}
\setcounter{tocdepth}{1}
\usetikzlibrary{cd}
\calclayout
\newcommand{\norm}[1]{\lvert\lvert#1\rvert\rvert}
\newcommand{\abs}[1]{\lvert#1\rvert}
\title{Formal solutions and the first-order theory of acylindrically hyperbolic groups}
\author{Simon André and Jonathan Fruchter}
\begin{document}

\begin{minipage}{\linewidth}

\maketitle

\vspace{0mm}

	\begin{abstract}
We generalise Merzlyakov's theorem about the first-order theory of non-abelian free groups to all acylindrically hyperbolic groups. As a corollary, we deduce that if $G$ is an acylindrically hyperbolic group and $E(G)$ denotes the unique maximal finite normal subgroup of $G$, then $G$ and the HNN extension $G\dot{\ast}_{E(G)}$, which is simply the free product $G\ast\mathbb{Z}$ when $E(G)$ is trivial, have the same $\forall\exists$-theory. As a consequence, we prove the following conjecture, formulated by Casals-Ruiz, Garreta and de la Nuez Gonz\'alez: acylindrically hyperbolic groups have trivial positive theory. In particular, one recovers a result proved by Bestvina, Bromberg and Fujiwara, stating that, with only the obvious exceptions, verbal subgroups of acylindrically hyperbolic groups have infinite width.\end{abstract}
	
\end{minipage}

\thispagestyle{empty}

\vspace{5mm}

\section{Introduction}

Given a group $G$, a natural model-theoretic question is whether or not $G$ and $G\ast\mathbb{Z}$ have the same first-order theory. This problem was first considered by Tarski in the case of free groups. Around 1945, he posed the following question: are all non-abelian free groups elementarily equivalent? A positive answer to this question was given by Sela in \cite{Sel06} (see also \cite{KM06} by Kharlampovich and Myasknikov). Then, Sela generalised this result in two directions: first, he proved in \cite{Sel09} that every torsion-free non-elementary hyperbolic group $G$ is elementarily equivalent to $G\ast\mathbb{Z}$. A few years later, he established the same result in the case where $G$ is a non-trivial free product, different from the infinite dihedral group $D_{\infty}=\mathbb{Z}/2\mathbb{Z}\ast\mathbb{Z}/2\mathbb{Z}$ (see \cite{Sel10}). More precisely, he proved the following stronger result: $G$ is elementarily embedded into $G\ast\mathbb{Z}$.

\smallskip

All these groups (namely non-elementary hyperbolic groups and non-elementary free products) have in common the property of being \emph{acylindrically hyperbolic}, meaning that they admit a non-elementary \emph{acylindrical action} on a hyperbolic space (for details, we refer the reader to Section \ref{prelim_acyl}). The main result of this paper is a partial generalisation of the above-mentioned theorems of Sela to all acylindrically hyperbolic groups (see Theorems \ref{th1} and \ref{th11} below). This wide class of groups, introduced by Osin in \cite{Osi16} in order to unify several classes of negatively-curved groups considered by different authors (in particular, see \cite{DGO17}), has been intensively studied in the past few years. Examples of acylindrically hyperbolic groups include, notably, all non-elementary (relatively) hyperbolic groups, all but finitely many mapping class groups of surfaces of finite type, $\mathrm{Out}(F_n)$ for $n\geq 2$, most 3-manifold groups, all non-cyclic and directly indecomposable right-angled Artin groups, and more generally any group acting geometrically on a $\mathrm{CAT}(0)$ space and containing a rank-one isometry, many fundamental groups of graphs of groups, and many other groups.

\smallskip

Despite an intense activity around acylindrically hyperbolic groups in geometric group theory, very little is know about the first-order theory of these groups. Dahmani, Guirardel and Osin proved that acylindrically hyperbolic groups are not superstable (see Theorem 8.1 in \cite{DGO17}). Recently, Groves and Hull adapted some of Sela's techniques to the context of acylindrically hyperbolic groups and initiated the study of solutions of systems of equations over such groups (see \cite{GH19}). Last, building on Groves' and Hull's version of Sela's \emph{shortening argument} (for further details, see \ref{shortening}), the second-named author of the present paper proved a generalisation of Merzlyakov's celebrated theorem \cite{Mer66} for torsion-free acylindrically hyperbolic groups (see \cite{Fru19}). An important part of our paper is devoted to an extension of Merzlyakov's theorem to all acylindrically hyperbolic groups, possibly with torsion; this involves techniques used in \cite{And19a} by the first-named author in the setting of hyperbolic groups. 

\smallskip

An \emph{$\forall\exists$-sentence} is a first-order sentence of the form $\forall\bm{x} \ \exists\bm{y} \ \psi(\bm{x},\bm{y})$, where $\bm{x}$ and $\bm{y}$ are two tuples of variables, and $\psi$ is a quantifier-free formula in these variables. The set of such sentences satisfied by a group $G$ is called the \emph{$\forall\exists$-theory} of $G$. Before stating our main result, recall that every acylindrically hyperbolic group $G$ admits a unique maximal finite normal subgroup, denoted by $E(G)$ (see \cite{DGO17}, Theorem 2.24). In what follows, $G\dot{\ast}_{E(G)}$ denotes the HNN extension where the stable letter acts trivially, that is the group \[G\ast_{E(G)}(\mathbb{Z}\times E(G))=\langle G,t \ \vert \ [t,g]=1, \ \forall g\in E(G)\rangle.\]

\begin{te}\label{th1}If $G$ is an acylindrically hyperbolic group, then $G$ and $G\dot{\ast}_{E(G)}$ have the same $\forall\exists$-theory.\end{te}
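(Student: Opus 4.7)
The plan is to derive Theorem \ref{th1} from the generalised Merzlyakov theorem announced as the main technical result of the paper. After reducing an arbitrary $\forall\exists$-sentence $\sigma = \forall \bar x\, \exists \bar y\, \psi(\bar x, \bar y)$ to a finite disjunction whose disjuncts have the form $\Sigma(\bar x, \bar y) = 1 \wedge \Lambda(\bar x, \bar y) \neq 1$, the two directions of the equivalence can be attacked separately. Two structural observations will be used throughout: $G$ embeds into $G \dot{\ast}_{E(G)}$, and there is a family of retractions $\phi_g : G \dot{\ast}_{E(G)} \to G$ sending $t \mapsto g$, parametrised by $g$ in the centraliser $C_G(E(G))$, which has finite index in $G$ and is therefore itself a non-elementary acylindrically hyperbolic group.

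For the direction $G \models \sigma \Rightarrow G \dot{\ast}_{E(G)} \models \sigma$, given a tuple $\bar a \in G \dot{\ast}_{E(G)}$ I would apply the generalised Merzlyakov theorem inside $G$ to extract, for each disjunct that is witnessed on a sufficiently generic locus in $G$, a formal solution $\bar y = W(\bar x, \bar c)$ with constants $\bar c \in G$. By construction $\Sigma(\bar x, W(\bar x, \bar c)) = 1$ holds as an identity in an appropriate free construction over $G$, so specialising $\bar x \mapsto \bar a$ inside $G \dot{\ast}_{E(G)}$ yields the equations automatically. Preserving the inequations $\Lambda(\bar a, W(\bar a, \bar c)) \neq 1$ must be built into the generalised Merzlyakov statement itself, which should produce formal solutions whose inequational content survives generic specialisation into any overgroup.

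For the reverse direction $G \dot{\ast}_{E(G)} \models \sigma \Rightarrow G \models \sigma$, I would take $\bar a \in G$ and use the hypothesis to pick $\bar b \in G \dot{\ast}_{E(G)}$ with $\psi(\bar a, \bar b)$. Any retraction $\phi_g$ preserves equations, so $\Sigma(\bar a, \phi_g(\bar b)) = 1$ holds in $G$ for every $g \in C_G(E(G))$; it remains to find a single $g$ that simultaneously preserves all inequations. For each non-trivial component $\lambda_j(\bar a, \bar b) \in G \dot{\ast}_{E(G)}$, inspecting Britton normal form shows that the vanishing locus $\{g \in C_G(E(G)) : \phi_g(\lambda_j(\bar a, \bar b)) = 1\}$ is an infinite-index subset of $C_G(E(G))$, so by a standard Neumann-style covering argument the non-elementary group $C_G(E(G))$ cannot be exhausted by finitely many such bad loci, and a generic $g$ yields the required $\bar b' := \phi_g(\bar b) \in G$.

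The main obstacle is clearly the generalised Merzlyakov theorem itself in its full inequational form, in the presence of torsion. Ensuring that formal solutions simultaneously control the equational and the inequational content, while carrying the finite normal subgroup $E(G)$ throughout, is what distinguishes the first-order statement from the purely positive one; it is expected to require a torsion-sensitive adaptation of the Groves--Hull shortening argument, drawing on the techniques developed in \cite{And19a} to handle torsion in the hyperbolic setting.
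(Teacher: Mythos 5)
Your high-level strategy (reduce to disjunctions of systems of equations and inequations and invoke the generalised Merzlyakov theorem) is the right one, but both directions as you describe them have genuine gaps. For the direction $G\models\sigma\Rightarrow G\dot{\ast}_{E(G)}\models\sigma$, the mechanism of extracting a formal solution $\bar y=W(\bar x,\bar c)$ over $G$ and then \emph{specialising} $\bar x\mapsto\bar a$ cannot work as stated: the conclusion of Theorem \ref{th0bis} only guarantees $\Psi(\bm{x},\pi_{\bm{\sigma}}(\bm{y}),\bm{a})\neq 1$ in the group $G_{\bm{\sigma}}$, where the $x_i$ remain generic stable letters, and the paper explicitly remarks that the inequations may become trivial after any substitution of the $x_i$; moreover the tuple $\bar a$ is \emph{given}, so there is no ``generic specialisation'' to exploit. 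The missing idea is the change of variables in the proof of Theorem \ref{sacerdote2}: write each component of $\bar a\in(G\dot{\ast}_{E(G)})^p$ as a word $w_i(\bm{s}_n,t)$ in the stable letter $t$ and constants $\bm{s}_n$ generating a finitely generated subgroup of $G$ containing $E(G)$, add the equations forcing $t$ to centralise $E(G)$, and apply Merzlyakov to the resulting sentence in the \emph{single} universal variable $t$ with constants $(\bm{g},\bm{s}_n)$. The formal solution then maps into $(\langle t\rangle\times E(G))\ast_{E(G)}G'$, which is already a subgroup of $G\dot{\ast}_{E(G)}$ and sends $t\mapsto t$, so no specialisation is ever performed and the inequations survive; this also proves the transfer \emph{with constants from $G$} and with the universal variables ranging over $G\dot{\ast}_{E(G)}$, which is crucial for the second direction.

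For the direction $G\dot{\ast}_{E(G)}\models\sigma\Rightarrow G\models\sigma$, your covering argument is not valid: the vanishing loci $\{g\in C_G(E(G))\ \vert\ \phi_g(\lambda_j(\bar a,\bar b))=1\}$ are arbitrary subsets, not subgroups or cosets, so Neumann's lemma does not apply, and a group is trivially covered by finitely many proper subsets. The statement you actually need --- for finitely many non-trivial elements of $G\dot{\ast}_{E(G)}$ there is one $g\in C_G(E(G))$ whose substitution for $t$ kills none of them --- is essentially a simultaneous, relative form of the Hull--Osin mixed-identity-freeness theorem; Britton's lemma certifies non-triviality \emph{before} the substitution but says nothing afterwards, and establishing the genericity requires the small cancellation/test-sequence machinery over the acylindrical action, not counting. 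The paper sidesteps this direction entirely: since Theorem \ref{sacerdote2} is a $\forall\exists$-embedding with constants, a purely universal formula with constants in $G$ (a degenerate $\forall\exists$-formula) true in $G$ transfers up, so if $G$ failed a $\forall\exists$-sentence true in $G\dot{\ast}_{E(G)}$ one would get a contradiction by contrapositive (this is the content of Theorem \ref{th11} and the easy lemma preceding it). So either prove the embedding with constants as the paper does, or be prepared to reprove Hull--Osin-type genericity to make your retraction argument run.
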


\begin{rque}Note that if the finite group $E(G)$ is trivial, the group $G\dot{\ast}_{E(G)}$ is simply the free product $G\ast\mathbb{Z}$. If $E(G)$ is non-trivial, one easily sees that $G\ast\mathbb{Z}$ cannot have the same $\forall\exists$-theory as $G$, since the existence of a non-trivial normal finite subgroup is expressible by means of a $\forall\exists$-sentence.\end{rque}

In fact, we prove a slightly stronger result. We say that the inclusion $i$ of a group $G$ into an overgroup $G'$ is an \emph{$\exists\forall\exists$-elementary embedding} if the following condition is satisfied: for every first-order formula of the form \[\phi(\bm{t}):\exists \bm{x} \ \forall\bm{y} \ \exists \bm{z} \ \psi(\bm{x},\bm{y},\bm{z},\bm{t}),\] where $\psi(\bm{x},\bm{y},\bm{z},\bm{t})$ is a quantifier-free formula, and for every tuple $\bm{g}$ of elements of $G$ of the same arity as $\bm{t}$, if the statement $\phi(\bm{g})$ holds in $G$, then $\phi(i(\bm{g}))$ holds in $G'$.

\begin{te}\label{th11}Let $G$ be an acylindrically hyperbolic group. The canonical inclusion of $G$ into $G\dot{\ast}_{E(G)}$ is an $\exists\forall\exists$-elementary embedding. In particular, $G$ and $G\dot{\ast}_{E(G)}$ have the same $\forall\exists$-theory.\end{te}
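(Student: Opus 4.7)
The plan is to deduce Theorem~\ref{th11} from a Merzlyakov-type theorem for acylindrically hyperbolic groups, which the paper announces as its main technical contribution. Given a formula $\phi(\bm{t})=\exists\bm{x}\,\forall\bm{y}\,\exists\bm{z}\,\psi(\bm{x},\bm{y},\bm{z},\bm{t})$ and a tuple $\bm{g}$ of elements of $G$ with $G\models\phi(\bm{g})$, I would fix a witness $\bm{a}\in G$ for the outer $\exists$ and aim to show that the resulting $\forall\exists$-statement $\forall\bm{y}\,\exists\bm{z}\,\psi(\bm{a},\bm{y},\bm{z},\bm{g})$ transfers to $G\dot{\ast}_{E(G)}$, with the same witness $\bm{a}$ under the canonical inclusion. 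Putting the quantifier-free matrix in disjunctive normal form rewrites the inner $\exists\bm{z}\,\psi$ as a finite disjunction of clauses $\exists\bm{z}\,(\Sigma_i(\bm{a},\bm{y},\bm{z},\bm{g})=1 \wedge \Omega_i(\bm{a},\bm{y},\bm{z},\bm{g})\neq 1)$, with each $\Sigma_i,\Omega_i$ a finite system of words.

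The core of the argument is to invoke the paper's Merzlyakov-type theorem: since for every tuple $\bm{y}\in G$ some $\bm{z}\in G$ satisfies one of the clauses above, one should obtain a finite family of \emph{formal solutions}, that is, retractions from $G\ast F(\bm{y},\bm{z})$ onto $G\ast F(\bm{y})$ (with appropriate modifications to accommodate $E(G)$), each fixing $G$ and $\bm{y}$, expressing each coordinate of $\bm{z}$ as an explicit word in $G$ and $\bm{y}$, and such that after retraction the chosen $\Sigma_i=1$ holds formally while $\Omega_i\neq 1$ holds in the target. The decisive property is that specialising $\bm{y}$ at any tuple $\bm{y}'$ in $G\dot{\ast}_{E(G)}$ gives a well-defined homomorphism from $G\ast F(\bm{y})$ into $G\dot{\ast}_{E(G)}$, under which $\Sigma_i$ stays trivial automatically, while the inequations $\Omega_i$ survive because the commutation of the stable letter with $E(G)$ ensures that the formal retract embeds compatibly into $G\dot{\ast}_{E(G)}$. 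A finite family of such formal solutions should collectively handle every $\bm{y}'$ in $G\dot{\ast}_{E(G)}$, producing the required $\bm{z}'$.

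The main obstacle is proving the Merzlyakov-type theorem in the full generality of acylindrically hyperbolic groups, possibly with torsion. The torsion-free case is due to \cite{Fru19}, via the Groves--Hull adaptation of Sela's shortening argument to the acylindrically hyperbolic setting. Extending this to groups with non-trivial $E(G)$ will demand careful control over the finite stabilisers appearing in the limiting actions on $\mathbb{R}$-trees produced by the shortening argument, drawing on the techniques of \cite{And19a} developed in the hyperbolic setting. Once the Merzlyakov-type theorem is available, Theorem~\ref{th11} follows as described, and Theorem~\ref{th1} is then immediate: a $\forall\exists$-sentence is an $\exists\forall\exists$-sentence with an empty leading $\exists$-block, while its negation, an $\exists\forall$-sentence, is likewise $\exists\forall\exists$ after padding on the right, so both directions of equality of $\forall\exists$-theories follow from the one-sided $\exists\forall\exists$-elementary embedding.
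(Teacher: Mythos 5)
Your reduction of the $\exists\forall\exists$-statement to a $\forall\exists$-statement by fixing a witness $\bm{a}\in G$ for the outer block is exactly the first (easy) step of the paper, and your closing observation that the two-sided equality of $\forall\exists$-theories follows from the one-sided embedding is also fine. The gap is in the core step. You propose to apply the Merzlyakov-type theorem with the original universal variables $\bm{y}$, obtain formal solutions $\bm{z}=w(\bm{y})$ over (a modification of) $G\ast F(\bm{y})$, and then specialise $\bm{y}$ at an arbitrary tuple $\bm{y}'$ of $G\dot{\ast}_{E(G)}$, claiming that ``the inequations $\Omega_i$ survive'' under this specialisation. That claim is false in general: specialisation is a (typically non-injective) homomorphism, and it preserves the equations $\Sigma_i=1$ but can perfectly well kill the inequations -- the paper warns about precisely this after Theorem \ref{th0bis} (the retraction $\varphi$ may trivialise $\Psi(\bm{x},\pi_{\bm{\sigma}}(\bm{y}),\bm{a})$). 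Likewise, your assertion that ``a finite family of such formal solutions should collectively handle every $\bm{y}'$'' is exactly the hard content of Sela's full $\forall\exists$-analysis (formal solutions over closures, validation of inequalities along all specialisations); it is not a consequence of the Merzlyakov theorem proved here, and no such machinery is available (or claimed) for acylindrically hyperbolic groups -- this is why Question \ref{question} is left open.

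The paper's route avoids specialising altogether by changing which variable is universal. Given $\bm{\gamma}\in\Gamma^p$ with $\Gamma=G\dot{\ast}_{E(G)}$, one writes each $\gamma_i$ as a word $w_i(\bm{s}_n,t)$ in the stable letter $t$ and a finite tuple $\bm{s}_n$ of constants from $G$, adds to the equations a finite system forcing $t$ to centralise $E(G)$, and applies Theorem \ref{th0bis} to the resulting sentence whose single universal variable is $t$. The formal solution then maps $G_{\Sigma'_k}$ onto a group $\Gamma'=(\langle t\rangle\times E(G))\ast_{E(G)}G'$ which is already a subgroup of $\Gamma$, with $t\mapsto t$ and $\bm{a}\mapsto\bm{a}$; the inequations hold in $\Gamma'$, hence in $\Gamma$, and no retraction of $t$ back into $G$ is ever performed, so they are never at risk. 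This re-encoding of the arbitrary tuple of $\Gamma$ through the stable letter, together with the fact that the formal solution's image sits inside $\Gamma$, is the missing idea; without it your argument does not go through, even granting the Merzlyakov-type theorem in full generality.
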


\begin{rque}This result was proved by the first author in \cite{And19a} under the stronger assumption that the group $G$ is hyperbolic (possibly with torsion).
\end{rque}

\begin{rque}As an immediate consequence of Theorem \ref{th11}, one recovers a result of Hull and Osin stating that acylindrically hyperbolic groups are mixed identity free, see \cite{HO16}.\end{rque}

For now, it is an open question whether Theorems \ref{th1} and \ref{th11} above remain true if one considers the whole first-order theories of $G$ and $G\dot{\ast}_{E(G)}$ instead of the $\forall\exists$ or $\exists\forall\exists$-fragments of these theories. This question can be viewed as a broad generalisation of Tarski's problem about elementary equivalence of non-abelian free groups.

\begin{qu}\label{question}Let $G$ be an acylindrically hyperbolic group. 
\begin{enumerate}
\item Are $G$ and $G\dot{\ast}_{E(G)}$ elementarily equivalent? 
\item Is $G$ elementarily embedded into $G\dot{\ast}_{E(G)}$?
\end{enumerate}
\end{qu}

As mentioned before, Sela proved that the answer to both of these questions is `Yes' under the stronger assumption that $G$ is a torsion-free non-elementary hyperbolic group or a non-trivial and non-dihedral free product. In all other cases, the answer is not known. 

\smallskip

Moreover, let us note that we do not know of any example of a finitely generated group $G$ that is not acylindrically hyperbolic and that has the same first-order theory, or even the same $\forall\exists$-theory, as $G\ast\mathbb{Z}$. The question of the existence of such a group is closely related to that of the preservation of acylindrical hyperbolicity under elementary equivalence among finitely generated groups (see Section \ref{comments_section} for further comments). It is worth mentioning the following corollary of Theorem \ref{th11} (see Proposition \ref{acyl_preservation}).

\begin{co}
Let $G$ be an acylindrically hyperbolic group, and let $H$ be a group that admits a non-trivial splitting over a virtually abelian group. Suppose that $G$ and $H$ are elementarily equivalent (or simply that they have the same $\exists\forall\exists$-theory). Then the group $H$ is acylindrically hyperbolic.
\end{co}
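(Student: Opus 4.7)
My plan combines the model-theoretic rigidity inherited from the hypothesis $G\equiv_{\exists\forall\exists}H$ together with a criterion of Minasyan and Osin for acylindrical hyperbolicity of groups acting on trees. The first step is to transfer algebraic restrictions from $G$ to $H$. Since $G$ is acylindrically hyperbolic, it contains a non-abelian free subgroup, hence satisfies no non-trivial law: for each non-trivial $w\in F_n$, the $\exists$-sentence ``$\exists\bar{x}:w(\bar{x})\neq 1$'' holds in $G$, and therefore in $H$. Moreover, by Hull and Osin \cite{HO16}, $G$ is Mixed Identity Free, a property whose $\forall\exists$-consequences (for example ``$\forall y\,(y\neq 1 \rightarrow \exists x:[x,y]\neq 1)$'', saying that $G$ has trivial centre) also transfer to $H$. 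In particular, $H$ is neither virtually soluble nor has non-trivial centre.

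Now let $T$ be the Bass--Serre tree of the given splitting of $H$ over the virtually abelian subgroup $C$. The action of $H$ on $T$ is non-trivial, minimal, and without a global fixed vertex. I would next show that it is also non-elementary. If the action fixed an end, a standard argument (vertex stabilisers fix pointwise the ray to the fixed end, hence lie in a conjugate of $C$) would contradict the non-triviality of the splitting; if instead the action preserved a line of $T$, then $H$ would fit into a short exact sequence $1\to C'\to H\to Q\to 1$ with $C'$ virtually abelian and $Q$ virtually cyclic, making $H$ virtually metabelian, so that $H$ would satisfy the law $[[x^N,y^N],[z^N,w^N]]=1$ for a suitable $N$, contradicting the previous paragraph.

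The final step is to invoke the Minasyan--Osin criterion: a group acting minimally and non-elementarily on a simplicial tree with a weakly malnormal edge stabiliser is either virtually cyclic or acylindrically hyperbolic. Since $H$ is not virtually cyclic, it remains to verify that $C$ is weakly malnormal in $H$, i.e.\ that some $g\in H$ satisfies $|C\cap gCg^{-1}|<\infty$. Suppose this fails, so that $C\cap gCg^{-1}$ is infinite for every $g\in H$; using the finite rank of the abelian part of $C$ and a commensurator analysis, one then extracts a non-trivial mixed identity on $H$, contradicting the transfer of MIF-like properties from $G$. This contradiction completes the verification of weak malnormality, and the Minasyan--Osin criterion then yields that $H$ is acylindrically hyperbolic.

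The main technical obstacle is precisely this last contradiction: the failure of weak malnormality provides only pointwise-in-$g$ information (for each $g$, some non-trivial intersection), whereas a mixed identity is a uniform statement over all substitutions. Bridging this gap requires careful use of the virtual abelianness of $C$, a pigeonhole argument within the finite-dimensional $\mathbb{Q}$-vector space $C_0\otimes_{\mathbb{Z}}\mathbb{Q}$ (where $C_0$ is a finite-index abelian subgroup of $C$), and the full strength of the $\exists\forall\exists$-equivalence with $G$.
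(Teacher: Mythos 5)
Your outline reaches the right framework (the Minasyan--Osin criterion from \cite{MO15}, reducing everything to weak malnormality of the edge group $C$), but the step you single out as ``the main technical obstacle'' is in fact a genuine gap, and the route you propose for closing it --- extracting a mixed identity on $H$ from the failure of weak malnormality and contradicting MIF-type properties transferred from $G$ --- does not work as stated. As you observe yourself, the failure of weak malnormality only gives, for each $h$, \emph{some} non-trivial $z_h\in C\cap hCh^{-1}$; there is no uniform word in $h$, so no mixed identity, and MIF of $G$ (or of $H$) is simply not the right obstruction. The paper's proof sidesteps uniformity entirely: fixing a non-trivial $c\in C$, the pointwise statement is already a single $\exists\forall\exists$-sentence
\[
\theta:\ \exists c\neq 1\ \forall h\ \exists z\neq 1\ \bigl([c,z]=1 \ \wedge\ [hch^{-1},z]=1\bigr),
\]
where the inner existential quantifier absorbs the dependence of $z$ on $h$ (abelianness of $C$ gives $[c,z]=1$, and $z\in hCh^{-1}$ gives $[hch^{-1},z]=1$). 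This sentence transfers from $H$ to $G$ by hypothesis, and then --- this is the step your sketch has no substitute for --- Theorem \ref{th11} is invoked to conclude that $\theta$ holds in $G\ast\langle t\rangle$, where it is refuted by a normal-form computation with $h=t$ (the centralizer of $c$ lies in $G$ and no non-trivial element of $G$ commutes with $tct^{-1}$). Note that one cannot refute $\theta$ in $G$ itself by appealing to MIF or to acylindrical hyperbolicity alone; the passage to $G\ast\mathbb{Z}$ via the $\exists\forall\exists$-embedding is exactly what the main theorem of the paper provides, and it is the engine of the proof. Your plan also leaves the virtually abelian (rather than abelian) case of $C$ and the case $E(G)\neq 1$ to an unexecuted ``pigeonhole argument''; the paper handles these by a concrete modification of $\theta$ (replacing $c$, $z$ by $d$-th powers and asking for $N+1$ pairwise distinct witnesses, $N=\vert\mathrm{Aut}(E(G))\vert$, using Lemma \ref{lemmefin} to control $E(G)$).

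Two smaller points. First, your preliminary reduction to a non-elementary action is partly incorrect: an action fixing an end of the Bass--Serre tree does \emph{not} contradict non-triviality of the splitting (ascending HNN extensions such as $BS(1,2)=\mathbb{Z}\ast_{\mathbb{Z}}$ are non-trivial splittings over an abelian group whose Bass--Serre action fixes an end), so that subcase needs the transferred first-order information as well; in the paper this discussion is absorbed into the statements of Corollaries 2.2 and 2.3 of \cite{MO15}, which only require non-virtual-cyclicity of $H$ (obtained from an existential sentence true in $G$) and weak malnormality of $C$. Second, while MIF does follow from the hypotheses (via \cite{HO16} or as a consequence of Theorem \ref{th11}), it plays no role in the actual argument; what is needed is the full $\exists\forall\exists$-elementarity of $G\hookrightarrow G\dot{\ast}_{E(G)}$, which is strictly stronger.
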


\begin{rque}
As a consequence, if there exists a group $G$ that is not acylindrically hyperbolic and such that $G$ and $G\ast\mathbb{Z}$ are elementarily equivalent, then all non-trivial splittings of $G$ (if they exist) have sufficiently complicated edge groups. For instance, if $G$ is a generalized Baumslag-Solitar group, then $G$ and $G\ast\mathbb{Z}$ are not elementarily equivalent.
\end{rque}

\subsection*{Positive theory, verbal subgroups}A first-order sentence is called \emph{positive} if it does not involve inequalities. We say that a group $G$ has \emph{trivial positive theory} if every positive sentence satisfied by $G$ is satisfied by all groups. In \cite{Mer66}, Merzlyakov proved that non-abelian free groups have trivial positive theory. As a consequence, $G$ has trivial positive theory if and only if it has the same positive theory as $F_n$, for any $n\geq 2$. Recently, in \cite{CGN19} and \cite{CGKN19}, Casals-Ruiz, Garreta, Kazachkov and de la Nuez Gonz\'alez proved that many groups acting non-trivially on trees have trivial positive theory. In particular, they showed that every acylindrically hyperbolic group that acts hyperbolically and irreducibly on a tree has trivial positive theory (see \cite[Corollary 8.2]{CGN19}). They also established the following quantifier elimination result (see \cite[Theorem 6.3]{CGN19}): a group has trivial positive theory if and only if it has trivial positive $\forall\exists$-theory. Using this fact and relying on Theorem \ref{th1}, we prove the following result (see Section \ref{sectionpos}), which was conjectured in \cite{CGN19} (Conjecture 9.1).

\begin{co}\label{pos}Acylindrically hyperbolic groups have trivial positive theory.\end{co}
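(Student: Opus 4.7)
The plan is to combine Theorem \ref{th1} with two results of Casals-Ruiz, Garreta and de la Nuez Gonz\'alez recalled in the discussion above: the quantifier elimination result that reduces triviality of the positive theory to triviality of the positive $\forall\exists$-theory \cite[Theorem 6.3]{CGN19}, and the fact that an acylindrically hyperbolic group admitting a hyperbolic, irreducible action on a tree has trivial positive theory \cite[Corollary 8.2]{CGN19}.

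Concretely, let $G$ be acylindrically hyperbolic and set $H:=G\dot{\ast}_{E(G)}$. I would first argue that $H$ falls into the framework of \cite[Corollary 8.2]{CGN19}. By construction $H$ is the fundamental group of a graph of groups with a single vertex carrying $G$ and a single edge carrying the finite group $E(G)$, so $H$ acts on its Bass--Serre tree $T$ with finite edge stabilisers. The stable letter $t$ is loxodromic on $T$, and since $G$ is non-elementary (so in particular $G\neq E(G)$) the action has no global fixed point, no invariant end and no invariant line; this makes the action of $H$ on $T$ hyperbolic and irreducible. Moreover, because edge stabilisers are finite, this action is acylindrical (in fact $2$-acylindrical), so $H$ is itself acylindrically hyperbolic. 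Consequently \cite[Corollary 8.2]{CGN19} applies and $H$ has trivial positive theory.

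Next I would transfer this back to $G$ via Theorem \ref{th1}. Every positive $\forall\exists$-sentence $\sigma$ true in $G$ is also true in $H$ (positive sentences are in particular $\forall\exists$ up to prenex normalisation, but more directly one argues that the positive $\forall\exists$-theory is a subset of the $\forall\exists$-theory, which Theorem \ref{th1} asserts to be the same for $G$ and for $H$). Since $H$ has trivial positive theory, in particular its positive $\forall\exists$-theory is trivial, so $\sigma$ is satisfied in every group. Hence the positive $\forall\exists$-theory of $G$ is trivial. Finally, applying the quantifier elimination theorem \cite[Theorem 6.3]{CGN19} upgrades this to the conclusion that $G$ has trivial positive theory, which is Corollary \ref{pos}.

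The proof is therefore essentially a short deduction from Theorem \ref{th1} together with the two cited results; the only point requiring a small verification is that $G\dot{\ast}_{E(G)}$ satisfies the hypotheses of \cite[Corollary 8.2]{CGN19}, and since both the acylindricity of the Bass--Serre action over a finite edge group and its irreducibility for non-elementary $G$ are standard, there is no real obstacle. The substantive work has already been done in establishing Theorem \ref{th1}.
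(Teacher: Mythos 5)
Your argument is correct, and it shares its skeleton with the paper's first proof (reduce to the positive $\forall\exists$-fragment via \cite[Theorem 6.3]{CGN19}, then transfer across Theorem \ref{th1}), but the endgame is genuinely different. The paper, after adding \emph{two} stable letters commuting with $E(G)$ (so that the resulting group $G\ast_{E(G)}(E(G)\times F_2)$ visibly surjects onto $F_2$), finishes elementarily: positive sentences are preserved under epimorphisms, and the positive theory of non-abelian free groups is trivial by Merzlyakov's classical theorem. You instead stay with a single stable letter and invoke \cite[Corollary 8.2]{CGN19} for $H=G\dot{\ast}_{E(G)}$ itself, which requires checking that $H$ is acylindrically hyperbolic and that its Bass--Serre action is hyperbolic and irreducible; your verification is fine (edge stabilizers are all equal to the finite normal subgroup $E(G)$, so the action is acylindrical in the coarse sense, and both indices over $E(G)$ are infinite, so the action is of general type). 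What the paper's route buys is self-containedness: beyond Theorem \ref{th1} it only needs quotient-preservation of positive sentences and Merzlyakov's original theorem, whereas your route leans on the substantially stronger tree-action result of Casals-Ruiz--Garreta--de la Nuez Gonz\'alez; what your route buys is brevity and that one application of Theorem \ref{th1} (one stable letter) suffices. Two small inaccuracies, neither affecting the proof: the action of $H$ on its Bass--Serre tree is ``$2$-acylindrical'' only in the generalized sense of uniformly bounded (not trivial) arc stabilizers when $E(G)\neq 1$, since $E(G)$ fixes every edge; and a positive sentence is \emph{not} in general $\forall\exists$ after prenexing --- but you do not use that, only the correct inclusion of the positive $\forall\exists$-theory in the $\forall\exists$-theory.
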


Let $G$ be a group, and let $w$ be an element of the free group $F(x_1,\ldots,x_k)$. This element $w$ induces a map $\bm{g}\in G^k \mapsto w(\bm{g})\in G$, also denoted by $w$. We say that the \emph{verbal subgroup} $w(G)=\langle\lbrace w(\bm{g}), \ \bm{g}\in G^k\rbrace\rangle$ has \emph{finite width} if there exists an integer $m\in\mathbb{N}$ such that any $g\in w(G)$ can be represented as a product of at most $m$ values of $w$ and their inverses, and the smallest such integer $m$ is called the \emph{width} of $w(G)$. For instance, for $n\geq 3$, there exists a constant $C(n)$ such that every element of $\mathrm{SL}_n(\mathbb{Z})$ is a product of $C(n)$ commutators (see \cite{AM18}); in other words, the derived subgroup of $\mathrm{SL}_n(\mathbb{Z})$ (that is $\mathrm{SL}_n(\mathbb{Z})$ itself) has finite width. Otherwise, one says that $w(G)$ has \emph{infinite width}. 

\smallskip

Let $e_i$ be the sum of the exponents of $x_i$ in $w$. If they are all $0$, define $d(w) = 0$. Otherwise, let $d(w)$ be their greatest common divisor. The following holds (see Section \ref{sectionpos}, Lemma \ref{lemmeposintro}): if $G$ has trivial positive theory, then $w(G)$ has infinite width, except if $w$ is trivial or $d(w)=1$ (in which cases the width is equal to $1$). As a consequence of Corollary \ref{pos}, one recovers the main result of \cite{BBF19}, due to Bestvina, Bromberg and Fujiwara.

\begin{co}\label{BBF}Let $G$ be an acylindrically hyperbolic, let $k\geq 1$ be an integer and let $w$ be a non-trivial element of $F_k$. If $d(w)\neq 1$, then $w(G)$ has infinite width.\end{co}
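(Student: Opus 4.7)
The plan is to derive Corollary \ref{BBF} immediately from Corollary \ref{pos} together with Lemma \ref{lemmeposintro}. Corollary \ref{pos} asserts that every acylindrically hyperbolic group $G$ has trivial positive theory, and Lemma \ref{lemmeposintro} ensures that in any group with trivial positive theory, whenever $w\in F_k$ is non-trivial with $d(w)\neq 1$, the verbal subgroup $w(G)$ has infinite width. These two statements compose directly, so the proof of the corollary itself is a one-line deduction.

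The substance of the argument is therefore already absorbed into the two results being quoted. Corollary \ref{pos} rests on the main Theorem \ref{th1} of the paper, which generalises Sela's elementary embedding results to acylindrically hyperbolic groups at the level of $\forall\exists$-theories, combined with the quantifier-elimination theorem of Casals-Ruiz, Garreta and de la Nuez González stating that a group has trivial positive theory if and only if it has trivial positive $\forall\exists$-theory. Lemma \ref{lemmeposintro}, in turn, would be proved by encoding ``$w(G)$ has width at most $m$'' as a positive $\forall\exists$-sentence: namely, $\forall \bm{x}_1,\dots,\bm{x}_{m+1}\ \exists \bm{y}_1,\dots,\bm{y}_m$ such that $w(\bm{x}_1)\cdots w(\bm{x}_{m+1})$ equals one of the $2^m$ expressions $w(\bm{y}_1)^{\epsilon_1}\cdots w(\bm{y}_m)^{\epsilon_m}$ with $\epsilon_i\in\{\pm 1\}$ (a finite disjunction is admissible in a positive formula), and then exhibiting a group in which this statement fails for every $m$, which one can arrange by means of a suitable non-trivial free product once $w$ is non-trivial and $d(w)\neq 1$.

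The main obstacle, consequently, does not lie in the proof of Corollary \ref{BBF} itself, but upstream: it lies in establishing Corollary \ref{pos}, and hence ultimately in proving the acylindrically hyperbolic version of Merzlyakov's theorem that underlies Theorem \ref{th1}. Once those are in hand, Corollary \ref{BBF} is a formal consequence, with the role of the hypothesis $d(w)\neq 1$ being merely to guarantee that the positive $\forall\exists$-sentence encoding bounded width really does fail in some group, as required by the contrapositive application of triviality of the positive theory.
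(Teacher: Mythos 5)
Your proposal is correct and is essentially the paper's own proof: Corollary \ref{BBF} is deduced in one line from Corollary \ref{pos} together with Lemma \ref{lemmeposintro}, exactly as in Section \ref{sectionpos}. Your sketch of Lemma \ref{lemmeposintro} also matches the paper's argument (a positive $\forall\exists$ encoding of bounded width, transferred to a free group where width is infinite by Rhemtulla/Segal), up to the minor point that the premise of the encoding should allow inverses of values of $w$, as in the paper's sentences $\phi_n$, since width is defined using signed values.
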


It is worth noting that we do not know any group $G$ with non-trivial positive theory and such that $w(G)$ has infinite width for every non-trivial $w$ satisfying $d(w)\neq 1$ (see \cite[Section 9.8]{CGN19} for further discussion).

\smallskip

\subsection*{Merzlyakov's theorem}\label{merz}In Section \ref{proof_main_th}, we deduce Theorem \ref{th11} from a generalisation of Merzlyakov's theorem \cite{Mer66}. Assuming that a non-abelian free group $F$ satisfies the positive first-order sentence \[\forall \bm{x} \ \exists\bm{y} \ \Sigma(\bm{x},\bm{y})=1,\] where $\Sigma(\bm{x},\bm{y})=1$ denotes a finite system of equations, Merzlyakov's theorem asserts that there exists a retraction from $\langle \bm{x},\bm{y} \ \vert \ \Sigma(\bm{x},\bm{y})=1\rangle$ onto the free group $F(\bm{x})$ on $\bm{x}$. Upon closer inspection, this result resembles the classical implicit function theorem in the sense that it enables one to convert the relations between the tuples $\bm{x}$ and $\bm{y}$ into a function. This is why Merzlyakov's theorem is sometimes referred to as an implicit function theorem for groups. This fundamental result was one of the first steps in Sela's positive answer to Tarski's question about the elementary equivalence of non-abelian free groups.

\smallskip

Let us mention that previous generalisations of Merzlyakov's theorem were proved for torsion-free hyperbolic groups, for hyperbolic groups with torsion, and for $\pi$-groups (that is pairs of the form $(F,\pi)$ where $\pi : F\rightarrow G$ is a homomorphism), respectively by Sela (see \cite{Sel09}),  by Heil (see \cite{Hei18}), and by de la Nuez Gonz\'alez (see \cite{Nuez17}).

\smallskip

Given a group $G$ and an element $g\in G$, we denote by $\mathrm{ad}(g)$ the inner automorphism $x\in G \mapsto gxg^{-1}$. Before stating our generalisation of Merzlyakov's theorem to all acylindrically hyperbolic groups, let us introduce the following definition.

\begin{de}Let $G$ be a group, and let $H$ be a subgroup of $G$. We define the subgroup $\mathrm{Aut}_{G}(H)$ of $\mathrm{Aut}(H)$ as follows:
\[\mathrm{Aut}_G(H)=\lbrace \sigma\in\mathrm{Aut}(H) \ \vert \ \exists g\in G, \ \mathrm{ad}(g)_{\vert H}=\sigma\rbrace.\]
\end{de}

We prove the following version of Merzlyakov's theorem (in Section \ref{Merz_section}, we give a more general statement allowing us to deal with finite disjunctions of finite systems of equations and inequations). In the case where $G$ is torsion-free and the first-order sentence considered in the theorem is positive, this result was proved by the second author (see \cite{Fru19}).

\begin{te}\label{th0bis}Let $G$ be an acylindrically hyperbolic group, and let $\bm{a}$ be a tuple of elements of $G$ (called constants). Fix a presentation $\langle \bm{a} \ \vert \ R(\bm{a})=1\rangle$ for the subgroup of $G$ generated by $\bm{a}$. Let \[\Sigma(\bm{x},\bm{y},\bm{a})=1 \ \wedge \ \Psi(\bm{x},\bm{y},\bm{a})\neq 1\] be a finite system of equations and inequations over $G$, where $\bm{x}$ and $\bm{y}$ are two tuples of variables. Let $G_{\Sigma}$ denote the following finitely generated group, finitely presented relative to $\langle \bm{a} \ \vert \ R(\bm{a})=1\rangle$: \[\langle \bm{x},\bm{y},\bm{a} \ \vert \ R(\bm{a})=1, \ \Sigma(\bm{x},\bm{y},\bm{a})=1\rangle.\]Let $p=\vert \bm{x}\vert$ be the arity of $\bm{x}$, and let $x_i$ denote the $i$th component of $\bm{x}$. Suppose that $G$ satisfies the following first-order sentence: \[\forall \bm{x} \ \exists \bm{y} \ \Sigma(\bm{x},\bm{y},\bm{a})=1 \ \wedge \ \Psi(\bm{x},\bm{y},\bm{a})\neq 1.\]Then, for every $p$-tuple $\bm{\sigma}=(\sigma_1,\ldots,\sigma_p)\in \mathrm{Aut}_G(E(G))^p$, there exists a morphism \[\pi_{\bm{\sigma}} : G_{\Sigma}\rightarrow G_{\bm{\sigma}}=G\ast_{E(G)}\left\langle  \bm{x}, E(G) \ \vert \ \mathrm{ad}(x_i)_{\vert E(G)}={\sigma_i}, \ \forall i\in \llbracket 1,p\rrbracket\right\rangle,\]called a \emph{formal solution}, enjoying the following properties:
\begin{itemize}
    \item[$\bullet$] $\pi_{\bm{\sigma}}(\bm{x})=\bm{x}$,
    \item[$\bullet$] $\pi_{\bm{\sigma}}(\bm{a})=\bm{a}$,
    \item[$\bullet$] $\Psi(\bm{x},\pi_{\bm{\sigma}}(\bm{y}),\bm{a})\neq 1$.
\end{itemize}
Moreover, the image of $\pi_{\bm{\sigma}}$ is a subgroup of $G_{\bm{\sigma}}$ of the form \[\left\langle \bm{g},\bm{a}\right\rangle\ast_{E(G)}\left\langle \bm{x}, E(G) \ \vert \ \mathrm{ad}(x_i)_{\vert E(G)}={\sigma_i}, \ \forall i\in \llbracket 1,p\rrbracket\right\rangle\] for some tuple $\bm{g}$ of elements of $G$.\end{te}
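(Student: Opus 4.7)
The strategy is to adapt the classical Merzlyakov argument via test sequences, limit actions on trees, and shortening, combining the acylindrical techniques of Groves and Hull \cite{GH19} with those developed by the first author \cite{And19a} to handle torsion. Given $\bm{\sigma} = (\sigma_1, \ldots, \sigma_p) \in \mathrm{Aut}_G(E(G))^p$, the first step is to construct a \emph{test sequence} $(\bm{x}_n)_{n \in \mathbb{N}}$ of $p$-tuples in $G$ such that each $x_{i,n}$ realises $\sigma_i$ by conjugation on $E(G)$, and such that the $\bm{x}_n$ are sufficiently generic: loxodromic with very long translation lengths for the acylindrical action of $G$, and satisfying small-cancellation conditions ensuring that the shortening argument will apply. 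This is possible because one can start with any fixed lift of each $\sigma_i$ to $G$ and multiply on the right by generic elements of the centraliser $C_G(E(G))$, which has finite index in $G$ and is therefore itself an acylindrically hyperbolic group.

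By the $\forall\exists$-hypothesis, for every $n$ there is a tuple $\bm{y}_n$ in $G$ with $\Sigma(\bm{x}_n, \bm{y}_n, \bm{a}) = 1$ and $\Psi(\bm{x}_n, \bm{y}_n, \bm{a}) \neq 1$, yielding homomorphisms $h_n : G_\Sigma \to G$ that send $\bm{x} \mapsto \bm{x}_n$ and fix $\bm{a}$. After passing to a subsequence and rescaling in the style of Bestvina--Paulin, the $h_n$ converge to an isometric action of a limit quotient $L$ of $G_\Sigma$ on an $\mathbb{R}$-tree. The relative Groves--Hull shortening argument, extended (following the techniques of \cite{And19a}) to allow for the finite stabilisers coming from conjugates of $E(G)$, then produces a new sequence of homomorphisms obtained from the $h_n$ by precomposition with modular automorphisms of $G_\Sigma$ that fix $\langle \bm{x}, \bm{a} \rangle$ up to conjugation. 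Because the test sequence is engineered to resist shortening on the $\bm{x}$-coordinates, the obstructions can only be concentrated in the flexible pieces of a JSJ-like decomposition of $L$ relative to $\langle \bm{x}, \bm{a} \rangle$.

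From this decomposition the formal solution $\pi_{\bm{\sigma}}$ is assembled as follows. The rigid vertex groups of $L$ inject into $G$ via one of the shortened $h_n$; their images, together with $\bm{a}$, yield the finite tuple $\bm{g}$ appearing in the statement. The flexible pieces (surface-type orbifold groups and abelian-type vertices) are mapped into the factor $\langle \bm{x}, E(G) \mid \mathrm{ad}(x_i)_{\vert E(G)} = \sigma_i \rangle$ of $G_{\bm{\sigma}}$ by the classical Merzlyakov combinatorial trick, which exploits the fact that this target contains a free group of the right rank with prescribed conjugation action on $E(G)$. The amalgamations over edge groups match because the $E(G)$-action on either side is governed by the prescribed $\sigma_i$. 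The three bulleted properties of $\pi_{\bm{\sigma}}$ are then immediate: fixing $\bm{x}$ and $\bm{a}$ follows from the construction, and the inequations $\Psi(\bm{x}, \pi_{\bm{\sigma}}(\bm{y}), \bm{a}) \neq 1$ are preserved because they hold along the test sequence and survive in the limit once the $\bm{x}_n$ are chosen generic enough.

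The main obstacle will be the simultaneous handling of torsion and of the merely acylindrical (rather than hyperbolic) geometric hypothesis inside the shortening argument: the limit $\mathbb{R}$-tree carries finite edge stabilisers of potentially unbounded order, and the relative JSJ machinery in this setting is more delicate than its hyperbolic counterpart. One must also check that the flexible quotients produced by the decomposition really can be realised inside $G_{\bm{\sigma}}$, which requires tracking the $E(G)$-action carefully across every edge of the JSJ --- and this is precisely where the prescribed tuple $\bm{\sigma}$, rather than any other tuple of automorphisms, becomes essential.
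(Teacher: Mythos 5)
Your outline diverges from the paper at its very first step, and that step is a genuine gap rather than a harmless variant. You claim one can build, inside $G$ itself, a $(\sigma_1,\ldots,\sigma_p)$-test sequence by taking lifts $g_i$ of the $\sigma_i$ and multiplying by ``generic'' elements of the finite-index subgroup $C_G(E(G))$. But a test sequence in the sense of Definition \ref{suitetest} requires much more than $\mathrm{ad}(x_{i,n})_{\vert E(G)}=\sigma_i$: condition (4) demands the $\varepsilon_n$-small cancellation condition for the whole tuple \emph{with respect to the acylindrical action of $G$} (the quantification over $h$ ranges over all of $G$, not over $C_G(E(G))$), together with the exact identification $\Lambda(\varphi_n(x_i))=\langle \varphi_n(x_i),E(G)\rangle$ realising $\sigma_i$ and rootlessness modulo $E(G)$. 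In the identity case this is achieved via a hyperbolically embedded copy of $F(a,b)\times E(G)$ and explicit words (Proposition \ref{jesaispas}); the paper states explicitly at the start of Section 8 that adapting this to a prescribed non-central action ``seems quite involved'' and deliberately avoids it. Instead the paper bootstraps: it proves the identity-$\bm{\sigma}$ case (Theorem \ref{th0bispartial}), deduces Theorem \ref{th11} (so $G$ embeds $\exists\forall\exists$-elementarily into $G\dot{\ast}_{E(G)}$ and hence into $G_{2p}=G\ast_{E(G)}(E(G)\times F_{2p})$), and only then builds the $\bm{\sigma}$-test sequence in the overgroup $G_{2p}$, where the stable letters centralise $E(G)$ and can be twisted by the lifts $g_i$ (Proposition \ref{jesaispasgeneral}); Proposition \ref{jesaispas2} is then run over $G_{2p}$. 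Your sketch offers no substitute for this reduction: the genericity assertion is exactly the unproved content, and nothing in your argument verifies condition (4).

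The second gap is that you never confront the failure of equational Noetherianity, which the paper treats as the central difficulty. Your shortening step speaks of precomposing the $h_n$ ``with modular automorphisms of $G_\Sigma$'', but modular automorphisms are automorphisms of the limit group $L$, and the defining sequence does \emph{not} factor through $L$ $\omega$-almost-surely; this is precisely why the paper introduces approximations of limit groups (Definition \ref{core_approx2}, Proposition \ref{approx}), lifts modular automorphisms to them (Proposition \ref{mod_approx}), proves the modified shortening statements (Theorems \ref{shortening_arg} and \ref{shortening_arg2}), and runs an iterative scheme of approximations $A_i$ whose termination rests on the uniform bound $C$ on the relevant finite stabilisers (Lemma \ref{stabilitylemma}) together with Dunwoody accessibility. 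Your stated ``main obstacle'' (finite edge stabilisers of potentially unbounded order in the limit tree) is the opposite of the actual situation: those orders are bounded by $C$, and that bound is what makes the induction terminate; the unboundedness of finite subgroups of $G$ is dangerous elsewhere. Finally, the assembly of $\pi_{\bm{\sigma}}$ is not the ``map the flexible pieces into the $\langle \bm{x},E(G)\rangle$ factor'' device you describe: in the paper, once $\langle\bm{x},\bm{a}\rangle$ is non-elliptic, Lemma \ref{abovelemma} extracts a two-vertex splitting $L=K\ast_F\langle\bm{x},F\rangle$ from a transverse covering of the limit tree, passes to an approximation, sends the $K$-side into $G$ by a factored $\psi_n$ fixing $\langle\bm{x},\bm{a}\rangle$, and checks the inequations via normal forms; your version of this step, and your one-line claim that the inequations ``survive in the limit'', would each need an actual argument.
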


\begin{rque}Note that $G_{\bm{\sigma}}$ is isomorphic to the group $G\ast_{E(G)}(F_p\times E(G))$ obtained from $G$ by adding $p$ stable letters commuting with $E(G)$. Indeed, by definition of $\mathrm{Aut}_G(E(G))$, for every $1\leq i\leq p$, there exists an element $g_i\in G$ such that $\mathrm{ad}(x_i)_{\vert E(G)}=\mathrm{ad}(g_i)_{\vert E(G)}$. It follows that $t_i=x_ig_i^{-1}$ commutes with $E(G)$.\end{rque}

\begin{rque}This theorem captures the spirit of Merzlyakov's original theorem, in the following sense: let $\bm{g}=(g_1,\ldots,g_p)$ be a tuple of elements of $G$, of the same arity as $\bm{x}$. Let $\bm{\sigma}=(\mathrm{ad}(g_1)_{\vert E(G)},\ldots,\mathrm{ad}(g_p)_{\vert E(G)})$, and let $\varphi : G_{\bm{\sigma}}\twoheadrightarrow G$ be the retraction that maps $x_i$ to $g_i$ and coincides with the identity on $G$. The homomorphism $\varphi\circ \pi_{\bm{\sigma}}$ from $G_{\Sigma}$ to $G$ maps $\bm{x}$ to $\bm{g}$. Denote by $\bm{h}$ the image of $\bm{y}$ under this homomorphism. The equalities $\Sigma(\bm{g},\bm{h},\bm{a})=1$ hold in $G$. In other words, just as with Merzlyakov's original theorem, the theorem above gives a mechanism for associating to every tuple $\bm{g}\in G^p$ another tuple $\bm{h}$ of the same arity as $\bm{y}$ such that the equalities $\Sigma(\bm{g},\bm{h},\bm{a})=1$ hold in $G$. However, note that the image of $\Psi(\bm{x},\pi_{\bm{\sigma}}(\bm{y}),\bm{a})\neq 1$ by $\varphi$ may be trivial.\end{rque}

\begin{ex}Let $G=\langle g_1,g_2,a \ \vert \ a^3=1, \ g_1ag_1^{-1}=a, \ g_2ag_2^{-1}=a^2\rangle\simeq \mathbb{Z}/3\mathbb{Z}\rtimes F_2$. Let $\sigma$ be the automorphism of $\langle a\rangle$ that maps $a$ to $a^2$, and let us consider the following first-order sentence, which is clearly satisfied by $G$: $\forall x \ \exists y \ ([x,a]=[y,a]) \ \wedge \ (x\neq y)$. By definition, one has:
\begin{itemize}
    \item[$\bullet$] $G_{\Sigma}=\langle x,y,a \ \vert \  a^3=1, \ [x,a]=[y,a]\rangle$,
    \item[$\bullet$] $G_{\mathrm{id}}=G\ast_{\langle a\rangle}\langle x,a \ \vert \ xax^{-1}=a\rangle$,
    \item[$\bullet$] $G_{\sigma}=G\ast_{\langle a\rangle}\langle x,a \ \vert \ xax^{-1}=a^2\rangle$.
\end{itemize}
\begin{center}
\begin{tikzcd}[column sep=tiny]& G_{\Sigma}\ar[dr, "\pi_{\sigma}"]\ar["\pi_{\mathrm{id}}",swap,dl]&&[1.5em] \\G_{\mathrm{id}}\ar[dr]&& G_{\sigma}\ar[dl]& \\& G&&\end{tikzcd}
\end{center}
The morphism $\pi_{\mathrm{id}}$ can be defined by $\pi_{\mathrm{id}}(x)=x$, $\pi_{\mathrm{id}}(a)=a$ and $\pi_{\mathrm{id}}(y)=g_1$. The morphism $\pi_{\sigma}$ can be defined by $\pi_{\sigma}(x)=x$, $\pi_{\sigma}(a)=a$ and $\pi_{\sigma}(y)=g_2$. Note that $\pi_{\mathrm{id}}(G_{\Sigma})$ and $\pi_{\sigma}(G_{\Sigma})$ are both isomorphic to $G$.
\end{ex}



The structure of the proof of Theorem \ref{th0bis} given in this paper, which is quite different from Merzlyakov's original combinatorial proof, is inspired from Sela's geometric proof of Merzlyakov's theorem (we refer to \cite{Sel03}). Nevertheless, both proofs rely crucially on small cancellation theory (combinatorial in one case, geometric in the other case). We also took inspiration from Sacerdote's paper \cite{Sac73}. 

\smallskip

\subsection*{An outline of the proof of Theorem \ref{th1}}\label{shortening_strength}

In order to illustrate the main ideas and to highlight the difficulties encountered, we sketch a proof of Theorem \ref{th1} in the particular case where the maximal normal finite subgroup $E(G)$ is trivial. 

\smallskip

Suppose that $G$ satisfies a first-order sentence \[\theta:\forall \bm{x} \ \exists \bm{y} \ \Sigma(\bm{x},\bm{y})=1\wedge \Psi(\bm{x},\bm{y})\neq 1.\]Let $\Gamma=G\ast\langle t\rangle\simeq G\ast\mathbb{Z}$. Observe that the following two assertions are equivalent, where $p$ denotes the arity of $\bm{x}$.
\begin{itemize}
\item[$\bullet$]$\Gamma$ satisfies the sentence $\theta$.
\item[$\bullet$]For every $\bm{\gamma}\in\Gamma^p$, there exists a retraction $r$ from $\Gamma_{\Sigma,\bm{\gamma}}=\langle \Gamma,\bm{y} \ \vert \ \Sigma(\bm{\gamma},\bm{y})=1\rangle$ onto $\Gamma$ such that no component of $\Psi(\bm{\gamma},\bm{y})$ is killed by $r$, i.e.\ the inequations remain valid in the image of $r$.\end{itemize}

\smallskip

In order to prove that $\Gamma$ satisfies the sentence $\theta$, we will construct such a retraction $r : \Gamma_{\Sigma,\bm{\gamma}}\twoheadrightarrow\Gamma$, for any $\bm{\gamma}\in\Gamma^p$. The very first step of the construction of this retraction relies on the existence of a quasi-convex free subgroup $F(a,b)\subset G$ (see \cite[Theorem 6.14]{DGO17} combined with \cite[Lemma 3.1]{AMS13}). From a sequence of elements $(w_n(a,b))_{n\in\mathbb{N}}\in F(a,b)^{\mathbb{N}}$ satsifying certain small cancellation conditions in the free group $F(a,b)$, one defines a \emph{test sequence} $(\varphi_n : \Gamma\twoheadrightarrow G)_{n\in \mathbb{N}}$ by ${\varphi_n}_{\vert G}=\mathrm{id}_G$ and $\varphi_n(t)=w_n(a,b)$. Since, by assumption, the sentence $\theta$ is true in the group $G$, each morphism $\varphi_n$ extends to a morphism $\psi_n : \Gamma_{\Sigma,\bm{\gamma}}\twoheadrightarrow G$ mapping $\bm{y}$ to a tuple $\bm{g}_n$ such that $\Sigma(\bm{\gamma},\bm{g}_n)=1$ and $\Psi(\bm{\gamma},\bm{g}_n)\neq 1$. 

\smallskip

The fact that $F(a,b)$ is quasi-isometrically embedded into $G$ enables us to prove that the sequence of elements $(\psi_n(t)=w_n(a,b))_{n\in\mathbb{N}}$ satisfies nice geometric conditions in $G$, which, in some sense, encapsulate the first-order sentence $\exists \bm{y} \ \Sigma(\bm{\gamma},\bm{y})=1\wedge \Psi(\bm{\gamma},\bm{y})\neq 1.$

\smallskip

Then, using the non-elementary acylindrical action of $G$ on a hyperbolic space, one can show that the sequence $(\psi_n)_{n\in\mathbb{N}}$ converges to an action of $\Gamma_{\Sigma,\bm{\gamma}}$ on a limiting real tree in the Gromov-Hausdorff topology, via the well-known Bestvina-Paulin method. This tree $T$ comes equipped with an isometric action of a quotient $L$ of $\Gamma$, called a divergent limit group. The action of $L$ on this tree can be analysed using the \emph{Rips machine}, adapted by Groves and Hull in \cite{GH19} to the setting of acylindrically hyperbolic groups, which converts the action $L\curvearrowright T$ into an action of $L$ on a simplicial tree, i.e.\ a splitting of $L$. The properties of the test sequence $(\psi_n)_{n\in\mathbb{N}}$ are reflected in this splitting of $L$, and the rest of the proof consists in constructing a retraction from $L$ onto $\Gamma$, using this splitting.

\smallskip

One key ingredient in this construction is a generalisation of Sela's \emph{shortening argument}. Due to the lack of \emph{equational Noetherianity} of acylindrically hyperbolic groups (see below), the sequence $(\psi_n : \Gamma_{\Sigma,\bm{\gamma}}\rightarrow G)_{n\mathbb{N}}$ does not factor through the quotient epimorphism $\psi_{\infty} : \Gamma_{\Sigma,\bm{\gamma}}\twoheadrightarrow L$ in general, which is the source of difficulties. Note that our version of the shortening argument is slightly different from the one proved by Groves and Hull, see \cite[Theorem 5.29 and Lemma 6.5]{GH19} and Section \ref{section_LG_SA} (Remark \ref{single_ap}) for further details.

\smallskip

Recall that a group is said to be equationally Noetherian if the set of solutions of any system of equations in finitely many variables coincides with the set of solutions of a certain finite subsystem of this system. As a consequence of the Hilbert Basis Theorem, linear hyperbolic groups are equationally Noetherian, and it was proved by Sela in \cite{Sel09} (torsion-free case) and by Reinfeldt and Weidmann in \cite{RW14} (general case) that the linearity assumption can be dropped. Equational Noetherianity has proved extremely useful in the study of the first-order theory of hyperbolic groups, notably because \emph{limit groups} over hyperbolic groups are not finitely presentable in general, which constrains us to deal with infinite systems of relations. Unfortunately, since equational Noetherianity is inherited by subgroups, and since for instance $H\ast\mathbb{Z}$ is acylindrically hyperbolic for any non-trivial group $H$, acylindrically hyperbolic groups are typically not equationally Noetherian. This is a major obstacle to constructing the desired retraction from $\Gamma_{\Sigma,\bm{\gamma}}$ onto $\Gamma$. 

\smallskip

We overcome this problem by introducing a method of approximating, in a precise sense, limit groups over acylindrically hyperbolic groups by finitely presented groups relative to a subgroup. The idea of approximating limit groups by finitely presented groups already appears, in a slightly different form, in \cite[Theorem 3.2]{Sel01} (see also \cite{Gro05}, \cite[Lemma 6.1]{RW14} and \cite[Lemma 6.3]{GH19}). More precisely, in the present case, there exists a quotient $A$ of $\Gamma_{\Sigma,\bm{\gamma}}$, called an \emph{approximation} of $L$, which is finitely presented relative to $G$, maps onto $L$, and has a splitting that mimics the splitting of $L$ outputted by the Rips machine. Since $A$ is finitely presented relative to $G$, each morphism $\psi_n : \Gamma_{\Sigma,\bm{\gamma}}\rightarrow G $ factors through the quotient epimorphism $\Gamma_{\Sigma,\bm{\gamma}}\twoheadrightarrow A$, for $n$ sufficiently large, as shown in the commutative diagram below.

\smallskip

\begin{align*}
\xymatrix{
\Gamma_{\Sigma,\bm{\gamma}} \ar@{->>}[drr] \ar[rrrr]^{\psi_n} \ar@{->>}[ddrr]_{\psi_\infty} && && G \\
&& A \ar[rru]_{\rho_n} \ar@{->>}[d] \\
&& L}
\end{align*}
We prove that the shortening argument applies to the resulting sequence $(\rho_n : A \rightarrow G)_{n\in\mathbb{N}}$, together with the splitting of $A$ mimicking the splitting of $L$. We refer the reader to Section \ref{section_LG_SA} for further details. Note, however, that the sequence $(\rho_n : A \rightarrow G)_{n\in\mathbb{N}}$ is not discriminating as soon as $L$ is a strict quotient of $A$; in other words, the stable kernel of this sequence is not trivial, and $A$ is not a $G$-limit group \emph{a priori}, which leads to new technical difficulties.

\smallskip

A further bad consequence of the lack of equational Noetherianity is that there is no descending chain condition for limit groups over acylindrically hyperbolic groups in general. This is another obstacle to the construction of the retraction. Fortunately, it is proved in \cite{GH19} (Convention 4.6 and Lemma 4.7) that the size of the finite edge groups appearing in the splitting of $L$ outputted by the Rips machine is bounded from above by a constant that depends only on the acylindrical action of $G$ on a fixed hyperbolic space, and on the hyperbolicity constant of this space. This result is remarkable since the order of a finite subgroup of $G$ is not bounded in general, and allows us to appeal to accessibility results and to prove that our construction, which is iterative, eventually terminates.

\smallskip

\subsection*{Acknowledgments}The first author thanks Denis Osin for interesting discussions. The second author would like to thank Martin Bridson and Zlil Sela for their guidance and support. 

\tableofcontents

\section{Preliminaries}
\subsection{Conventions}

For a group $G$ generated by a (not necessarily finite) set $S$, the word length $\abs{g}_S$ of an element $g \in G$ is the length of the shortest word in $S \cup S^{-1}$ representing $g$ in $G$. We usually denote the Cayley graph of $G$ (with respect to $S$) by $X$ and regard $X$ as a metric space by setting $d(g,h)=\abs{g^{-1}h}_S$. Throughout this paper, all groups acting on metric spaces act by isometries, and all metric spaces are geodesic.

\subsection{Equations over groups}

An \emph{equation} in variables $\bm{x}=(x_1,\ldots,x_p)$ is an equality $w(\bm{x})=1$ for $w(\bm{x}) \in F(\bm{x})$ (where $F(\bm{x})$ is the free group on $\bm{x}$); an equation \emph{over} a group $G$ in variables $\bm{x}$ is an equality of the form $w(\bm{x},\bm{a})=1$ where $w(\bm{x},\bm{a})\in F(\bm{x})*G$ and $\bm{a}$ is a tuple of elements from $G$. A \emph{solution} to the equation $w(\bm{x},\bm{a})=1$ over a group $G$ consists of a tuple $\bm{g}\in G^p$ for which the element $w(\bm{g},\bm{a})$, obtained by replacing every occurrence of $x_i^{\pm 1}$ with $g_i^{\pm 1}$, is trivial. Given a subset $\Sigma(\bm{x},\bm{a})=\lbrace w_i(\bm{x},\bm{a})\rbrace_{i\in I}\subset F(\bm{x})*G$, we refer to the conjunction $\bigwedge_{i\in I}w_i(\bm{x},\bm{a})=1$ as a \emph{system} of equations. We abbreviate and write $\Sigma(\bm{x},\bm{a})=1$, and say that a tuple $\bm{g}\in G^p$ is a solution to $\Sigma(\bm{x},\bm{a})=1$ if for every $w_i(\bm{x},\bm{a})\in \Sigma(\bm{x},\bm{a})$, one has $w_i(\bm{g},\bm{a})=1$. 

\smallskip

Similarly, an \emph{inequation} in variables $\bm{x}=(x_1,\ldots,x_p)$ is an inequality $w(\bm{x})\ne 1$ for $w(\bm{x}) \in F(\bm{x})$ (and an inequation over a group $G$ is an inequality $w(\bm{x},\bm{a}) \ne 1$ where $w(\bm{x},\bm{a})\in F(\bm{x})*G$ and $\bm{a}$ is a tuple of elements from $G$). Just like systems of equations, systems of inequations are conjunctions of inequations; we say that a tuple $\bm{g}\in G^p$ satisfies the system of inequations $\Phi(\bm{x},\bm{a}) \ne 1$ in $G$ if for every $w_i(\bm{x},\bm{a})\in \Phi(\bm{x},\bm{a})$, $w_i(\bm{g},\bm{a})\ne 1$ holds.

\smallskip

Note that there is a one-to-one correspondence between the set of solutions to the system of equations $\Sigma(\bm{x},\bm{a})=1$ over a group $G$ and the set of homomorphisms 
\begin{align*}
    \varphi: G_\Sigma = \langle \bm{x}, \bm{a} \ \vert \ R(\bm{a}) \cup \Sigma(\bm{x},\bm{a}) \rangle \rightarrow G
\end{align*}
(where $R(\bm{a})$ is a set of relations for which $\langle \bm{a} \ \vert \ R(\bm{a})\rangle$ is a presentation of the subgroup of $G$ generated by $\bm{a}$). If $\bm{g}$ is a solution to $\Sigma(\bm{x},\bm{a})=1$, there exists a homomorphism $\varphi:G_\Sigma \rightarrow G$ mapping $\bm{x}$ to $\bm{g}$ and $\bm{a}$ to $\bm{a}$; on the other hand, given such a homomorphism $\varphi$, the tuple $\varphi(\bm{x})\in G^p$ is a solution to $\Sigma(\bm{x},\bm{a})=1$ over $G$. In addition, a solution $\bm{g}$ to the system of equations $\Sigma(\bm{x},\bm{a})=1$ satisfies the system of inequations $\Phi(\bm{x},\bm{a})\ne 1$ if and only if there exists a homomorphism $\varphi:G_\Sigma \rightarrow G$ which maps $\bm{x}$ to $\bm{g}$ and $\bm{a}$ to $\bm{a}$, and such that for every $w_i(\bm{x},\bm{a})\in \Phi(\bm{x},\bm{a})$, $\varphi(w_i(\bm{x},\bm{a}))\ne 1$. Thus we regard the study of equations (and their solutions) over $G$, as the study of homomorphisms from the group $G_\Sigma$ to $G$. 

\subsection{Acylindrically hyperbolic groups}\label{prelim_acyl}
The aim of this subsection is to familiarize the reader, in a rather shallow manner, with acylindrically hyperbolic groups. 

\begin{de}
A geodesic metric space $(X,d)$ is called \emph{$\delta$-hyperbolic} if every geodesic triangle $\Delta=(x,y,z)$ in $X$ is \emph{$\delta$-slim}: every side of $\Delta$ is contained in the closed $\delta$-neighborhood of the union of the two other edges. The space $(X,d)$ is called \emph{hyperbolic} if it is $\delta$-hyperbolic for some $\delta$.
\end{de}

Recall that if a group $G$ acts on a hyperbolic space $(X,d)$ by isometries, an isometry $g\in G$ is called \emph{elliptic} if some (equivalently, any) orbit of $g$ is bounded. An isometry $g \in G$ is called \emph{hyperbolic} if for some (equivalently, any) $x \in X$, the map $\mathbb{Z}\rightarrow X$ defined via $m \mapsto g^m x$ is a quasi-isometric embedding; we call the image of such a quasi-isometric embedding a \emph{quasi-geodesic axis} of $g$ (or in other words, a quasi-geodesic axis of $g$ is an orbit of $g$ in $X$). The \emph{Gromov boundary} of $X$, denoted by $\partial X$, is defined as the collection of equivalence classes of quasi-isometric embeddings $\mathbb{N} \rightarrow X$ (where two embeddings are equivalent if their images lie at bounded Hausdorff distance from one another). A hyperbolic element $g \in G$ has therefore exactly two limit points $g^{+\infty}$ and $g^{-\infty}$ on $\partial X$, represented by the quasi-isometric embeddings $n \mapsto g^n x$ and $n \mapsto g^{-n} x$ (for some $x \in X$) respectively. Two hyperbolic elements $g$ and $h$ are called \emph{independent} if $\{g^{\pm \infty}\} \cap \{h^{\pm \infty}\}=\varnothing$. We call the action of $G$ on $X$ \emph{non-elementary} if there are two (or equivalently, infinitely many) independent hyperbolic elements in $G$. 

\smallskip

The notion of an \emph{acylindrical} group action on a metric space was first introduced by Bowditch in \cite{Bow08}, and was inspired by Sela's notion of a \emph{$k$-acylindrical} group action on a tree: a group action on a tree is called $k$-acylindrical if it contains no arcs of length greater than $k$ which are fixed by a non-trivial element of the group (and hence, the tree contains no "cylinders"). This notion was later generalized by imposing a bound on the cardinality of a subgroup which fixes an arc of length greater than $k$ in the tree, and coarsified in the following manner.

\begin{de}\label{acylindricity_constants}
    A group action on a metric space $G \curvearrowright (X,d)$ is called \emph{acylindrical} if for every $\varepsilon \ge 0$ there exist $N>0$ and $R>0$ such that for every $x,y \in X$ satisfying $d(x,y) \ge R$,
    \begin{align*}
        \abs{\lbrace g \in G \ \vert \ d(x,gx)\le \varepsilon \text{ and } d(y,gy)\le \varepsilon \rbrace}\le N.
    \end{align*}
\end{de}

The following lemma which appears in \cite{DGO17} will play an important role throughout this paper (note that in \cite{DGO17}, this result is stated for hyperbolic (also called loxodromic) WPD elements, but it turns out that all hyperbolic elements are automatically WPD when the action of the group is acylindrical).

\begin{lemme}
    \cite[Lemma 6.5, Corollary 6.6]{DGO17}\label{lemmefin} Let $G$ be a group acting acylindrically on a hyperbolic space $X$ and let $g \in G$ be a hyperbolic element. Then $g$ is contained in the unique maximal and virtually cyclic subgroup $\Lambda (g)$ which consists of all $h \in H$ for which the Hausdorff distance between $\ell$ and $h\ell$ is finite, where $\ell$ is some quasi-geodesic axis of $g$ in $X$. In addition, the following are equivalent for any $h \in G$:
    \begin{enumerate}
        \item $h \in \Lambda (g)$.
        \item $h^{-1}g^mh=g^k$ for some $0 \ne m,k \in \mathbb{Z}$.
        \item $h^{-1}g^nh=g^{\pm n}$ for some $n \in \mathbb{N}^{\ast}$.
    \end{enumerate}
    In addition, there exists $r \in \mathbb{N}$ such that the centralizer of $g^r$ is given by
    \begin{align*}
        C_G(g^r)=\lbrace h \in G \ \vert \ \exists n \in \mathbb{N}, \ h^{-1}g^nh=g^n \rbrace \subset \Lambda (g).
    \end{align*}
\end{lemme}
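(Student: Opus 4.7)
The plan is to identify $\Lambda(g)$ with the setwise stabiliser of the pair $\{g^{+\infty}, g^{-\infty}\} \subset \partial X$, and then use acylindricity to pin down its structure. I would start by recalling that in a hyperbolic space, two quasi-geodesic lines lie at finite Hausdorff distance if and only if they share the same pair of endpoints on the Gromov boundary. Applied to $\ell$ and $h\ell$, this characterisation gives $\Lambda(g) = \{h \in G : h\{g^{+\infty}, g^{-\infty}\} = \{g^{+\infty}, g^{-\infty}\}\}$, which is clearly a subgroup containing $g$. Any virtually cyclic overgroup of $g$ has each of its elements commuting with a nonzero power of $g$, hence fixing $\{g^{\pm\infty}\}$ setwise, so it sits inside $\Lambda(g)$; this makes $\Lambda(g)$ the unique maximal virtually cyclic overgroup of $g$, provided I first establish that $\Lambda(g)$ is itself virtually cyclic.

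To prove the latter, I would fix a basepoint $x_0 \in \ell$ and, given $h \in \Lambda(g)$, choose an integer $k(h)$ such that $g^{k(h)} h x_0$ lies within a fixed bounded distance of $x_0$; this is possible because $h\ell$ sits in a bounded neighbourhood of $\ell$ and $g$ acts coboundedly on $\ell$. Applying the same estimate at a second basepoint $g^M x_0$ for $M$ large enough (depending on the acylindricity constants at an appropriate scale $\varepsilon$), and noting that $g^{k(h)}h$ either preserves or reverses orientation along $\ell$, Definition \ref{acylindricity_constants} bounds the number of possible elements $g^{k(h)}h$ uniformly in $h$. Thus $\langle g \rangle$ has finite index in $\Lambda(g)$.

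For the equivalences, $(3) \Rightarrow (2)$ is immediate, and $(2) \Rightarrow (1)$ follows because the relation $h^{-1}g^m h = g^k$ passes to translation lengths, forcing $|m|=|k|$ and showing that $h$ permutes $\{g^{\pm\infty}\}$. I expect $(1) \Rightarrow (3)$ to be the main obstacle: given $h \in \Lambda(g)$, the conjugate $hgh^{-1}$ is a hyperbolic isometry with the same translation length as $g$, whose quasi-axis $h\ell$ is a bounded Hausdorff distance from $\ell$. Fellow-traveller arguments in $X$ show that for large $n$, the product $g^{\mp n} (h^{-1} g^n h)$ displaces a long segment of $\ell$ by a uniformly bounded amount, so by Definition \ref{acylindricity_constants} these elements lie in a uniformly finite set. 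A pigeonhole over varying $n$ then produces two exponents giving the same element, which rearranges to $h^{-1} g^n h = g^{\pm n}$ for some $n \in \mathbb{N}^{\ast}$.

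Finally, the centraliser statement reflects the virtually cyclic structure of $\Lambda(g)$. Its index-at-most-$2$ subgroup $\Lambda^+(g)$ fixing each of $g^{\pm\infty}$ individually is an infinite virtually cyclic group containing $g$, so it fits in a short exact sequence $1 \to F \to \Lambda^+(g) \to \mathbb{Z} \to 1$ with $F$ finite. Choosing $r$ divisible by $|F|$ and by the index of $\langle g \rangle F$ in $\Lambda^+(g)$ forces $g^r$ to be central in $\Lambda^+(g)$, so $\Lambda^+(g) \subset C_G(g^r)$. Conversely, any $h \in C_G(g^r)$ satisfies condition (2), hence (1), and preserves orientation along $\ell$, so lies in $\Lambda^+(g)$; the same chain of implications identifies the set $\{h : \exists n \in \mathbb{N}^{\ast}, h^{-1} g^n h = g^n\}$ with $\Lambda^+(g)$ as well, completing the argument.
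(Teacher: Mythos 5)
The paper gives no proof of this lemma at all: it is quoted verbatim from \cite[Lemma 6.5, Corollary 6.6]{DGO17}, so the only comparison available is with the standard argument in the literature, and your strategy (identify $\Lambda(g)$ with the setwise stabiliser of $\{g^{\pm\infty}\}$, use acylindricity at two far-apart points of the quasi-axis to get finiteness of $[\Lambda(g):\langle g\rangle]$, prove $(1)\Rightarrow(3)$ by pigeonholing the elements $g^{\mp n}h^{-1}g^nh$, and read the centraliser statement off the finite-by-$\mathbb{Z}$ structure of $\Lambda^+(g)$) is exactly that standard route and is viable. However, three steps are not correct as written. First, it is false that every element of a virtually cyclic overgroup $E$ of $g$ commutes with a nonzero power of $g$: if $E$ is of dihedral type, an element inverting $g$ modulo the finite radical commutes with no power of $g$. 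What is true, and what you actually need, is that $\langle g\rangle$ has finite index in $E$, so every $h\in E$ satisfies $h^{-1}g^mh=g^k$ for some nonzero $m,k$ (commensuration), hence preserves $\{g^{\pm\infty}\}$ by your own implication $(2)\Rightarrow(1)$. Second, in the finite-index argument, applying Definition \ref{acylindricity_constants} at $x_0$ and $g^Mx_0$ only controls the orientation-preserving elements $g^{k(h)}h$: an orientation-reversing one sends $g^Mx_0$ close to $g^{-M}x_0$, so its displacement of the second basepoint is about $2M\lVert g\rVert$, not small. You must additionally compare two orientation-reversing elements — their ratio is orientation-preserving and displaces both basepoints boundedly — to bound that class as well; this is a routine but necessary extra step.

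Third, and most concretely, your choice of $r$ in the centraliser statement does not work: taking $r$ divisible by $|F|$ and by $[\Lambda^+(g):\langle g\rangle F]$ need not make $g^r$ central in $\Lambda^+(g)$. For instance, if $\Lambda^+(g)=F\rtimes\langle g\rangle$ with $F=\mathbb{Z}/3\mathbb{Z}$ and $g$ acting on $F$ by inversion, then $|F|=3$, the index is $1$, and $g^3$ still acts on $F$ by inversion, so it is not central. The repair is easy and standard: pick $t\in\Lambda^+(g)$ mapping to a generator of $\Lambda^+(g)/F\cong\mathbb{Z}$ and let $d$ be the order of the automorphism of $F$ induced by $t$; then $t^d$ is central, so $Z(\Lambda^+(g))$ has finite index in $\Lambda^+(g)$, and a pigeonhole on cosets gives some $r\geq 1$ with $g^r\in Z(\Lambda^+(g))$. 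With that $r$ your final chain of identifications ($C_G(g^r)=\{h \mid \exists n,\ h^{-1}g^nh=g^n\}=\Lambda^+(g)\subset\Lambda(g)$) goes through as you describe. So the architecture is sound, but these three points must be fixed before the proof is complete.
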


Suppose now that a group $G$ acts acylindrically on a hyperbolic space $(X,d)$. The action of $G$ on $X$ falls into exactly one of three categories \cite[Theorem 1.1]{Osi16}:
\begin{enumerate}
    \item the action of $G$ is elliptic, that is every $G$-orbit is bounded.
    \item $G$ is virtually cyclic and contains a hyperbolic element.
    \item $G$ contains two (equivalently, infinitely many) pairwise independent hyperbolic elements.
\end{enumerate}

If an action falls into category (1) or (2) above, it is termed \emph{elementary}.

\begin{de}
    A group $G$ is said to be \emph{acylindrically hyperbolic} if it admits a non-elementary and acylindrical action on a hyperbolic space.
\end{de}

As a matter of fact, we can always choose the hyperbolic space on which $G$ acts to be a simplicial graph, as shown by the following result.

\begin{te}
    \cite[Theorem 1.2]{Osi16}\label{ahcay} If $G$ is acylindrically hyperbolic then there exists a (not necessarily finite) generating set $S$ of $G$ such that the Cayley graph $X$ of $G$ with respect to $S$ is hyperbolic and such that the natural action of $G$ on $X$ is non-elementary and acylindrical.
\end{te}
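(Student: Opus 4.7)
The plan is to derive this characterisation from the structural theory of hyperbolically embedded subgroups developed in \cite{DGO17}. Starting with the given non-elementary acylindrical action $G\curvearrowright Y$ on a hyperbolic space, Lemma \ref{lemmefin} produces a loxodromic element $g\in G$, and acylindricity of the action forces $g$ to be WPD. The key structural result of \cite{DGO17} then yields, for every such loxodromic WPD element, a maximal virtually cyclic subgroup $\Lambda(g)\le G$ that is \emph{hyperbolically embedded} in $G$: concretely, there exists a (possibly infinite) relative generating set $X_0\subset G$ such that the Cayley graph $\mathrm{Cay}(G,X_0\cup \Lambda(g))$ is hyperbolic and the inclusion $\Lambda(g)\hookrightarrow G$ enjoys well-behaved coset projections.

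I would then take $S := X_0 \cup \Lambda(g)$ as the candidate generating set and consider $X := \mathrm{Cay}(G,S)$. Hyperbolicity of $X$ is built into the definition of a hyperbolically embedded subgroup, which settles the first requirement. Since $\Lambda(g)$ is infinite, $g$ translates along its $\Lambda(g)$-orbit in $X$ and so acts loxodromically on $X$. Choosing a loxodromic element $h$ on $Y$ with $\{h^{\pm\infty}\}\cap\{g^{\pm\infty}\}=\varnothing$ (which exists by non-elementarity of $G\curvearrowright Y$) and passing to sufficiently high powers, a standard ping-pong argument in $X$ produces two independent loxodromic isometries of $X$, yielding the non-elementarity statement.

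The technical heart of the argument, and the step I expect to be the main obstacle, is acylindricity of $G\curvearrowright X$. The idea is that the relative metric on $\Lambda(g)$ provided by the hyperbolic embedding translates into quantitative control in $X$: for a geodesic segment $[u,v]$ in $X$ that is long compared to the hyperbolicity constant, any element $f\in G$ that almost-fixes both endpoints must fellow-travel a bounded arc of a single coset of $\Lambda(g)$, and the WPD property of $g$ then yields a uniform bound on the number of such $f$ in terms of the initial $\varepsilon$ appearing in Definition \ref{acylindricity_constants}. Carrying this out rigorously requires the projection-and-paths machinery of \cite[Sections 4--5]{Osi16}; alternatively, one may invoke the full equivalence between non-elementary acylindrical hyperbolicity and the existence of a non-degenerate hyperbolically embedded subgroup proved in \cite{Osi16}, and extract acylindricity of the action on the relative Cayley graph directly from that source.
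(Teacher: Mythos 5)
The paper does not prove Theorem \ref{ahcay}: it is quoted verbatim from \cite[Theorem 1.2]{Osi16}, so your proposal amounts to re-proving Osin's theorem, and as it stands it has a genuine gap. If you set $S=X_0\cup\Lambda(g)$, then every element of $\Lambda(g)$ is itself a generator, so $\Lambda(g)$ has diameter at most $1$ in $X=\mathrm{Cay}(G,S)$; in particular $d_X(1,g^n)\le 1$ for all $n$, and $g$ acts elliptically on $X$, not loxodromically. This is the opposite of what you assert, and it breaks both your non-elementarity argument (the ping-pong with a second loxodromic element has nothing to play against, since $g$ has translation length zero on $X$) and your heuristic for acylindricity, which was phrased in terms of elements fellow-travelling a coset of $\Lambda(g)$ along long geodesics of $X$ and then applying WPD for $g$. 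Loxodromic elements for the action on a relative Cayley graph $\mathrm{Cay}(G,X_0\sqcup\Lambda(g))$ do exist, but they are not powers of $g$, and exhibiting them already requires the projection machinery you are trying to sidestep.

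The deeper issue is acylindricity itself: the action of $G$ on the relative Cayley graph with respect to the relative generating set $X_0$ furnished by \cite{DGO17} need \emph{not} be acylindrical. The substantive content behind \cite[Theorem 1.2]{Osi16} is that one can enlarge $X_0$ to a set $Y$ such that $\Lambda(g)$ remains hyperbolically embedded with respect to $Y$ and the action of $G$ on $\mathrm{Cay}(G,Y\sqcup\Lambda(g))$ becomes acylindrical (Osin's Theorem 5.4); non-elementarity is then deduced from the fact that $\Lambda(g)$ is a proper infinite hyperbolically embedded subgroup, not from loxodromicity of $g$. Your fallback option, to ``invoke the full equivalence between non-elementary acylindrical hyperbolicity and the existence of a non-degenerate hyperbolically embedded subgroup proved in \cite{Osi16}'', is circular: the statement you are asked to prove is precisely one implication of that equivalence. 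A correct treatment must either reproduce the enlargement argument of \cite{Osi16} or cite the theorem as a black box, which is what the paper does.
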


\begin{rque}
    If the group $G$ is not hyperbolic, then the generating set $S$ mentioned in Theorem \ref{ahcay} above is necessarily infinite.
\end{rque}

\section{Limit groups, approximations and the shortening argument}\label{section_LG_SA}
In this section, we define \emph{limit groups} over acylindrically hyperbolic groups and discuss some of their prominent properties following the recent work of Groves and Hull \cite{GH19}. We focus our interest on \emph{divergent} limit groups (see Definition \ref{limit}) as these come armed with a limiting action on a real tree. Under some conditions, a divergent limit group splits as a \emph{graph of actions} (see Definition \ref{graphofactions}); this splitting is famously known as the output of the \emph{Rips machine}, which was introduced in unpublished work of Rips in the early 1990's. 

\smallskip

The lack of equational Noetherianity in acylindrically hyperbolic groups imposes a great obstacle to exploiting Sela's \emph{shortening argument} (see Subsection \ref{shortening}) in our setting. Hence, we conclude this section by introducing a method of approximating limit groups by finitely presented groups (relative to a subgroup) in a sense that captures the structure of the aforementioned splitting, and which will allow us to establish a generalisation of Sela's shortening argument to acylindrically hyperbolic groups. Note that a similar method of approximating limit groups appears in \cite{GH19}, however the construction of the approximation is different, and the version of the shortening argument we prove is slightly different from the one proved in \cite{GH19} (see Remark \ref{single_ap} for further details).

\smallskip

Throughout this section, assume that $(X,d)$ is a $\delta$-hyperbolic simplicial graph on which $G$ acts acylindrically and non-elementarily (note that, by Theorem \ref{ahcay}, this assumption is not restrictive). We also stick to the language of \emph{ultrafilters} as in \cite{GH19} and as is common in non-standard analysis; this enables us to phrase statements with relative ease, rather than often passing to subsequences. Recall that a \emph{non-principal ultrafilter} is a finitely additive probability measure $\omega:2^{\mathbb{N}}\rightarrow\{0,1\}$ satisfying $\omega(F)=0$ for every finite $F\subset \mathbb{N}$. For a statement $P$ depending on an index $n \in \mathbb{N}$, we say that $P$ holds $\omega$\emph{-almost-surely} if 
\begin{align*}
    \omega(\lbrace n\in \mathbb{N} \ \vert \ P \text{ holds for }n\rbrace)=1.
\end{align*}
The $\omega$\emph{-limit} of a sequence $(x_n)_{n\in\mathbb{N}}$ in $\mathbb{R}$ is $x \in \mathbb{R}$ if for every $\varepsilon>0$,
\begin{align*}
    \omega(\lbrace n\in \mathbb{N} \ \vert \ \abs{x-x_n}<\varepsilon \rbrace)=1.
\end{align*}
In this case we denote $\lim_\omega(x_n)=x$. We say that $\lim_\omega(x_n)=\infty$ if $\omega(\lbrace n\in \mathbb{N} \ \vert \ x_n>N \rbrace )=1$ holds for every $N \in \mathbb{N}$. Every sequence of real numbers has a unique $\omega$-limit in $\mathbb{R}\cup\{\infty\}$.

\subsection{Limit groups over acylindrically hyperbolic groups}\label{limit_groups_over_AH}
We define limit groups over acylindrically hyperbolic groups as in the standard case over free groups; for a more detailed description of the construction, and for additional properties of such limit groups, we refer the reader to \cite{GH19}.
\begin{de}
    \label{limit}
    Let $H$ be a finitely generated group, and let $(\varphi_n)_{n\in\mathbb{N}}$ be a sequence in $\mathrm{Hom}(H,G)^{\mathbb{N}}$. The \emph{stable} kernel of $(\varphi_n)_{n\in\mathbb{N}}$ (with respect to $\omega$) is
    \begin{align*}
        \underleftarrow{\ker}_\omega((\varphi_n)_{n\in\mathbb{N}})=\lbrace g \in H \ \vert \ g \in \ker (\varphi_n) \, \omega \text{-almost-surely} \rbrace.
    \end{align*}
    Fixing a finite generating set $S$ of $H$, we associate a \emph{scaling factor} to every element in the sequence $(\varphi_n)_{n\in\mathbb{N}}$, defined by
    \begin{align*}
        \norm{\varphi_n}=\inf_{y \in X} \max_{s \in S} d(y,\varphi_n(s)y).
    \end{align*}
\end{de}
Using the notion of a stable kernel of a sequence of homomorphisms, we can define limit groups over acylindrically hyperbolic groups.
\begin{de}
    Keeping the notation from Definition \ref{limit}, a $G$-\emph{limit group} is a group of the form $L=H / \underleftarrow{\ker}_\omega((\varphi_n)_{n\in\mathbb{N}})$. We call the limit group $L$ \emph{divergent} if $\lim_\omega(\norm{\varphi_n})=\infty$. 
\end{de}

The sequence $(\varphi_n)_{n\in\mathbb{N}}$ is called the \emph{defining sequence} of homomorphisms for $L$, and we denote by $\varphi_{\infty}:H\twoheadrightarrow L$ the natural quotient map and refer to $\varphi_{\infty}$ as the \emph{limit map} associated with the sequence $(\varphi_n)_{n\in\mathbb{N}}$. As previously mentioned, every divergent limit group $L$ comes equipped with a non-trivial and minimal action on a real tree; the construction of this real tree is commonly referred to as the Bestvina-Paulin method. We briefly explain how the real tree on which a divergent limit group acts is constructed (a detailed proof appears in \cite[Theorem 4.4]{GH19}). Consider the sequence $(X_n,d_n,o_n)_{n\in\mathbb{N}}$ of pointed simplicial graphs, where $X_n=X$ for every $n$, $d_n=\frac{1}{\norm{\varphi_n}} \cdot d$ and $o_n$ is a point in $X_n$ chosen to satisfy
\begin{align*}
    \max_{s\in S}d_n(o_n,\varphi_n(s)o_n) \leq \norm{\varphi_n}+\frac{1}{n}.
\end{align*}
We can always choose a point $o_n$ which satisfies this inequality since the metric on $X$ is discrete. The \emph{ultra-limit} $\left(\prod_{n \in \mathbb{N}} X_n\right) / \omega$ of the sequence $(X_n,d_n,o_n)_{n\in\mathbb{N}}$ is given by
\begin{align*}
    \left(\prod_{n \in \mathbb{N}} X_n\right) / \omega = \frac {\lbrace  (x_n)_{n\in\mathbb{N}} \in \prod_{n \in \mathbb{N}} X_n   \ \vert \ \lim_\omega (d_n(o_n,x_n))<\infty \rbrace}{\sim_\omega}
\end{align*}
where the equivalence relation $\sim_\omega$ on $\prod_{n \in \mathbb{N}}X_n$ is defined by setting $(x_n)_{n\in\mathbb{N}}\sim_\omega(y_n)_{n\in\mathbb{N}}$ if and only if $\lim_\omega(d_n(x_n,y_n))=0$. The ultra-limit $\left(\prod_{n \in \mathbb{N}}X_n\right) / \omega$ is equipped with a complete metric $d_\omega$ defined by $d_\omega((x_n)_{n\in\mathbb{N}},(y_n)_{n\in\mathbb{N}})=\lim_\omega d_n(x_n,y_n)$; furthermore, note that in this case every $X_n$ is $\left(\delta_n=\frac{1}{\norm{\varphi_n}}\cdot \delta\right)$-hyperbolic and the ultra-limit is $(\lim_\omega(\delta_n)=0)$-hyperbolic or in other words a real tree. Note that every homomorphism $\varphi_n$ endows $H$ with an action on $X$ (and $X_n$) by setting $hx=\varphi_n(h)x$ for every $h \in H$. These actions enable us to define an action of $H$ on the ultra-limit $\left(\prod_{n \in \mathbb{N}} X_n\right) / \omega$ via $h(x_n)_{n\in\mathbb{N}}=(hx_n)_{n\in\mathbb{N}}$. \smallskip
Lastly, we choose a minimal and $H$-invariant subtree $T$ of $\left(\prod_{n \in \mathbb{N}} X_n\right) / \omega$. Since every element of $\underleftarrow{\ker}_\omega((\varphi_n)_{n\in\mathbb{N}})$ acts trivially on $T$, the action of $H$ on $T$ induces the desired action of the divergent limit group $L$ on the real tree $T$.

\subsection{Graphs of actions and the Rips machine}
Under certain conditions, a group acting on a real tree splits as a \emph{graph of actions}. This splitting endows the group with an action on a simplicial tree which is generally easier to understand than an action on a real tree. Groves and Hull proved in \cite{GH19} that divergent limit groups over acylindrically hyperbolic groups and their canonical actions on real trees satisfy the desired conditions which are required to invoke Guirardel's version of the Rips machine (see \cite{Gui08}). We present the relevant definitions and results from both works and assume that the reader is familiar with the standard terminology associated with the Rips machine, as in \cite{Gui08}.

\begin{de}
    \label{graphofactions}
    \cite[Definition~1.2]{Gui08}
    A \emph{graph of actions} $\mathcal{G}$ consists of:
    \begin{enumerate}
        \item an underlying graph of groups $\mathbb{A}=\left(A,(A_v)_{v \in \mathrm{V}(A)},(A_e)_{e \in \mathrm{E}(A)},(i_e)_{e \in \mathrm{E}(A)}\right)$,
        \item a collection of real trees $(T_v,d_v)_{v \in \mathrm{V}(A)}$ such that $A_v$ acts on $T_v$,
        \item a collection of points $(p_e \in T_{t(e)})_{e \in \mathrm{E}(A)}$ such that every $p_e$ is fixed by $i_e(A_e)$, called \emph{attaching points},
        \item a function $\ell:\mathrm{E}(A)\rightarrow \mathbb{R}_{\ge 0}$ assigning \emph{lengths} to the edges of $A$, and such that $\ell(e)=\ell(\overline{e})$ for every $e \in \mathrm{E}(A)$.
    \end{enumerate}
    We usually present the information above as a tuple and write
    \begin{align*}
        \mathcal{G}=\mathcal{G}(\mathbb{A})=\left(\mathbb{A},(T_v)_{v \in \mathrm{V}(A)},(p_e)_{e \in \mathrm{E}(A)},\ell \right).
    \end{align*}
\end{de}

A graph of actions $\mathcal{G}(\mathbb{A})$ enables one to canonically construct a real tree $T_\mathcal{G}$ on which $G=\pi_1(\mathbb{A})$ acts: replace each vertex $\tilde{v}$ of the Bass-Serre tree $T_A$ corresponding to $\mathbb{A}$ by a copy of $T_v$ (where $v$ is the image of $\tilde{v}$ under the quotient map $q:T_A\rightarrow G \backslash T_A = A$), and replace any edge $\tilde{e}$ of $T_A$ by a segment of length $\ell(e)$ (where $e=q(\tilde{e})$). We also ask that if $t(\tilde{e})=\tilde{v}$ in $T_A$ then $t(\tilde{e})=\tilde{p}_e$ in $T_\mathcal{G}$, that is we attach the tree $T_v$ via the attaching point $p_e$. The action of $\pi_1(\mathbb{A})$ on $T_A$ extends naturally to an action $\pi_1(\mathbb{A})\curvearrowright T_\mathcal{G}$.  We next define notions of stability concerning with group actions on real trees which will allow us to describe the output of the Rips machine.

\begin{de}\label{recall_definition_stable}
    Suppose that $L$ is a group acting on a real tree $T$. 
    \begin{enumerate}
        \item A subtree $T' \subset T$ is called \emph{stable} if for every non-degenerate subtree $T''\subset T'$, $\mathrm{Stab}_L(T')=\mathrm{Stab}_L(T'')$. Otherwise, $T'$ is called \emph{unstable}. An action on a real tree is \emph{stable} if any non-degenerate arc contains a non-degenerate stable subarc.
        \item The action $L\curvearrowright T$ is said to satisfy the \emph{ascending chain condition} if for any sequence of nested arcs $I_1 \supset I_2 \supset \cdots$ in $T$ whose lengths tend to $0$, the corresponding sequence of stabilizers $\mathrm{Stab}_L(I_1)\subset \mathrm{Stab}_L(I_2)\subset \cdots$ eventually stabilizes.
    \end{enumerate}
\end{de}

We are now ready to state a relative version of the Rips machine which appears in \cite{Gui08}.

\begin{te}
\label{ripsmachine}
    \cite[Main Theorem]{Gui08}
    Let $L$ be a group acting minimally and non-trivially on a real tree $T$ by isometries. Let $U$ be a subgroup of $L$ such that $L$ is finitely generated over $U$ and such that $U$ fixes a point in a real tree $T$ on which $L$ acts. Assume in addition that the action of $L$ on $T$ satisfies the ascending chain condition, and that for any unstable arc $I \subset T$,
    \begin{enumerate}
        \item $\mathrm{Stab}_L(I)$ is finitely generated, and
        \item $\mathrm{Stab}_L(I)$ is not a proper subgroup of any conjugate of itself.
    \end{enumerate}
    Then one of the following holds.
    \begin{enumerate}
        \item $L$ splits over the stabilizer of an unstable arc and $U$ is contained in one of the factors.
        \item $L$ splits over the stabilizer $N$ of an infinite tripod and $U$ is contained in one of the factors, and the normalizer of $N$ contains a non-abelian free group generated by two hyperbolic elements whose axes do not intersect.
        \item The action $L \curvearrowright T$ decomposes as a graph of actions $\mathbb{R}_L$ where each vertex action is
        \begin{enumerate}
            \item either \emph{simplicial}: a simplicial action on a simplicial tree,
            \item of \emph{Seifert-type}: the action of $L_v$ has kernel $N_v$ and the faithful action of $L_v/N_v$ is dual to an arational measured foliation on a compact $2$-orbifold with boundary,
            \item or \emph{axial}: $T_v$ is a line, and the image of $L_v$ in $\mathrm{Isom}(T_v)$ is a finitely generated group acting with dense orbits on $T_v$.
        \end{enumerate}
    \end{enumerate}
\end{te}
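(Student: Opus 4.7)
The plan is to translate the isometric action $L \curvearrowright T$ into a finite combinatorial object on which one can run a Rips-style simplification procedure, and to read the three cases of the conclusion off of the normal form produced. First, since $L$ is finitely generated over $U$ and $U$ fixes a point $p \in T$, I would pick a finite generating set $\{s_1,\ldots,s_k\}$ of $L$ relative to $U$ and take a finite subtree $K \subset T$ containing $p$ and the points $s_i p$, chosen large enough that $K \cup s_1 K \cup \cdots \cup s_k K$ is connected and carries all the relations needed to reconstruct the $L$-action. The pairs $(K \cap s_i^{-1} K, s_i K \cap K)$ give a finite system of partial isometries on $K$, whose suspension is a band complex $\Sigma$; the leaf space of $\Sigma$ recovers the $L$-minimal subtree of $T$ and the holonomy recovers the action.

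Next I would apply the standard Rips moves — collapses of free subarcs, slides of bands along their base, and the thin/surface/toral decomposition step — to put $\Sigma$ into a normal form in which each connected sub-band complex is of one of three types: a \emph{simplicial} part (isolated bands producing the edges of a Bass–Serre tree), a \emph{thin} part (in the sense of Bestvina–Feighn/Levitt, dual to an arational measured foliation on a compact 2-orbifold), or a \emph{toral} part (a finite union of bands producing a minimal axial action on a line). The ascending chain condition is exactly the finiteness assumption needed to guarantee that the sequence of Rips moves terminates: each move either strictly decreases a lexicographic complexity measured by arc-stabilizer chains, or isolates a piece already in normal form.

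The dichotomy between the three cases in the conclusion then arises from what obstructs reaching normal form. If during the reduction one encounters an arc $I$ whose stabilizer properly grows under passage to a subarc, $I$ is unstable; the corresponding band then detects a splitting of $L$ over $\mathrm{Stab}_L(I)$, and since $U$ fixes a point of $\Sigma$, $U$ lies on one side — this is case (1), and hypothesis (1) ensures the edge group is finitely generated, while (2) rules out the pathology of $\mathrm{Stab}_L(I)$ being conjugated into a proper subgroup of itself along a band. If instead one encounters an infinite tripod whose stabilizer $N$ has non-abelian free normalizer (generated by two hyperbolic isometries with disjoint axes, detected by two independent bands crossing the tripod), one extracts the splitting of case (2). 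If neither obstruction appears, the band complex is genuinely in normal form and reading off the Bass–Serre-type decomposition gives the graph of actions $\mathbb{R}_L$ of case (3), with the three vertex types corresponding to the three kinds of pieces above.

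The main obstacle I expect is the \emph{relative} nature of the statement — keeping track of $U$ throughout the procedure and ensuring that, whenever a splitting over an unstable-arc or tripod stabilizer is extracted, $U$ is contained in a single vertex group. Concretely, this requires that the basepoint $p$ fixed by $U$ never gets ``cut'' by a Rips move, which forces one to choose the order of moves carefully and to perform slides rather than collapses near $p$. In the Seifert case it also requires verifying that if $U$ meets a thin piece nontrivially it lands entirely in a boundary subgroup, which uses the arationality of the foliation together with the fact that $U$'s fixed set in $\Sigma$ is contractible. A secondary technical point is verifying that the hypotheses (1) and (2) on unstable-arc stabilizers, combined with the ACC, really do prevent an infinite sequence of moves from cycling — this is essentially the content of Guirardel's refinement over earlier versions of the machine by Bestvina–Feighn and Sela.
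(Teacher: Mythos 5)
You should first note that the paper does not prove this statement at all: it is quoted, with attribution, as the Main Theorem of Guirardel \cite{Gui08}, so your sketch has to stand on its own as a proof of that theorem, and it does not. The decisive gap is your very first step. You choose a finite subtree $K$ ``carrying all the relations needed to reconstruct the $L$-action'' and suspend a \emph{single} band complex whose leaf space recovers the minimal subtree; this is only legitimate when $L$ is finitely presented (relative to $U$). Here $L$ is merely finitely generated over $U$ — and in the application in this paper $L$ is a limit group over an acylindrically hyperbolic group, which is typically not finitely presented — so a finite system of partial isometries only produces a \emph{geometric} action approximating $T$, not $T$ itself. The heart of Guirardel's argument is precisely the passage to a strong limit of a sequence of such geometric approximations, and the trichotomy in the conclusion comes from whether this approximation process stabilizes: conclusions (1) and (2) arise in the limiting analysis, not as ``obstructions encountered during Rips moves'' on one complex, and the ascending chain condition is used to control stabilizers along the limit, not merely as a termination measure for a sequence of moves.

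There is also a genuine conceptual error in your classification of the pieces: you identify the ``thin'' part with pieces ``dual to an arational measured foliation on a compact $2$-orbifold.'' Thin (exotic, Levitt-type) components are \emph{not} the surface components; the Seifert-type vertex actions of conclusion (3) are the orbifold ones, whereas thin components are exactly the ones that must be excluded from the conclusion, and their occurrence in the geometric approximations is what produces the splittings over tripod stabilizers or unstable-arc stabilizers of conclusions (1) and (2) — including the non-abelian free group in the normalizer of $N$, which comes from the dynamics of such a component. So your proposed mechanism for cases (1) and (2) misplaces where those cases come from, and as written your outline is essentially the classical Rips machine for stable actions of finitely presented groups, which does not suffice for the relative, finitely generated statement being cited.
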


We are interested in decompositions of divergent limit groups over acylindrically hyperbolic groups; the following lemma that appears in \cite{GH19}, also known as the stability lemma, implies that such limit groups indeed satisfy the stability conditions required for applying Theorem \ref{ripsmachine}. Recall that $L$ is a divergent limit group with defining sequence of homomorphisms $(\varphi_n)_{n\in\mathbb{N}}\in \mathrm{Hom}(H,G)^{\mathbb{N}}$. In the following lemma, $\delta$ denotes the hyperbolicity constant of a hyperbolic space on which $G$ acts acylindrically and non-elementarily, and $N$ and $R$ denote the acylindricity constants appearing in Definition \ref{acylindricity_constants}.

\begin{lemme}
    \label{stabilitylemma}
    \cite[Lemma~4.7]{GH19}There is a constant $C$ depending only on $\delta$, $N$ and $R$ such that the action of $L$ on the real tree $T$ constructed in Subsection \ref{limit_groups_over_AH} satisfies the following conditions.
    \begin{enumerate}
        \item If $A\subset L$ stabilizes a non-trivial arc of $T$, or if $A$ preserves a line in $T$ and fixes its ends, then $A$ is an extension of an abelian group by a finite group of order $\le C$.
        \item The stabilizer of a tripod in $T$ is of order $\le C$.
        \item The stabilizer of an unstable arc $I\subset T$ is of order $\le C$.
        \item If $K\subset L$ is \emph{locally stably elliptic}, that is for every finitely generated subgroup $K' \subset K$, the action of $\varphi_n(\tilde{K}')$ (where $\tilde{K}'$ is a lift of $K'$ to $H$) on $X_n$ is elliptic $\omega$-almost-surely, then the order of $K$ is $\le C$.
    \end{enumerate}
\end{lemme}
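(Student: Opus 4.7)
The plan is to deduce all four items from the acylindricity of $G \curvearrowright X$ by channelling everything through a single ``key principle'' controlling finite subsets of $L$ that fix positive-length arcs of $T$. I would fix once and for all an $\varepsilon > 0$ depending only on $\delta$, let $N$ and $R$ be the corresponding acylindricity constants from Definition \ref{acylindricity_constants}, and aim for $C = N$ up to a universal multiplicative factor handling item (4).

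\textbf{Key principle.} \emph{Let $B \subset L$ be a finite subset all of whose elements fix two points $x, y \in T$ with $d_\omega(x,y) > 0$. Then $|B| \le N$.} To prove this, pick sequences $(x_n), (y_n) \in X^{\mathbb{N}}$ representing $x$ and $y$ in the ultralimit. Since $d_\omega(x,y) = \lim_\omega d(x_n,y_n)/\Vert\varphi_n\Vert > 0$ and $\Vert\varphi_n\Vert \to \infty$ (by divergence of $L$), one has $d(x_n,y_n) \ge R$ $\omega$-almost surely. Lift each $b \in B$ to some $\tilde b \in H$; the equality $b \cdot x = x$ in $T$ translates to $\lim_\omega d(x_n, \varphi_n(\tilde b) x_n)/\Vert\varphi_n\Vert = 0$, so in particular $d(x_n, \varphi_n(\tilde b) x_n) \le \varepsilon$ $\omega$-a.s., and likewise at $y_n$. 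Acylindricity then bounds $|\varphi_n(\tilde B)| \le N$ $\omega$-a.s. Finally, since the finite set $\{\tilde b_i^{-1} \tilde b_j : b_i \ne b_j \in B\}$ is disjoint from the stable kernel, $\varphi_n$ is $\omega$-a.s.\ injective on $\tilde B$, whence $|B| \le N$.

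Given the key principle, each item follows with modest additional work. For (2), a tripod contains two subarcs of positive length meeting at its centre, and its pointwise stabiliser fixes two points at positive distance in $T$, hence has order $\le N$. For (1), the key principle bounds every finite subgroup of an arc or line-with-ends stabiliser by $N$; the virtually abelian structure then comes from the standard theory of isometries of real trees, namely the fact that the kernel of the action on the set of directions at an interior point of the arc (resp.\ of the translation-length homomorphism to $\mathbb{R}$ along the line) is abelian and of finite index, with quotient bounded by $N$ via the key principle once more. For (3), the key principle first yields the ascending chain condition for arc stabilisers---any strictly increasing chain of stabilisers of nested arcs with lengths tending to $0$ would eventually contain a finite subgroup exceeding the $N$-bound---and hence the stabiliser of an unstable arc is finite (otherwise a stable subarc could be produced, contradicting instability), so the $N$-bound applies. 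For (4), an elliptic subgroup of $G$ under its acylindrical action has cardinality bounded by a constant $C'$ depending only on $\delta$, $N$, $R$: this is a standard Gromov-type argument using a quasi-centre of a bounded orbit as a near-fixed point to which acylindricity can be applied. Applying this bound to $\varphi_n(\tilde K')$ for every finitely generated $K' \le K$ and passing to a direct limit gives the desired bound on $|K|$.

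The main obstacle lies in the structural ``virtually abelian'' part of (1): the key principle only bounds the order of finite subgroups, not the abelian quotient, so one must genuinely invoke the real-tree machinery (local action on the branching structure, or the translation-length homomorphism) to identify the abelian normal subgroup of finite index. This requires a careful check that everything behaves well in the particular setting of divergent limit groups over acylindrically hyperbolic groups, but no essentially new input beyond acylindricity and the key principle is needed.
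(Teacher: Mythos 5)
Your ``key principle'' is where the proposal breaks, and it breaks in two ways. First, the proof of it contains an invalid step: from $b\cdot x=x$ in $T$ you only get $\lim_\omega d(x_n,\varphi_n(\tilde b)x_n)/\Vert\varphi_n\Vert=0$, i.e.\ the displacement is \emph{sublinear} in the scaling factor; it does not follow that $d(x_n,\varphi_n(\tilde b)x_n)\le\varepsilon$ $\omega$-almost-surely, since the unrescaled displacement may still tend to infinity, and then acylindricity simply never applies to $\varphi_n(\tilde b)$. Second, the principle itself is false, and in fact contradicts item (1) of the very lemma you are proving, which allows arc stabilizers to be infinite (virtually abelian). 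Concretely, take $g\in G$ hyperbolic, $H=F(a,b)$, $\varphi_n(a)=g^{n}$, $\varphi_n(b)=g^{n^2}$: the limit group is $\mathbb{Z}^2$ acting on a line, $\langle a\rangle$ fixes every arc of $T$ (it is even in the kernel of the action), yet $d(x_n,\varphi_n(a^k)x_n)\approx kn\Vert g\Vert\to\infty$, so the sets $B=\{a,\dots,a^M\}$ violate your bound for every $M>N$. Since items (1)--(3) are all routed through this principle, the whole argument collapses. The correct mechanism (this lemma is not proved in the paper but quoted from \cite{GH19}; the paper's own proof of Lemma \ref{center} displays the technique) is to use the quasi-translation lemma \ref{quasitrans} to show that elements fixing two far points act on the middle of $[x_n,y_n]$ by quasi-translations, and then to apply acylindricity not to the elements themselves but to derived elements whose displacement of two points at distance $\ge R$ is genuinely $O(\delta)$ --- commutators $[\varphi_n(\tilde g),\varphi_n(\tilde c)^j]$, or differences of powers with comparable translation parameters. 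This is exactly why only tripod stabilizers, unstable-arc stabilizers and the bounded ``finite part'' of arc stabilizers come out uniformly finite, while arc stabilizers themselves are only finite-by-abelian; and the abelian structure in (1) does not come from abstract real-tree theory (the kernel of the action on directions at an interior point of a fixed arc need not be abelian for a general $\mathbb{R}$-tree action) but from the approximate additivity of the quasi-translation numbers along $[x_n,y_n]$ combined with acylindricity.

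Item (4) has an independent error of the same nature: it is not true that an elliptic subgroup of a group with a non-elementary acylindrical action has cardinality bounded in terms of $\delta,N,R$. For $G=A\ast B$ acting acylindrically on its Bass--Serre tree, the factor $A$ is elliptic and may be infinite; acylindricity constrains the set of elements coarsely fixing \emph{two} points at distance $\ge R$, and a single bounded orbit (or quasi-centre) gives no cardinality bound at all. So the ``standard Gromov-type argument'' you invoke does not exist, and (4) again requires the two-point mechanism above (in \cite{GH19} the ellipticity hypothesis is exploited together with the structure of the limiting action, e.g.\ for subgroups lying in arc stabilizers, precisely as in the proof of Lemma \ref{center}); as stated, it cannot be obtained from one near-fixed point.
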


\begin{co}
    The fact that stabilizers of unstable arcs are finite implies that the action of $L$ on $T$ satisfies the ascending chain condition and the rest of the conditions required for Theorem \ref{ripsmachine}. Hence if $L$ does not split non-trivially over a finite subgroup of order $\le C$, it must split as a graph of actions as in Theorem \ref{ripsmachine}. 
\end{co}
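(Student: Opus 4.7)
The plan is to verify the hypotheses of Theorem \ref{ripsmachine} one by one, using Lemma \ref{stabilitylemma}, and then to read off the conclusion. The action $L \curvearrowright T$ is non-trivial since $L$ is divergent (by construction of $T$ via the Bestvina-Paulin method), and minimal by the choice of $T$ as a minimal $L$-invariant subtree. The group $L$ is finitely generated (being a quotient of the finitely generated $H$), so the subgroup $U$ required by Theorem \ref{ripsmachine} can be taken to be, for instance, the stabilizer of a chosen point of $T$ (which is elliptic by definition, and with respect to which $L$ is finitely generated since $L$ itself is).

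The main verification is the ascending chain condition. Consider a nested sequence of arcs $I_1 \supset I_2 \supset \cdots$ whose lengths tend to $0$, and the associated chain of stabilizers $\mathrm{Stab}_L(I_1) \subset \mathrm{Stab}_L(I_2) \subset \cdots$. I would split into two cases. If some $I_k$ is stable, then by definition every non-degenerate subarc $I_j \subset I_k$ with $j \ge k$ satisfies $\mathrm{Stab}_L(I_j) = \mathrm{Stab}_L(I_k)$, so the chain is eventually constant. Otherwise, every $I_k$ is an unstable arc, and by item $(3)$ of Lemma \ref{stabilitylemma} each $\mathrm{Stab}_L(I_k)$ has order at most $C$. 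A strictly increasing chain of finite groups of uniformly bounded order has length at most $C$, so the chain must stabilise.

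The remaining conditions on stabilizers of unstable arcs are immediate from item $(3)$ of Lemma \ref{stabilitylemma}: every such stabilizer is finite, hence trivially finitely generated; and a finite subgroup cannot be a proper subgroup of any of its conjugates, because all conjugates share the same (finite) cardinality. Therefore Theorem \ref{ripsmachine} applies and yields one of the three alternatives (1), (2), (3). In alternative (1), $L$ splits over the stabilizer of an unstable arc, which is finite of order $\le C$ by item $(3)$; in alternative (2), $L$ splits over the stabilizer of a tripod, which is finite of order $\le C$ by item $(2)$. Under the assumption that $L$ admits no non-trivial splitting over a finite subgroup of order $\le C$, both of these alternatives are excluded, leaving only alternative (3), namely the desired graph of actions decomposition.

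The only mildly delicate step is the ACC verification, and specifically the case distinction between stable and unstable arcs in the nested sequence; the point is that the two cases are handled by genuinely different mechanisms (one by the defining property of stability, the other by the uniform bound $C$ from the stability lemma), but both terminate the chain. Everything else is a direct translation of the content of Lemma \ref{stabilitylemma} into the hypotheses of Theorem \ref{ripsmachine}.
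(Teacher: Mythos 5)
Your proof is correct and follows essentially the same route the paper takes (the paper leaves the corollary as an immediate consequence of Lemma \ref{stabilitylemma} and Theorem \ref{ripsmachine}): you verify the ascending chain condition via the stable/unstable dichotomy and the uniform bound $C$ on unstable-arc stabilizers, check the remaining hypotheses trivially for finite groups, and rule out alternatives (1) and (2) using the no-splitting assumption. No gaps.
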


\subsection{Approximations of limit groups}
Recall that $G$ is a group that admits an acylindrical and non-elementary action on a $\delta$-hyperbolic simplicial graph $(X,d)$ and that $L$ is a limit group with defining sequence $(\varphi_n)_{n\in\mathbb{N}}\in\mathrm{Hom}(H,G)^{\mathbb{N}}$, where $H$ is a finitely generated group.

\begin{st}\label{StAs}In what follows, we assume that $H$ is finitely presented over an infinite finitely generated (but not necessarily finitely presented) subgroup $U\subset H$. We denote by $S$ a finite generating set of $H$. In addition, we suppose that $U$ acts elliptically on the limiting tree $T$, and that the restriction of the limit map $\varphi_\infty$ to $U$ is injective, which allows us to identify $U$ with its image under $\varphi_\infty$.\end{st}

\smallskip

In this subsection, we aim to prepare the grounds for proving a version of the shortening argument for acylindrically hyperbolic groups (slightly different from the version proved in \cite{GH19} by Groves and Hull). First, let us recall that the group $G$ is not equationally Noetherian in general. Therefore, the sequence $(\varphi_n : H \rightarrow G)_{n\in\mathbb{N}}$ defining the $G$-limit group $L$ does not factor through the quotient map $\varphi_{\infty}$ \emph{a priori}. This is a major obstacle to generalising the standard proof of the shortening argument, since we cannot rely on the splitting of $L$ as a graph of actions outputted by the Rips machine for shortening the morphisms of the sequence. Before we explain our approach for overcoming this difficulty (which is very similar to the approach taken in \cite[Lemma 6.3]{GH19}, coming from \cite[Theorem 3.2]{Sel01}), we begin by defining \emph{approximations} of limit groups.

\begin{de}\label{cor_approx}
    Given a finite set of relations $\mathcal{R}\subset \ker (\varphi_\infty)$, we define the \emph{$\mathcal{R}$-approximation} $A$ of $L$ as $A=H\big/ \langle \langle \mathcal{R} \rangle \rangle$. In general, we call a group $A$ obtained in this manner an $\emph{approximation}$ of $L$.
\end{de}

\begin{rque}
    Since the set $\mathcal{R}$ is a subset of $\ker(\varphi_\infty)$, the group $A$ acts on the limiting tree $T$. Both quotient maps $H\twoheadrightarrow A$ and $A\twoheadrightarrow L$ are equivariant with respect to the corresponding actions on $T$.
\end{rque}

Our motivations for introducing approximations of limit groups are the following.
\begin{enumerate}
    \item Since $H$ is finitely presented over $U$ (see Standing Assumption \ref{StAs}), every $\mathcal{R}$-approximation $A$ of $L$ is finitely presented over $U$. Therefore, the homomorphisms in the sequence $(\varphi_n : H \rightarrow G)_{n \in \mathbb{N}}$ factor $\omega$-almost-surely through the quotient map $H\twoheadrightarrow A$. This factorization will be crucial in our proof of the general version of the shortening argument.
    \item Suppose that $L$ admits a nice splitting as a graph of groups (see below for more details). We will see that, provided that the finite set of relations $\mathcal{R}\subset \ker (\varphi_\infty)$ is carefully chosen, the approximation $A$ admits a splitting that mimics the splitting of $L$, in a precise sense.
\end{enumerate}

In the proof of the shortening argument, as well as in the proof of Merzlyakov's theorem, we will consider different splittings of $L$: 
\begin{itemize}
    \item[$\bullet$]if $L$ splits non-trivially relative to $U$ over a finite subgroup of order $\leq C$  (the constant appearing in Lemma \ref{stabilitylemma}), a reduced JSJ splitting of $L$ relative to $U$ over finite subgroups of order $\le C$, denoted by $\mathbb{J}_L$ (see Proposition \ref{approx} and Corollary \ref{JSJ_app});
    \item[$\bullet$]if $L$ does not split non-trivially relative to $U$ over a finite subgroup of order $\leq C$, a splitting of $L$ as a graph of actions outputted by the Rips machine \ref{ripsmachine}, denoted by $\mathbb{R}_L$ (see Proposition \ref{approx} and Corollary \ref{rips_app}). This is our main motivation for approximating limit groups;
    \item[$\bullet$]more  generally, a splitting $\mathbb{RJ}_L$ of $L$ obtained from $\mathbb{J}_L$ by replacing the unique vertex $u$ fixed by $U$ with $\mathbb{R}_{L_u}$ (see Proposition \ref{approx} and Corollary \ref{comb_app}).
\end{itemize}

Before we construct approximations of $L$ equipped with splittings that mimic one of the aforementioned splittings, we define with more details the sense in which an approximation of $L$ mimics a certain splitting.

\begin{de}\label{core_approx2}
Let $A$ be an approximation of $L$ as in Definition \ref{cor_approx}. Let $\pi : A \twoheadrightarrow L$ be the natural epimorphism (obtained by quotienting out by the image of $\ker(\varphi_\infty)$ in $A$). Suppose that $L$ splits as a graph of groups $\mathbb{S}_L$. One says that $A$ is an $\mathbb{S}_L$-approximation of $L$ if the following four conditions hold:
\begin{enumerate}
    \item $A$ splits as a graph of groups $\mathbb{S}_A$ with the same underlying graph as $\mathbb{S}_L$, and in which all the edge groups are finitely presented and all the vertex groups are finitely presented (relative to $U$);
    \item $\pi$ induces an isomorphism of graphs, denoted by $f$, between the underlying graph of $\mathbb{S}_A$ and the underlying graph of $\mathbb{S}_L$;
    \item if $i : A_e \hookrightarrow A_v$ denotes the inclusion of an edge group of $\mathbb{S}_A$ into an adjacent vertex group, and $j : L_{f(e)} \hookrightarrow L_{f(v)}$ denotes the corresponding inclusion in the graph of groups $\mathbb{S}_L$, then the following diagram commutes:
\begin{center}
\begin{tikzcd}A_e\arrow[r, "i"]\arrow[d, "\pi"]& A_v\arrow[d, "\pi"] \\L_{f(e)}\arrow[r, "j"]& L_{f(v)}\end{tikzcd}
\end{center}
\item $\pi$ maps every edge group $A_e$ of $\mathbb{S}_A$ into the corresponding edge group $L_{f(e)}$ of $\mathbb{S}_L$.
\end{enumerate}
For readability, we omit the isomorphism $f$ and denote the vertex $f(v)$ and the edge $f(e)$ by $v$ and $e$ respectively.
\end{de}

\begin{rque}
The second and third conditions in the definition above can be phrased, equivalently, as follows: there exists a $\pi$-equivariant isomorphism of graphs between the Bass-Serre trees of the splittings $\mathbb{S}_A$ and $\mathbb{S}_L$.
\end{rque}

A similar method of approximating limit groups appears in \cite[Theorem 3.2]{Sel01} (see also \cite{Gro05}, \cite[Lemma 6.1]{RW14} and \cite[Lemma 6.3]{GH19}). Note that in these papers, in the process of approximating a limit group, one constructs countably many approximations; each of them approximates the limit group $L$ to a greater extent than its predecessors. Below we give an alternative construction, which approximates (only) the specific properties of the limit group $L$ required for the proofs of the shortening argument and Merzlyakov's theorem.

\begin{prop}\label{approx}
Suppose that $L$ splits as a graph of groups $\mathbb{S}_L$ in which all the edge groups are virtually abelian. Then there exists an $\mathbb{S}_L$-approximation $A$ of $L$, which in addition satisfies the following two properties.
\begin{enumerate}
    \item If $L_e$ is a finitely generated edge group of $\mathbb{S}_L$, then the quotient map $\pi:A \twoheadrightarrow L$ maps $A_e$ onto $L_e$ (and thus $\pi_{\vert A_e}:A_e \rightarrow L_e$ is an isomorphism).
    \item Let $L_v$ be a vertex group of $\mathbb{S}_L$; if all the edge groups of $\mathbb{S}_L$ adjacent to $L_v$ are finitely generated, then the quotient map $\pi:A \twoheadrightarrow L$ maps $A_v$ onto $L_v$. Moreover, if $L_v$ is finitely presented (relative to $U$), then the map $\pi_{\vert A_v}:A_v \rightarrow L_v$ is an isomorphism.
\end{enumerate}
In addition, for any finite set of relations $\mathcal{F}\subset\ker(\varphi_\infty)$, we can choose the approximation $A$ such that the image of $\mathcal{F}$ in $A$ is trivial.
\end{prop}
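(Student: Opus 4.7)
The plan is to build the approximation $A$ as the fundamental group of a graph of groups $\mathbb{S}_A$ that mimics $\mathbb{S}_L$, and then recognize $A$ as the quotient of $H$ by only finitely many relations. Throughout, I exploit the fact that the underlying graph of $\mathbb{S}_L$ is finite (we are in the setting of Rips/JSJ splittings of divergent limit groups). First, I pull back the action of $L$ on the Bass--Serre tree of $\mathbb{S}_L$ via $\varphi_\infty:H\twoheadrightarrow L$ to obtain a graph of groups $\mathbb{S}_H$ for $H$ with the same underlying graph $\Gamma$ as $\mathbb{S}_L$, vertex groups $H_v=\varphi_\infty^{-1}(L_v)$, and edge groups $H_e=\varphi_\infty^{-1}(L_e)$; this identifies $H$ with $\pi_1(\mathbb{S}_H)$.

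Next, I choose finite pieces of data that are to become the vertex and edge groups of $\mathbb{S}_A$. For each edge $e$, since $L_e$ is virtually abelian, it is the union of a directed family of finitely generated subgroups, each of which is finitely presented. I pick a finite set $\Sigma_e\subset H_e$ whose image generates a ``sufficiently large'' such subgroup of $L_e$; if $L_e$ itself is finitely generated, I arrange that $\Sigma_e$ maps onto $L_e$. For each vertex $v$, I choose a finite set $\Sigma_v\subset H_v$ containing the $\Sigma_e$ for $e$ incident to $v$, together with a lift of a finite generating set of $U$ at the unique vertex fixed by $U$. When every edge group of $\mathbb{S}_L$ incident to $v$ is finitely generated and $L_v$ is finitely presented (relative to $U$), I additionally require $\Sigma_v$ to map onto a generating set of $L_v$ and I fix a lift to $H$ of a finite relative presentation of $L_v$.

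The finite set $\mathcal{R}\subset\ker(\varphi_\infty)$ is then assembled from three pieces: (i) for each $g\in\Sigma_e$ and each endpoint $v$ of $e$, a relation expressing $i_e(g)$ as a word in $\Sigma_v$ (this is possible since the identity holds in $L$); (ii) for each vertex $v$ treated in the finitely-presented-relative-to-$U$ case, the lifted relative presentation; and (iii) the prescribed finite set $\mathcal{F}$. Setting $A=H/\langle\langle\mathcal{R}\rangle\rangle$ and letting $A_v,A_e\subset A$ be the subgroups generated by the images of $\Sigma_v,\Sigma_e$, I read off a graph-of-groups structure $\mathbb{S}_A$ on $A$ with underlying graph $\Gamma$ and inclusions induced by those in $\mathbb{S}_L$; properties (1) and (2) are then direct consequences of the choices of $\Sigma_e$ and $\Sigma_v$, because in each of the relevant cases the lifted finite presentation forces $A_e\to L_e$ (resp.\ $A_v\to L_v$) to be an isomorphism.

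The main obstacle, and the only non-routine point, is verifying that $A$ genuinely splits as $\mathbb{S}_A$, i.e.\ that the natural map $\pi_1(\mathbb{S}_A)\to A$ is an isomorphism: I need to ensure that the edge and vertex groups of $\mathbb{S}_A$ inject into $A$ and that no unwanted identifications beyond the amalgamation data are created by $\mathcal{R}$. The key leverage is that $\mathcal{R}\subset\ker(\varphi_\infty)$, so the action of $H$ on the Bass--Serre tree of $\mathbb{S}_L$ descends to an action of $A$ on the same tree, and the stabilizers in $A$ of the base vertex and edges are exactly the $A_v$, $A_e$ built above. Combined with the Bass--Serre normal form theorem and our finite (relative) presentations, this gives the desired isomorphism $A\cong\pi_1(\mathbb{S}_A)$ and completes the proof.
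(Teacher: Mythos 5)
Your construction goes off the rails at the one step you yourself identify as the crux. Since $\mathcal{R}\subset\ker(\varphi_\infty)$, the group $A=H/\langle\langle\mathcal{R}\rangle\rangle$ acts on the Bass--Serre tree $T_L$ of $\mathbb{S}_L$ only \emph{through} the quotient map $\pi:A\twoheadrightarrow L$; in particular the whole kernel of $\pi$ acts trivially on $T_L$. This kernel is nontrivial in general (that is precisely the point of working with approximations: $L$ need not be finitely presented over $U$, so every such $A$ is typically a strict "overgroup" of $L$), so the stabilizer in $A$ of the base vertex is the full preimage $\pi^{-1}(L_v)\supseteq\ker(\pi)$, not the finitely generated subgroup $A_v=\langle\Sigma_v\rangle$; likewise for edges, and even the images $\pi(A_e)$, $\pi(A_v)$ are proper subgroups of $L_e$, $L_v$ whenever those are infinitely generated. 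Hence the action on $T_L$ only exhibits a splitting of $A$ over the large preimage subgroups, and Bass--Serre normal forms for that action say nothing about whether the small groups $A_v$, $A_e$ sit in $A$ in graph-of-groups position, inject, or even generate $A$ together with the stable letters. The central claim that $A\cong\pi_1(\mathbb{S}_A)$ is therefore not established. A secondary gap: Definition \ref{core_approx2} requires \emph{all} vertex groups of $\mathbb{S}_A$ to be finitely presented relative to $U$ and all edge groups finitely presented, but for vertices outside your "finitely presented rel $U$" case you define $A_v$ merely as the subgroup of $A$ generated by the image of $\Sigma_v$, with no control on its isomorphism type; finite generation does not give finite presentability, and your relation set (i)--(iii) does not impose the edge-group relations or any truncated vertex relations that would supply it.

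The paper's proof of Proposition \ref{approx} sidesteps both problems by reversing the logic: the groups $A_e$ and $A_v$ are first defined \emph{abstractly} by explicit finite (respectively, finite relative to $U$) presentations, assembled from finitely generated subgroups of the virtually abelian edge groups and truncated vertex relation sets $Q'_v$, and the finite set $\mathcal{R}$ is chosen (together with enough relations to rewrite the generators $S$ and the relators $P\cup\mathcal{F}$ in the graph-of-groups generators) so that the presentation $\langle S\mid P_U\cup P\cup\mathcal{R}\rangle$ of $H/\langle\langle\mathcal{R}\rangle\rangle$ literally becomes the standard presentation of the fundamental group of the graph of groups $\mathbb{S}_A$ built from these pieces. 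The splitting of $A$, the injectivity of its vertex and edge groups, and their finite presentability then hold by construction, with no normal-form or stabilizer argument needed. If you want to salvage your approach, you would have to add to $\mathcal{R}$ the defining relations of the intended models of $A_v$ and $A_e$ and relations expressing the generators of $H$ in terms of the $\Sigma_v$ and stable letters, and then prove the resulting presentation is a graph-of-groups presentation -- at which point you have essentially reproduced the paper's argument rather than found a shortcut through the action on $T_L$.
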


\begin{proof}
    We choose to construct the vertex and edge groups of $\mathbb{S}_A$ before constructing the group $A$ itself. By doing so, we hope to give the reader a better understanding of the structure of the approximation $A$. Since the proof of this proposition is quite intricate, we divide it into four steps.
    
    \smallskip
    
    \textbf{Step 1.} We will fix explicit presentations of the edge and vertex groups of the graph of groups $\mathbb{S}_L$ that will be used throughout the proof. Since $L$ is finitely generated, every vertex group $L_v$ of $\mathbb{S}_L$ is finitely generated relative to its adjacent edge groups. Fix a presentation $\langle S \  \vert \ P_U \cup P\rangle$ of $H$, where $P_U$ consists of relations involving only elements from $U$ and $P$ is finite. We also fix a finite generating set $X_U$ of $U$.
    
     \smallskip
     
    For each edge $e\in\mathrm{E}(\mathbb{S}_L)$, fix a presentation $\langle X_{e} \ \vert \ R_{e} \rangle$ of the edge group $L_e$. If $L_e$ is finitely generated (and hence finitely presented, as a virtually abelian group), we choose this presentation to be finite. Note that otherwise, $X_e$ and $R_e$ are both infinite. Then, for each vertex $v \in \mathrm{V}(\mathbb{S}_L)$, fix a presentation of the vertex group $L_v$ of the form 
    \begin{align*}
        \langle X_v=Y_v\cup X^v_{e_1} \cup \cdots \cup X^v_{e_{n_v}} \left. \right | \ R_v=Q_v\cup R^v_{e_1} \cup \cdots \cup R^v_{e_{n_v}} \rangle,
    \end{align*}
    where
    \begin{itemize}
        \item $e_1,\ldots,e_{n_v}$ are the edges adjacent to $v$ in the underlying graph of $\mathbb{S}_L$;
        \item each $\langle X^v_{e_i} \ \vert \ R^v_{e_i}\rangle$ is a copy of the corresponding edge group within $L_v$.
        \item Recall that $L$ is finitely generated; therefore, $L_v$ is finitely generated relative to its adjacent edge groups. Hence, the set $Y_v$ can be chosen finite (and if $L_v$ contains $U$ we choose $Y_v$ to be the union of $X_U$ and a finite set).
        \item $Q_v$ is a (possibly infinite) set of relations.
    \end{itemize}
    In addition, we fix a presentation of $L$ as the fundamental group of $\mathbb{S}_L$, that is
    \begin{align*}
    L=\left\langle \bigcup_{v \in \text{V}(\mathbb{R}_L)} X_v \cup \{t_e, \ e \in \text{E}\} \ \Bigg\vert \ \bigcup_{v \in \text{V}(\mathbb{J}_K)} R_v \cup R \right\rangle
    \end{align*}
    where $\text{E}$ is a subset of the set of edges $\text{E}(\mathbb{R}_L)$ and $R$ is a (possibly infinite) set of relations that identify a set of generators for each edge group with their images in the adjacent vertex groups.
    
    \smallskip
    
    \textbf{Step 2.}
    After having fixed the relevant presentations, we seek to pick a finite set $X_L \subset  \bigcup_{v \in \text{V}(\mathbb{R}_L)} X_v \cup \{t_e, \ e \in \text{E}\}$ that generates $L$. The elements in $X_L$ will be used to define the vertex and edge groups of $\mathbb{S}_A$. We choose $X_L$ to be extensive enough so that each of the relations in the finite set $\varphi_\infty(P \cup \mathcal{F})$ can be written as a product of conjugates of relations from the presentation of $L$ above as the fundamental group of $\mathbb{S}_L$, involving only elements from $X_L$.
    
    \smallskip
    
    For every $s\in S$, write $\varphi_\infty(s)$ as a product of generators appearing in the presentation of $L$ above. Let $X_S$ be the finite subset of the generating set $\bigcup_{v \in \text{V}(\mathbb{R}_L)} X_v \cup \{t_e, \ e \in \text{E}\}$ of $L$ composed of the generators appearing in these products.
    
    \smallskip
     
    Similarly, each relation $r$ in the finite set $\varphi_\infty(P \cup \mathcal{F})$ can be written as a product of conjugates of relations appearing in the presentation of $L$ above. Let $R_L$ be the finite set of relations that participate in such products, and let $X_R$ be the finite subset of $ \bigcup_{v \in \text{V}(\mathbb{R}_L)} X_v \cup \{t_e, \ e \in \text{E}\}$ which consists of all the generators of $L$ participating in the products of conjugates of relations from $R_L$ described above.
    
    \smallskip
    
    Finally, let $X_L=X_S \cup X_R$.
    
    \smallskip
    
    \textbf{Step 3.}
    We finally construct the edge and vertex groups of the splitting $\mathbb{S}_A$. For every $e \in \mathrm{E}(\mathbb{S}_L)$ we define $A_e$ as follows: if $L_e$ is finitely generated (and hence, finitely presented), let $A_e=L_e$. We fix an alternative notation for the finite presentation of $A_e$: $\langle X_e' \ \vert \ R'_e\rangle$. If $L_e$ is not finitely presented, let $A_e$ be the subgroup of $L_e$ generated by $L_e \cap X_L$; note that $A_e$ is finitely presented (as a finitely generated virtually abelian group) and fix a finite presentation $\langle X_e' \ \vert \ R'_e \rangle$ of $A_e$. Up to modifying the original presentation of $L_e$, we may assume that $X_e'$ is a subset of $X_e$.
    
    \smallskip

    For every $v \in \mathrm{V}(\mathbb{S}_L)$ we define $A_v$ as follows: if $L_v$ is finitely presented over $U$, we let $A_v=L_v$ (and fix an alternative notation for the presentation of $A_v$: $\langle X'_v \ \vert \ R'_v \rangle$); otherwise, we set $A_v$ to be the group admitting the following presentation:
    \begin{align*}
        \big\langle X'_v=Y_v\cup (X'_{e_1})^v \cup \cdots \cup (X'_{e_{n_v}})^v \left. \right | \ R'_v=Q'_v \cup \cdots \cup (R_{e_{n_v}}')^v \big\rangle,
    \end{align*}
    where
    \begin{itemize}
        \item $e_1,\ldots,e_{n_v}$ are the edges adjacent to $v$ in the underlying graph of $\mathbb{R}_L$;
        \item $Y_v$ is as in the presentation of $L_v$;
        \item each $\langle (X'_{e_i})^v \ \vert \ (R_{e_i}')^v \rangle$ is a copy of $A_{e_i}$ within $A_v$;
        \item if $L_v$ does not contain $U$, one has $Q'_v=Q_v\cap R_L$ if $Q_v$ is infinite, and $Q'_v=Q_v$ otherwise. If $L_v$ contains $U$, we pick $Q'_v$ in the same manner, but include in $Q'_v$ the (possibly infinite) subset of $Q_v$ which consists of relations involving only elements from $U$.
    \end{itemize}
    
    \smallskip
    
    Recall that $R$ is the set of relations from the presentation of $L$ which identify a set of generators for each edge group with their images in the adjacent vertex groups. Let $R'$ be the finite set identifying the generators $X'_e$ of $A_e$ with their images $(X'_e)^v$ in $A_v$, whenever $v\in \mathrm{V}(\mathbb{R}_L)$ is adjacent to $e \in \mathrm{E}(\mathbb{S}_L)$. Let $\mathcal{R}=\bigcup_{v \in \mathrm{V}(\mathbb{R}_L)}R'_v \cup R'$, where all of the relations in the union are written with the letters of the generating set $S$ of $H$. Note that $\mathcal{R}$ is finite. Now, let $A$ be the $\mathcal{R}$-approximation of $L$, that is define $A=H/\langle \langle \mathcal{R} \rangle \rangle$.
    
    \smallskip
    
    \textbf{Step 4.}
    We now show that $A$ satisfies the desired properties. First, note that $A$ admits the presentation $\langle S \ \vert \ P_U \cup P \cup \mathcal{R} \rangle$ in the generators of $H$. By expressing this presentation in terms of the generators of $X_L$, we obtain the following presentation of $A$:
    \begin{align*}
        A=\left\langle \bigcup_{v \in \text{V}(\mathbb{S}_L)} X'_v \cup \{t_e, \ e \in \text{E}\} \ \Bigg\vert \ R_L \cup \mathcal{R} \right\rangle.
    \end{align*}
    But since $R_L$ is contained in $\mathcal{R}$ by definition of the $R'_v$, one can omit $R_L$ in the previous presentation of $A$. Hence, $A$ is simply the fundamental group of the graph of groups $\mathbb{S}_A$ obtained from $\mathbb{S}_L$ by replacing each vertex group $L_v$ with the group $A_v$, and each edge group $L_e$ with the group $A_e$. In addition, all of the relations in $\mathcal{F}$ hold in $A$. This shows that condition (1) of Definition \ref{core_approx2} holds.
    
    \smallskip
    
    Next, note that the map $\pi'$ defined by mapping each generator in the presentation of $A$ above to the corresponding generator in the presentation of $L$ as the fundamental group of $\mathbb{S}_L$ coincides with the natural epimorphism $\pi:A \twoheadrightarrow L$ obtained by quotienting out the image of $\ker(\varphi_\infty)$ in $A$. Indeed, denote by $q$ the quotient map $H\twoheadrightarrow A$ and observe that for every $s \in S$ one has $\pi'\circ q(s)=\varphi_\infty(s)$; this implies that $\pi'=\pi$. Last, properties (2), (3) and (4) appearing in Definition \ref{core_approx2} are clearly satisfied.
    
    \smallskip
    
    To finish, let us check that properties \emph{(1)} and \emph{(2)} of Proposition \ref{approx} hold: for property \emph{(1)}, recall that whenever an edge group $L_e$ of $\mathbb{S}_L$ is finitely generated, we defined $A_e$ to be $L_e$. For property \emph{(2)}, recall that if all the edge groups adjacent to a vertex group $L_v$ of $\mathbb{S}_L$ are finitely generated, then the generators $X'_v$ of $A_v$ correspond to the generators $X_v$ of $L_v$. This implies that the restriction of the map $\pi:A\twoheadrightarrow L$ to $A_v$ is a surjection. If in addition $L_v$ is finitely presented (over $U$), then $A_v$ and $L_v$ admit the same presentation and $\pi$ maps $A_v$ isomorphically to $L_v$.
\end{proof}

\begin{rque}\label{trick}Suppose that $L$ admits a splitting $\mathbb{S}_L$, and let $\lbrace h_1,\ldots ,h_k\rbrace$ be a finite set of elements of $L$. Write each element $h_i$ as a product $s_{i,1}\cdots s_{i,m_i}$ of generators appearing in the presentation of $L$ as the fundamental group of $\mathbb{S}_L$. By choosing the finite set of relations $\mathcal{F}$ in Proposition \ref{approx} above wisely, we can make sure that $L$ has an $\mathbb{S}_L$-approximation $A$ such that each $h_i$ has a primage $a_i$ in $A$ that admits the same decomposition as $h_i$ as a product of generators. More precisely, let $\tilde{h}_i,\tilde{s}_{i,1},\ldots,\tilde{s}_{i,m_i}$ be lifts of $h_i,s_{i,1},\ldots,s_{i,m_i}$ to $H$, for $1\leq i\leq k$. Let 
    \begin{align*}
        \mathcal{F}=\{\tilde{h}_i^{-1}\tilde{s}_{i,1}\cdots\tilde{s}_{i,m_i}, \ 1\le i \le k\}\subset \ker(\varphi_\infty),
    \end{align*}
    and let $A$ be an $\mathbb{S}_L$-approximation of $L$ in which the relations in $\mathcal{F}$ hold. Then by Proposition \ref{approx} above, all of the generators $s_{i,j}$ appear in the presentation of $A$ as the fundamental group of $\mathbb{S}_A$, and the image of $h_i$ in $A$ can be simply written as $s_{i,1}\cdots s_{i,m_i}$. We will use this method in our proof of the general version of the shortening argument.
\end{rque}

We now deduce from Proposition \ref{approx} a series of three corollaries. 

\begin{co}\label{JSJ_app}
    Suppose that $L$ admits a splitting $\mathbb{S}_L$ in which all the edge groups are finite (for instance, $\mathbb{S}_L$ can be a reduced JSJ splitting of $L$ relative to $U$ over finite subgroups of order $\le C$, denoted by $\mathbb{J}_L$). In this case, all the vertex groups of $\mathbb{S}_L$ are finitely generated. Then there exists an $\mathbb{S}_L$-approximation $A$ of $L$, whose splitting is denoted by $\mathbb{S}_A$, such that $\mathbb{S}_A$ and $\mathbb{S}_L $ share the same edge groups, and the vertex groups of $\mathbb{S}_A$ surject onto those of $\mathbb{S}_L$.
\end{co}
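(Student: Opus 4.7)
The plan is to deduce Corollary \ref{JSJ_app} as a direct consequence of Proposition \ref{approx}, with the strong hypothesis of finite edge groups both trivialising the hypothesis of the proposition and strengthening its conclusions. First I would note that the hypothesis of Proposition \ref{approx} is automatically satisfied, since finite groups are virtually abelian (indeed, trivially virtually trivial). Thus Proposition \ref{approx} yields an $\mathbb{S}_L$-approximation $A$ of $L$, together with the accompanying splitting $\mathbb{S}_A$ and the natural epimorphism $\pi : A \twoheadrightarrow L$.

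Next I would invoke item (1) of Proposition \ref{approx} for every edge $e$ of $\mathbb{S}_L$: finite groups are a fortiori finitely generated, so the restriction $\pi_{|A_e} : A_e \to L_e$ is an isomorphism. This is precisely the claim that $\mathbb{S}_A$ and $\mathbb{S}_L$ share the same edge groups. Then, for each vertex $v$ of $\mathbb{S}_L$, every adjacent edge group is finite and hence finitely generated, so item (2) of Proposition \ref{approx} gives a surjection $A_v \twoheadrightarrow L_v$. One should not expect this to be an isomorphism in general, since the vertex groups of $\mathbb{S}_L$ need not be finitely presented (relative to $U$); this is exactly why the corollary only asserts surjectivity on vertex groups.

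The only genuinely forward-looking remark concerns the parenthetical example of the reduced JSJ splitting $\mathbb{J}_L$ relative to $U$ over finite subgroups of order $\le C$. To justify that this splitting is a legitimate instance of $\mathbb{S}_L$, I would combine Lemma \ref{stabilitylemma} (which bounds the order of arc stabilisers by the constant $C$ and thus identifies the family of possible edge groups) with a standard existence result for JSJ decompositions relative to a subgroup over a bounded family of finite subgroups. Since $L$ is finitely generated and the edge groups of $\mathbb{J}_L$ are finite, the underlying graph of $\mathbb{J}_L$ is finite and all vertex groups are finitely generated, so Proposition \ref{approx} applies without further modification. I do not anticipate any substantial obstacle: the corollary is essentially the specialisation of Proposition \ref{approx} to the simplest possible class of edge groups, where both conclusions (1) and (2) of the proposition fire at every edge and every vertex of the splitting.
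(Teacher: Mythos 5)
Your argument is correct and is essentially the paper's own proof: the corollary is obtained by applying Proposition \ref{approx} directly, noting that finite edge groups are virtually abelian and finitely generated, so that conclusion (1) identifies the edge groups of $\mathbb{S}_A$ with those of $\mathbb{S}_L$ and conclusion (2) gives the surjections on vertex groups. Your extra remarks on the JSJ example and on why one cannot expect isomorphisms on vertex groups are sound but not needed for the proof.
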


\begin{proof}Since edge groups of $\mathbb{S}_L$ are finite, they are virtually abelian and finitely generated. Thus the existence of $A$ is an immediate consequence of Proposition \ref{approx}.\end{proof}

As mentioned earlier, the main motivation for defining approximations of limit groups is to approximate in an accurate manner splittings of $L$ as a graph of actions outputted by the Rips machine \ref{ripsmachine}. The following lemma proves that the approximations given by Proposition \ref{approx} capture many of the properties of such splittings.

\begin{co}\label{rips_app}
Suppose that $L$ does not split non-trivially over a finite subgroup of order $\leq C$, and let $\mathbb{R}_L$ be a splitting of $L$ as a graph of actions outputted by the Rips machine. Let $A$ be an $\mathbb{R}_L$-approximation of $L$ given by Proposition \ref{approx} and let $\mathbb{R}_A$ denote its splitting. Then the following hold:
\begin{enumerate}
    \item every edge group of $\mathbb{R}_A$ is finitely presented and virtually abelian, and the quotient map $\pi: A\twoheadrightarrow L$ maps every edge group of $\mathbb{R}_A$ into the corresponding edge group of $\mathbb{R}_L$; 
    \item if $L_v$ is a simplicial vertex group of $\mathbb{R}_L$ and $A_v$ is the corresponding vertex group of $\mathbb{R}_A$, then $\pi$ maps $A_v$ onto a finitely generated subgroup of $L_v$;
    \item if $L_v$ is a Seifert-type vertex group of $\mathbb{R}_L$ and $A_v$ is the corresponding vertex group of $\mathbb{R}_A$, then $\pi$ maps $A_v$ isomorphically to $L_v$;
    \item if $L_v$ is an axial vertex group of $\mathbb{R}_L$ and $A_v$ is the corresponding group of $\mathbb{R}_A$, then $A_v$ is finitely presented (relative to $U$) and virtually abelian, and $\pi$ maps $A_v$ into $L_v$.
\end{enumerate}
\end{co}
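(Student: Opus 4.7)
The plan is to deduce the corollary directly from Proposition \ref{approx} applied to the splitting $\mathbb{R}_L$. The hypothesis of Proposition \ref{approx} is that all edge groups of $\mathbb{R}_L$ are virtually abelian; this is immediate from Lemma \ref{stabilitylemma}(1), since every edge group of $\mathbb{R}_L$ arises as the stabilizer of a non-degenerate arc in the limiting real tree $T$, and is therefore an extension of an abelian group by a finite group of order at most $C$. Proposition \ref{approx} then produces an $\mathbb{R}_L$-approximation $A$ with splitting $\mathbb{R}_A$ fulfilling all four conditions of Definition \ref{core_approx2}. Properties (1) and (2) of the corollary should follow immediately by inspecting the construction in Step 3 of the proof of Proposition \ref{approx}: each edge group $A_e$ of $\mathbb{R}_A$ is by construction a finitely generated subgroup of $L_e$, hence finitely presented (being finitely generated and virtually abelian), and the inclusion $\pi(A_e)\subseteq L_e$ is exactly Definition \ref{core_approx2}(4); and each vertex group $A_v$ is by construction generated by the finite set $Y_v\cup(X'_{e_1})^v\cup\cdots\cup(X'_{e_{n_v}})^v$, so its image under $\pi$ is automatically a finitely generated subgroup of $L_v$.

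For (3), I would invoke property (2) of Proposition \ref{approx}. A Seifert-type vertex group $L_v$ is, after possibly quotienting by the finite kernel of the action on $T_v$, the fundamental group of a compact $2$-orbifold with boundary; in particular $L_v$ itself is finitely presented. The edges of $\mathbb{R}_L$ incident to $v$ correspond to boundary components of this orbifold, so the adjacent edge groups are all virtually cyclic, and in particular finitely generated. Both hypotheses of property (2) of Proposition \ref{approx} are therefore satisfied, and one concludes that $\pi|_{A_v}:A_v\to L_v$ is an isomorphism.

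The main obstacle is (4). An axial vertex group $L_v$ preserves the line $T_v$; after passing to an index at most $2$ subgroup $L_v^0$ that fixes the two ends of $T_v$, Lemma \ref{stabilitylemma}(1) applies to show that $L_v^0$, and hence $L_v$, is virtually abelian, and in fact contains an abelian subgroup of index bounded by some $M=M(C)$. By Step 3 of the proof of Proposition \ref{approx}, $A_v$ is already finitely presented relative to $U$. To upgrade this to virtual abelianness of $A_v$, I would exploit the flexibility of the last clause of Proposition \ref{approx}: one writes down a finite list of commutators (and, if necessary, commutators of suitable powers) of the finitely many generators of $A_v$ whose validity as relations forces $A_v$ to be virtually abelian, each of which becomes trivial in $L_v$ by virtue of $L_v$ being virtually abelian. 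Lifting these to elements of $\ker(\varphi_\infty)$ in $H$ and adjoining them to the set $\mathcal{F}$ of Proposition \ref{approx} then yields an approximation in which $A_v$ is virtually abelian; the inclusion $\pi(A_v)\subseteq L_v$ is automatic from Definition \ref{core_approx2}. The delicate step, and the one I expect to require the most care, is to extract the correct finite list of relations: since $L_v$ need not be finitely presented (its action on $T_v$ may have an infinitely generated kernel, sitting inside an arc stabilizer), one cannot simply inherit a finite presentation from $L_v$, and instead has to construct the necessary relations by hand from the quantitative virtual-abelianness of $L_v$ furnished by Lemma \ref{stabilitylemma}.
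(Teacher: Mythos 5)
Your handling of (1)--(3) is correct and essentially the paper's argument: (1) and (2) are read off from Step 3 of the proof of Proposition \ref{approx} together with conditions (1) and (4) of Definition \ref{core_approx2}, and for (3) you verify the hypotheses of property (2) of Proposition \ref{approx}; the only (harmless) difference is that you obtain finite generation of the edge groups adjacent to a Seifert-type vertex from the boundary/conical structure of the underlying $2$-orbifold, whereas the paper notes that $L_v$ is hyperbolic, so all of its (virtually) abelian subgroups are virtually cyclic.

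The gap is in (4). You propose to adjoin to $\mathcal{F}$ ``a finite list of commutators (of suitable powers) of the generators of $A_v$ whose validity forces $A_v$ to be virtually abelian'', but imposing finitely many commutation relations among generators does not in general force virtual abelianness (for instance $\langle x,y \mid [x^{N},y^{N}]=1\rangle$ contains non-abelian free subgroups), so the ``delicate step'' you defer is not a technicality to be filled in later: as sketched, it fails, and it is precisely where the content lies. The fact that rescues the argument --- and the one the paper uses --- is the observation you set aside as unavailable: one does not need a finite presentation of $L_v$ itself, only of the subgroup $B=\langle \pi(X'_v)\rangle\le L_v$ generated by the images of the finitely many (relative to $U$) generators of $A_v$. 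Since $L_v$ is virtually abelian, $B$ is finitely generated (rel $U$) and virtually abelian, hence finitely presented (rel $U$); the paper then concludes that in the construction of Proposition \ref{approx} the vertex group $A_v$ is in fact (isomorphic to) this subgroup of $L_v$, so it is virtually abelian, finitely presented (rel $U$), and $\pi$ maps it injectively into $L_v$ --- no additional relations in $\mathcal{F}$ and no quantitative use of Lemma \ref{stabilitylemma} are required. Your mechanism of adding relations through $\mathcal{F}$ could be repaired by taking those relations to be a finite presentation of $B$ on the generators $X'_v$, but that presupposes exactly the finite-presentability fact your proposal tries to do without.
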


\begin{proof}Recall that the edge groups of $\mathbb{R}_L$ are virtually abelian, and that each edge group $A_e$ of $\mathbb{R}_A$ is finitely presented (condition (1) in Definition \ref{core_approx2}), and that $\pi$ maps $A_e$ into the corresponding edge group $L_e$ (condition (4) in Definition \ref{core_approx2}). Hence, $A_e$ is virtually abelian and the first assertion above holds. 

\smallskip

Next, note that the second assertion is an immediate consequence of \emph{(2)} in Proposition \ref{approx}. 

\smallskip

For \emph{(3)}, let $L_v$ be a Seifert-type vertex group of $\mathbb{R}_L$. Note that $L_v$ is finitely presented (relative to $U$). We will prove that $\pi$ maps the corresponding vertex group $A_v$ of $\mathbb{R}_A$ isomorphically to $L_v$. By the second assertion of Proposition \ref{approx}, it is enough to show that $L_v$ does not contain infinitely generated abelian subgroups, and thus that all the edge groups adjacent to $L_v$ are finitely generated. This follows from the following easy observation: since $L_v$ is a Seifert-type vertex group, it is hyperbolic, and therefore all of its abelian subgroups are virtually cyclic. 

\smallskip

Lastly, for (4), note that any subgroup of $L_v$ which is finitely generated (relative to $U$) is finitely presented (relative to $U$) since $L_v$ is virtually abelian. This implies that, as part of the construction of $A_v$ in the proof of Proposition \ref{approx}, $A_v$ is in fact a subgroup of $L_v$ which is finitely presented (relative to $U$) and $\pi$ maps $A_v$ injectively to $L_v$.
\end{proof}

The last corollary is a combination of Corollaries \ref{JSJ_app} and \ref{rips_app}, and it can be proved in a similar way.

\begin{co}\label{comb_app}
    Let $\mathbb{J}_L$ be a reduced JSJ splitting of $L$ over finite groups of order $\leq C$ relative to $U$, and let $\mathbb{RJ}_L$ be a splitting of $L$ obtained from $\mathbb{J}_L$ by replacing the unique vertex $u$ fixed by $U$ with $\mathbb{R}_{L_u}$. Note that $L_u$ can be viewed as a limit group whose defining sequence of homomorphisms is $({\varphi_n}_{\vert H_u})_{n\in\mathbb{N}}$, where $H_u$ denotes a finitely generated subgroup of $H$ that contains $U$ and such that $\varphi_\infty(H_u)=L_u$. Then any $\mathbb{RJ}_L$-approximation $A$ of $L$ outputted by Proposition \ref{approx} admits a splitting $\mathbb{RJ}_A$ satisfying the following conditions.
\begin{enumerate}
    \item Denote by $\mathbb{R}_{A_u}$ the subgraph of $\mathbb{RJ}_A$ which corresponds to the subgraph $\mathbb{R}_{L_u}$ of $\mathbb{RJ}_L$. Denote by $A_u$ the fundamental group of $\mathbb{R}_{A_u}$ and note that $A_u$ is a lift of $L_u$ to $A$. Then $A_u$ is an $\mathbb{R}_{L_u}$-approximation of $L_u$. Furthermore, the splitting $\mathbb{R}_{A_u}$ of $A_u$ enjoys the properties described in Corollary \ref{rips_app}.
    \item Let $\mathbb{J}_A$ be the splitting of $A$ obtained by collapsing to a point the subgraph $\mathbb{R}_{A_u}$ of $\mathbb{RJ}_A$. Then $A$, equipped with the splitting $\mathbb{J}_A$, is a $\mathbb{J}_L$-approximation of $L$.
\end{enumerate}
\end{co}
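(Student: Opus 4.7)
The plan is to apply Proposition \ref{approx} directly to the splitting $\mathbb{RJ}_L$, and then to recover the two stated conclusions by decomposing the resulting splitting $\mathbb{RJ}_A$ along the subgraph corresponding to $\mathbb{R}_{L_u}$. The hypothesis of Proposition \ref{approx} is met because every edge group of $\mathbb{RJ}_L$ is either a finite edge group of $\mathbb{J}_L$ (of order $\leq C$) or an edge group of the Rips machine output $\mathbb{R}_{L_u}$, which is virtually abelian by Lemma \ref{stabilitylemma}. Applying Proposition \ref{approx} thus produces an $\mathbb{RJ}_L$-approximation $A$ of $L$ whose splitting $\mathbb{RJ}_A$ has the same underlying graph as $\mathbb{RJ}_L$, with finitely presented edge groups and vertex groups that are finitely presented relative to $U$.

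For assertion \emph{(1)}, let $\mathbb{R}_{A_u}$ be the subgraph of $\mathbb{RJ}_A$ corresponding (via the canonical graph isomorphism of Definition \ref{core_approx2}) to the subgraph $\mathbb{R}_{L_u}$ of $\mathbb{RJ}_L$, and let $A_u$ denote its fundamental group. By Bass-Serre theory $A_u$ is naturally a subgroup of $A$, and by the commutativity condition of Definition \ref{core_approx2} the quotient map $\pi\colon A \twoheadrightarrow L$ restricts to a map $A_u \to L_u$, so that $A_u$ is a lift of $L_u$. Choosing a finitely generated subgroup $H_u\subseteq H$ with $U\subseteq H_u$ and $\varphi_\infty(H_u)=L_u$, one can rewrite $A_u$ as $H_u/\langle\langle \mathcal{R}_u\rangle\rangle$ where $\mathcal{R}_u\subset\ker(\varphi_\infty|_{H_u})$ consists of the subset of the relations defining the vertex and edge groups of $\mathbb{R}_{A_u}$; this shows that $A_u$ is an approximation of $L_u$ in the sense of Definition \ref{cor_approx}. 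The four conditions of Definition \ref{core_approx2} for $(A_u,\mathbb{R}_{A_u})$ relative to $(L_u,\mathbb{R}_{L_u})$ are then inherited from the corresponding conditions satisfied by $(A,\mathbb{RJ}_A)$ relative to $(L,\mathbb{RJ}_L)$, because $\mathbb{R}_{A_u}$ and $\mathbb{R}_{L_u}$ are full subgraphs. Once $A_u$ is recognized as an $\mathbb{R}_{L_u}$-approximation, the additional structural properties are exactly what Corollary \ref{rips_app} gives, since the Rips machine decomposition $\mathbb{R}_{L_u}$ is produced under the hypothesis that $L_u$ does not split non-trivially over a finite subgroup of order $\leq C$ relative to $U$.

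For assertion \emph{(2)}, let $\mathbb{J}_A$ be the splitting of $A$ obtained by collapsing the subgraph $\mathbb{R}_{A_u}$ of $\mathbb{RJ}_A$ to a single vertex whose vertex group is $A_u$; its underlying graph coincides with that of $\mathbb{J}_L$, and the edges adjacent to the collapsed vertex are precisely the edges of $\mathbb{J}_L$ incident to $u$. Since these edges are finite (of order $\leq C$) and hence already realised identically inside $\mathbb{RJ}_A$, Proposition \ref{approx} part \emph{(1)} guarantees that $\pi$ restricts to isomorphisms on the edge groups. The vertex groups of $\mathbb{J}_A$ outside the collapsed vertex coincide with vertex groups of $\mathbb{RJ}_A$ corresponding to vertices of $\mathbb{J}_L$ other than $u$, and the commutativity of the inclusion-and-$\pi$ diagrams is inherited from $\mathbb{RJ}_A$. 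This verifies all four conditions of Definition \ref{core_approx2}, so $A$ equipped with $\mathbb{J}_A$ is a $\mathbb{J}_L$-approximation, which is the conclusion of Corollary \ref{JSJ_app} applied to the present setting.

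The only genuinely delicate point is the identification in step one of $A_u$ with a quotient of $H_u$ by relations lying in $\ker(\varphi_\infty|_{H_u})$, so that $A_u$ qualifies as an approximation in the strict sense of Definition \ref{cor_approx}; this requires choosing the finite set $\mathcal{F}$ in the invocation of Proposition \ref{approx} carefully, along the lines of Remark \ref{trick}, so that the generators of $L_u$ have preimages in $A_u$ with matching decompositions. Everything else is a formal consequence of the compatibility of the approximation construction with subgraph-of-groups and collapse operations.
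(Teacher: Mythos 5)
Your proposal is correct and takes essentially the same route the paper intends: the paper's own ``proof'' of Corollary \ref{comb_app} is just the remark that it is a combination of Corollaries \ref{JSJ_app} and \ref{rips_app} proved in the same way, i.e.\ apply Proposition \ref{approx} to the combined splitting $\mathbb{RJ}_L$ (whose edge groups are finite or virtually abelian), then restrict to the subgraph $\mathbb{R}_{A_u}$ for assertion (1) and collapse it for assertion (2), exactly as you do. Your explicit flagging of the one delicate point — identifying $A_u$ as a quotient of $H_u$ by finitely many relations from $\ker(\varphi_\infty\vert_{H_u})$, handled via a suitable choice of $\mathcal{F}$ as in Remark \ref{trick} — is in fact more detail than the paper itself supplies.
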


\begin{rque}
Note that the splitting $\mathbb{RJ}_L$ is not unique in general since the finite edge groups adjacent to the vertex $u$ in $\mathbb{J}_L$ may fix several vertex groups in the splitting $\mathbb{R}_{L_u}$ of $L_u$.
\end{rque}

\subsection{The shortening argument}\label{shortening}

The \emph{shortening argument} encompasses a wide array of results, all of which share a similar nature: shortening homomorphisms. The classical result asserts that given a sequence of homomorphisms from a finitely generated group to another group from a certain class, either one can shorten the homomorphisms (in some sense), or the stable kernel of the sequence is non-trivial. For the class of acylindrically hyperbolic groups, a version of the shortening argument is proven in \cite[Theorem 5.29]{GH19}. In this section, we provide two additional versions of this result which will help us deal with inequalities in the proof of Merzlyakov's theorem. We first define the notion of a \emph{short} homomorphism.

\begin{de}
    The \emph{length} (or \emph{scaling factor}) of a homomorphism $\varphi:H\rightarrow G$ is defined by
    \begin{align*}
        \norm{\varphi}=\inf_{y \in X} \max_{s \in S} d(y,\varphi(s)y),
    \end{align*}
    where $S$ is a finite generating set of $H$. We call $\varphi$ \emph{short relative to} $U$ if for every homomorphism $\phi:H\rightarrow G$ whose restriction to $U$ coincides with $\varphi_{\vert U}$ up to conjugation (that is, there exists $g \in G$ such that $\phi(h)=g \varphi(h) g^{-1}$ for every $h \in U$), one has:
    \begin{align*}
        \norm{\varphi} \le \norm{\phi}.
    \end{align*}
\end{de}

We would like to point out once again the main difficulty which stands in our way: unlike hyperbolic groups, acylindrically hyperbolic groups are not equationally Noetherian in general. Therefore, the sequence of homomorphisms $(\varphi_n)_{n\in\mathbb{N}}$ does not necessarily factor via the limit group $L$ ($\omega$-almost-surely) and one can not use automorphisms of $L$ in order to shorten the homomorphisms in the sequence $(\varphi_n)_{n\in\mathbb{N}}$. To combat this, we use approximations of $L$ which were defined in the previous subsection (see Definitions \ref{cor_approx} and \ref{core_approx2}). Since the homomorphisms in the sequence $(\varphi_n)_{n \in \mathbb{N}}$ do factor via an approximation $A$ of $L$ $\omega$-almost-surely, we can use automorphisms of $A$ in order to shorten the sequence $(\varphi_n)_{n\in\mathbb{N}}$. The automorphisms which we use are lifts of a certain type of \emph{modular} automorphisms of $L$ (see Definitions \ref{mod} and \ref{mod*}) to $A$. 

\smallskip

Before stating and proving our two versions of the shortening argument, we begin by collecting a few definitions and results. 

\begin{de}
\label{natural_ext}
\cite[Definition 3.13]{RW14}
Let $G$ be a group which splits as a graph of groups $\mathbb{S}$ and let $G_v$ be one of its vertex groups. Suppose that $\alpha_v\in \mathrm{Aut}(G_v)$ satisfies the following property: for every edge group $G_{e_i}$ adjacent to $G_v$ there exists an element $c_{e_i}\in G_v$ such that $\alpha_v$ restricts to conjugation by $c_{e_i}$ on $G_{e_i}$. Recall that each element of $G$ can be realized as a loop in the graph of groups $\mathbb{S}$. The homomorphism $\alpha : G \rightarrow G$ defined by
\begin{align*}
    [a_0,e_1,a_1,\ldots,e_k,a_k]\mapsto [b_0,e_1,b_1,\ldots,e_k,b_k]
\end{align*}
where
\begin{align*}
        b_i=\begin{cases} a_i & a_i \notin A_v \\ c_{e_i^{-1}}^{-1}\alpha_v(a_i)c_{e_{i+1}} & a_i \in A_v \end{cases}.
\end{align*}
is called a \emph{natural extension} of $\alpha_v$.\end{de}

\begin{rque}Note that the elements $c_{e_i}$ are not unique in general, and hence the morphism $\alpha$ above is not uniquely defined by $\alpha_v$.\end{rque}

The following short lemma shows that such a natural extension $\alpha$ is an automorphism of $G$.

\begin{lemme}\label{extension_lemma}Let $G$ be a group that splits as a graph of groups $\mathbb{S}$; let $G_v$ be one of its vertex groups. Let $\alpha_v\in \mathrm{Aut}(G_v)$ satisfy the properties appearing in Definition \ref{natural_ext} above and let $\alpha:G\rightarrow G$ be a natural extension of $\alpha_v$. Then $\alpha$ is a well-defined automorphism of $G$ whose restriction to $G_v$ is $\alpha_v$, and whose restriction to every edge group of $\mathbb{S}$ is a conjugation (by some element, depending on the edge).\end{lemme}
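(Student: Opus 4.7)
The plan is to pass to the standard Bass-Serre presentation of $G = \pi_1(\mathbb{S})$, verify on generators that $\alpha$ is a well-defined homomorphism, and then produce an explicit inverse; the restriction properties will then be read off the construction.

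First, I would rewrite the natural extension of Definition \ref{natural_ext} in terms of generators and relations. Fix a spanning tree of the underlying graph of $\mathbb{S}$ and present $G$ by the vertex groups $G_w$ together with stable letters $t_e$ for the edges outside the tree, subject to the usual edge relations $t_e\, i_e(g)\, t_e^{-1} = i_{\bar{e}}(g)$ for $g \in G_e$ (and $t_{\bar{e}} = t_e^{-1}$). The loop formula of Definition \ref{natural_ext} then translates into the following rule on generators: $\alpha|_{G_v} = \alpha_v$, $\alpha|_{G_w} = \mathrm{id}$ for $w \neq v$, and $\alpha(t_e)$ is obtained from $t_e$ by inserting a factor $c_e$ on the $v$-side of $e$ whenever $v$ is an endpoint of $e$ (and doing nothing otherwise). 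Only the edge relations require a nontrivial check. The relations inside a vertex group $G_w$ with $w \neq v$ are preserved trivially, and inside $G_v$ they are preserved because $\alpha_v$ is itself a homomorphism. For an edge $e$ with neither endpoint at $v$, the edge relation is untouched since $\alpha$ fixes every letter involved. For the remaining edges, the hypothesis $\alpha_v|_{i_e(G_e)} = \mathrm{ad}(c_e)|_{i_e(G_e)}$ is exactly what allows the inserted conjugations to absorb the discrepancy between $\alpha_v(i_e(g))$ and $i_e(g)$, yielding $\alpha(t_e\, i_e(g)\, t_e^{-1}) = \alpha(i_{\bar{e}}(g))$ by a direct one-line computation.

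To prove that $\alpha$ is bijective, I would construct an explicit two-sided inverse $\beta$ as the natural extension of $\alpha_v^{-1}$, with accompanying elements $c'_e := \alpha_v^{-1}(c_e)^{-1}$. Applying $\alpha_v^{-1}$ to the identity $\alpha_v = \mathrm{ad}(c_e)$ on $i_e(G_e)$ shows that $\alpha_v^{-1}$ restricts to conjugation by $c'_e$ on $i_e(G_e)$, so $\beta$ is itself a well-defined natural extension by the previous step. A check on generators — trivial on vertex groups, and on stable letters reducing to the cancellations $c'_e \cdot \alpha_v^{-1}(c_e) = 1$ and $\alpha_v(c'_e) \cdot c_e = 1$ — shows that $\beta \circ \alpha$ and $\alpha \circ \beta$ are the identity.

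Finally, the restriction of $\alpha$ to $G_v$ is $\alpha_v$ by construction, and its restriction to any edge group $G_e$ of $\mathbb{S}$ is a conjugation: conjugation by $c_e$ (respectively $c_{\bar{e}}$) if $v$ is an endpoint of $e$, and the identity (that is, conjugation by $1$) otherwise. The main obstacle is purely notational: one must keep the orientation conventions straight when an edge has both endpoints at $v$ (a loop at $v$), in which case both $c_e$ and $c_{\bar{e}}$ enter the edge relation simultaneously and must be compatible with the chosen formula for $\alpha(t_e)$.
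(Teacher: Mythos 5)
Your overall strategy (fix a presentation, check the relations, then invert by taking the natural extension of $\alpha_v^{-1}$) could be made to work, but your translation of Definition \ref{natural_ext} into the spanning-tree presentation is wrong, and this breaks the well-definedness step. The natural extension is \emph{not} the identity on the vertex groups $G_w$ with $w\neq v$: in the loop formula the $G_w$-letters are left untouched, but the (possibly trivial) $G_v$-letters flanking them pick up factors $c_e^{\pm1}$, so on $G_w$ the map is a conjugation. Test the one-edge case $G=A\ast_C B$ with $G_v=A$: there are no stable letters at all, your rule gives $\alpha|_A=\alpha_v$ and $\alpha|_B=\mathrm{id}$, and the amalgamation relation then forces $\alpha_v(c)=c$ for every $c\in C$, which is not assumed; the actual natural extension acts on $B$ as $\mathrm{ad}(c_e)$. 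The same failure occurs at every spanning-tree edge incident to $v$: there is no stable letter into which your factor $c_e$ can be inserted, so the relation $i_e(g)=i_{\bar e}(g)$ is not preserved by the map you wrote down. Your case analysis of the edge relations silently assumes that every edge at $v$ carries a stable letter, i.e.\ that the underlying graph has a single vertex; the difficulty is not the notational one you flag about loops at $v$. Since the inverse $\beta$ and the final restriction statements are computed from this mis-defined map, they inherit the gap.

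The argument can be repaired within your framework: assign to each vertex $w\neq v$ the conjugating element $c_e$ attached to the first edge $e$ of the tree path from $v$ towards $w$ (so $\alpha|_{G_w}=\mathrm{ad}(c_e)$, constant on each branch of the tree at $v$), and adjust the images of the non-tree stable letters accordingly; then the relation checks go through, and your inverse construction with $c'_e=\alpha_v^{-1}(c_e)^{-1}$ is sound. For comparison, the paper sidesteps this global bookkeeping by reducing to splittings with a single edge, defining the extension there as $\alpha_v$ on $A$ and $\mathrm{ad}(c_e)$ on the other factor (respectively $t\mapsto a_2ta_1^{-1}$ in the HNN case), and proving injectivity via normal forms and Britton's lemma rather than by exhibiting an inverse. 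As written, however, your proposal does not prove the lemma.
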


\begin{proof}
Since $G$ can be realized as a sequence of amalgamated products followed by a sequence of HNN extensions, it is enough to prove the lemma in the case where $\mathbb{S}$ has only one edge.

\smallskip

\emph{First case.} Suppose that $G=A\ast_C B$, and assume that $\alpha_v$ is an automorphism of $A$ such that $\alpha_{v \vert C}=\mathrm{ad}(a)$ for some $a\in A$. Define $\alpha$ as in Definition \ref{natural_ext}, that is: $\alpha_{\vert A}=\alpha_v$ and $\alpha_{\vert B}=\mathrm{ad}(a)$. This endomorphism is well-defined, and it is clearly surjective since its image contains $\alpha_v(A)=A$ and $aBa^{-1}$, which generate $G$. Let us prove that $\alpha$ is injective. Consider a non-trivial element $g=a_1b_1a_2b_2\cdots a_nb_n\in G$ written in normal form. The elements $a_i$ and $b_i$ do not belong to $C$, except maybe $a_1$ or $b_n$. One can write $\alpha(g)=a'_1b_1a'_2b_2\cdots a'_nb_na'_{n+1}$ with $a'_1=\alpha_v(a_1)a$, $a'_i=a^{-1}\alpha_v(a_i)a$ for $1< i\leq n$ and $a'_{n+1}=a^{-1}$. Observe that $a'_i$ does not belong to $C$ for $1< i\leq n$, otherwise $a'_i=a^{-1}\alpha_v(a_i)a=c\in C$, thus $\alpha_v(a_i)=aca^{-1}=\alpha_v(c)$. It follows that $a_i=c$; this is a contradiction. Hence, the previous decomposition of $\alpha(g)$ is in normal form, which proves that $\alpha(g)$ is not trivial. 

\smallskip

\emph{Second case.} Suppose that $G=\langle A,t \ \vert \ tct^{-1}=\sigma(c), \ \forall c\in C_1\rangle$, where $\sigma$ denotes an isomorphism between two subgroups $C_1$ and $C_2=\sigma(C_1)$ of $A$. Suppose that $\alpha_v$ is an automorphism of $A$ such that $\alpha_{v \vert C_i}=\mathrm{ad}(a_i)$ for some $a_i\in A$, for $1\leq i\leq 2$. Define $\alpha$ as follows: $\alpha_{\vert A}=\alpha_v$ and $\alpha(t)=a_2ta_1^{-1}$. As in the first case, one easily sees that $\alpha$ is well-defined and surjective. The injectivity follows from Britton's lemma, by a similar argument as above.\end{proof}

We next define the modular group of a limit group (see \cite[Definition 5.22]{GH19}).

\begin{de}
\label{mod}
Suppose that $L$ admits a splitting as a graph of actions $\mathbb{R}_L$ outputted by the Rips machine. The \emph{modular group} $\mathrm{Mod}_{\mathbb{R}_L}(L)$ associated with the splitting $\mathbb{R}_L$ is the subgroup of $\mathrm{Aut}(L)$ generated by the following automorphisms.
\begin{enumerate}
    \item Inner automorphisms.
    \item Dehn twists over the virtually abelian edge groups of $\mathbb{R}_L$: if $L_e$ is an edge group of $\mathbb{R}_L$ and $c \in Z(L_e)$ then the \emph{Dehn twist by} $c$ is the automorphism of $L$ given by
    \begin{align*}
        \begin{cases}
        \tau_c(a)=a,\, \tau_c(b)=cbc^{-1} & \text{if }L=A_1\ast_{A_e} A_2,\, a\in A_1 \text{ and } b\in A_2\\
        \tau_c(a)=a,\, \tau_c(t)=tc & \text{if }L=A\ast_{L_e}\text{with stable letter }t \text{ and } a\in A.
        \end{cases}
    \end{align*}
    \item Natural extensions of automorphisms of Seifert-type vertex groups which are induced by homeomorphisms of the underlying $2$-orbifold and which fix the boundary and conical points.
    \item Natural extensions (in the sense of Definition \ref{natural_ext}) of automorphisms of axial vertex groups, which satisfy the condition below. Denote by $L_v$ an axial vertex group of $\mathbb{R}_L$, then by \cite[Lemma 5.1]{GH19} every subgroup $B \le L_v$ is virtually abelian, and has a unique maximal subgroup $B^+$ of index at most $2$ which is finite-by-abelian. Denote by $E(L_v)$ the subgroup of $L_v$ generated by its adjacent edge groups. We allow natural extensions of automorphisms $\alpha_v$ of $L_v$ for which:
    \begin{enumerate}
        \item $\alpha_v$ fixes the subgroup $P_v^+$ of $L_v$ which consists of all $g \in L_v$ such that $g \in \ker(\phi)$ for every homomorphism $\phi:L_v\rightarrow \mathbb{Z}$ satisfying $E(L_v)\cap A_v^+ \subset \ker(\phi)$, and
        \item $\alpha_v$ restricts to conjugation on every subgroup $B \le L_v$ for which $B^+=P_v^+$.
    \end{enumerate}
\end{enumerate}
\end{de}

\begin{rque}
    \label{conj_fin}
    Note that every modular automorphism of one of the types (1)-(4) above restricts to conjugation on every finite subgroup of $L$, and hence modular automorphisms always restrict to conjugation on finite subgroups of $L$.
\end{rque}

Our next goal is to show that given a modular automorphism $\alpha$ of $L$, under some restrictions, one can find an approximation $A$ of $L$ and a lift $\beta \in \mathrm{Aut}(A)$ of $\alpha$. As Lemma \ref{mod_approx} will show, every modular automorphism of types (1)-(3) above admits a lift to an approximation of $L$. This follows from the extent to which one can approximate the edge groups and the Seifert-type vertex groups of the splitting of $L$ as a graph of actions, as evident in Corollary \ref{rips_app}. However, dealing with axial vertex groups is slightly more complicated. We therefore discuss further the structure of axial vertex groups of $L$ and describe a few properties of modular automorphisms of type (4) used in the proof of the shortening argument. We follow \cite[Subsection 4.2.1]{RW14} and refer the reader to \cite{RW14} for further details.

\smallskip

Suppose that $L$ admits a splitting as a graph of actions $\mathbb{R}_L$ outputted by the Rips machine and that $L_v$ is an axial vertex group of $L$. Denote by $E\le L_v$ the torsion subgroup of $L_v$ and by $H$ the quotient $L_v/E$; let $\pi_E$ be the quotient map $L_v\twoheadrightarrow H$. Recall that $L_v$ has a subgroup $L_v^+$ of index at most $2$ which is finite-by-abelian, and let $H^+$ be the image of $L_v^+$ in $H$. The group $H^+$ admits a decomposition $H^+=A \oplus B$ where $A$ is a finitely generated free abelian group and $B$ is the torsion-free (and abelian) kernel of the action of $H^+$ on the line $T_v\subset T$. Let $\tilde{A}=\pi_E^{-1}(A)$ and $\tilde{B}=\pi_E^{-1}(B)$. As in \cite[Subsection 4.2.1]{RW14}, there exists an element $s\in L_v$ such that $L_v=\langle A,B,s \rangle$ and every $g\in L_v$ can be written as a product of the form
\begin{align*}
    g=abs^{\eta}
\end{align*}
where $a\in \tilde{A}$, $b\in \tilde{B}$ and $\eta \in \{0,1\}$.

\smallskip

We now define the subgroup $\mathrm{Aut}^*(L_v)$ of $\mathrm{Aut}(L_v)$ to be the subgroup which consists of all the automorphisms $\alpha_v \in \mathrm{Aut}(L_v)$ which satisfy the following three properties:
\begin{enumerate}
    \item $\alpha_v$ preserves $\tilde{A}$;
    \item $\alpha_v$ restricts to the identity on $\langle \tilde{B},s \rangle$;
    \item consider the action of $L_v$ on the line $T_v \subset T$. Then for every $x\in T_v$, $\alpha_v$ restricts to conjugation on the stabilizer $(L_v)_x$ of $x$. 
\end{enumerate}

This leads us to define the following subgroup of $\mathrm{Mod}_{\mathbb{R}_L}(L)$.
\begin{de}
\label{mod*}
The group $\mathrm{Mod}^*_{\mathbb{R}_L}(L)$ is the subgroup of $\mathrm{Mod}_{\mathbb{R}_L}(L)$ generated by:
\begin{enumerate}
    \item modular automorphisms of types (1)-(3) (see Definition \ref{mod});
    \item modular automorphisms $\alpha$ of type (4) which satisfy the following: if $\alpha$ is a natural extension of $\alpha_v\in \mathrm{Aut}(L_v)$ for an axial vertex group $L_v$ of $L$, then $\alpha_v \in \mathrm{Aut}^*(L_v)$.
\end{enumerate}
\end{de}

The motivation behind Definition \ref{mod*} comes from the fact that, by \cite[Subsection 4.2.1]{RW14}, it is enough to use modular automorphisms which lie in $\mathrm{Mod}^*_{\mathbb{R}_L}(L)$ in the proof of the shortening argument. We finish this discussion with the following easy observation.

\smallskip

\noindent \textbf{Observation:} suppose that $L_v$ is an axial vertex group of $L$; since the torsion subgroup $E$ of $L_v$ is finite (by Lemma \ref{stabilitylemma}) and since the subgroup $A$ of $H^+$ is finitely generated, there are finitely generated subgroups of $L_v$ which contain $\tilde{A}$. In addition, let $\alpha_v\in \mathrm{Aut}^*(L_v)$ and suppose that $L'_v$ is any subgroup of $L_v$ which contains $\tilde{A}$, then the restriction of $\alpha_v$ to $L'_v$ is an automorphism.

\begin{prop}\label{mod_approx}
    Suppose that $L$ does not split non-trivially over a subgroup of order $\le C$ and let $\mathbb{R}_L$ be the graph of actions decomposition of $L$ outputted by the Rips machine. Let $\alpha\in \mathrm{Mod}^*_{\mathbb{R}_L}(L)$. Then there exists an $\mathbb{R}_L$-approximation $A$ of $L$ and $\beta \in \mathrm{Aut}(A)$ such that the following diagram commutes $\omega$-almost-surely.
    \begin{align*}
        \xymatrix{H \ar[rrrrrr]^{\varphi_n} \ar@{->>}[drr]^q \ar@{->>}[ddrr]_{\varphi_\infty} && && && G \\
        && A \ar[rr]_\beta \ar@{->>}[d]^{\theta_\infty} \ar[urrrr]^{\theta_n}&& A \ar@{->>}[d]^{\theta_\infty} && \\
        && L \ar[rr]^\alpha && L &&
        }
    \end{align*}
    In particular, for every $h\in H$ such that $\varphi_n(h)\ne 1$ $\omega$-almost-surely, $\theta_n \circ \beta \circ q(h)\ne 1$ $\omega$-almost-surely.
\end{prop}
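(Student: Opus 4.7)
The strategy is to write $\alpha$ as a product $\alpha_m\cdots\alpha_1$ of the four types of generators of $\mathrm{Mod}^*_{\mathbb{R}_L}(L)$ listed in Definitions \ref{mod} and \ref{mod*}, construct a single $\mathbb{R}_L$-approximation $A$ of $L$ via Proposition \ref{approx} whose splitting $\mathbb{R}_A$ carries enough data to mimic each $\alpha_i$, and then assemble the lifts $\beta_i\in\mathrm{Aut}(A)$ by applying the natural extension procedure of Definition \ref{natural_ext} and Lemma \ref{extension_lemma}. Setting $\beta=\beta_m\cdots\beta_1$ will give the required lift, and the commutativity of the upper part of the diagram $\omega$-almost-surely follows immediately from the fact that $A$ is finitely presented over $U$ (a consequence of Proposition \ref{approx} combined with Standing Assumption \ref{StAs}): every defining relation of $A$ relative to $U$ lies in $\ker(\varphi_\infty)$, hence in $\ker(\varphi_n)$ $\omega$-almost surely.

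For each generator $\alpha_i$ there is a finite list of elements of $L$ (parameters) whose lifts to $A$ are needed to write down $\beta_i$: the conjugating element for an inner automorphism; the element $c\in Z(L_e)$ for a Dehn twist $\tau_c$ along an edge group $L_e$; nothing further for a Seifert-type vertex automorphism, since by Corollary \ref{rips_app}\emph{(3)} the quotient $\pi$ already restricts to an isomorphism $A_v\xrightarrow{\sim} L_v$; and for an axial vertex automorphism $\alpha_v\in\mathrm{Aut}^*(L_v)$, a finite generating set of $\tilde{A}$, the elements $\tilde{B}$ and $s$ of Definition \ref{mod*}, and representatives of the finitely many point-stabilizers on which $\alpha_v$ acts by conjugation. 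Applying Remark \ref{trick} with a finite set $\mathcal{F}\subset\ker(\varphi_\infty)$ that contains the words witnessing each of these data (and, for every Dehn twist, a word realizing the chosen $c$ inside $A_e$, which is possible thanks to Corollary \ref{rips_app}\emph{(1)}), we obtain a single approximation $A$ in which all local lifts are simultaneously well defined.

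Each $\beta_i$ is then defined in two steps. We first specify it on the relevant vertex group $A_v$ of $\mathbb{R}_A$: for inner and Dehn-twist generators by the explicit formulas of Definition \ref{mod}, for Seifert-type by transporting $\alpha_v$ through the isomorphism $A_v\cong L_v$, and for axial generators by restricting $\alpha_v$ to $A_v$, which is a valid automorphism by the observation following Definition \ref{mod*} (the restriction of an $\mathrm{Aut}^*$-element to any subgroup containing $\tilde A$ is an automorphism) provided $\mathcal{F}$ was chosen as above. The compatibility condition required by Lemma \ref{extension_lemma} — that the restriction to each adjacent edge group of $\mathbb{R}_A$ is a conjugation — transfers from the corresponding condition in $L$ using Corollary \ref{rips_app}\emph{(1)} together with Remark \ref{trick} applied to the conjugating elements. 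We then extend $\beta_i$ to all of $A$ by natural extension, which is an automorphism by Lemma \ref{extension_lemma}. A direct check from the construction shows that $\theta_\infty\circ\beta_i=\alpha_i\circ\theta_\infty$ for every $i$, and hence $\theta_\infty\circ\beta=\alpha\circ\theta_\infty$.

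The main obstacle, and the reason $\mathrm{Mod}^*$ rather than $\mathrm{Mod}$ appears in the statement, is the axial case: axial vertex groups are only approximated by finitely presented subgroups (Corollary \ref{rips_app}\emph{(4)}), so a generic automorphism of $L_v$ need not descend to an automorphism of $A_v$. The extra constraints defining $\mathrm{Aut}^*(L_v)$ — preservation of $\tilde A$, triviality on $\langle\tilde B,s\rangle$, and inner action on point-stabilizers — are exactly what guarantees, via the finite generation of $\tilde A$ and the observation following Definition \ref{mod*}, that by enlarging $\mathcal{F}$ finitely we can force $A_v$ to be $\alpha_v$-invariant. A secondary technical point is that, since several natural extensions must be composed, the conjugating elements chosen at each step must themselves be lifted coherently to $A$; this is handled uniformly by absorbing all relevant witnesses into a single finite $\mathcal{F}$ before invoking Proposition \ref{approx}.
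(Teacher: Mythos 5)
Your proposal is correct and follows essentially the same route as the paper: decompose $\alpha$ into generators of types (1)--(4), use Remark \ref{trick} to build a single $\mathbb{R}_L$-approximation containing lifts of all needed parameters (Dehn-twist elements in the $A_e$, a finitely generated subgroup containing $\tilde{A}$ and the conjugating elements in the axial case), lift each generator via Lemma \ref{extension_lemma} and Corollary \ref{rips_app} (isomorphism on Seifert-type vertices, restriction of $\mathrm{Aut}^*$-automorphisms on axial ones), and compose, with the factorization $\varphi_n=\theta_n\circ q$ $\omega$-almost-surely coming from relative finite presentability. The only cosmetic difference is that the paper's witness set for the axial case consists just of generators of a finitely generated subgroup containing $\tilde{A}$ and the edge-conjugators (not $\tilde{B}$, $s$, or point-stabilizer representatives, which need not be finitely enumerable), but this does not affect the argument.
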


\begin{proof}
    Write $\alpha=\alpha_k\circ \cdots \circ \alpha_1$ where for every $1\le i \le k$, $\alpha_i\in \mathrm{Mod}_{\mathbb{R}_L}(L)$ is a modular automorphism of $L$ of one of the types (1)-(4) appearing in Definition \ref{mod}. Furthermore, if $\alpha_i$ is a modular automorphism of type (4), we assume that it satisfies the condition appearing in Definition \ref{mod*}. It is enough to find an $\mathbb{R}_L$-approximation $A$ of $L$ and an automorphism $\beta \in \mathrm{Aut}(A)$ for which
    \begin{align*}
        \theta_\infty \circ \beta = \alpha \circ \theta_\infty.
    \end{align*}
    In fact, it is enough to show that there is an $\mathbb{R}_L$-approximation $A$ of $L$ and automorphisms $\beta_1,\ldots,\beta_k$ of $L$ such that the following holds for every $1\le j \le k$:
    \begin{align*}
        \theta_\infty \circ \beta_j = \alpha_j \circ \theta_\infty.
    \end{align*}
    
    We begin with the construction of the approximation $A$ of $L$. We define a finite subset $C$ of $L$ as follows: 
    \begin{enumerate}
        \item Whenever $\alpha_i$ is a Dehn twist for $1\le i\le k$, we add to $C$ an element $c$ which lies in an edge group of $\mathbb{R}_L$ and such that $\alpha_i$ is a Dehn twist by $c$.
        \item We keep the notations from the discussion appearing before Definition \ref{mod*}. Suppose now that $\alpha_i$ is a modular automorphism of type (4); denote by $\alpha_v\in \mathrm{Aut}^*(L_v)$ an automorphism of an axial vertex group $L_v$ of $L$ such that $\alpha_i$ is a natural extension of $\alpha_v$. Let $L_{e_1},\ldots,L_{e_k}$ be the edge groups adjacent to $L_v$ and recall that $\alpha_v$ restricts to conjugation by elements $c_1,\ldots,c_k \in L_v$ on $L_{e_1},\ldots,L_{e_k}$ respectively. Let $L'_v$ be a finitely generated subgroup of $L_v$ (over $U$) which contains $\tilde{A}$ and $c_1,\ldots,c_k$; such a group exists by the observation following Definition \ref{mod*}. Denote by $S'_v$ a finite set of generators of $L'_v$. We add the elements in $S'_v$ to $C$.
    \end{enumerate}
     Let $A$ be an $\mathbb{R}_L$-approximation of $L$ which satisfies the following condition: for every $c\in C$ belonging to an edge group $L_e$ of $\mathbb{R}_L$, there is an element $c' \in A_e$ such that $\theta_\infty(c')=c$. Such an approximation $A$ of $L$ exists by Remark \ref{trick}. Recall that by Lemma \ref{core_approx2}, $A$ admits a splitting $\mathbb{R}_A$ which satisfies the following: $\theta_\infty$ maps every vertex group or edge group of $A_v$ to the corresponding vertex or edge group of $L_v$. In addition, $\theta_\infty$ maps every stable letter in the presentation of $A$ as the fundamental group of $\mathbb{R}_A$ to the corresponding stable letter in the presentation of $L$ as the fundamental group of $\mathbb{R}_L$. 
    
    \smallskip
    
    We next construct the automorphisms $\beta_1,\ldots,\beta_k$ of $A$. Let $1\le j \le k$. We divide the construction of $\beta_j$ into cases, depending on the type of the modular automorphism $\alpha_j \in \mathrm{Mod}_{\mathbb{R}_L}(L)$. 
    \begin{enumerate}
        \item $\alpha_j$ is a modular automorphism of $L$ of type (1), that is conjugation by some element $g\in L$. Let $g'\in A$ be such that $\theta_\infty(g')=g$ and set $\beta_j$ to be conjugation by $g'$. It is clear that the desired equality holds.
        
        \item $\alpha_j$ is a modular automorphism of $L$ of type (2), that is a Dehn twist by some element $c\in C$ which lies in an edge group $L_e$ of $\mathbb{R}_L$. By the manner in which the approximation $A$ was chosen, there is an element $c'\in A_e$ such that $\theta_\infty(c')=c$. Let $\beta_j$ be a Dehn twist by $c'$. Assume that by collapsing every edge except for $e$, $A$ splits as an amalgamated product over $A_e$, that is $A=A_1 \ast _{A_e} A_2$; the case where $A$ splits as an HNN extension is similar. It follows that $L$ splits as an amalgamated product over $L_e$; write $L=L_1 \ast_{L_e} L_2$. In addition, by the properties of the approximation $A$, one has $\theta_\infty(A_i) \subset L_i$ for $i\in \{1,2\}$. 
        
        \smallskip
        
        Let $g\in A$ and write $g$ as an alternating product of elements from $A_1$ and $A_2$, that is $g=a_1b_1a_2\cdots a_mb_m$ with $a_i\in A_1$ and $b_i\in A_2$ for $1\le i \le m$. We have that
        \begin{align*}
            \theta_\infty \circ \beta_j (g) & = \theta_\infty \circ \beta_j (a_1b_1a_2\cdots a_mb_m) \\
            & = \theta_\infty (a_1 (c' b_1 (c')^{-1}) a_2 \cdots a_m (c' b_m (c')^{-1}) \\
            & = \theta_\infty(a_1) (c \theta_\infty (b_1) c^{-1}) \theta_\infty(a_2) \cdots \theta_\infty(a_m) (c \theta_\infty(b_m) c^{-1}) \\
            & = \alpha_j(\theta_\infty(a_1)\theta_\infty(b_1)\theta_\infty(a_2)\cdots \theta_\infty(a_m)\theta_\infty(b_m)) \\
            & = \alpha_j \circ \theta_\infty (g).
        \end{align*}
        
        \item $\alpha_j$ is a modular automorphism of type (3), that is a natural extension of an automorphism $\alpha_v$ of a vertex group $L_v$ of $\mathbb{R}_L$ of Seifert-type, as described in Definition \ref{mod}. Let $e^v_1,\cdots,e^v_\ell\in \mathrm{E}(\mathbb{R}_L)$ be an enumeration of the edges of $\mathbb{R}_L$ which are adjacent to $v$ and recall that $\alpha_j$ restricts to conjugation by some $c_{e^v_i} \in L_v$ on $L_{e^v_i}$ for every $1\le i \le \ell$. In addition, by Corollary \ref{rips_app}, $\theta_\infty$ maps $A_v$ isomorphically to $L_v$. This implies that $\alpha_v$ is an isomorphism of $A_v$, and that there are elements $c'_{e^v_1},\ldots,c'_{e^v_\ell} \in A_v$ such that $\theta_\infty(c'_{e^v_i})=c_{e^v_i}$ and $\alpha_v$ restricts to conjugation by $c'_{e^v_i}$ on $A_{e^v_i}$ for every $1\le i \le \ell$. Let $\beta_j$ be the natural extension of $\alpha_v$ to $A$, with respect to the elements $c'_{e^v_1},\ldots,c'_{e^v_\ell}$. Now let $g\in A$ and write $g$ as a loop in the graph of groups $\mathbb{R}_A$, that is $g=[a_0,e_1,a_1,\ldots,e_k,a_k]$. Then $\beta_j(g)=[b_0,e_1,b_1,\ldots,e_k,b_k]$ where
        \begin{align*}
            b_i=\begin{cases} a_i & a_i \notin A_v \\ {c'_{e_i}}^{-1}\alpha_v(a_i)c'_{e_{i+1}} & a_i \in A_v \end{cases}.
        \end{align*}
        To finish, note that $\theta_\infty(g)$ can be written as a loop $[\theta_\infty(a_0),e_1,\theta_\infty(a_1),\ldots,e_k,\theta_\infty(a_k)]$ in $\mathbb{R}_L$, which implies that $\alpha_j\circ \theta_\infty(g)=[c_0,e_1,c_1,\ldots,e_k,c_k]$ where
        \begin{align*}
            c_i=\begin{cases} \theta_\infty(a_i) & \theta_\infty(a_i) \notin A_v \\ c_{e_i^{-1}}^{-1}\alpha_v(\theta_\infty(a_i)))c_{e_{i+1}} & a_i \in A_v \end{cases}.
        \end{align*}
        One easily sees that $\theta_\infty(b_i)=c_i$ for $1\le i \le k$, which implies that
        \begin{align*}
            \theta_\infty \circ \beta_j \circ \cdots \circ \beta_1  = \alpha_j \circ \cdots \circ \alpha_1 \circ \theta_\infty.
        \end{align*}
        \item $\alpha_j$ is a modular automorphism of $L$ of type (4), and which satisfies the condition appearing in Definition \ref{mod*}. In particular, $\alpha_j$ is a natural extension of an automorphism $\alpha_v\in \mathrm{Aut}^*(L_v)$ of an axial vertex group $L_v$ of $L$. Note that $\theta_\infty$ maps $A_v$ into $L_v$; we identify $A_v$ with its image in $L_v$. By the manner in which the set $C$ was defined, and by the observation following Definition \ref{mod*}, we have that the restriction of $\alpha_v$ to $A_v$ is an automorphism. One can continue as in (3) above, by taking a natural extension of ${\alpha_v}_{\vert _{A_v}}$ to $A$.
    \end{enumerate}
    Finally, let $\beta=\beta_k \circ \cdots \circ \beta_1$. The construction of the automorphisms $\beta_1,\ldots,\beta_k$ implies that the diagram appearing in the statement of this proposition does commute $\omega$-almost-surely. Lastly, let $h\in H$ be such that $\varphi_n(h)\ne 1$ $\omega$-almost-surely. It follows that $\theta_\infty \circ q(h)\ne 1$. Therefore $\alpha\circ \theta_\infty \circ q(h)\ne 1$ which implies that $\theta_\infty \circ \beta \circ q(h)\ne 1$. Hence $\theta_n\circ \beta \circ q(h)\ne 1$ $\omega$-almost-surely.
\end{proof}

\begin{rque}\label{elliptic_mod}
Note that since the action of $U$ on $T$ is elliptic, the modular automorphism $\alpha$ of $L$ restricts to conjugation on $U$. The proof of Proposition \ref{mod_approx} above implies that $\beta$ also restricts to conjugation on $U$.
\end{rque}

\begin{te}[The shortening argument]\label{shortening_arg} Suppose that $L$ does not split non-trivially over a finite subgroup of order $\le C$, then $\omega$-almost-surely the homomorphisms $\varphi_n$ are not short relative to $U$. More explicitly, denote by $\mathbb{R}_L$ the splitting of $L$ as a graph of actions outputted by the Rips machine. Then there is an $\mathbb{R}_L$-approximation $A$ of $L$ admitting a splitting $\mathbb{R}_A$, and an automorphism $\beta \in \mathrm{Aut}(A)$, for which the following holds: denote by $q$ the quotient map $H\twoheadrightarrow A$ and let $(\theta_n:A\rightarrow G)_{n \in \mathbb{N}}$ be such that $\varphi_n=\theta_n \circ q$ $\omega$-almost-surely. Then the sequence $(\phi_n=\theta_n\circ\beta\circ q:H\rightarrow G)_{n\in\mathbb{N}}$ satisfies the following:
    \begin{enumerate}
        \item ${\phi_n}_{\vert U}$ coincides with ${\varphi_n}_{\vert U}$ up to conjugation $\omega$-almost-surely;
        \item $\norm{\phi_n}<\norm{\varphi_n}$ $\omega$-almost-surely;
        \item for every $h\in H$ such that $\varphi_n(h)\ne 1$ $\omega$-almost-surely, $\phi_n(h)\ne 1$ $\omega$-almost-surely.
    \end{enumerate}
\end{te}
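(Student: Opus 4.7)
My plan is to first apply the Rips machine to obtain the splitting $\mathbb{R}_L$ of $L$ as a graph of actions. Since $L$ does not split non-trivially over a finite subgroup of order $\le C$, the stability lemma (Lemma \ref{stabilitylemma}) guarantees that the hypotheses of Theorem \ref{ripsmachine} are satisfied, and the first two possible outputs are ruled out; thus $L\curvearrowright T$ decomposes as a graph of actions $\mathbb{R}_L$ with simplicial, Seifert-type, and axial vertex actions. I would then invoke Proposition \ref{mod_approx} to obtain an $\mathbb{R}_L$-approximation $A$ of $L$ (equipped with a splitting $\mathbb{R}_A$ that mimics $\mathbb{R}_L$) for which any modular automorphism $\alpha\in\mathrm{Mod}^*_{\mathbb{R}_L}(L)$ lifts to an automorphism $\beta\in\mathrm{Aut}(A)$ with the commutative diagram of the proposition. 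Since $H$ is finitely presented over $U$ and $A$ is a quotient of $H$ by finitely many additional relations in $\ker(\varphi_\infty)$, each $\varphi_n$ factors $\omega$-almost-surely as $\varphi_n=\theta_n\circ q$ for some $\theta_n:A\to G$.

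The heart of the argument is the construction of a specific $\alpha\in\mathrm{Mod}^*_{\mathbb{R}_L}(L)$ that shortens the asymptotic cone action in the following concrete sense: for the basepoints $(o_n)$ chosen in the Bestvina-Paulin construction, replacing each generator $s\in S$ by $\alpha$-conjugate data decreases the maximal displacement $\max_{s\in S} d_n(o_n,\varphi_n(s)o_n)$. I would obtain $\alpha$ by analysing the orbit of the limit basepoint $o_\infty\in T$ piece by piece through $\mathbb{R}_L$: on each simplicial edge we use a suitable Dehn twist centred on (the centre of) the virtually abelian edge group to collapse excess length; on each Seifert-type vertex group we use the natural extension of a mapping-class-group automorphism of the underlying $2$-orbifold that fixes boundary and conical points but moves the attaching points closer to the basepoint of the corresponding tree; on each axial vertex group we use a natural extension of an element of $\mathrm{Aut}^*(L_v)$ (translation along the line $T_v$) to absorb length along axial segments. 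This is precisely the Rips–Sela shortening of tree actions adapted to the relative setting, carried out inside $\mathrm{Mod}^*_{\mathbb{R}_L}(L)$ so as to stay within the scope of Proposition \ref{mod_approx}.

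Setting $\phi_n=\theta_n\circ\beta\circ q$, I would verify the three conclusions as follows. For (1), since $U$ fixes a point in $T$, every modular automorphism in $\mathrm{Mod}^*_{\mathbb{R}_L}(L)$ restricts to an inner automorphism on $\varphi_\infty(U)\cong U$ (Remark \ref{elliptic_mod}); because the lift $\beta$ in Proposition \ref{mod_approx} is built so that $\theta_\infty\circ\beta=\alpha\circ\theta_\infty$, $\beta$ itself restricts to conjugation on $q(U)\cong U$, and hence ${\phi_n}_{|U}$ agrees with ${\varphi_n}_{|U}$ up to post-conjugation by an element of $G$. For (3), if $h\in H$ satisfies $\varphi_n(h)\neq 1$ $\omega$-a.s., then $\theta_\infty\circ q(h)\neq 1$, so $\alpha(\theta_\infty\circ q(h))\neq 1$, hence $\theta_\infty\circ\beta\circ q(h)\neq 1$, and therefore $\theta_n\circ\beta\circ q(h)\neq 1$ $\omega$-a.s., as recorded in the last sentence of Proposition \ref{mod_approx}.

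The main obstacle, and the step I would devote the most care to, is (2): translating the fact that $\alpha$ shortens the action $L\curvearrowright T$ into the statement $\|\phi_n\|<\|\varphi_n\|$ $\omega$-almost-surely. The argument I have in mind is a standard passage to the limit along the rescaled Cayley graphs $(X_n,d_n,o_n)$: for the original generating set, the limiting displacement is $\lim_\omega\max_s d_n(o_n,\varphi_n(s)o_n)$, which is bounded below by $\max_s d_\omega(o_\infty,s\cdot o_\infty)$; after replacing each $s$ by the image of $\beta(q(s))$ in $G$, the analogous limiting displacement strictly drops, because $\alpha$ was chosen to shorten the action on $T$ at the specific basepoint $o_\infty$. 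The subtlety is that, unlike in the equationally Noetherian case, $\phi_n$ is not literally the composition of $\varphi_n$ with an automorphism of $L$, so one must show that the approximation $A$ captures enough structure for the comparison on $T$ to pass to a strict inequality on $G$; this is exactly what the precise control in Proposition \ref{approx} and in Corollary \ref{rips_app} was designed to provide. Rescaling by $\|\varphi_n\|$ and taking the $\omega$-limit of the ratio $\|\phi_n\|/\|\varphi_n\|$ then yields a value strictly less than $1$, which gives (2).
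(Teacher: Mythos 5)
Your skeleton coincides with the paper's proof: rule out outputs (1)--(2) of the Rips machine via Lemma \ref{stabilitylemma}, choose a single modular automorphism $\alpha\in\mathrm{Mod}^*_{\mathbb{R}_L}(L)$, lift it to an automorphism $\beta$ of an $\mathbb{R}_L$-approximation via Proposition \ref{mod_approx}, and deduce conclusions (1) and (3) from Remark \ref{elliptic_mod} and the last sentence of Proposition \ref{mod_approx}. The gap is in your argument for (2), and it sits exactly where the real work lies: the simplicial pieces of $\mathbb{R}_L$. You propose to choose $\alpha$ so that the limiting displacement of every generator at the basepoint strictly drops, obtaining the drop on simplicial edges by Dehn twists that ``collapse excess length''. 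This fails: a Dehn twist over an edge group of the simplicial part leaves $d_T(o,\alpha(\varphi_\infty(s))o)$ unchanged, and for a generator $s$ such that $[o,\varphi_\infty(s)o]$ lies entirely in the discrete part of $T$ no shortening is visible in the limit tree at all (in the extreme case where the action $L\curvearrowright T$ is purely simplicial, nothing can be shortened in $T$). Consequently your passage to the $\omega$-limit cannot produce a strict inequality for such generators: the rescaled limiting displacements before and after applying $\beta$ coincide, and a limit of ratios equal to $1$ gives no information about $\norm{\phi_n}$ versus $\norm{\varphi_n}$.

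The paper resolves this by adapting \cite[Theorem 6.1]{RS94}: for generators whose paths lie in the simplicial part one constructs an automorphism $\alpha^F_{\mathrm{sim}}$, a product of Dehn twists, which satisfies $d_T(o,\alpha^F_{\mathrm{sim}}(\varphi_\infty(s))o)=d_T(o,\varphi_\infty(s)o)$ but strictly decreases the unrescaled displacement, $d(o_n,\varphi_n(s_\alpha)o_n)<d(o_n,\varphi_n(s)o_n)$ $\omega$-almost-surely, directly in the spaces $X_n$; the final comparison is then made at the specific point $o_n$ realizing the infimum defining $\norm{\varphi_n}$, not between $\omega$-limits. Carrying this out in the acylindrically hyperbolic setting needs two further ingredients your sketch does not supply: since edge stabilizers are only virtually abelian, the Dehn twists must be taken by powers of an infinite-order central element $c_e\in Z(L_e)$ whose $\varphi_n$-images are hyperbolic (Lemma \ref{center}), and the displacement lower bounds of Lemmas 6.2, 6.5, 6.8 and 6.11 of \cite{RS94} must be replaced by a uniform positive lower bound on stable translation lengths of hyperbolic elements of $G$ (via \cite{Bow08} and \cite[Lemma 10.6.4]{CDP90}), so that a sufficiently high power of $c_e$ displaces the relevant points by more than the required multiple of $\delta$. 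Your treatment of the axial and Seifert-type pieces, by contrast, is essentially the paper's (quoting \cite{RW14} through $\mathrm{Mod}^*_{\mathbb{R}_L}(L)$ and Proposition \ref{mod_approx}), so the missing content is precisely this simplicial-case argument and the resulting two-regime structure of the proof of (2).
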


\begin{rque}
Note that conditions $\emph{(1)}$ and $\emph{(2)}$ above imply that $\varphi_n$ is not short relative to $U$ $\omega$-almost-surely. Furthermore, condition $\emph{(2)}$ can be equivalently phrased as follows: $\norm{\theta_n \circ \beta} < \norm{\theta_n}$ $\omega$-almost-surely, where the lengths are taken with respect to the set $q(S)$. Condition $\emph{(3)}$ is equivalent to each of the following two conditions:
\begin{enumerate}
    \item[(3')] $\underleftarrow{\ker}_\omega((\phi_n)_{n\in\mathbb{N}})\subset \underleftarrow{\ker}_\omega((\varphi_n)_{n\in\mathbb{N}})$;
    \item[(3'')] $\beta^{-1}(\underleftarrow{\ker}_\omega((\theta_n)_{n\in\mathbb{N}}))\subset \underleftarrow{\ker}_\omega((\theta_n)_{n\in\mathbb{N}})$.
\end{enumerate}
\end{rque}

\begin{rque}\label{single_ap}
The version of the shortening argument appearing in \cite[Theorem 5.29]{GH19} satisfies conditions $\emph{(1)}$ and $\emph{(2)}$ above, but does not necessarily satisfy condition $\emph{(3)}$. To obtain condition $\emph{(3)}$ we approximate a single modular automorphism of $L$, rather than a sequence of modular automorphisms, and the result follows from Lemma \ref{mod_approx}.
\end{rque}

The second version of the shortening argument that will be proved is a strengthened version of Theorem \ref{shortening_arg} which accommodates the use of JSJ decompositions of limit groups over finite groups of order less than $C$. Note that in this version of the shortening argument, we assume that a single vertex group of a JSJ decomposition of $L$ over $\leq C$ admits a splitting outputted by the Rips machine and we shorten the homomorphisms with respect to the generators of this vertex group.

\begin{te}\label{shortening_arg2}
    Let $\mathbb{J}_L$ be a reduced JSJ splitting of $L$ over finite groups of order $\le C$ relative to $U$ and let $u$ be the vertex fixed by $U$. Let $\mathbb{R}_{L_u}$ be the splitting of $L_u$ as a graph of actions outputted by the Rips machine, and let $\mathbb{RJ}_L$ be the splitting of $L$ obtained from $\mathbb{J}_L$ by replacing $u$ with $\mathbb{R}_{L_u}$. Let $H_u$ be a finitely generated subgroup of $H$ containing $U$ and such that $\varphi_{\infty}(H_u)=L_u$. Let $S_u$ be a finite generating of $H_u$. Then the following hold:
    \begin{enumerate}
        \item there exist an $\mathbb{RJ}_L$-approximation $A$ of $L$ admitting a splitting $\mathbb{RJ}_A$ as in Corollary \ref{comb_app} and a sequence $(\theta_n:A\rightarrow G)_{n \in \mathbb{N}}$ that satisfies $\varphi_n=\theta_n \circ q$ $\omega$-almost-surely (where $q$ is the quotient map $H\twoheadrightarrow A$);
        \item denote by $\mathbb{R}_{A_u}$ the subgraph of $\mathbb{RJ}_A$ corresponding to the subgraph $\mathbb{R}_{L_u}$ of $\mathbb{RJ}_L$. Denote by $A_u$ the fundamental group of $\mathbb{R}_{A_u}$. There exists an automorphism $\beta_u$ of $A_u$ that admits a natural extension $\beta \in \mathrm{Aut}(A)$, and such that the sequence $(\phi_n=\theta_n\circ\beta\circ q:H\rightarrow G)_{n\in\mathbb{N}}$ satisfies the following properties: 
        \begin{enumerate}
            \item ${\phi_n}_{\vert U}$ coincides with ${\varphi_n}_{\vert U}$ up to conjugation $\omega$-almost-surely;
            \item $\norm{\phi_{n \vert _{H_u}}}<\norm{\varphi_{n \vert _{H_u}}}$ $\omega$-almost-surely (where the lengths are taken with respect to the $S_u$);
            \item for every $h\in H$ such that $\varphi_n(h)\ne 1$ $\omega$-almost-surely, $\phi_n(h)\ne 1$ $\omega$-almost-surely.
        \end{enumerate}
    \end{enumerate}
\end{te}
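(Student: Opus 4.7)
The plan is to reduce Theorem~\ref{shortening_arg2} to Theorem~\ref{shortening_arg} by applying the latter to the vertex subgroup $L_u$ and then extending the resulting shortening automorphism to the whole approximation of $L$. First, view $L_u$ as a limit group with defining sequence $(\varphi_n|_{H_u})_{n \in \mathbb{N}}$. Since $\mathbb{J}_L$ is a reduced JSJ splitting of $L$ over finite subgroups of order $\le C$ relative to $U$, the vertex group $L_u$ admits no further non-trivial splitting of this kind relative to $U$, so the hypothesis of Theorem~\ref{shortening_arg} is satisfied. Applying that theorem --- or, more precisely, tracing through its underlying Proposition~\ref{mod_approx} --- yields a modular automorphism $\alpha_u \in \mathrm{Mod}^*_{\mathbb{R}_{L_u}}(L_u)$, an $\mathbb{R}_{L_u}$-approximation $A_u^0$ of $L_u$ and an automorphism $\beta_u \in \mathrm{Aut}(A_u^0)$ satisfying $\theta_\infty^u \circ \beta_u = \alpha_u \circ \theta_\infty^u$, whose associated sequence shortens $(\varphi_n|_{H_u})_{n}$ relative to $U$.

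Next, we extend $\alpha_u$ and $\beta_u$. By Remark~\ref{conj_fin}, $\alpha_u$ restricts to conjugation, say by elements $c_{e_i} \in L_u$, on each finite edge group $L_{e_i}$ of $\mathbb{J}_L$ adjacent to $L_u$, so by Lemma~\ref{extension_lemma} it extends to an automorphism $\alpha \in \mathrm{Aut}(L)$ along the splitting $\mathbb{J}_L$. We then invoke Corollary~\ref{comb_app}, enhanced by Remark~\ref{trick}, to build an $\mathbb{RJ}_L$-approximation $A$ of $L$ whose Rips subgroup $A_u = \pi_1(\mathbb{R}_{A_u})$ coincides with $A_u^0$ and in which each $c_{e_i}$ has a preimage $c_{e_i}' \in A_u$. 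Since every edge group $A_{e_i}$ of $\mathbb{J}_A$ adjacent to $A_u$ is finite and $\theta_\infty$ identifies $A_{e_i}$ with $L_{e_i}$ (by Proposition~\ref{approx}(1) and Corollary~\ref{JSJ_app}), the automorphism $\beta_u$ restricts to conjugation by $c_{e_i}'$ on $A_{e_i}$, and Lemma~\ref{extension_lemma} delivers an extension $\beta \in \mathrm{Aut}(A)$ of $\beta_u$ that is a natural extension of $\beta_u$ along $\mathbb{J}_A$ and satisfies $\theta_\infty \circ \beta = \alpha \circ \theta_\infty$.

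Finally, since $A$ is finitely presented over $U$, almost surely $\varphi_n$ factors as $\theta_n \circ q$ for some $\theta_n : A \rightarrow G$; set $\phi_n = \theta_n \circ \beta \circ q$. Condition~(a) follows from $U \subset A_u$, $\beta|_{A_u} = \beta_u$, and the corresponding conclusion of Theorem~\ref{shortening_arg} applied to $\beta_u$; condition~(b) is a direct transcription of the length condition of Theorem~\ref{shortening_arg} at the first step, since $\phi_n|_{H_u}$ factors through $A_u$ as $\theta_n|_{A_u} \circ \beta_u \circ q|_{H_u}$. For condition~(c), the relation $\theta_\infty \circ \beta = \alpha \circ \theta_\infty$ together with $\alpha \in \mathrm{Aut}(L)$ yields $\beta^{-1}(\ker \theta_\infty) = \ker \theta_\infty$; and the identity $\ker \theta_\infty = \underleftarrow{\ker}_\omega((\theta_n)_{n\in\mathbb{N}})$ --- which follows from $\varphi_n = \theta_n \circ q$ almost surely, from $\ker \varphi_\infty = \underleftarrow{\ker}_\omega((\varphi_n)_{n\in\mathbb{N}})$, and from the surjectivity of $q$ --- then forces $\phi_n(h) \ne 1$ almost surely whenever $\varphi_n(h) \ne 1$ almost surely.

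The delicate point lies in the second step: one has to coordinate the two separate approximation constructions so that the $\mathbb{R}_{L_u}$-approximation $A_u^0$ produced by Proposition~\ref{mod_approx} applied to $L_u$ really embeds as the vertex subgroup $A_u$ of the $\mathbb{RJ}_L$-approximation $A$ of $L$, while simultaneously arranging that each conjugating element $c_{e_i}$ lifts to $A_u$ in such a way that $\beta_u$ becomes an honest conjugation on each finite edge group $A_{e_i}$ --- for only then does Lemma~\ref{extension_lemma} actually extend $\beta_u$ to an automorphism of $A$. The flexibility afforded by Remark~\ref{trick} in the construction of approximations is precisely what accommodates this coordination.
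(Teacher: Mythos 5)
Your overall strategy is the paper's: shorten the vertex action of $L_u$ by a modular automorphism supported on $\mathbb{R}_{L_u}$, use the fact that modular automorphisms restrict to conjugation on the finite JSJ edge groups (Remark \ref{conj_fin}) to extend across $\mathbb{J}_L$, and lift the result to an $\mathbb{RJ}_L$-approximation of $L$; indeed the paper's proof consists of running the proof of Theorem \ref{shortening_arg} and Proposition \ref{mod_approx} verbatim, except that the natural extensions of the axial and Seifert-type vertex automorphisms are taken with respect to the entire decomposition $\mathbb{RJ}_L$ rather than with respect to $\mathbb{R}_L$ alone. The gap lies in your first step and in the ``coordination'' you yourself flag as delicate. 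You apply Theorem \ref{shortening_arg} (via Proposition \ref{mod_approx}, hence via Proposition \ref{approx}) to $L_u$ regarded as a limit group with defining sequence $(\varphi_n{}_{\vert H_u})_{n\in\mathbb{N}}$; but all of these results operate under Standing Assumption \ref{StAs}, which requires the base group to be \emph{finitely presented} over $U$, whereas $H_u$ is only assumed finitely generated. So the standalone approximation $A_u^0$ of $L_u$ is not available as stated, and the identification of $A_u^0$ with the vertex subgroup $A_u$ of the $\mathbb{RJ}_L$-approximation $A$ is asserted through Remark \ref{trick} but never carried out: Remark \ref{trick} lets you prescribe finitely many lifts inside a single approximation built from $H$, it does not let you match two approximations built over different base groups.

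The repair is exactly the paper's reorganization, and it dissolves both problems at once: do not build $A_u^0$ at all. Construct the shortening modular automorphism $\alpha_u$ of $L_u$ as in the proof of Theorem \ref{shortening_arg} (using \cite{RW14}, \cite{RS94} and Lemma \ref{center}), but take the natural extension of each elementary (axial or Seifert-type) automorphism with respect to the whole of $\mathbb{RJ}_L$, which is legitimate since these automorphisms are conjugations on all finite subgroups, in particular on the edge groups of $\mathbb{J}_L$. Then build one $\mathbb{RJ}_L$-approximation $A$ of $L$ via Proposition \ref{approx} and Corollary \ref{comb_app} from $H$ (which \emph{is} finitely presented over $U$), choosing the finite set $\mathcal{F}$ so that the Dehn-twist elements, the axial conjugators and the conjugators on the JSJ edge groups have preimages in the corresponding edge and vertex subgroups of $\mathbb{RJ}_A$ -- this is the correct use of Remark \ref{trick} -- and run the case-by-case lifting of Proposition \ref{mod_approx} directly on $A$, with natural extensions taken along $\mathbb{RJ}_A$; its restriction to $A_u$ is your $\beta_u$. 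With this rearrangement your verifications of (2)(a) and (2)(c) go through as written (the identity $\ker\theta_\infty=\underleftarrow{\ker}_\omega((\theta_n)_{n\in\mathbb{N}})$ and the relation $\theta_\infty\circ\beta=\alpha\circ\theta_\infty$ are exactly the mechanism used in Proposition \ref{mod_approx}); for (2)(b) note that what is needed is not that $q(H_u)$ lies in $A_u$ (it need not, since $A\twoheadrightarrow L$ is not injective), but only that $\theta_\infty(\beta(q(s)))=\alpha_u(\varphi_\infty(s))$ for each $s\in S_u$, so that the displacement estimates for elements of $L_u$ from \cite{RW14} and \cite{RS94} apply to the basepoints adapted to $S_u$.
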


\begin{rque}
As in Theorem \ref{shortening_arg}, the condition $\emph{(2)(b)}$ above is equivalent to \[\norm{(\theta_n \circ \beta)_{\vert _{q(H_u)}}} < \norm{\theta_{n\vert _{q(H_u)}}}.\]
\end{rque}

We need the following lemma in order to prove Theorems \ref{shortening_arg} and \ref{shortening_arg2}.

\begin{lemme}
    \label{center}
    Suppose that $L$ does not split non-trivially over a finite subgroup of order at most $C$, and that $e\subset T$ (where $T$ is the limiting tree on which $L$ acts) is an edge; denote its stabilizer by $L_e$. Then there is an element $\tilde{c}_e\in H$ whose image $c_e$ in $L$ is contained in  $Z(L_e)$ and such that $\varphi_n(\tilde{c}_e)$ is hyperbolic $\omega$-almost-surely.
\end{lemme}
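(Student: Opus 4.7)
The plan is to exploit the finite-by-abelian structure of arc stabilizers provided by Lemma \ref{stabilitylemma}(1), combined with the dichotomy elliptic/hyperbolic afforded by acylindricity, to extract from $L_e$ a central element of infinite order whose lifts must be hyperbolic. The proof splits into two parts: first, produce a suitable central element $c_e \in Z(L_e)$ of infinite order; second, argue by contradiction that its lift $\tilde{c}_e \in H$ has hyperbolic images under the $\varphi_n$ almost surely.

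For the first part, I first observe that under the non-splitting hypothesis, $L_e$ must be infinite: if $L_e$ were finite, it would have order at most $C$ by Lemma \ref{stabilitylemma}(1), and the corresponding edge in the graph-of-actions decomposition supplied by the Rips machine (Theorem \ref{ripsmachine}) would yield a non-trivial splitting of $L$ over a finite group of order $\le C$, contradicting the hypothesis. By Lemma \ref{stabilitylemma}(1), $L_e$ sits in an extension $1 \to F \to L_e \to A \to 1$ with $F$ finite of order at most $C$ and $A$ abelian; in particular $[L_e,L_e]\subset F$ is finite, so by Schur's theorem $Z(L_e)$ has finite index in $L_e$ and is therefore itself infinite. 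To find an element of infinite order inside $Z(L_e)$, I argue by contradiction: if $Z(L_e)$ were a torsion group, every finitely generated subgroup $K'\subset Z(L_e)$ would be finite, and finite subgroups are automatically locally stably elliptic, so by Lemma \ref{stabilitylemma}(4) one would have $\abs{K'}\le C$; since any two elements of $Z(L_e)$ generate such a subgroup, this would force $\abs{Z(L_e)}\le C$, contradicting that $Z(L_e)$ is infinite.

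For the second part, pick $c_e \in Z(L_e)$ of infinite order and a lift $\tilde{c}_e \in H$. Suppose toward a contradiction that $\varphi_n(\tilde{c}_e)$ fails to be hyperbolic $\omega$-almost-surely. Since $c_e$ has infinite order in $L$, we have $\varphi_n(\tilde{c}_e)\ne 1$ $\omega$-almost-surely; since $G$ acts acylindrically on the hyperbolic space $X$, every non-trivial isometry is either elliptic or hyperbolic (there are no parabolics under an acylindrical action), so $\varphi_n(\tilde{c}_e)$ is $\omega$-almost-surely elliptic. Powers of an elliptic isometry remain elliptic, and the finitely generated subgroups of $\langle c_e\rangle$ are all of the form $\langle c_e^m\rangle$ with image $\langle \varphi_n(\tilde{c}_e)^m\rangle$ acting elliptically on $X_n$ $\omega$-almost-surely. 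Hence $\langle c_e\rangle$ is locally stably elliptic, and Lemma \ref{stabilitylemma}(4) forces $\abs{\langle c_e\rangle}\le C$, contradicting the infinite order of $c_e$. Therefore $\varphi_n(\tilde{c}_e)$ is hyperbolic $\omega$-almost-surely.

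The step I expect to be the most delicate is the first one---specifically the verification that $L_e$ is infinite under the stated hypothesis, which relies on translating the non-splitting condition into information about stabilizers of edges of the real tree $T$ via the graph-of-actions decomposition. Once this is in hand, the rest of the argument is a fairly clean application of Lemma \ref{stabilitylemma}, with Schur's theorem providing the bridge between the finite-by-abelian structure of $L_e$ and the existence of infinite-order central elements, and the local stable ellipticity criterion providing the dichotomy between elliptic and hyperbolic limit behavior.
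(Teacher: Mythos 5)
Your second half (ellipticity versus hyperbolicity of $\varphi_n(\tilde{c}_e)$ via local stable ellipticity and Lemma \ref{stabilitylemma}(4)) is sound, and it is essentially how the paper obtains its starting element: an element $c\in L_e$ of infinite order whose lifts have hyperbolic images. But your first half contains the real gap, and it concerns precisely the part of the lemma that is hard: producing an \emph{infinite-order central} element of $L_e$. You argue that Lemma \ref{stabilitylemma}(1) makes $L_e$ finite-by-abelian, hence $[L_e,L_e]$ is finite, hence ``by Schur's theorem'' $Z(L_e)$ has finite index. That implication is the \emph{converse} of Schur's theorem, and it is false without finite generation: an infinite extraspecial $2$-group is (finite)-by-(abelian) with $[G,G]$ of order $2$, yet its center is that same finite group, of infinite index. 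Moreover, arc stabilizers in this setting are only virtually abelian (this is how the paper uses Lemma \ref{stabilitylemma} elsewhere, e.g.\ in the shortening argument), and a virtually abelian group such as $D_\infty$ has trivial center; nothing in the stability lemma rules such groups out as edge stabilizers \emph{a priori}, and $L_e$ is not known to be finitely generated. So the algebraic shortcut from the structure of $L_e$ to ``$Z(L_e)$ is infinite, hence contains an infinite-order element'' does not work, and with it the whole first part collapses.

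This is exactly why the paper's proof is geometric rather than algebraic: it first extracts (as you do, via local stable ellipticity and the non-splitting hypothesis) an element $c\in L_e$ of infinite order with $\varphi_n(\tilde{c})$ hyperbolic $\omega$-almost-surely, and then proves that the specific power $c_e=c^{N!}$ is central in $L_e$, where $N$ is an acylindricity constant for $\varepsilon=8\delta$. The centrality is obtained by taking any $g\in L_e$, noting that $g$ and the powers $c,\dots,c^{N+1}$ all move approximating points $x_n,y_n$ of the endpoints of $e$ by amounts negligible compared with $d(x_n,y_n)$, applying the quasi-translation Lemma \ref{quasitrans} to see that all the commutators $[\varphi_n(\tilde{g}),\varphi_n(\tilde{c})^j]$, $1\le j\le N+1$, move two points $p_n,q_n$ at distance $>R$ by at most $8\delta$, and then invoking acylindricity and the pigeonhole principle to force two of these commutators to coincide; this yields that $\varphi_n(\tilde{g})$ commutes with $\varphi_n(\tilde{c})^{j-i}$ and hence with $\varphi_n(\tilde{c}_e)$, so $g$ commutes with $c_e$ in $L$. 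Some argument of this kind (or another genuinely geometric input) is needed to replace your appeal to Schur; as written, your proof establishes hyperbolicity of the lift of a central element that you have not shown to exist.
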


To prove this lemma, we use the following result.

\begin{lemme}
    \cite[Lemma 3.5]{per08}
    \label{quasitrans}
    Let $(X,d)$ be a $\delta$-hyperbolic space and let $g:X\rightarrow X$ be an isometry. Suppose that for $x,y \in X$
    \begin{align*}
        d(x,g(x))+d(y,g(y))<2d(x,y)-4\delta.
    \end{align*}
    Then for some $\lambda\in \mathbb{R}$ such that $\abs{\lambda}\le \max \{d(x,g(x)),d(y,g(y))\}$ the isometry $g$ acts by $(\lambda,2\delta)$-\emph{quasi-translation} on a subgeodesic of $[x,y]$: for every $p\in[x,y]$ at distance greater than $\max \{d(x,g(x)),d(y,g(y))\}$ from both $x$ and $y$ (if such a point $p$ exists), one has
    \begin{align*}
        d(g(p),p_\lambda)<2\delta  \,\,\,\,\,  \text{ and } \,\,\,\,\, d(g^{-1}(p),p_{-\lambda})<2\delta
    \end{align*}
    where $p_\lambda$ and $p_{-\lambda}$ are the points on $[x,y]$ which lie at distance $\lambda$ from $p$.
\end{lemme}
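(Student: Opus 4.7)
The plan is to argue via the geometry of the thin geodesic quadrilateral with vertices $x, y, g(y), g(x)$. Setting $a = d(x, g(x))$ and $b = d(y, g(y))$, the hypothesis $a+b < 2d(x,y) - 4\delta$ says that two opposite sides of this quadrilateral, namely $[x,g(x)]$ and $[y,g(y)]$, are so short compared to $d(x,y)$ that the remaining long sides $[x,y]$ and $[g(x), g(y)]=g([x,y])$ must fellow-travel on a long subarc. Extracting this explicit fellow-travelling is the core of the proof.

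First, I would use the standard observation that a geodesic quadrilateral in a $\delta$-hyperbolic space, obtained by gluing two $\delta$-slim triangles along a diagonal, is $2\delta$-slim: every point on one side lies within $2\delta$ of the union of the other three sides. Fixing an arc-length parametrisation $\gamma : [0, D] \to X$ of $[x,y]$ with $\gamma(0) = x$ and $\gamma(D) = y$, I pick $p = \gamma(s)$ with $s > a$ and $D-s > b$ (which exists under the hypothesis). Applying $2\delta$-slimness to $g(p) \in [g(x), g(y)]$ yields a point $z$ with $d(g(p), z) \le 2\delta$ on one of the other three sides. The two short sides are ruled out by the triangle inequality: if $z \in [x, g(x)]$, then $d(g(p), g(x)) \le d(g(p), z) + d(z, g(x)) \le 2\delta + a$, contradicting $d(g(p), g(x)) = d(p,x) = s > a$ once $s$ is taken sufficiently deep in the middle (similarly for $z \in [y, g(y)]$). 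Hence $g(p)$ is within $2\delta$ of a point $p^* \in [x,y]$.

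Next, I set $\lambda$ to be the signed arc-length displacement $p^* - p$ for a single fixed choice of $p$ in the middle subgeodesic, and show the same $\lambda$ works uniformly. If $p, p'$ are both admissible with $g(p) \approx p^*$ and $g(p') \approx p'^*$ within $2\delta$, then $g$ being an isometry gives $d(p^*, p'^*) \approx d(p, p')$, so the parameter offsets along $[x,y]$ agree up to an additive $4\delta$; an orientation argument using that the short sides $[x,g(x)]$ and $[y,g(y)]$ are attached at opposite ends then forces the signed displacements to agree. By shrinking the slimness constant (or using a slightly more refined decomposition of the quadrilateral), the offset becomes exactly a single $\lambda$ absorbed into the $2\delta$ error. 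The bound $\abs{\lambda} \le \max\{a, b\}$ follows because the projection of $g(x)$ onto $[x,y]$ lies within distance $a$ of $x$, pinning down the translation length. The $g^{-1}$ statement is symmetric: one has $d(x, g^{-1}(x)) = a$ and $d(y, g^{-1}(y)) = b$, so the same argument applied to $g^{-1}$ yields displacement $-\lambda$.

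The main obstacle is upgrading the family of displacements $s \mapsto s^*$ from a coarse quasi-translation (with $4\delta$ slack) to a genuine translation by a single constant $\lambda$ with only $2\delta$ error. This requires carefully using that nearest-point projection onto a geodesic in a $\delta$-hyperbolic space is coarsely $1$-Lipschitz and orientation-preserving on the relevant subarc, so that the shadow of $g([x,y])$ onto $[x,y]$ is rigidly a translated copy of the admissible window. All remaining estimates are routine triangle-inequality bookkeeping combined with the $2\delta$-slimness of the quadrilateral.
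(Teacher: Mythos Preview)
The paper does not give its own proof of this lemma; it is quoted verbatim from Perin's thesis \cite[Lemma~3.5]{per08} and used as a black box in the proof of Lemma~\ref{center}. So there is no in-paper argument to compare against. Your outline via the $2\delta$-slim quadrilateral on $x,y,g(y),g(x)$ is the standard route and is essentially how the result is established in the cited source.

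That said, your sketch has a constant-tracking gap. In ruling out the short side $[x,g(x)]$ you argue that $z\in[x,g(x)]$ would give $d(g(p),g(x))\le 2\delta+a$, contradicting $d(g(p),g(x))=s$; but the hypothesis on $p$ is only $s>\max\{a,b\}$, not $s>a+2\delta$, so no contradiction follows as written. Likewise, your passage from ``offsets agree up to $4\delta$'' to a single $\lambda$ with $2\delta$ error is asserted rather than proved: the crude $2\delta$-slimness of the quadrilateral genuinely only yields a $4\delta$ bound on $d(g(p),p_\lambda)$. To hit the stated $2\delta$ one has to work more carefully, for instance by splitting the quadrilateral along the diagonal $[x,g(y)]$ and using the tripod approximation (or the four-point condition) in each triangle separately, which is what Perin does. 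The idea is right; only the bookkeeping needs tightening.
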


\begin{proof}[Proof of Lemma \ref{center}]
    Since $L$ does not split non-trivially over a finite subgroup of order $\le C$, Lemma \ref{stabilitylemma} implies that there exists $c\in L_e$ of infinite order and such that $\varphi_n(\tilde{c})$ is hyperbolic $\omega$-almost-surely. Fix $\varepsilon=8\delta$ and let $N$ and $R$ be the corresponding acylindricity constants. We claim that $c_e=c^{N!}$ lies in $Z(L_e)$. Let $g\in L_e$, write $e=[x,y]$ and let $(x_n)_{n\in\mathbb{N}}$ and $(y_n)_{n\in\mathbb{N}}$ be approximating sequences for $x$ and $y$ respectively; let $\tilde{g}$ be a lift of $g$ to $H$. Since $g$ fixes both $x$ and $y$ in the limiting tree, $\varphi_n(\tilde{g})$ must displace both $x_n$ and $y_n$ by a distance which is significantly smaller than $d(x_n,y_n)$ $\omega$-almost-surely. More precisely, we have that
    \begin{align*}
        d(x_n,y_n) & > 100 \cdot R \\
        d(x_n,y_n) & > 100 \cdot \max \left\{ d(x_n,\varphi_n(\tilde{g})x_n),d(y_n,\varphi_n(\tilde{g})y_n) \right\} \\
        d(x_n,y_n) & > 100 \cdot \max \left\{ d(x_n,\varphi_n(\tilde{c})^jx_n),d(y_n,\varphi_n(\tilde{c})^jy_n) \right\}
    \end{align*}
    holds for all $j\in \llbracket 1,N+1 \rrbracket$ $\omega$-almost-surely. Choose two points $p_n,q_n\in e$ satisfying $d(x_n,p_n)<d(x_n,q_n)$, $d(p_n,q_n)>R$ and
    \begin{align*}
        \min \{ d(x_n,p_n), d(q_n,y_n) \} > 10 \cdot \left( \max_{h \in \{g,c,c^2,\ldots,c^{N+1}\}}  \left\{ d\left(x_n,\varphi_n(\tilde{h})x_n\right), d\left(y_n,\varphi_n(\tilde{h})y_n \right) \right\} \right)
    \end{align*}
    $\omega$-almost-surely. It follows from Lemma \ref{quasitrans} that each of $g,c,c^2,\ldots,c^{N+1}$ acts on a subsegment of $[x,y]$ which contains $p_n$ and $q_n$ by $2 \delta$-quasi-translation, and therefore both $d\left(p_n,\left[\varphi_n(\tilde{g}),\varphi_n(\tilde{c})^j\right]p_n\right) \leq 8 \delta$ and $d\left(q_n,\left[\varphi_n(\tilde{g}),\varphi_n(\tilde{c})^j\right]q_n\right) \leq 8 \delta$ hold for every $j \in \llbracket 1,N+1 \rrbracket$ $\omega$-almost-surely. The acylindricity condition implies that not all of the $N+1$ commutators can be distinct and there are $i,j\in\llbracket 1,N+1 \rrbracket$ for which
    \begin{align*}
        \left[\varphi_n(\tilde{g}),\varphi_n(\tilde{c})^j\right]  = &\varphi_n(\tilde{g}) \varphi_n(\tilde{c})^j\varphi_n(\tilde{g}) ^{-1} \varphi_n(\tilde{c})^{-j} \\ = & \varphi_n(\tilde{g}) \varphi_n(\tilde{c})^i\varphi_n(\tilde{g}) ^{-1} \varphi_n(\tilde{c})^{-i} \\ = &\left[\varphi_n(\tilde{g}),\varphi_n(\tilde{c})^i\right].
    \end{align*}
    In particular, $\varphi_n(\tilde{g})$ commutes with $\varphi_n(\tilde{c})^{j-i}$ $\omega$-almost-surely. Since $\abs{j-i} \le N$, the element $c_e=c^{N!}$ is a power of $c^{j-i}$ and $\varphi_n(\tilde{g})$ commutes with $\varphi_n(\tilde{c}_e)$ $\omega$-almost-surely; hence $g$ commutes with $c_e$.
\end{proof}

\begin{proof}[Proof of Theorem \ref{shortening_arg}]
    The proof of this theorem is divided in two: using results from \cite{RW14} and \cite{RS94} and explaining briefly how they adapt to our setting, we first find a suitable modular automorphism $\alpha \in \mathrm{Mod}^*_{\mathbb{R}_L}(L)$. Then, by means of Proposition \ref{mod_approx}, we find an $\mathbb{R}_L$-approximation $A$ of $L$ and an automorphism $\beta$ of $A$ which satisfy the desired properties.
    
    \smallskip
    
    The idea behind the proof amounts to finding a finite sequence of modular automorphisms of $L$, each of which shortens the actions of the generators $S$ of $H$ over $U$ with respect to the different vertex actions in the graph of actions decomposition $\mathbb{R}_L$ of $L$. The construction of the modular automorphism $\alpha \in \mathrm{Mod}^*_{\mathbb{R}_L}(L)$ relies on the proofs appearing in \cite{RW14} for the axial and Seifert-type cases, and on the proof appearing in \cite{RS94} for the simplicial case. We begin with vertex actions $L_v \curvearrowright T_v$ which admit dense orbits, namely axial and Seifert-type vertex actions. Recall that if $L_v$ is of Seifert-type, then the index of the $2$-orbifold subgroup of $L_v$ is at most $C$. Therefore, by \cite[Subsections 4.2.1 and 4.2.2]{RW14}, for every such vertex action and every finite subset $F\subset H$, there exists a modular automorphism $\alpha_v^F \in \mathrm{Mod}^*_{\mathbb{R}_L}(L)$ of type (3) or (4) which satisfies the following: denote by $o=(o_n)_{n\in\mathbb{N}}$ the base point of $T$, then for every $f \in F$,
    \begin{align*}
        d_T(o,\alpha_v^F(\varphi_\infty(f))o)<d_T(o,\varphi_\infty(f)o)
    \end{align*}
    whenever $[o,\varphi_\infty(f)o]$ has a non-degenerate intersection with a translate of $T_v$ in $T$, and $d_T(o,\alpha_v^F(\varphi_\infty(f))o)=d_T(o,\varphi_\infty(f)o)$ otherwise.
    
    \smallskip
    
    This allows us to shorten the actions (on the real tree $T$) of all the generators which intersect (a translate of) an axial or a Seifert-type component of $T$ non-degenerately: let $v_1,\cdots,v_m$ be an enumeration of the axial and Seifert-type vertices of $\mathbb{R}_L$; we define a sequence $(\alpha_1,\ldots,\alpha_m)\in \mathrm{Mod}^*_{\mathbb{R}_L}(L)^m$ iteratively. Let $\alpha_1=\alpha_{v_1}^{\varphi_\infty(S)}$, and after $\alpha_1,\ldots,\alpha_{i}$ were defined let
    \begin{align*}
        \alpha_{i+1}=\alpha_{v_{i+1}}^{\alpha_i\circ \cdots \alpha_1 (S)}.
    \end{align*}
    Now note that for every $s\in S$ such that $[o,\varphi_\infty(f)o]$ has a non-degenerate intersection with (a translate of) an axial or a Seifert-type component of $T$, we have that
    \begin{align*}
        d_T(o,\alpha_m\circ \cdots \circ \alpha_1 \circ \varphi_\infty(s)o) < d_T (o, \varphi_\infty(s)o).
    \end{align*}
    
    \smallskip
    
    Bring to mind that the modular automorphism $\alpha_m \circ \cdots \circ \alpha_1$ of $L$ does not necessarily shorten the actions of $S$ on $T$: it could be that for every $s \in S$, $[o,\varphi_\infty(s)o]$ is contained entirely in the discrete part of $T$, that is it does not intersect non-degenerately (translates of) axial and Seifert-type components of $T$. We therefore adapt \cite[Theorem 6.1]{RS94} to our settings. This theorem states that for every finite set $F \subset H$ there is a modular automorphism $\alpha^F_\mathrm{sim}\in \mathrm{Mod}^*_{\mathbb{R}_L}(L)$ of $L$, which can be written as a composition of Dehn twists about elements which lie in the edge groups of $T$, and which satisfies the following: for every $f\in F$ which does not fix $o$, and such that $[o,\varphi_\infty(f)o]$ lies entirely in the discrete part of $T$, let $f_{\alpha^F_\mathrm{sim}}$ be a lift of $\alpha^F_{\mathrm{sim}}(\varphi_\infty(f))$ to $H$; then
    \begin{align*}
        d\left(o_n,\varphi_n\left(f_{\alpha^F_\mathrm{sim}}\right)o_n\right) < d(o_n, \varphi_n(f)o_n),
    \end{align*}
    $\omega$-almost-surely. In addition, for every $f \in F$,
    \begin{align*}
        d_T(o, \alpha^F_\mathrm{sim}(\varphi_\infty(f))o)=d_T(o, \varphi_\infty(f)o).
    \end{align*}
    
    Note that in this case the modular automorphism $\alpha^F_\mathrm{sim}$ does not shorten the actions of the elements in $F$ on $T$, but rather shortens the actions of the elements in $F$ direclty on the spaces $X_n$.
    
    \smallskip
    
    In the proof of Theorem \cite[Theorem 6.1]{RS94}, one finds Dehn twists over the edge groups of $\mathbb{R}_L$, where each Dehn twist does not affect the displacement of $o$ by the elements of $F$ in $T$, but does affect the displacement of $o_n$ by $\varphi_n(F)$ in $X_n$. The construction of a Dehn twist over the stabilizer of an edge $e$ of $\mathbb{R}_L$ is divided into three cases:
        \begin{enumerate}
        \item $o$ lies in the interior of $e$ and $L$ splits as an amalgamated product over the stabilizer of $e$;
        \item $o$ lies in the interior of $e$ and $L$ splits as an HNN extension over the stabilizer of $e$;
        \item $o$ does not lie in the interior of $e$, and is one of its vertices.
    \end{enumerate}
    In the last case, one considers all edges in $T$ which are adjacent to $o$ and shortens the action of the generators with respect to all of these edges simultaneously. The proof appearing in \cite{RS94} can be transitioned almost seamlessly to our setting. In \cite{RS94}, the stabilizers of edges in the limiting tree are cyclic, whereas in our case they are virtually abelian. Therefore, in our case we can not take Dehn twists by any element in an edge group $L_e$ of $\mathbb{R}_L$, and take Dehn twists by a power of an element $c_e\in Z(L_e)$, of infinite order, which exists by Lemma \ref{center}. The only other parts which do not carry over to our settings are Lemmas 6.2, 6.5, 6.8 and 6.11 in \cite{RS94} which assert that there are elements of infinite order in the edge groups of $\mathbb{R}_L$ which satisfy the following: let $\tilde{c}\in H$ be a lift of such an element, then $\varphi_n(\tilde{c})$ displaces certain points in $X_n$ by a distance bounded from below by $10 \delta$ or $20 \delta$ $\omega$-almost-surely. These are the elements by which one takes the Dehn twists. We can easily overcome this: by \cite{Bow08}, there is $\eta > 0$ such that
    \begin{align*}
        \eta < \ell (g) = \lim_{n\rightarrow \infty} \frac{1}{n}d(g^no,o)
    \end{align*}
    for every hyperbolic $g \in G$. In addition, denote $\norm{g}=\inf_{x \in X} \{d(x,gx)\}$ and by \cite[Lemma 10.6.4]{CDP90} we have that $\ell(g)\leq \norm{g}$ and clearly $\ell(g^n)=n \ell(g)$. Therefore, given $D>0$ there exists $N$ such that
    \begin{align*}
        D < N\eta < N \ell (g) = \ell (g^N) \leq \norm{g^N} = \inf_{x \in X} \{d(x,g^N x)\}
    \end{align*}
    for every hyperbolic $g \in G$. We also remark that one might have to enlarge the constant $C_0$ appearing in \cite{RS94} to accommodate with the choice of $N$ above. This implies that the proof of \cite[Theorem 6.1]{RS94} can be carried out in our setting. 
    
    \smallskip
    Now let
    \begin{align*}
        \alpha=\alpha_{\mathrm{sim}}^{\alpha_m\circ\cdots\circ\alpha_1\circ\varphi_\infty(S)}\circ\alpha_m\circ\cdots\circ\alpha_1 \in \mathrm{Mod}^*_{\mathbb{R}_L}(L).
    \end{align*}
    For every $s\in S$, denote by $s_\alpha \in H$ a lift of $\alpha(\varphi_\infty(s))$ to $H$. Since $d_T(o,\alpha(\varphi_\infty(s))o)<d_T(o,\varphi_\infty(s)o)$ whenever $[o,\varphi_\infty(s)o]$ intersects a translate of an axial or a Seifert type component of $T$ non-degenerately, the following holds $\omega$-almost-surely:
    \begin{align*}
        d(o_n,\varphi_n(s_\alpha)o_n)<d(o_n,\varphi_n(s)o_n).
    \end{align*}
    By Proposition \ref{mod_approx} there exists an $\mathbb{R}_L$-approximation $A$ of $L$ and $\beta\in\mathrm{Aut}(A)$ such that the following diagram commutes $\omega$-almost-surely.
    \begin{align*}
        \xymatrix{H \ar[rrrrrr]^{\varphi_n} \ar@{->>}[drr]^q \ar@{->>}[ddrr]_{\varphi_\infty} && && && G \\
        && A \ar[rr]_\beta \ar@{->>}[d]^{\theta_\infty} \ar[urrrr]^{\theta_n}&& A \ar@{->>}[d]^{\theta_\infty} && \\
        && L \ar[rr]^\alpha && L &&
        }
    \end{align*}
    We claim that the approximation $A$ and its automorphism $\beta$ satisfy the properties described in the theorem.
    
    \smallskip
    
    By Remark \ref{elliptic_mod}, since the action of $U$ on $T$ is elliptic, $\beta$ restricts to conjugation on $U$. Hence condition $\emph{(1)}$ holds. For $\emph{(2)}$, note that for every $s\in S$, $\alpha(\varphi_\infty(s))$ and $\beta(q(s))$ share the same lift in $H$; setting $\varphi_n=\theta_n\circ \beta\circ q$, it follows that
    \begin{align*}
        d(o_n,\theta_n\circ\beta\circ q(s)o_n)=d(o_n,\phi_n(s)o_n)<d(o_n,\varphi_n(s)o_n),
    \end{align*}
    and in particular, since $o_n$ realizes the infimum $\inf_{x \in X} \max_{s \in S} d(x,\varphi_n(s)x)$,
    \begin{align*}
        \norm{\phi_n} &= \inf_{x \in X} \max_{s \in S} d(x,\phi_n(s)x) \\
        & \le \max_{s \in S} d(o_n,\phi_n(s)o_n) \\
        & < \max_{s \in S} d(o_n,\varphi_n(s)o_n) \\
        &= \norm{\varphi_n}
    \end{align*}
    $\omega$-almost-surely. Last, Property $\emph{(3)}$ follows directly from Proposition \ref{mod_approx}.
\end{proof}

The proof of Theorem \ref{shortening_arg2} is very similar to the proof of Theorem \ref{shortening_arg}.
\begin{proof}[Proof of Theorem \ref{shortening_arg2}]
    The proof is identical to that of Theorem \ref{shortening_arg}, with one change (applied to both Theorem \ref{shortening_arg} and Proposition \ref{mod_approx}): one has to take natural extensions of automorphisms of axial and Seifert-type vertex group with respect to the entire graph of groups decomposition $\mathbb{RJ}_L$ of $L$ (and not with respect to a graph of actions outputted by the Rips machine) instead of modular automorphisms of $L$ with respect to $\mathbb{R}_L$.
\end{proof}

\section{Test sequences}

The goal of this section is to define test sequences and prove important preliminary results about these sequences that will play a crucial role in the proof of Merzlyakov's theorem \ref{th0bis}.

\subsection{Transverse covering}

We will use the following definitions (see \cite{Gui04}, Definitions 4.6 and 4.8).

\begin{de}\label{transverse}Let $T$ be a real tree endowed with an action of a group $G$, and let $(Y_j)_{i\in J}$ be a $G$-invariant family of non-degenerate closed subtrees of $T$. We say that $(Y_j)_{j\in J}$ is a transverse covering of $T$ if the following two conditions hold.
\begin{itemize}
\item[$\bullet$]\emph{Transverse intersection:} if $Y_i\cap Y_j$ contains more than one point, then $Y_i=Y_j$.
\item[$\bullet$]\emph{Finiteness condition:} every arc of $T$ is covered by finitely many $Y_j$.
\end{itemize}
\end{de}

\begin{de}\label{squelette}Let $T$ be a real tree, and let $(Y_j)_{j\in J}$ be a transverse covering of $T$. The \emph{skeleton} of this transverse covering is the bipartite simplicial tree $S$ defined as follows:
\begin{enumerate}
\item $V(S)=V_0(S)\sqcup V_1(S)$ where $V_1(S)=\lbrace Y_j \ \vert \ j\in J\rbrace$ and $V_0(S)$ is the set of points $x\in T$ that belong to at least two distinct subtrees $Y_i$ and $Y_j$. 
\item There is an edge $\varepsilon=(Y_j,x)$ between $Y_j\in V_1(S)$ and $x\in V_0(S)$ if and only if $x$, viewed as a point of $T$, belongs to $Y_j$, viewed as a subtree of $T$.
\end{enumerate}
\end{de}

\begin{rque}The stabilizer of a vertex of $S$ is the stabilizer $G_{Y_i}$ or $G_e$ of the corresponding subtree or point of $T$. The stabilizer of an edge $\varepsilon=(Y_j,x)$ is $G_{Y_i}\cap G_x$. Moreover, the action of $G$ on $S$ is minimal provided that the action of $G$ on $T$ is minimal (see \cite{Gui04} Lemma 4.9).\end{rque}

\subsection{Bounding the number of branch points}

Recall (see Definition \ref{recall_definition_stable}) that a group action on a real tree is stable if any non-degenerate arc contains a non-degenerate stable subarc. We need to strengthen this definition.

\begin{de}\label{Msuper}An action on a real tree is $K$\emph{-superstable} if every arc whose pointwise stabilizer has order greater than $K$ is stable.\end{de}

Let $T$ be a real tree, and let $x$ be a point of $T$. A \emph{direction} at $x$ is a connected component of $T\setminus\lbrace x\rbrace$. We say that $x$ is a \emph{branch point} if there are at least three directions at $x$. The following result is a work in preparation by Guirardel and Levitt (improving \cite{Gui01}).

\begin{te}\label{guilev}Let $L$ be a group acting on a real tree $T_L$. Suppose that $L$ is finitely generated relative to a subgroup $G$ elliptic in $T_L$. Suppose that the action is $K$-superstable for some constant $K$, with finitely generated arc stabilizers. Then every point stabilizer is finitely generated relative to $G$, the number of orbits of branch points in $T_L$ is finite, the number of orbit of directions at branch points in $T_L$ is finite.\end{te}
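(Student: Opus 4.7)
The strategy would be a relative, $K$-superstable refinement of the argument of \cite{Gui01}. The plan has three stages. First, I would apply (a relative version of) the Rips machine, Theorem \ref{ripsmachine}, to produce a transverse covering of $T_L$ (in the sense of Definition \ref{transverse}) by subtrees $Y_v$ of three types: simplicial, axial (a line with $L_v$-action with dense orbits), and Seifert-type (dual to an arational measured foliation on a compact $2$-orbifold). The $K$-superstability hypothesis replaces the usual stability: arcs with pointwise stabilizer of order $>K$ are stable and behave classically, while arcs with smaller (hence finite, of order $\leq K$) stabilizers play the role of unstable arcs. This is precisely what is needed for the hypotheses of Theorem \ref{ripsmachine} to hold --- unstable arc stabilizers are finite, hence finitely generated and not properly conjugate into themselves, and the ascending chain condition follows from $K$-superstability together with finite generation of arc stabilizers. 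The skeleton $S$ of this transverse covering (Definition \ref{squelette}) is then a simplicial $L$-tree on which $G$ acts elliptically.

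Second, I would establish finiteness of orbits of vertices of $S$ by a relative accessibility theorem. Since $L$ is finitely generated relative to $G$, $G$ is elliptic in $S$, and the edge stabilizers of $S$ are either finite of order at most $K$ or finitely generated (being intersections of vertex stabilizers in $T_L$ with arc stabilizers), one can invoke a Bestvina--Feighn/Delzant/Sela-style relative accessibility theorem to conclude that the quotient $L\backslash S$ is a finite graph of groups. Combining this with the internal combinatorics of each piece yields finiteness of orbits of branch points: simplicial pieces contribute their (finitely many orbits of) vertices by accessibility; Seifert-type pieces contribute conical points, boundary components, and attaching points, all bounded by the finiteness of orbifold data together with the accessibility of $S$; axial pieces contribute only attaching points, whose orbits are in bijection with the finitely many orbits of incident edges of $S$. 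The same analysis, refined to count directions rather than points, gives the finiteness of orbits of directions at branch points.

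Third, for the relative finite generation of point stabilizers, I would combine Bass--Serre theory on $S$ with the piece-by-piece structure. A point $x\in T_L$ either lies in the interior of a unique $Y_v$, in which case $\mathrm{Stab}_L(x)$ is a point stabilizer inside $Y_v$ (finitely generated relative to incident arc stabilizers in the simplicial case; a subgroup of a finitely presented $2$-orbifold group in the Seifert-type case; and controlled by $K$-superstability together with finite generation of arc stabilizers in the axial case); or $x$ is a vertex of $V_0(S)$, and $\mathrm{Stab}_L(x)$ is a vertex stabilizer of $S$, which is finitely generated relative to $G$ by the relative accessibility of $S$ and the fact that the incident edge stabilizers of $S$ are finitely generated.

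The main obstacle I expect is controlling the interaction between the ``small'' $K$-bounded arc stabilizers and the stable part, both in the Rips decomposition and in the accessibility argument. A priori, unstable arcs with small stabilizers could carry intricate nested structure, and it requires care to verify that they gather into finitely many $L$-orbits so that the skeleton $S$ has a well-behaved finite quotient graph of groups; this is presumably the step where the improvement over \cite{Gui01} lies, and where the relative setting interacts most delicately with the superstability hypothesis.
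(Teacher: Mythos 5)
First, a point of comparison: the paper does not prove this statement at all. Theorem \ref{guilev} is quoted as a result from work in preparation by Guirardel and Levitt, improving \cite{Gui01}, so there is no internal proof against which to measure your argument; what you are sketching is essentially the content of that unpublished work, and the sketch as written has genuine gaps.

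The main gap is in your first two stages. Theorem \ref{ripsmachine} does not simply hand you a transverse covering: its conclusion is a trichotomy, and in outcomes (1) and (2) you only obtain a splitting of $L$ over an unstable arc stabilizer or over a tripod stabilizer. To reach a graph-of-actions decomposition you must iterate over such splittings, and already at that stage you need an accessibility statement; you then need accessibility again to bound the orbits of vertices and edges of the skeleton $S$. But the accessibility you invoke is not available off the shelf in this setting: $L$ is only finitely generated \emph{relative} to $G$ (in particular not finitely presented, so Bestvina--Feighn accessibility does not apply), and the arc stabilizers --- hence the edge groups of $S$ --- are merely finitely generated, not finite and not of bounded order, so Linnell/Dunwoody-type accessibility does not apply either (only the unstable arc stabilizers are bounded by $K$; the stable ones can be arbitrary finitely generated groups). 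Since the whole theorem (finiteness of orbits of branch points and directions, relative finite generation of point stabilizers) is essentially equivalent to this kind of complexity bound, invoking ``a relative accessibility theorem'' here is circular: it assumes the hard part. The same issue resurfaces in your counting of branch points inside the pieces --- for instance, branch points of $T_L$ lying on an axial line correspond to attaching points, whose orbit-finiteness is exactly the finiteness of orbits of incident edges of $S$ that has not been established --- and in the axial and Seifert cases the relative finite generation of point stabilizers is asserted rather than derived. So the architecture (Rips-type analysis of the pieces plus a complexity/accessibility bound, as in \cite{Gui01}) is the right one, but the steps you defer are precisely where the new work of Guirardel--Levitt lies, and they are not consequences of the results quoted in this paper.
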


\subsection{Small cancellation condition}\label{Coulon}Let $(X,d)$ be a $\delta$-hyperbolic simplicial graph, let $G$ be a group acting on $(X,d)$ by isometries, and let $g$ be an element of $G$. We define the \emph{translation length} of $g$ by $\vert\vert g\vert\vert=\inf_{x\in X}d(x,gx)$. If $g$ is hyperbolic, the \emph{quasi-axis} of $g$, denoted by $A(g)$, is the union of all geodesics joining $g^{-}$ and $g^{+}$. By Lemma 2.26 in \cite{Cou13}, the quasi-axis $A(g)$ is 11$\delta$-quasi-convex. If $g'$ is another hyperbolic element of $G$, one defines the \emph{fellow traveling constant} $\Delta(g,g')$ as follows: \[\Delta(g,g')=\mathrm{diam}\left(A(g)^{+100\delta}\cap A(g')^{+100\delta}\right)\in\mathbb{N}\cup\lbrace\infty\rbrace,\] where $A(g)^{+100\delta}$ is the $100\delta$-neighbourhood of $A(g)$ in $(X,d)$, and $A(g')^{+100\delta}$ is defined similarly. Recall that if $G$ acts acylindrically on $(X,d)$, then every hyperbolic element $g$ is contained in a unique maximal infinite virtually cyclic subgroup $\Lambda (g)$ of $G$ (see \cite{DGO17}, Lemma 6.5). Moreover, there exists a constant $ N(g) \geq 0 $ such that every element $h \in G $ satisfying $\Delta (g,hgh^{-1}) \geq N(g) $ belongs to $\Lambda (g)$ (see for example \cite{Cou16}). In addition, if $g$ and $h$ are hyperbolic, then either $\Lambda (g)=\Lambda (h)$ or $\Lambda (g)\cap \Lambda (h)$ is finite.

\begin{de}\label{SCC0}Let $\varepsilon>0$. We say that a hyperbolic element $g\in G$ satisfies the \emph{$\varepsilon$-small cancellation condition} if the following holds: for every $h\in G$, if \[\Delta(g,hgh^{-1}) > \varepsilon \vert \vert g\vert \vert,\] then $h$ and $g$ commute (so $h$ belongs to $\Lambda (g)$). In particular, $g$ is central in $\Lambda (g)$.\end{de}

\begin{de}\label{SCC}Let $\varepsilon>0$. We say that a tuple $(g_1,\ldots,g_p)\in G^p$ of hyperbolic elements satisfies the \emph{$\varepsilon$-small cancellation condition} if the following condition holds: for every $h\in G$, for every $(i,j)\in\llbracket 1,p\rrbracket^2$, if \[\Delta(g_i,hg_jh^{-1}) > \varepsilon \min(\vert \vert g_i\vert \vert,\vert\vert g_j\vert\vert),\] then $i=j$, and the elements $h$ and $g_i$ commute. In particular, $h$ belongs to $\Lambda (g_i)$. As a consequence, $g_i$ is central in $\Lambda (g_i)$ and for every $(i,j)\in\llbracket 1,p\rrbracket^2$, we have $\Lambda (g_i)\neq \Lambda (g_j)$, i.e.\ $\Lambda (g_i)\cap \Lambda (g_j)=E(G)$.\end{de}

\subsection{Preliminary lemmas}

Let $G$ be an acylindrically hyperbolic group. By \cite[Theorem 1.2]{Osi16}, there exists a generating set $S$ of $G$ such that the Cayley graph $X$ of $G$ with respect to $S$ is $\delta$-hyperbolic, for some $\delta\geq 0$, and the natural action of $G$ on $X$ is acylindrical and non-elementary. We denote by $d$ the word metric on $X$ associated with $S$, and by $1$ the neutral of $G$, viewed as a point of $X$. Given a hyperbolic element $g\in G$, recall that $\vert\vert g\vert\vert$ denotes the translation length of $g$, and that $A(g)$ denotes the quasi-axis of $g$.

\begin{de}\label{suitetest}Let $G$ be an acylindrically hyperbolic group, and let $\bm{a}$ be a tuple of elements of $G$. Fix a presentation $\langle \bm{a} \ \vert \ R(\bm{a})=1\rangle$ for the subgroup of $G$ generated by $\bm{a}$. Let $\Sigma(\bm{x},\bm{y},\bm{a})=1$ be a finite system of equations over $G$, where $\bm{x}$ and $\bm{y}$ are tuples of variables. Denote $G_{\Sigma}=\langle \bm{x},\bm{y},\bm{a} \ \vert \ R(\bm{a})=1, \Sigma(\bm{x},\bm{y},\bm{a})=1\rangle$. Let $p=\vert \bm{x}\vert$ be the arity of $\bm{x}$, and let $x_i$ denote the $i$th component of $\bm{x}$. Let $(\sigma_1,\ldots,\sigma_p)$ be a $p$-tuple of elements of $\mathrm{Aut}_G(E(G))$. A sequence of homomorphisms $(\varphi_n : G_{\Sigma} \rightarrow G)_{n\in\mathbb{N}}$ is called a $(\sigma_1,\ldots,\sigma_p)$-\emph{test sequence} if the following four conditions hold.
\begin{enumerate}
\item For every integer $n$, the morphism $\varphi_n$ coincides with a conjugation on $\bm{a}$.\smallskip
\item For every integer $i\in\llbracket 1,p\rrbracket$, the translation length $\vert \vert \varphi_n(x_i)\vert \vert$ of $\varphi_n(x_i)$ tends to infinity as $n$ tends to infinity. 
\smallskip
\item For every $(i,j)\in\llbracket 1,p\rrbracket^2$, there exists a real number $r_{i,j}\in [0,+\infty]$ such that the ratio \[\frac{\vert \vert \varphi_n(x_i)\vert \vert}{\vert \vert \varphi_n(x_j)\vert \vert}\]tends to $r_{i,j}$ as $n$ tends to infinity.
\smallskip
\item There exists a sequence of positive real numbers $(\varepsilon_n)_{n\in\mathbb{N}}$ converging to $0$ such that, for every integer $n$, the tuple $\varphi_n(\bm{x})$ satisfies the $\varepsilon_n$-small cancellation condition (see Definition \ref{SCC}), and the following equality holds, for every integer $1\leq i\leq p$: \[\Lambda (\varphi_n(x_i))=\langle \varphi_n(x_i), E(G) \ \vert \ \mathrm{ad}(\varphi_n(x_i))_{\vert E(G)}=\sigma_i\rangle.\]In particular, the image of $\varphi_n(x_i)$ in $\Lambda (\varphi_n(x_i))/E(G)$ has no roots.
\smallskip
\end{enumerate}
In the particular case where $(\sigma_1,\ldots,\sigma_p)=(\mathrm{id}_{E(G)},\ldots,\mathrm{id}_{E(G)})$, one simply says that $(\varphi_n)_{n\in\mathbb{N}}$ is a \emph{test sequence}.
\end{de}


\begin{rque}\label{impor}Let $(\varphi_n : G_{\Sigma} \rightarrow G)_{n\in\mathbb{N}}$ be a $(\sigma_1,\ldots,\sigma_p)$-test sequence. Let $U$ be the subgroup of $G_{\Sigma}$ generated by $\bm{x}$ and $\bm{a}$. Since the translation length $\vert\vert \cdot\vert\vert $ is constant on conjugacy classes, one easily sees that any sequence $(\theta_n : G_{\Sigma} \rightarrow G)_{n\in\mathbb{N}}$ such that $\theta_n$ coincides on $U$ with $\varphi_n$ up to conjugation is also a $(\sigma'_1,\ldots,\sigma'_p)$-test sequence for some $(\sigma'_1,\ldots,\sigma'_p)\in\mathrm{Aut}_G(E(G))^p$.\end{rque}

\begin{rque}Note that any subsequence of a $(\sigma_1,\ldots,\sigma_p)$-test sequence is a $(\sigma_1,\ldots,\sigma_p)$-test sequence as well.
\end{rque}

The following easy lemma will be useful in the sequel.

\begin{lemme}\label{intersection}Let $(\varphi_n)_{n\in\mathbb{N}}$ be a $(\sigma_1,\ldots,\sigma_p)$-test sequence. For every infinite subset $A\subset \mathbb{N}$ and every integer $1\leq i\leq p$, we have \[\bigcap_{n\in A}\Lambda (\varphi_n(x_i))=E(G).\] 
\end{lemme}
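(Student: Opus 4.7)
First I would establish the easy containment $E(G) \subseteq \bigcap_{n\in A}\Lambda(\varphi_n(x_i))$. This is essentially formal: since $E(G)$ is finite and normal in $G$, for every $e\in E(G)$ and every $x\in X$ the conjugate $x^{-1}ex$ stays in the finite set $E(G)$, so $d(x,ex) = |x^{-1}ex|_S$ is bounded by $D := \max_{f\in E(G)}|f|_S$ independently of $x$. Hence $e$ moves any quasi-axis of a hyperbolic element by Hausdorff distance at most $D$, which places $e$ in $\Lambda(\varphi_n(x_i))$ for every $n$ via Lemma \ref{lemmefin}.

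The serious work is the reverse inclusion. Fix $h$ in the intersection; since each $\Lambda(\varphi_n(x_i))$ is virtually cyclic, $h$ is either of finite order or hyperbolic, and I would split into these two cases. If $h$ has finite order, condition (4) of Definition \ref{suitetest} finishes the matter almost immediately: the image of $\varphi_n(x_i)$ in $\Lambda(\varphi_n(x_i))/E(G)$ generates and has no roots, so that quotient is infinite cyclic and in particular torsion-free; the image of $h$ in it must therefore be trivial, giving $h \in E(G)$.

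The hyperbolic case is where I would aim for a contradiction with condition (2) of Definition \ref{suitetest}. Applying Lemma \ref{lemmefin} to the pair $(h,\varphi_n(x_i))$, the axes of $h$ and of $\varphi_n(x_i)$ lie at finite Hausdorff distance, so $\Lambda(h) = \Lambda(\varphi_n(x_i))$ for every $n\in A$. Consequently all the groups $\Lambda(\varphi_n(x_i))$, $n\in A$, coincide with a single virtually cyclic group $\Lambda$, and inside $\Lambda/E(G)\cong\mathbb{Z}$ every $\varphi_n(x_i)$ projects to a generator. Hence for any two indices $n,m\in A$ one has $\varphi_m(x_i) = e\,\varphi_n(x_i)^{\pm 1}$ for some $e\in E(G)$; using the uniform displacement bound $D$ above together with $\|g^{-1}\|=\|g\|$, I would conclude $\bigl|\,\|\varphi_n(x_i)\| - \|\varphi_m(x_i)\|\,\bigr| \le D$ for all $n,m \in A$. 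This contradicts the unboundedness of $\|\varphi_n(x_i)\|$ along the infinite set $A$.

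The main obstacle is the hyperbolic case, which forces one to combine two ingredients of the test-sequence definition that look independent at first sight: the no-roots-mod-$E(G)$ property (4) and the growth of translation lengths (2). The rest is a routine analysis of the virtually cyclic structure of $\Lambda(\varphi_n(x_i))$ via Lemma \ref{lemmefin}.
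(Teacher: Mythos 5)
Your argument is correct, but it is organised rather differently from the paper's proof, which is much more direct: there, one simply uses condition (4) of Definition \ref{suitetest} to write any $g$ in the intersection as $g=\varphi_n(x_i)^{k_n}g_n$ with $g_n\in E(G)$ for every $n\in A$, and observes that $k_n\neq 0$ along a subsequence would force $\Vert g\Vert$ to exceed quantities comparable to $\vert k_n\vert\,\Vert\varphi_n(x_i)\Vert\to\infty$, contradicting the constancy of $\Vert g\Vert$; hence $k_n=0$ eventually and $g\in E(G)$. Your route reaches the same contradiction between condition (2) and boundedness, but only after a detour: you split into the torsion and hyperbolic cases, identify all the groups $\Lambda(\varphi_n(x_i))$, $n\in A$, with a single $\Lambda$, and then compare two test-sequence values $\varphi_n(x_i)$ and $\varphi_m(x_i)$ as generators of $\Lambda/E(G)\cong\mathbb{Z}$, rather than comparing the fixed element $g$ with powers of $\varphi_n(x_i)$. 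Two small remarks. First, the inclusion $E(G)\subseteq\Lambda(\varphi_n(x_i))$ needs no displacement estimate: it is immediate from condition (4), which presents $\Lambda(\varphi_n(x_i))$ as $\langle\varphi_n(x_i),E(G)\rangle$. Second, the step ``$h$ hyperbolic and $h\in\Lambda(\varphi_n(x_i))$ implies $\Lambda(h)=\Lambda(\varphi_n(x_i))$'' is not literally what Lemma \ref{lemmefin} states; it follows from it via the standard commensurability argument (two infinite-order elements of a virtually cyclic group share a common nonzero power, hence have quasi-axes at finite Hausdorff distance, and the Hausdorff-distance description of $\Lambda$ then gives equality), and similarly the dichotomy ``finite order or hyperbolic'' for elements of $\Lambda(\varphi_n(x_i))\cong E(G)\rtimes\mathbb{Z}$ deserves the one-line justification that an infinite-order element has a power lying in $E(G)\varphi_n(x_i)^{k}$ with $k\neq 0$ and is therefore hyperbolic. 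With these points spelled out your proof is complete; it simply buys the same conclusion at a higher cost than the paper's two-line computation.
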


\begin{proof}Suppose that $g$ belongs to $\Lambda (\varphi_n(x_i))$ for every $n\in A$. Then, there exists an integer $k_n$ and an element $g_n\in E(G)$ such that $g=\varphi_n(x_i)^{k_n}g_n$, for every $n\in A$. Now, observe that $k_n$ must be equal to $0$ for every $n$ large enough, otherwise (up to extracting a subsequence) $\vert\vert \varphi_n(x_i)^{k_n}\vert\vert$ goes to infinity, and so does the constant $\vert\vert g\vert\vert$, which is a contradiction. It follows that $g$ belongs to $E(G)$.\end{proof}

Let $G$ be an acylindrically hyperbolic group. We keep the same notations as above. Let $(\varphi_n: G_{\Sigma} \rightarrow G)_{n\in\mathbb{N}}$ be a $(\sigma_1,\ldots,\sigma_p)$-test sequence. Let $L$ be the quotient of $G_{\Sigma}$ by the stable kernel of the sequence $(\varphi_n)_{n\in\mathbb{N}}$, and let $\varphi_{\infty} : G_{\Sigma}\twoheadrightarrow L$ be the corresponding epimorphism. Let $S$ be a finite generating set of $L$ containing the images of $\bm{x}$ and $\bm{a}$ in $L$ (still denoted by $\bm{x}$ and $\bm{a}$) and let $(X,d)$ be a hyperbolic Cayley graph of $G$ on which $G$ acts acylindrically and non-elementarily. Let $\lambda_n=\inf_{x\in X}\mathrm{max}_{s\in S} \ d(x,\varphi_n(s)x)$ be the displacement (or scaling factor) of $\varphi_n$. Let us consider the rescaled metric $d_n=d/\lambda_n$, let $\omega$ be a non-principal ultrafilter and let $(X_{\omega},d_{\omega})$ be the ultralimit of $((X,d_n))_{n\in\mathbb{N}}$. Classically, $X_{\omega}$ is a real tree and there exists a unique minimal $L$-invariant non-degenerate subtree $T_L\subset X_{\omega}$. Moreover, some subsequence of the sequence $((X,d_n))_{n\in\mathbb{N}}$ converges to $T_L$ in the equivariant Gromov-Hausdorff topology. Let $o$ denote the limit of the sequence $(1)_{n\in\mathbb{N}}$, where $1$ is the neutral of $G$ (viewed as a point of the Cayley graph $X$). This point $o$ is called the base point of $T_L$.

\begin{lemme}\label{fini}Define $K=\lbrace g\in G_{\Sigma} \ \vert \ \varphi_n(g)\in E(G) \ \omega\text{-surely}\rbrace$ and $F=\varphi_{\infty}(K)$. This group $F$ is finite. Moreover, it is the unique maximal finite subgroup of $L$ normalized by $\varphi_{\infty}(x_i)$, for every component $x_i$ of $\bm{x}$, with $1\leq i\leq p$.\end{lemme}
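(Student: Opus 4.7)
The plan is to prove the lemma in three steps: that $F$ is finite, that $F$ is normalised by each $\varphi_\infty(x_i)$, and that $F$ contains every finite subgroup of $L$ normalised by all the $\varphi_\infty(x_i)$. The first two steps are essentially formal; the third is where the test sequence hypothesis does the real work.

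For the first step, fix $g \in K$. Since $\omega$ is a non-principal ultrafilter and $E(G)$ is finite, there is a unique element $e(g) \in E(G)$ with $\varphi_n(g) = e(g)$ $\omega$-almost surely. The assignment $g \mapsto e(g)$ is a homomorphism $K \to E(G)$, and whenever $\varphi_\infty(g) = \varphi_\infty(g')$ one has $g g'^{-1} \in \underleftarrow{\ker}_\omega(\varphi_n)$, so $\varphi_n(g g'^{-1}) = 1$ $\omega$-almost surely, and hence $e(g) = e(g')$. So $e$ descends to an injective homomorphism $F \hookrightarrow E(G)$, proving $|F| \leq |E(G)|$. For the second step, let $\tilde{x}_i \in G_\Sigma$ be a lift of $\varphi_\infty(x_i)$; since $E(G)$ is normal in $G$, one has $\varphi_n(\tilde{x}_i g \tilde{x}_i^{-1}) \in E(G)$ $\omega$-almost surely for every $g \in K$, so $\tilde{x}_i K \tilde{x}_i^{-1} \subseteq K$. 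Applying $\varphi_\infty$ and using finiteness of $F$ gives $\varphi_\infty(x_i) F \varphi_\infty(x_i)^{-1} = F$.

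The heart of the argument is the third step. Let $F' \subseteq L$ be a finite subgroup normalised by every $\varphi_\infty(x_i)$, and pick $g \in F'$ with lift $\tilde{g} \in G_\Sigma$; the aim is to show $\tilde{g} \in K$. Because $F'$ is finite and conjugation by $\varphi_\infty(x_1)$ permutes $F'$, there is an integer $M \geq 1$ (depending only on $|F'|$) such that $\varphi_\infty(x_1)^M$ centralises $g$ and $g^M = 1$. Lifting these equalities back to $G_\Sigma$, both $[\tilde{g}, \tilde{x}_1^M]$ and $\tilde{g}^M$ belong to $\underleftarrow{\ker}_\omega(\varphi_n)$, so $\omega$-almost surely $\varphi_n(\tilde{g})$ commutes with $\varphi_n(x_1)^M$ and $\varphi_n(\tilde{g})^M = 1$. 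Condition (2) of Definition \ref{suitetest} guarantees that $\varphi_n(x_1)$ is hyperbolic for $n$ large, and clause (3) of Lemma \ref{lemmefin} then places $\varphi_n(\tilde{g})$ inside $\Lambda(\varphi_n(x_1))$ $\omega$-almost surely. Finally, condition (4) of Definition \ref{suitetest} presents $\Lambda(\varphi_n(x_1))$ as the semidirect product $E(G) \rtimes_{\sigma_1} \mathbb{Z}$, whose torsion is precisely $E(G)$; combined with $\varphi_n(\tilde{g})^M = 1$, this forces $\varphi_n(\tilde{g}) \in E(G)$ $\omega$-almost surely, so $\tilde{g} \in K$ and $g \in F$.

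The main obstacle I anticipate is ensuring that torsion in $\Lambda(\varphi_n(x_1))$ is confined to $E(G)$: without the explicit presentation of $\Lambda(\varphi_n(x_1))$ as an extension of $\mathbb{Z}$ by $E(G)$ supplied by condition (4) of Definition \ref{suitetest}, one would only know that $\Lambda(\varphi_n(x_1))$ is virtually cyclic, and torsion elements could a priori lie outside $E(G)$. It is exactly this structural feature of the test sequence, together with the hyperbolicity of the $\varphi_n(x_i)$ for large $n$, that converts a purely algebraic constraint in $L$ (the existence of a finite normalised subgroup $F'$) into the concrete conclusion that the lifts of elements of $F'$ land in $E(G)$ along the sequence.
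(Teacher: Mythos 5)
Your proof is correct and follows essentially the same route as the paper's: finiteness of $F=\varphi_\infty(K)$ via the $\omega$-eventual-value embedding into $E(G)$, and maximality by converting membership in a finite normalized subgroup into commutation with a power of $x_i$ plus torsion, then invoking Lemma \ref{lemmefin} and the $E(G)$-by-$\mathbb{Z}$ structure of $\Lambda(\varphi_n(x_i))$ supplied by condition (4) of Definition \ref{suitetest}. One small remark: the paper proves the stronger, per-generator statement (for each $i$, $F$ contains every finite subgroup normalized by $\varphi_\infty(x_i)$ alone), whereas you phrase your third step for subgroups normalized by all the $\varphi_\infty(x_i)$; since your argument only ever uses normalization by $\varphi_\infty(x_1)$, it already gives the stronger version upon replacing $x_1$ by $x_i$.
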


\begin{proof}First, we prove the existence of a unique maximal finite subgroup of $L$ normalized by $\varphi_{\infty}(x_i)$, denoted by $F_i$. Let $\lbrace F_{i,j}\rbrace_{j\in J}$ be the collection of all finite subgroups of $L$ that are normalized by $\varphi_{\infty}(x_i)$, let $A_i=\cup_{j\in J}F_{i,j}$ and let $F$ be the subgroup of $L$ generated by $A_i$. Let $K_i=\varphi_{\infty}^{-1}(F_i)$. We claim that $\varphi_n(K_i)$ is contained in $E(G)$ $\omega$-almost-surely. Let $k\in K_i$ be a preimage of an element of $A_i$. By definition of $A_i$, $\varphi_{\infty}(k)$ is contained in a finite subgroup of $L$ normalized by $\varphi_{\infty}(x_i)$. As a consequence, there exists an integer $m\geq 1$ such that $\varphi_{\infty}([k,x_i^m])=1$. It follows that $\varphi_n([k,x_i^m])$ is trivial $\omega$-almost-surely. By Lemma \ref{lemmefin}, this implies that $\varphi_n(k)$ belongs to $\Lambda (\varphi_n(x_i))$ $\omega$-almost-surely. Now, recall that $\Lambda (\varphi_n(x_i))$ is generated by $\varphi_n(x_i)$ and $E(G)$ by definition of a test sequence. In particular, $\Lambda (\varphi_n(x_i))$ is $E(G)$-by-$\mathbb{Z}$, which proves that $\varphi_n(k)$ is contained in $E(G)$, since $\varphi_n(k)$ has finite order, by definition of $A_i$. Hence,  $\varphi_n(K_i)$ is contained in $E(G)$ $\omega$-almost-surely. It follows that $F_i=\varphi_{\infty}(K_i)$ is finite. Moreover, by construction, $F_i$ is the unique maximal finite subgroup of $L$ normalized by $\varphi_{\infty}(x_i)$.

\smallskip

Let $K$ be the subgroup \[\lbrace g\in G_{\Sigma} \ \vert \ \varphi_n(g)\in E(G) \ \omega\text{-surely}\rbrace\] of $G_{\Sigma}$. We claim that $K_i$ and $K$ coincide (in particular, $K_i$ does not depend on $i$). First, note that the inclusion $K_i\subset K$ was proved in the previous paragraph. Then, let $k$ be an element of $K$, and let us prove that $k$ belongs to $K_i$. Let us define a subgroup $K'_i$ of $G_{\Sigma}$ as follows: $K'_i=\langle \lbrace x_i^{\ell}kx_i^{-\ell}, \ \ell\in\mathbb{N}\rbrace\rangle$. By definition of $K$, the element $\varphi_n(k)$ belongs to $E(G)$ $\omega$-almost-surely. It follows that $\varphi_n(K'_i)$ is a subgroup of $E(G)$ $\omega$-almost-surely. In addition, this subgroup is normalized by $\varphi_n(x_i)$ by construction. As a consequence, $\varphi_{\infty}(K'_i)$ is contained in $F_i$. In particular, $\varphi_{\infty}(k)$ belongs to $F_i$, which implies that $k$ belongs to $K_i$. Hence, one has $K_i=K$ for any integer $1\leq i\leq p$. As a consequence, the finite groups $F_1, \ldots, F_p$ are equal.\end{proof}

In the sequel, we abuse notation and denote by $x_i$ both the element of $G_{\Sigma}$ and its image in $L$ under $\varphi_{\infty}$.

\begin{lemme}\label{lemme2}We keep the same notations and assumptions as above. Let $S\subset L$ be the stabilizer of the base point $o$. Note that $S$ contains each element of the tuple $\bm{a}$. Indeed, each $\varphi_n$ restricts to a conjugation on $\bm{a}$. Suppose that the subgroup $\Gamma:=\langle \bm{x}\cup S\rangle$ of $L$ does not fix a point in $T_L$. Then the minimal subtree $ T_{\Gamma} \subset T_L$ of $\Gamma$ is simplicial, and $\Gamma$ admits a splitting of the form \[\Gamma=\langle \bm{x},S \ \vert \ \mathrm{ad}(x_i)_{\vert F}=\alpha_i, \forall i\in\llbracket 1,p\rrbracket\rangle,\]where $F$ denotes the finite subgroup of $L$ defined in the previous lemma, and $\alpha_i$ denotes the automorphism of $F$ induced by the action of $x_i$ on $F$.\end{lemme}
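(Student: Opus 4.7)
My plan is to exploit the $\varepsilon_n$-small cancellation condition from Definition~\ref{SCC} to show that the $\Gamma$-translates of the axes of the $x_i$'s form a transverse covering of $T_\Gamma$ in the sense of Definition~\ref{transverse}, and then to read off the claimed HNN splitting from the skeleton of this covering. I would begin by collecting the necessary ingredients. Each $x_i$ can be assumed hyperbolic on $T_L$: otherwise $x_i$ lies in $S$ and the corresponding stable letter is absorbed into the vertex group. Denote by $\ell_i$ the axis of $x_i$. By Lemma~\ref{fini}, $F=\varphi_\infty(K)$ where $\varphi_n(K)\subset E(G)$ $\omega$-almost-surely, so elements of $F$ have uniformly bounded displacement on $X$ and fix $o=\lim 1$ after rescaling by $\lambda_n\to\infty$; hence $F\subset S$. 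Condition~(4) of a test sequence gives $E(G)\subset\Lambda(\varphi_n(x_i))$, which passes to the limit as $F\subset\Lambda_L(x_i)=\langle x_i,F\rangle$, so $F$ stabilises $\ell_i$ setwise.

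The small cancellation then transfers to $T_L$ as follows. If $\ell_i\cap g\ell_j$ contains a non-degenerate arc, lifting and rescaling shows that the overlap $\Delta(\varphi_n(x_i),\tilde{g}_n\varphi_n(x_j)\tilde{g}_n^{-1})$ exceeds $\varepsilon_n\min(\|\varphi_n(x_i)\|,\|\varphi_n(x_j)\|)$ for $n$ large (using $\varepsilon_n\to 0$ and $\|\varphi_n(x_i)\|\leq\lambda_n$), so Definition~\ref{SCC} combined with Lemma~\ref{intersection} forces $i=j$ and $g\in\Lambda_L(x_i)$. Thus distinct $\Gamma$-translates of axes meet in at most one point, which is the transverse intersection condition for the family $\mathcal{Y}=\{g\ell_i:g\in\Gamma,\,i\in\llbracket 1,p\rrbracket\}$. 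The finiteness condition follows from Theorem~\ref{guilev} applied to the $C$-superstable action with finite arc stabilisers (Lemma~\ref{stabilitylemma}); and $\mathcal{Y}$ covers $T_\Gamma$ by minimality, $T_\Gamma$ being the union of axes of hyperbolic elements. A key geometric step is that each $\ell_i$ passes through $o$: after conjugating each $\varphi_n$ appropriately (as allowed by Remark~\ref{impor}), the identity $1\in X$ sits at distance $O(\delta)$ from the quasi-axis $A(\varphi_n(x_i))$, and this rescales to $0$ in the ultralimit.

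The skeleton $\mathcal{S}$ of $\mathcal{Y}$ (Definition~\ref{squelette}) is then a simplicial tree on which $\Gamma$ acts, with $V_1$-vertex stabilisers $g\langle x_i,F\rangle g^{-1}$, all $V_0$-vertices lying in $\Gamma\cdot o$ (since $o\in\ell_i$ for every $i$) with stabiliser $S$, and edge stabilisers $\langle x_i,F\rangle\cap S=F$ (any element of $\langle x_i,F\rangle$ fixing $o\in\ell_i$ has zero translation along $\ell_i$ and hence lies in $F$). Reading off the graph of groups yields a central vertex group $S$ connected by $p$ edges of group $F$ to leaf vertex groups $F\rtimes_{\alpha_i}\langle x_i\rangle$; its fundamental group is the iterated amalgam $S\ast_F\langle x_1,F\rangle\ast_F\cdots\ast_F\langle x_p,F\rangle$, which rewrites as the stated multiple HNN extension, and $T_\Gamma$ is identified with the geometric realisation of $\mathcal{S}$ in $T_L$. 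The main obstacle I anticipate is the geometric step that $o$ lies on each $\ell_i$ simultaneously: without it, one would obtain a refined splitting with extra vertices at the projections of $o$ onto the various $\ell_i$, so careful use of the freedom to conjugate in Remark~\ref{impor}, combined with the acylindricity of the action, will be essential.
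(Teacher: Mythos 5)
Your overall strategy is the same as the paper's: condition (4) of Definition \ref{suitetest} together with Lemma \ref{fini} gives that distinct $\Gamma$-translates of the axes of the $x_i$ meet in at most a point and identifies their setwise (resp.\ pointwise) stabilisers as $\langle x_i,F\rangle$ (resp.\ $F$); Theorem \ref{guilev} plus Lemma \ref{stabilitylemma} yields finiteness of branch points and hence simpliciality; the splitting is then read off the action (the paper does this directly, you package it through Guirardel's skeleton, which the paper itself only uses later, in Lemma \ref{abovelemma}). But the step you yourself flag as the main obstacle is a genuine gap, and your proposed fix does not close it. You need every limit axis $\ell_i$ to pass through $o$, and downstream of that you need that the translates of the $\ell_i$ cover $T_\Gamma$ and that every $V_0$-vertex of the skeleton lies in $\Gamma\cdot o$. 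Remark \ref{impor} only allows you to conjugate each $\varphi_n$ by a single element, which moves the basepoint relative to \emph{all} quasi-axes at once: you can bring $1$ uniformly close to $A(\varphi_n(x_1))$, but this says nothing about its distance to $A(\varphi_n(x_j))$ for $j\neq 1$. To put $o$ on all the $\ell_i$ simultaneously you would need the $p$ quasi-axes to pass $o(\lambda_n)$-close to a common point, which is not part of Definition \ref{suitetest} and is essentially the fact to be established; acylindricity does not obviously supply it. Relatedly, "$\mathcal{Y}$ covers $T_\Gamma$ by minimality" is not valid as stated: the minimal subtree of a group generated by hyperbolic elements (and an elliptic $S$) is the union of the translates of the generators' axes only when that union is connected; in general it also contains bridges between axes (already for a free group on two hyperbolic elements with disjoint axes), so connectedness is exactly the point at issue rather than a consequence of minimality.

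Note how the paper avoids the strong claim $o\in\ell_i$ for all $i$: it first shows, via its Claims 1 and 2, that the union of the $\Gamma$-translates of the axes is all of $T_\Gamma$ and that elements sent into $E(G)$ $\omega$-almost-surely fix $o$ (Lemma \ref{fini}); this is what pins elements with overlapping axes down to the form $sx_i^{-p}$ with $s\in S$ and ultimately forces the relevant intersection points into the orbit of $o$, so that only \emph{conjugates} $\gamma_i x_i\gamma_i^{-1}$ are required to have axes through $o$. Your skeleton argument, by contrast, silently assumes both that $o$ is a $V_0$-vertex and that all $V_0$-vertices are translates of $o$; without that, the quotient graph of groups need not be the claimed star and extra vertex orbits (with a priori larger vertex groups) can appear. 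A smaller but real defect is your opening reduction: if some $x_i$ were elliptic you could not "absorb it into the vertex group", since an elliptic $x_i$ need not fix $o$ and the statement to be proved exhibits every $x_i$ as an infinite-order stable letter conjugating $F$ by $\alpha_i$; you must instead argue, as the paper does from conditions (2)--(3) of Definition \ref{suitetest} and the non-ellipticity of $\Gamma$, that each $x_i$ acts hyperbolically on $T_\Gamma$.
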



\begin{proof}Suppose that $\Gamma$ does not fix a point of $T_L$. Let $ T_{\Gamma} \subset T_L$ be the minimal subtree of $\Gamma$. Note that there exists an integer $1\leq i\leq p$ such that $\vert \vert \varphi_n(x_i)\vert\vert/\lambda_n$ does not approach $0$ as $n$ goes to infinity. Otherwise $\Gamma$, which is generated by $S$ and $\bm{x}$, would be elliptic in $T_L$. 

\smallskip

Moreover, by the third condition of Definition \ref{suitetest}, $\vert \vert \varphi_n(x_i)\vert\vert/\vert \vert \varphi_n(x_j)\vert\vert$ tends to a real number $r_{i,j}\in [0,+\infty]$, for every integers $1\leq i,j\leq p$. Consequently, each $x_i$ acts hyperbolically on $T_{\Gamma}$; we denote by $\ell_i$ the limit of the sequence $(\vert \vert \varphi_n(x_i)\vert\vert/\lambda_n)_{n\in\mathbb{N}}$, for every $1\leq i\leq p$. Note that one has $0<\ell_i<+\infty$.


\smallskip

\textbf{Claim 1:} for every integers $1\leq i,j\leq p$ and for every $s\in S$, if the intersection of the axes of $x_j$ and $sx_is^{-1}$ contains two distinct points $v$ and $w$, then $i=j$ and $s$ belongs to $F$.

\smallskip

Let us prove this claim. By assumption, the segment $[v,w]$ is contained in the intersection of the axes of $sx_is^{-1}$ and $x_j$. Let $\eta$ be the length of $[v,w]$ in the limiting tree $T_{\Gamma}$. Let $\bar{s}$ be a preimage of $s$ in $G_{\Sigma}$. The overlap $\Delta(\varphi_n(\bar{s}x_i\bar{s}^{-1}),\varphi_n(x_j))$ is comparable to $\eta\lambda_n$ when $n$ is large. As a consequence, $\omega$-almost-surely, the following inequality holds: \[\Delta(\varphi_n(\bar{s}x_i\bar{s}^{-1}),\varphi_n(x_j))\geq \eta\lambda_n/2 .\]Moreover, the translation length $\vert\vert \varphi_n(x_i)\vert\vert$ is equivalent to $\ell_i\lambda_n$. Thus, $\omega$-almost-surely, one has \[\Delta(\varphi_n(\bar{s}x_i\bar{s}^{-1}),\varphi_n(x_j))\geq \frac{\eta}{4\ell_i} \vert\vert \varphi_n(x_i)\vert\vert.\]According to the fourth condition of Definition \ref{suitetest}, the tuple $(\varphi_n(x_1),\ldots,\varphi_n(x_p))$ satisfies the $\varepsilon_n$-small cancellation condition for some sequence of positive real numbers $(\varepsilon_n)_{n\in\mathbb{N}}$ converging to $0$; $\omega$-almost-surely, $\varepsilon_n$ is smaller than $\eta/(4\ell_i)$. As a consequence, the integers $i$ and $j$ are equal, and $\varphi_n(\bar{s})$ is contained in $\Lambda (\varphi_n(x_i))=E(G)\rtimes \langle\varphi_n(x_i)\rangle$. But since $s$ belongs to $S$, the translation length of $\varphi_n(\overline{s})$ is negligible compared to that of $\varphi_n(x_i)$ $\omega$-almost-surely, so $s=\varphi_{\infty}(\bar{s})$ belongs to $F=\varphi_{\infty}(H)$, where $H$ is the subgroup from Lemma \ref{fini}.

\smallskip

\textbf{Claim 2:} for every $1\leq i\leq p$, the pointwise stabilizer $Z_i$ of the axis $A(x_i)$ of $x_i$ in $T_{\Gamma}$ is equal to $\langle x_i,F\rangle$. Moreover, $A(x_i)$ is transverse to its translates, which means that for every $\gamma\in\Gamma\setminus Z_i$, the intersection of $A(x_i)$ with $A(\gamma x_i\gamma^{-1})$ is empty or reduced to a point.

\smallskip

Let us prove the second claim. Consider an element $\gamma\in\Gamma$ such that the intersection of $A(x_i)$ with $A(\gamma x_i\gamma^{-1})$ contains two distinct points $v$ and $w$. Let $\bar{\gamma}$ be a preimage of $\gamma$ in $G_{\Sigma}$. As in the proof of Claim 1, $\varphi_n(\bar{\gamma})$ is contained in $\Lambda (\varphi_n(x_i))=E(G)\rtimes \langle\varphi_n(x_i)\rangle$. Hence, for every $n$, there exists an integer $p_n$ such that $\varphi_n(\bar{\gamma}x_i^{p_n})$ belongs to $E(G)$ $\omega$-almost-surely. Note that the sequence $(p_n)_{n\in\mathbb{N}}$ is bounded $\omega$-almost-surely, since $\vert \vert \varphi_n(x_i)\vert\vert/\lambda_n$ stays away from $0$ $\omega$-almost-surely. In particular, $p_n$ is constant $\omega$-almost-surely, equal to a certain integer $p$. It follows that $\gamma x_i^p$ fixes the basepoint $o$. In other words, $\gamma$ is equal to $sx_i^{-p}$ for some element $s\in S$. Now, observe that one has $\gamma x_i\gamma^{-1}=sx_is^{-1}$. Since the intersection of $A(x_i)$ with $A(\gamma x_i\gamma^{-1})$ is neither empty nor reduced to a point, Claim 1 implies that $s$ belongs to $F$. This shows that $Z_i=\langle x_i,F\rangle$ and that $A(x_i)$ is transverse to its translates, which completes the proof of Claim 2.

\smallskip

Now, let us consider the union $T$ of all the $\Gamma$-translates of the axes of $x_1,\dots,x_p$. For every integer $i$, the intersection $x_iT\cap T$ is non-empty by definition of $T$. It follows that $T$ is connected, i.e.\ $T$ is a subtree of $T_{\Gamma}$. In addition, $T$ is $\Gamma$-invariant by definition. By minimality of $T_{\Gamma}$, one has $T=T_{\Gamma}$.

\smallskip

Let us prove that $T_{\Gamma}$ is a simplicial tree. Let $v$ be a point on the axis of $x_i$ in $T_{\Gamma}$, and let $e=[v,x_iv]$ be the segment of $T_{\Gamma}$ with endpoints $v$ and $x_iv$. Let us prove that there are only finitely many branch points on $e$ in $T_{\Gamma}$. By Lemma 4.7 and Theorem 4.18 of \cite{GH19}, the hypotheses of Theorem \ref{guilev} are satisfied. It follows from this theorem that the number of orbits of directions at branch points in $T_{\Gamma}$ is finite. Now, assume towards a contradiction that there are infinitely many branch points on $e$. Then there exist necessarily two non-degenerate subsegments $I$ and $J$ in $e$, with $I\cap J=\varnothing$, and an element $\gamma\in \Gamma$ such that $\gamma I=J$. But we just proved that the axis of $x_i$ in $T_{\Gamma}$ is transverse to its translates (Claim 2 above). Thus, $\gamma$ belongs to the stabilizer of the axis of $x_i$, namely $\langle x_i, F\rangle$. This is a contradiction since $F$ fixes the axis of $x_i$ pointwise, and since the intersection of $e$ with $x_ie$ is reduced to the endpoint $x_iv$ of $e$.

\smallskip

It follows from the previous description of $T_{\Gamma}$ that there exist some conjugates $\gamma_1x_1\gamma_1^{-1}$, $\ldots,$ $\gamma_px_p\gamma_p^{-1}$ of $x_1,\ldots,x_p$ whose axes intersect at the basepoint $o$. Last, since the pointwise stabilizer of the axis of $x_i$ is $F$ for every $1\leq i\leq p$, and since $ \bm{x}$ and $S$ generate $\Gamma$ by definition, $\Gamma$ admits a splitting of the form \[\Gamma=\langle \bm{x},S \ \vert \ \mathrm{ad}(x_i)_{\vert F}=\alpha_i, \forall i\in\llbracket 1,p\rrbracket\rangle,\]where $\alpha_i$ denotes the automorphism of $F$ induced by the action of $x_i$ on $F$ by conjugation.\end{proof}

\begin{co}\label{lemme22}With the same notations and the same hypotheses as in Lemma \ref{lemme2} above, the tree $T_{\Gamma} $ is transverse to its translates, i.e.\ for every element $ h \in L \setminus \Gamma $, the intersection $hT_{\Gamma}\cap T_{\Gamma}$ is at most one point. In addition, if $e$ is an edge of $T_{\Gamma}$, there are only finitely many branch points on $e$ in $T_L$.\end{co}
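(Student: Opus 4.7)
My plan is to handle the two claims in turn. For transversality (1), I would assume $h\in L\setminus\Gamma$ with $hT_\Gamma\cap T_\Gamma$ containing a non-degenerate segment $[v,w]$. Since Lemma \ref{lemme2} shows $T_\Gamma$ is simplicial with edges carried by the $\Gamma$-translates of the axes $A(x_i)$, I may shrink $[v,w]$ to a subsegment lying simultaneously in a single axis $A(\gamma x_i\gamma^{-1})$ of $T_\Gamma$ and in a single axis $hA(\gamma' x_j(\gamma')^{-1})$ of $hT_\Gamma$, for some $\gamma,\gamma'\in\Gamma$. Setting $\alpha:=\gamma^{-1}h\gamma'\in L$ and translating by $\gamma^{-1}$, the axes $A(x_i)$ and $A(\alpha x_j\alpha^{-1})$ share a non-degenerate arc of length $\eta>0$. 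Running the $\varepsilon_n$-small-cancellation computation of Claim 1 in the proof of Lemma \ref{lemme2} verbatim, with any lift $\bar\alpha\in G_\Sigma$ of $\alpha$, yields $i=j$ and $\varphi_n(\bar\alpha)\in\Lambda(\varphi_n(x_i))=E(G)\rtimes\langle\varphi_n(x_i)\rangle$ $\omega$-almost-surely, so I may write $\varphi_n(\bar\alpha)=g_n\varphi_n(x_i)^{p_n}$ with $g_n\in E(G)$ and $p_n\in\mathbb{Z}$.

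The new and main technical point is bounding $|p_n|$: in Lemma \ref{lemme2} the analogue of $\alpha$ lay in the point-stabilizer $S$ and had displacement $o(\lambda_n)$, which instantly gave $p_n=0$, but here $\alpha\in L$ is arbitrary. However, $\alpha$ acts on the real tree $T_L$ as an isometry with a well-defined translation length $\tau\in[0,\infty)$, and $\vert\vert\varphi_n(\bar\alpha)\vert\vert/\lambda_n\to\tau$. Since $E(G)$ is finite and $\varphi_n(x_i)$ is hyperbolic with $\vert\vert\varphi_n(x_i)\vert\vert\sim\ell_i\lambda_n$, a standard estimate inside the virtually cyclic group $\Lambda(\varphi_n(x_i))$ gives $\vert\vert g_n\varphi_n(x_i)^{p_n}\vert\vert=|p_n|\vert\vert\varphi_n(x_i)\vert\vert+O(1)$, so $|p_n|\le\tau/\ell_i+o(1)$. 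Thus $(p_n)$ is bounded and, passing to the ultrafilter, $p_n$ is $\omega$-a.s.\ equal to a fixed integer $p$ and $g_n$ is $\omega$-a.s.\ equal to a fixed element $g\in E(G)$. Then $\bar\alpha\bar x_i^{-p}$ lies in the subgroup $K$ of Lemma \ref{fini}, so $\alpha x_i^{-p}\in F\subset S\subset\Gamma$, giving $\alpha\in\Gamma$ and therefore $h=\gamma\alpha(\gamma')^{-1}\in\Gamma$, which contradicts $h\notin\Gamma$ and proves (1).

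For claim (2), I would apply Theorem \ref{guilev} to the action $L\curvearrowright T_L$ (its hypotheses are verified via \cite[Lemma 4.7]{GH19} just as they are for $\Gamma\curvearrowright T_\Gamma$ in Lemma \ref{lemme2}) to obtain finitely many orbits of directions at branch points of $T_L$. If some edge $e$ of $T_\Gamma$ carried infinitely many branch points of $T_L$, the same pigeonhole argument used at the end of Lemma \ref{lemme2}, but now for the $L$-action, would produce disjoint non-degenerate subsegments $I,J\subset e$ and an element $\gamma\in L$ with $\gamma I=J$. Then $J\subset T_\Gamma\cap\gamma T_\Gamma$ is non-degenerate, so part (1) just proved forces $\gamma\in\Gamma$. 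But $e$ lies inside an axis $\gamma_0 A(x_i)\gamma_0^{-1}$ and, by Claim 2 of Lemma \ref{lemme2}, this axis is transverse to its $\Gamma$-translates, so $\gamma\in\gamma_0\langle x_i,F\rangle\gamma_0^{-1}$. Such a $\gamma$ either fixes $e$ pointwise (if the $x_i$-exponent vanishes, since $F$ pointwise fixes the axis), or translates the axis by a non-zero multiple of $\ell_i\ge|e|$; in the first case $\gamma I=I\ne J$, and in the second $\gamma I$ meets $e$ in at most one point, so cannot equal the non-degenerate $J\subset e$---a contradiction. The hard part of the whole corollary is therefore the bound on $|p_n|$ in (1), obtained by transferring the finite translation length of $\alpha$ on $T_L$ into an effective bound in $X$ via the asymptotics of test sequences; once (1) is in hand, (2) is a clean combination of (1) with the rigidity of $T_\Gamma$ already established in Lemma \ref{lemme2}.
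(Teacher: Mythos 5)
Your proposal follows essentially the same route as the paper: overlapping axes in the limit tree give, via the $\varepsilon_n$-small cancellation condition, that $\varphi_n(\bar\alpha)\in\Lambda(\varphi_n(x_i))$, the exponent $p_n$ is bounded because the translation length of the fixed element $\alpha$ is $O(\lambda_n)$ while $\vert\vert\varphi_n(x_i)\vert\vert\sim\ell_i\lambda_n$, and then Lemma \ref{fini} forces $\alpha\in\Gamma$, with the branch-point claim handled by Theorem \ref{guilev} plus the transversality just proved. The only differences are cosmetic (you spell out the final contradiction in part (2) in more detail than the paper, and you only need the upper bound $\limsup_\omega\vert\vert\varphi_n(\bar\alpha)\vert\vert/\lambda_n<\infty$, not the exact limit $\tau$), so the argument is correct and matches the paper's proof.
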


\begin{proof}Let $h$ be an element of $L$ such that $hT_{\Gamma}\cap T_{\Gamma} $ is non-degenerate. As a consequence of the description of $ T_{\Gamma} $ above, we can find two elements $u, v \in \Gamma $ such that the axes of $ ux_iu^{-1} $ and $ h(vx_jv^{-1})h^{-1} $ have a non-trivial overlap in the limiting tree $T_L$, for some $1\leq i,j\leq p$, possibly equal. We denote by $\bar{u}, \bar{v},\bar{h}$ three preimages of $u,v,h$ in $G_{\Sigma}$. As in the previous proof, we have \[\Delta(\varphi_n(x_i),{\varphi_n(\bar{u}^{-1}\bar{h}\bar{v})}\varphi_n(x_j){\varphi_n(\bar{u}^{-1}\bar{h}\bar{v})}^{-1})\geq \varepsilon_n \min(\vert \vert \varphi_n(x_i)\vert \vert,\vert \vert \varphi_n(x_j)\vert \vert)\] $\omega$-almost-surely. Hence, the integers $i$ and $j$ are equal, and $ \varphi_n (\bar{u}^{- 1} \bar{h}\bar{v}) $ belongs to the group $\Lambda (\varphi_n(x_i)) = E(G) \times \langle \varphi_n(x_i) \rangle $. So, for every $ n $, there is an integer $ p_n $ such that $\varphi_n(\bar{u}^{-1}\bar{h}\bar{v}x_i^{p_n})$ belongs to $E(G)$. On the other hand, since $x_i$ acts hyperbolically on $T_{\Gamma}$, the integer $ p_n $ is bounded by a constant that does not depend on $ n $. Otherwise, $\vert\vert\varphi_n(x_i)\vert\vert/\vert\vert \varphi_n(u^{-1}hv)\vert\vert$ $\omega$-tends to $0$. Hence, since $\varphi_n(u^{-1}hv)/\lambda_n$ is bounded, $\vert\vert \varphi_n(x_i)\vert\vert /\lambda_n$ $\omega$-tends to $0$, contradicting that $x_i$ is hyperbolic. As a consequence, one can assume that $ p_n = p $ for all $ n $. Thus, the image $\varphi_n(\bar{u}^{-1}\bar{h}\bar{v}x_i^p)$ belongs to $E(G)$ $\omega$-almost-surely, i.e.\ $\bar{u}^{-1}\bar{h}\bar{v}x_i^p$ belongs to $H$ and $u^{-1}hvx_i^p$ belongs to $\varphi_{\infty}(H)=F$. Hence, there is an element $f\in F$ such that $u^{-1}hvx_i^p=f$, that is $h=ufx_i^{-p}v^{-1}$. This element belongs to $\Gamma$ since $u$, $v$, $f$ and $x_i$ belong to $\Gamma$.

\smallskip

Last, let $e$ be an edge of $T_{\Gamma}$. Let us prove that there are only finitely many branch points on $e$ in $T_L$. By Lemma 4.7 and Theorem 4.18 of \cite{GH19}, the hypotheses of Theorem \ref{guilev} are satisfied. It follows from this theorem that the number of orbits of directions at branch points in $T_L$ is finite. Now, assume towards a contradiction that there are infinitely many branch points on $e$. Then there exist necessarily two non-degenerate subsegments $I$ and $J$ in $e$, with $I\cap J=\varnothing$, and an element $g\in G$ such that $gI=J$. But we just proved that $T_{\Gamma}$ is transverse to its translates, so $g$ belongs to $\Gamma$. This is a contradiction since $T_{\Gamma}$ is a simplicial tree (by the previous lemma).\end{proof}

\section{Merzlyakov's theorem}\label{Merz_section}

The main theorem proved in this paper is the following generalisation of Merzlyakov's theorem. Note that Theorem \ref{th0bis} stated in the introduction corresponds to the case where $\ell=1$ in the result below.

\begin{te}\label{th0bis3}Let $G$ be an acylindrically hyperbolic group, and let $\bm{a}$ be a tuple of elements of $G$ (called constants). Fix a presentation $\langle \bm{a} \ \vert \ R(\bm{a})=1\rangle$ for the subgroup of $G$ generated by $\bm{a}$. Let \[\theta(\bm{x},\bm{y},\bm{a}):\bigvee_{k=1}^{\ell}(\Sigma_k(\bm{x},\bm{y},\bm{a})=1 \ \wedge \ \Psi_k(\bm{x},\bm{y},\bm{a})\neq 1)\] be a finite disjunction of finite systems of equations and inequations over $G$, where $\bm{x}$ and $\bm{y}$ are two tuples of variables. For every $1\leq k\leq \ell$, let $G_{\Sigma_k}$ denote the following group, finitely presented relative to $\langle \bm{a} \ \vert \ R(\bm{a})=1\rangle$: \[\langle \bm{x},\bm{y},\bm{a} \ \vert \ R(\bm{a})=1, \ \Sigma_k(\bm{x},\bm{y},\bm{a})=1\rangle.\]Let $p=\vert \bm{x}\vert$ be the arity of $\bm{x}$, and let $x_i$ denote the $i$th component of $\bm{x}$. Suppose that $G$ satisfies the following first-order sentence: \[\forall \bm{x} \ \exists \bm{y} \ \bigvee_{k=1}^{\ell}(\Sigma_k(\bm{x},\bm{y},\bm{a})=1 \ \wedge \ \Psi_k(\bm{x},\bm{y},\bm{a})\neq 1).\]
Then, for every $p$-tuple $\bm{\sigma}=(\sigma_1,\ldots,\sigma_p)\in \mathrm{Aut}_G(E(G))^p$, there exist an integer $1\leq k\leq \ell$ and a morphism \[\pi_{\bm{\sigma}} : G_{\Sigma_k}\rightarrow G_{\bm{\sigma}}=G\ast_{E(G)}\left\langle\bm{x},E(G) \ \vert \ \mathrm{ad}(x_i)_{\vert E(G)}={\sigma_i}, \ \forall i\in \llbracket 1,p\rrbracket\right\rangle\] such that the following hold:
\begin{enumerate}
    \item $\pi_{\bm{\sigma}}(\bm{x})=\bm{x}$,
    \item $\pi_{\bm{\sigma}}(\bm{a})=\bm{a}$,
    \item $\Psi(\bm{x},\pi_{\bm{\sigma}}(\bm{y}),\bm{a})\neq 1$.
\end{enumerate}
Moreover, the image of $\pi_{\bm{\sigma}}$ is a subgroup of $G_{\bm{\sigma}}$ of the form \[\left\langle \bm{g},\bm{a}\right\rangle\ast_{E(G)}\left\langle \bm{x}, E(G) \ \vert \ \mathrm{ad}(x_i)_{\vert E(G)}={\sigma_i}, \ \forall i\in \llbracket 1,p\rrbracket\right\rangle\] for some tuple $\bm{g}$ of elements of $G$.\end{te}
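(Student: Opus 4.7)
The plan is to follow Sela's geometric strategy for Merzlyakov's theorem, adapted to the acylindrically hyperbolic setting via the test sequences of Definition \ref{suitetest} and the structural Lemma \ref{lemme2}, with the approximation machinery of Section \ref{section_LG_SA} used to circumvent the failure of equational Noetherianity. The first move is to reduce the finite disjunction to a single disjunct: applying the $\forall\exists$-hypothesis to each $\varphi_n(\bm{x})$ in a $\bm{\sigma}$-test sequence $(\varphi_n : \langle \bm{x}, \bm{a}\rangle \to G)$ produces witnesses $\bm{h}_n$ together with an index $k_n \in \llbracket 1, \ell \rrbracket$, and since $\ell$ is finite, a single $k$ will work $\omega$-almost-surely for a fixed non-principal ultrafilter $\omega$. (Existence of $\bm{\sigma}$-test sequences should follow by combining elements $g_i \in G$ realising the $\sigma_i$ via conjugation on $E(G)$ with generic elements of a quasi-convex free subgroup of $G$ supplied by \cite[Theorem 6.14]{DGO17} and \cite[Lemma 3.1]{AMS13}, and then imposing small cancellation.) This reduces matters to the $\ell=1$ case, with a sequence $\psi_n : G_{\Sigma_k} \to G$ extending $\varphi_n$ on $\bm{x}, \bm{a}$ and sending $\bm{y}$ to $\bm{h}_n$.

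I would then form the limit group $L = G_{\Sigma_k}/\underleftarrow{\ker}_\omega((\psi_n))$ with quotient $\psi_\infty$ and apply Bestvina--Paulin to obtain a minimal isometric action $L \curvearrowright T_L$ with base point $o$ and stabiliser $S \supset \psi_\infty(\bm{a})$. By conditions (2) and (3) of Definition \ref{suitetest}, at least one $x_i$ acts hyperbolically on $T_L$, so $\Gamma := \langle \bm{x}, S \rangle$ does not fix a point. Lemma \ref{lemme2} then provides the key structural splitting
\[
\Gamma \;=\; \big\langle\, \bm{x},\, S \,\bigm|\, \mathrm{ad}(x_i)_{\vert F} = \alpha_i,\ \forall\, i \in \llbracket 1, p \rrbracket \,\big\rangle,
\]
where $F$ is the finite subgroup of Lemma \ref{fini}, and Corollary \ref{lemme22} shows that $T_\Gamma$ is transverse to its $L$-translates, so the $L$-orbit of $T_\Gamma$ carries a transverse covering in the sense of Definition \ref{transverse}.

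The next step is to identify $F$ with a subgroup of $E(G)$: for each $f \in F$ pick a lift $\tilde f \in G_{\Sigma_k}$ with $\psi_n(\tilde f) \in E(G)$ $\omega$-almost-surely (this is the definition of $F$), and since $E(G)$ is finite the ultrafilter extracts a constant value $e(f) \in E(G)$. The resulting $e : F \to E(G)$ should be an injective homomorphism, and it intertwines the $\alpha_i$-action on $F$ with the $\sigma_i$-action on $E(G)$ by condition (4) of the test sequence. This lets me define a retraction $\rho : \Gamma \to G_{\bm{\sigma}}$ sending $x_i$ to $x_i$ and $f$ to $e(f)$, with $S$ mapped into $G$ via the factorisation of $\psi_n|_{\tilde S}$ through a finitely presented approximation of $L$ (Proposition \ref{approx}); the HNN-presentation of $G_{\bm{\sigma}}$ then makes $\rho$ well defined. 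To extend $\rho$ to a morphism $\pi_{\bm{\sigma}} : G_{\Sigma_k} \to G_{\bm{\sigma}}$, the skeleton (Definition \ref{squelette}) of the transverse covering of $L \cdot T_\Gamma$ decomposes $\psi_\infty(\bm{y})$ into pieces lying either in conjugates of $\Gamma$ (mapped by conjugates of $\rho$) or in elliptic subgroups mapped into $G$; by construction, the image of $\pi_{\bm{\sigma}}$ then has the asserted form $\langle \bm{g}, \bm{a} \rangle *_{E(G)} \langle \bm{x}, E(G)\rangle$.

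The hard part will be ensuring that $\pi_{\bm{\sigma}}$ does not collapse any component of $\Psi_k$. The sequence condition $\psi_n(\Psi_k) \neq 1$ $\omega$-almost-surely immediately gives $\psi_\infty(\Psi_k) \neq 1$ in $L$, but passing to non-triviality in $G_{\bm{\sigma}}$ requires control over how $\rho$ interacts with those elements, and here the absence of equational Noetherianity for $G$ is genuinely dangerous: $(\psi_n)$ does not factor through $L$ itself. This is precisely the role of the approximation technology of Section \ref{section_LG_SA}: through an $\mathbb{RJ}_L$-approximation $A$ (Proposition \ref{approx}, Corollary \ref{comb_app}), which is finitely presented relative to an elliptic subgroup, the $\psi_n$ do factor $\omega$-almost-surely. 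Working at the level of $A$, choosing the set of relations $\mathcal{F}$ of Proposition \ref{approx} to contain lifts of all components of $\Psi_k$ (so that these components survive in $A$), and invoking property (3) of the shortening argument (Theorem \ref{shortening_arg2}) to preserve non-triviality, one arranges retractions $\rho_n : A \to G$ whose limit induces $\rho$ while keeping the finitely many inequations non-trivial. This is the step I expect to be the main obstacle, and it is where the bulk of the technical work of the proof will go.
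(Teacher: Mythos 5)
Your outline contains two genuine gaps, and they are precisely the points where the paper's actual argument has to work hardest. The first is the existence of a $(\sigma_1,\ldots,\sigma_p)$-test sequence: you assert that it "should follow" by multiplying generic small-cancellation elements of the quasi-convex free subgroup $F(a,b)$ (from \cite[Theorem 6.14]{DGO17} and \cite[Lemma 3.1]{AMS13}) by elements $g_i$ realising the $\sigma_i$. Getting the right action on $E(G)$ this way is trivial, but condition (4) of Definition \ref{suitetest} demands much more: the elements must be hyperbolic with controlled translation lengths, satisfy the $\varepsilon_n$-small cancellation condition of Definition \ref{SCC}, and have $\Lambda(\varphi_n(x_i))$ equal \emph{exactly} to $E(G)\rtimes\langle\varphi_n(x_i)\rangle$. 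The products $g_iw_n$ leave the quasi-isometrically embedded free subgroup, so the geometric control used in Proposition \ref{jesaispas} (overlaps of quasi-axes computed inside the tree $Y$) is lost; the paper explicitly states that adapting the construction to a non-central prescribed action "seems quite involved" and instead \emph{bootstraps}: it first proves the case $\bm{\sigma}=(\mathrm{id},\ldots,\mathrm{id})$ in $G$, deduces Theorem \ref{th11}, passes to the $\exists\forall\exists$-elementary overgroup $G_{2p}=G\ast_{E(G)}(E(G)\times F_{2p})$ where the stable letters make a $\bm{\sigma}$-test sequence easy to write down (Proposition \ref{jesaispasgeneral}), and only then applies the formal-solution machinery (Proposition \ref{jesaispas2}) to $G_{2p}$. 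Your proposal has no substitute for this detour, so the very first step of your plan is unsupported. Relatedly, your $e:F\to E(G)$ need not be onto, so without the reduction of Lemma \ref{weaker_a_priori} (adding $E(G)$ to the constants and specifying the action of $\bm{x}$ on it) you only get the image form over a subgroup $E\subseteq E(G)$, not the form over $E(G)$ asserted in the statement.

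The second gap is your claim that conditions (2) and (3) of Definition \ref{suitetest} force some $x_i$ to act hyperbolically on $T_L$, so that $\Gamma=\langle\bm{x},S\rangle$ is non-elliptic and Lemma \ref{lemme2} applies. The limit tree is rescaled by $\lambda_n=\norm{\psi_n}$, which is computed over a generating set containing $\bm{y}$; if the witnesses $\bm{h}_n$ grow much faster than the $\varphi_n(x_i)$, then $\vert\vert\varphi_n(x_i)\vert\vert/\lambda_n\to 0$ and every $x_i$ is elliptic in $T_L$, so your appeal to Lemma \ref{lemme2} fails for the original sequence. Non-ellipticity of $U$ (hence of $\Gamma$) is exactly what the bulk of the proof of Proposition \ref{jesaispas2} establishes: one replaces $\psi_n$ by shortest representatives relative to $U$ among morphisms preserving the inequations, and runs an iterative process of $\mathbb{RJ}$-approximations $A_0\twoheadrightarrow A_1\twoheadrightarrow\cdots$ and applications of the relative shortening argument (Theorem \ref{shortening_arg2}), with termination guaranteed by Dunwoody's bound on the number of edges of reduced JSJ splittings over finite groups of order $\le C$. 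In your sketch the shortening argument appears only at the end, to preserve inequations; it is in fact needed much earlier, and in this iterated form, to create the hyperbolic action of $\bm{x}$ on the limit tree in the first place. As written, your argument would stall exactly at the point where you invoke Lemma \ref{lemme2}.
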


In fact, as shown by Lemma \ref{weaker_a_priori} below, it is enough to prove the following result, which is \emph{a priori} weaker than Theorem \ref{th0bis3} since the group $E(G)$ is replaced with a subgroup $E\subset E(G)$ that may be proper.

\begin{te}\label{th0bis2}Let $G$ be an acylindrically hyperbolic group, and let $\bm{a}$ be a tuple of elements of $G$ (called constants). Fix a presentation $\langle \bm{a} \ \vert \ R(\bm{a})=1\rangle$ for the subgroup of $G$ generated by $\bm{a}$. Let \[\theta(\bm{x},\bm{y},\bm{a}):\bigvee_{k=1}^{\ell}(\Sigma_k(\bm{x},\bm{y},\bm{a})=1 \ \wedge \ \Psi_k(\bm{x},\bm{y},\bm{a})\neq 1)\] be a finite disjunction of finite systems of equations and inequations over $G$, where $\bm{x}$ and $\bm{y}$ are two tuples of variables. For every $1\leq k\leq \ell$, let $G_{\Sigma_k}$ denote the following group, finitely presented relative to $\langle \bm{a} \ \vert \ R(\bm{a})=1\rangle$: \[\langle \bm{x},\bm{y},\bm{a} \ \vert \ R(\bm{a})=1, \ \Sigma_k(\bm{x},\bm{y},\bm{a})=1\rangle.\]Let $p=\vert \bm{x}\vert$ be the arity of $\bm{x}$, and let $x_i$ denote the $i$th component of $\bm{x}$. Suppose that $G$ satisfies the following first-order sentence: \[\forall \bm{x} \ \exists \bm{y} \ \bigvee_{k=1}^{\ell}(\Sigma_k(\bm{x},\bm{y},\bm{a})=1 \ \wedge \ \Psi_k(\bm{x},\bm{y},\bm{a})\neq 1).\]
Then, for every $p$-tuple $\bm{\sigma}=(\sigma_1,\ldots,\sigma_p)\in \mathrm{Aut}_G(E(G))^p$, there exist an integer $1\leq k\leq \ell$, a finite subgroup $E$ of $E(G)$, and a morphism \[\pi_{\bm{\sigma}} : G_{\Sigma_k}\rightarrow G_{\bm{\sigma}}=\left\langle G, \bm{x} \ \vert \ \mathrm{ad}(x_i)_{\vert E}={\sigma_i}, \ \forall i\in \llbracket 1,p\rrbracket\right\rangle\] such that the following hold:
   \begin{enumerate}
    \item $\pi_{\bm{\sigma}}(\bm{x})=\bm{x}$,
    \item $\pi_{\bm{\sigma}}(\bm{a})=\bm{a}$,
    \item $\Psi(\bm{x},\pi_{\bm{\sigma}}(\bm{y}),\bm{a})\neq 1$.
\end{enumerate}
Moreover, the image of $\pi_{\bm{\sigma}}$ is a subgroup of $G_{\bm{\sigma}}$ of the form \[\left\langle \bm{g},\bm{a}\right\rangle\ast_{E}\left\langle \bm{x}, E \ \vert \ \mathrm{ad}(x_i)_{\vert E}={\sigma_i}, \ \forall i\in \llbracket 1,p\rrbracket\right\rangle\] for some tuple $\bm{g}$ of elements of $G$.\end{te}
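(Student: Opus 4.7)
The plan is to adapt the geometric strategy outlined in \S\ref{shortening_strength}: construct a test sequence tailored to $\bm{\sigma}$, take a Bestvina--Paulin limit, invoke the Rips machine together with the approximation technology of Section~\ref{section_LG_SA}, and read off a formal solution from the resulting rigid splitting. First, for each $\sigma_i$ I pick $g_i\in G$ realising $\sigma_i$ on $E(G)$ by conjugation. Using the existence inside $G$ of a quasi-convex non-abelian free subgroup whose generators $a,b$ centralise $E(G)$ (combining \cite[Theorem~6.14]{DGO17} with \cite[Lemma~3.1]{AMS13}), I choose words $w_{n,i}(a,b)\in F(a,b)$ satisfying progressively stronger small-cancellation conditions, and set $\varphi_n(x_i)=g_i\,w_{n,i}(a,b)$, $\varphi_n(\bm{a})=\bm{a}$. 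The centralisation of $E(G)$ forces $\Lambda(\varphi_n(x_i))=\langle\varphi_n(x_i),E(G)\mid\mathrm{ad}(\varphi_n(x_i))_{|E(G)}=\sigma_i\rangle$, and small cancellation yields the translation length and overlap conditions, making $(\varphi_n)$ a $\bm{\sigma}$-test sequence in the sense of Definition~\ref{suitetest}.

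By the assumed $\forall\exists$-sentence, for every $n$ there exist $k(n)\in\{1,\ldots,\ell\}$ and a tuple $\bm{h}_n\in G^{|\bm{y}|}$ with $\Sigma_{k(n)}(\varphi_n(\bm{x}),\bm{h}_n,\bm{a})=1$ and $\Psi_{k(n)}(\varphi_n(\bm{x}),\bm{h}_n,\bm{a})\ne 1$. Fixing a non-principal ultrafilter $\omega$, I choose $k$ such that $k(n)=k$ $\omega$-almost-surely; this extends $\varphi_n$ to a homomorphism $\psi_n\colon G_{\Sigma_k}\rightarrow G$. Precomposing with automorphisms of $G_{\Sigma_k}$ fixing $U=\langle\bm{x},\bm{a}\rangle$ and postcomposing by conjugation in $G$, I may assume $\psi_n$ is short relative to $U$; by Remark~\ref{impor} the result is still a test sequence for a possibly modified tuple of automorphisms, a difference that will be absorbed into the tuple $\bm{g}$ of the final statement. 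Let $L=G_{\Sigma_k}/\underleftarrow{\ker}_\omega(\psi_n)$, $\varphi_\infty\colon G_{\Sigma_k}\twoheadrightarrow L$, and $T_L$ be the limiting real tree on which $L$ acts via Bestvina--Paulin.

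Next I analyse $L\curvearrowright T_L$. Let $S$ be the stabiliser of the base point (containing $\bm{a}$), $F$ the finite subgroup from Lemma~\ref{fini}, and $\Gamma=\langle\bm{x},S\rangle\leq L$. If $\Gamma$ fixes a point of $T_L$, the limit map already produces $\pi_{\bm{\sigma}}$ of the required form by sending $\bm{y}$ into $G_{\bm{\sigma}}$ directly. Otherwise, Lemmas~\ref{lemme2} and~\ref{lemme22} give a simplicial $\Gamma$-minimal subtree $T_\Gamma$ with splitting $\Gamma=\langle\bm{x},S\mid\mathrm{ad}(x_i)_{|F}=\alpha_i\rangle$ transverse to its $L$-translates. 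Applying the Rips machine (Theorem~\ref{ripsmachine}) together with the stability lemma~\ref{stabilitylemma} decomposes the complement as a graph of actions with simplicial, Seifert-type and axial vertices attached to $T_\Gamma$ through finite groups of order at most $C$. Using Proposition~\ref{approx} and Corollary~\ref{comb_app} I build an $\mathbb{RJ}_L$-approximation $A$ through which $\psi_n$ factors $\omega$-almost-surely as $\theta_n\circ q$. Theorem~\ref{shortening_arg2} applied against the assumed shortness of $\psi_n$ relative to $U$ forces each axial and Seifert vertex of $\mathbb{RJ}_A$ to be rigid: any genuine modular automorphism would provide a strictly shorter test sequence that still satisfies the inequations (by condition~(3)), contradicting shortness.

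This rigidity means that each $\varphi_\infty(y_j)$, read as a loop in the Bass--Serre tree of $\Gamma$, is an alternating word in $\bm{x}$ and $S$ with $F$-identifications on edges. Since $\bm{a}\subset S$ and $F$ is a finite subgroup of a copy of $E(G)$ inside $G_{\bm{\sigma}}$ with $E=F$, this word lifts canonically to $G_{\bm{\sigma}}$ and defines $\pi_{\bm{\sigma}}$, whose image has exactly the amalgamated structure stated in the theorem (with $\bm{g}$ the tuple of $S$-entries used along the way). The equations $\Sigma_k(\bm{x},\pi_{\bm{\sigma}}(\bm{y}),\bm{a})=1$ hold because they hold in $L$, and the inequations survive because condition~(3) of Theorem~\ref{shortening_arg2} preserves non-triviality along the sequence. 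The main obstacle throughout is precisely this preservation of inequations combined with the failure of equational Noetherianity in $G$, which is what forces the whole argument to be routed through the approximation $A$ rather than directly through $L$, and what makes the enhanced shortening argument (condition~(3)) indispensable; a secondary subtlety is that one only obtains $E\subset E(G)$ at this stage, the promotion to the full $E(G)$ of Theorem~\ref{th0bis3} being deferred to Lemma~\ref{weaker_a_priori}.
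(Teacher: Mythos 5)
The crux of your argument is the unproved assertion that setting $\varphi_n(x_i)=g_i\,w_{n,i}(a,b)$ yields a $(\sigma_1,\ldots,\sigma_p)$-test sequence, and this is precisely the step the paper identifies as the obstruction and deliberately circumvents. In Proposition \ref{jesaispas}, the verification of condition (4) of Definition \ref{suitetest} depends crucially on the images of the $x_i$ lying in the quasi-convex subgroup $F(a,b)\times E(G)$: a large overlap $\Delta(\varphi_n(x_i),h\varphi_n(x_j)h^{-1})$ is first shown to force $h\in\langle a,b\rangle\times E(G)$ by comparing with the axis of $a$, and the estimate is then transported to the Cayley tree $Y$ of $F(a,b)$, where one argues combinatorially about common prefixes of cyclic conjugates of the $g_{i,n}$; the identification $\Lambda(\varphi_n(x_i))=\langle\varphi_n(x_i)\rangle\cdot E(G)$ and the absence of roots are likewise obtained inside $F(a,b)\times E(G)$. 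Once you multiply by $g_i\notin\langle a,b\rangle\times E(G)$, the quasi-axis of $g_iw_{n,i}$ is no longer tracked by a geodesic of $Y$ (it is a broken path with $g_i$-jumps), overlaps between translated axes can occur across these jumps, and none of the above steps applies; the sentence ``small cancellation yields the translation length and overlap conditions'' is exactly the missing argument, not a consequence of anything you have set up. The paper's architecture exists to avoid this: it first proves the case $\sigma_i=\mathrm{id}$ (Theorem \ref{th0bispartial}), deduces Theorem \ref{th11}, and then works in $G_{2p}=G\ast_{E(G)}(E(G)\times F_{2p})$, where the stable letters can be normalised so that $\mathrm{ad}(t_i)_{\vert E(G)}=\sigma_i$ and the $(\sigma_1,\ldots,\sigma_p)$-test sequence of Proposition \ref{jesaispasgeneral} is written in the letters $t_i,t_{i+p}$ alone; the hypothesis transfers to $G_{2p}$ because the inclusion is a $\forall\exists$-elementary embedding, and Proposition \ref{jesaispas2} is then applied with $G_{2p}$ in place of $G$. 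Your route could in principle work, but only if you supply a genuine geometric small-cancellation argument in $X$ for the mixed elements $g_iw_{n,i}$, which you have not sketched.

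There are also gaps in your limit analysis, although they could be repaired by invoking Proposition \ref{jesaispas2} as a black box (it applies to any $(\sigma_1,\ldots,\sigma_p)$-test sequence with the two extra properties). The case ``$\Gamma$ fixes a point of $T_L$'' does not ``already produce $\pi_{\bm{\sigma}}$'': ellipticity of $U$ is the situation in which the paper applies the shortening argument (Theorem \ref{shortening_arg2}) to contradict the minimal-length choice of the morphisms, and since the shortened morphisms only factor through an approximation $A_{i+1}$ rather than through $L_i$ (lack of equational Noetherianity), this must be iterated; termination is not automatic and is proved via Dunwoody's bound $\eta_i\leq C\,\mathrm{rank}_U(G_{\Sigma})$ on the number of edges of reduced JSJ splittings together with the claim that eventually $q_{i+1}(A_i^{U})=A_{i+1}^{U}$, at which point shortening would contradict the choice of the previous stage. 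Your single appeal to ``rigidity of axial and Seifert vertices'' does not address why the process terminates, nor why shortening the approximation contradicts the shortness of the original morphisms; and in the final step the edge group of the formal solution is $E=\psi_n(F)\subset E(G)$, read off through an approximation chosen (via Remark \ref{trick}) so that the components of $\Psi_k$ have the same normal forms, which is what makes the preservation of the inequations an actual argument rather than the phrase ``lifts canonically''.
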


\begin{lemme}\label{weaker_a_priori}Theorems \ref{th0bis3} and \ref{th0bis2} are equivalent.\end{lemme}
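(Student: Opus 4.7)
The implication Theorem~\ref{th0bis3} $\Rightarrow$ Theorem~\ref{th0bis2} is immediate, by taking $E=E(G)$ in the conclusion of Theorem~\ref{th0bis2}.

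For the converse, assume Theorem~\ref{th0bis2} and let $\bm{\sigma}\in\mathrm{Aut}_G(E(G))^p$ and $\theta$ be as in Theorem~\ref{th0bis3}. My plan is to reduce to Theorem~\ref{th0bis2} by augmenting the tuple of constants $\bm{a}$ so that the subgroup it generates contains the finite group $E(G)$. Explicitly, let $\{e_1,\dots,e_m\}$ be a generating set of $E(G)$, and set $\bm{a}'=(\bm{a},e_1,\dots,e_m)$; fix a presentation $\langle \bm{a}'\mid R(\bm{a}')\rangle$ of $\langle \bm{a},E(G)\rangle\subset G$. Since $\theta$ does not involve the new constants, it is still satisfied by $G$.

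Applying Theorem~\ref{th0bis2} to this augmented data produces an integer $k$, a $\bm{\sigma}$-invariant finite subgroup $E\subset E(G)$, and a morphism
\[
\pi : \langle \bm{x},\bm{y},\bm{a}' \mid R(\bm{a}'),\Sigma_k(\bm{x},\bm{y},\bm{a})\rangle \longrightarrow G_{\bm{\sigma},E}
\]
whose image has the form $K\ast_E V_E$ with $K=\langle \bm{g},\bm{a}'\rangle$, for some tuple $\bm{g}$ in $G$, and where $V_E=\langle \bm{x},E\mid \mathrm{ad}(x_i)_{|E}=\sigma_i\rangle$. By the augmentation, $K\supset E(G)$. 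Composing $\pi$ with the natural surjection $q\colon G_{\bm{\sigma},E}\twoheadrightarrow G_{\bm{\sigma},E(G)}$ (identity on $G$ and on $\bm{x}$, well-defined because the defining relations of $G_{\bm{\sigma},E(G)}$ extend those of $G_{\bm{\sigma},E}$) produces $\pi'=q\circ\pi$. The marking conditions $\pi'(\bm{x})=\bm{x}$ and $\pi'(\bm{a})=\bm{a}$ are preserved, and the image of $\pi'$ is $\langle K,\bm{x}\rangle\subset G_{\bm{\sigma},E(G)}$; using $K\supset E(G)\subset V_{E(G)}$, this subgroup inherits the amalgamated product structure $K\ast_{E(G)}V_{E(G)}$ required by Theorem~\ref{th0bis3}.

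The main obstacle is to verify that the system of inequations $\Psi_k(\bm{x},\pi(\bm{y}),\bm{a})\neq 1$, which holds in $G_{\bm{\sigma},E}$, remains valid after applying $q$. The map $q$ is in general \emph{not} injective on $K\ast_E V_E$: its kernel contains the elements $x_i e x_i^{-1}\sigma_i(e)^{-1}$ for $e\in E(G)\setminus E$. The verification should proceed by a normal-form analysis of the two amalgamated product structures. Since $V_E=E\rtimes F(\bm{x})$ embeds into $V_{E(G)}=E(G)\rtimes F(\bm{x})$ with $V_E\cap E(G)=E$, the $V$-letters of an element in normal form in $K\ast_E V_E$ map to $V_{E(G)}$-letters outside $E(G)$, so they do not collapse; the delicate point is to control the $K$-letters lying in $E(G)\setminus E\subset K$, where the normal form of $K\ast_{E(G)}V_{E(G)}$ can shorten. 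This last combinatorial verification is where the bulk of the technical effort lies.
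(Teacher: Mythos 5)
Your first direction and the idea of adjoining $E(G)$ to the constants are fine, but the converse direction has a genuine gap, and it sits exactly where you defer the work. After applying Theorem \ref{th0bis2} with the enlarged constants you only obtain some finite, possibly proper, $\bm{\sigma}$-invariant subgroup $E\subset E(G)$ and a formal solution $\pi$ landing in $G_{\bm{\sigma},E}=G\ast_E(E\rtimes F(\bm{x}))$; nothing in the statement of Theorem \ref{th0bis2} relates $\mathrm{ad}(x_i)$ to the elements of $E(G)\setminus E$ in the target, so composing with $q:G_{\bm{\sigma},E}\twoheadrightarrow G_{\bm{\sigma},E(G)}$ may kill components of $\Psi_k$. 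Indeed $\ker q$ meets the image $K\ast_E V_E$ nontrivially (it contains the elements $x_i e x_i^{-1}\sigma_i(e)^{-1}$ for $e\in E(G)\setminus E\subset K$), and Theorem \ref{th0bis2} gives no control on the normal forms of the elements $\pi(\Psi_k)$: a $K$-letter lying in $E(G)\setminus E$ is perfectly possible, and then the image in $G\ast_{E(G)}(E(G)\rtimes F(\bm{x}))$ can collapse, possibly to the identity. So the ``combinatorial verification'' you postpone is not a routine normal-form check; as stated it is simply not true that nontrivial elements of $K\ast_E V_E$ survive in $K\ast_{E(G)}V_{E(G)}$, and no analysis of $q$ alone can close this.

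The paper avoids the problem by making the reduction before invoking Theorem \ref{th0bis2}, at the level of the sentence rather than of the target group: besides adjoining $E(G)$ to the constants, it replaces the quantifier-free part by the disjunction, over the finitely many tuples $\bm{\sigma}_1,\ldots,\bm{\sigma}_N\in\mathrm{Aut}_G(E(G))^p$, of the formulas ``$\theta$ holds and $\bm{x}$ acts on $E(G)$ by conjugation as $\bm{\sigma}_i$''. The added equations $x_j e x_j^{-1}=\sigma_{i,j}(e)$, for $e\in E(G)$, then hold in the relevant group $G_{\Sigma}$; since a formal solution fixes the constants and sends $x_j$ to $x_j$, the amalgam normal form in $G_{\bm{\sigma},E}$ shows these equations are incompatible with any $e\in E(G)\setminus E$, forcing $E=E(G)$. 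Hence no quotient map $q$ is needed and the inequations are untouched. This is the missing ingredient in your argument: constrain the action of $\bm{x}$ on $E(G)$ inside the system of equations (possible because $E(G)$ and $\mathrm{Aut}_G(E(G))^p$ are finite), rather than trying to upgrade $E$ to $E(G)$ after the fact.
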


\begin{proof}Theorem \ref{th0bis2} follows immediately from Theorem \ref{th0bis3}. Let us prove the converse. The proof consists in slightly modifying the first-order formula $\forall \bm{x} \ \exists \bm{y} \ \theta(\bm{x},\bm{y},\bm{a})$. Let $\bm{b}$ be the tuple of elements of $G$ composed of $\bm{a}$ and $E(G)$, and let $\bm{\sigma}_1,\ldots,\bm{\sigma}_N$ be an enumeration of the elements of $\mathrm{Aut}_G(E(G))^p$. For $1\leq i\leq N$, let $\mu_i(\bm{x},\bm{y},\bm{b})$ be the quantifier-free formula saying "$\theta(\bm{x},\bm{y},\bm{a})$ is true and $\bm{x}$ acts on $E(G)$ as $\bm{\sigma}_i$". Since $\forall \bm{x} \ \exists \bm{y} \ \theta(\bm{x},\bm{y},\bm{a})$ holds in $G$, the following first-order sentence holds in $G$ as well:\[\forall \bm{x} \ \exists \bm{y} \ \bigvee_{i=1}^N \mu_i(\bm{x},\bm{y},\bm{b}).\]Theorem \ref{th0bis3} follows from Theorem \ref{th0bis2} applied to this new first-order sentence.\end{proof}

\section{Proof of Merzlyakov's theorem \ref{th0bis2} in a particular case}

In this section, we deal with the case where $(\sigma_1,\ldots,\sigma_p)=(\mathrm{id}_{E(G)},\ldots,\mathrm{id}_{E(G)})$. More precisely, we prove the following result, which is a partial version of Theorem \ref{th0bis2}.

\begin{te}\label{th0bispartial}Let $G$ be an acylindrically hyperbolic group, and let $\bm{a}$ be a tuple of elements of $G$ (called constants). Fix a presentation $\langle \bm{a} \ \vert \ R(\bm{a})=1\rangle$ for the subgroup of $G$ generated by $\bm{a}$. Let \[\bigvee_{k=1}^{\ell}(\Sigma_k(\bm{x},\bm{y},\bm{a})=1 \ \wedge \ \Psi_k(\bm{x},\bm{y},\bm{a})\neq 1)\] be a finite disjunction of finite system of equations and inequations over $G$, where $\bm{x}$ and $\bm{y}$ are two tuples of variables. For every $1\leq k\leq \ell$, let $G_{\Sigma_k}$ denote the following group, finitely presented relative to $\langle \bm{a} \ \vert \ R(\bm{a})=1\rangle$: \[\langle \bm{x},\bm{y},\bm{a} \ \vert \ R(\bm{a})=1, \ \Sigma_k(\bm{x},\bm{y},\bm{a})=1\rangle.\]Let $p=\vert \bm{x}\vert$ be the arity of $\bm{x}$, and let $x_i$ denote the $i$th component of $\bm{x}$. Suppose that $G$ satisfies the following first-order sentence: \[\forall \bm{x} \ \exists \bm{y} \ \bigvee_{k=1}^{\ell}(\Sigma_k(\bm{x},\bm{y},\bm{a})=1 \ \wedge \ \Psi_k(\bm{x},\bm{y},\bm{a})\neq 1).\]
Then there exist an integer $1\leq k\leq \ell$, a finite subgroup $E$ of $E(G)$, and a morphism \[\pi_{\bm{\sigma}} : G_{\Sigma_k}\rightarrow G_{\bm{\sigma}}=\left\langle G, \bm{x} \ \vert \ \mathrm{ad}(x_i)_{\vert E}=\mathrm{id}_{E}, \ \forall i\in \llbracket 1,p\rrbracket\right\rangle\] such that the following hold:
\begin{itemize}
    \item[$\bullet$] $\pi_{\bm{\sigma}}(\bm{x})=\bm{x}$,
    \item[$\bullet$] $\pi_{\bm{\sigma}}(\bm{a})=\bm{a}$,
    \item[$\bullet$] $\Psi(\bm{x},\pi_{\bm{\sigma}}(\bm{y}),\bm{a})\neq 1$.
\end{itemize}
Moreover, the image of $\pi_{\bm{\sigma}}$ is a subgroup of $G_{\bm{\sigma}}$ of the form \[\left\langle \bm{g},\bm{a}\right\rangle\ast_{E}\left\langle \bm{x}, E \ \vert \ \mathrm{ad}(x_i)_{\vert E}=\mathrm{id}_{E}, \ \forall i\in \llbracket 1,p\rrbracket\right\rangle\] for some tuple $\bm{g}$ of elements of $G$.\end{te}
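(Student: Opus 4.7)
The plan is to apply the Bestvina-Paulin method to a test sequence for $\bm{x}$ extended via the given $\forall\exists$-sentence, then exploit the structural results on limit groups from Section 4, together with the shortening argument and approximation technique from Section 3, to extract the formal solution.

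\textbf{Step 1 (Test sequence and limit group).} First I construct a $(\mathrm{id}_{E(G)},\ldots,\mathrm{id}_{E(G)})$-test sequence. Using \cite[Theorem 6.14]{DGO17} together with \cite[Lemma 3.1]{AMS13}, fix a quasi-convex non-abelian free subgroup $F(a,b) \le G$ centralising $E(G)$, and choose elements $w_{i,n} \in F(a,b)$ satisfying the small cancellation and length conditions of Definition \ref{suitetest}; fix a finite subgroup $E \subseteq E(G)$ such that $\Lambda(w_{i,n}) = \langle w_{i,n}, E\rangle$ for all $i,n$. Define $\varphi_n : G_{\bm{\sigma}} \to G$ by $\varphi_n|_G = \mathrm{id}_G$ and $\varphi_n(x_i) = w_{i,n}$. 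Since $G \models \theta$, for each $n$ there exist a disjunct index $k_n$ and a tuple $\bm{h}_n$ with $\Sigma_{k_n}(\varphi_n(\bm{x}), \bm{h}_n, \bm{a}) = 1$ and $\Psi_{k_n}(\varphi_n(\bm{x}), \bm{h}_n, \bm{a}) \neq 1$. Fixing a non-principal ultrafilter $\omega$, assume $k_n = k$ $\omega$-almost-surely, and set $\psi_n : G_{\Sigma_k} \to G$ to extend $\varphi_n$ by $\psi_n(\bm{y}) = \bm{h}_n$. Form the limit group $L = G_{\Sigma_k}/\underleftarrow{\ker}_\omega(\psi_n)$ with limit map $\psi_\infty$; since $\psi_n|_G = \mathrm{id}_G$, the restriction $\psi_\infty|_G$ is injective, so $G$ embeds in $L$.

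\textbf{Step 2 (Rigid subgroup inside $L$).} The constants subgroup $U := \langle \bm{a}\rangle$ acts elliptically on the limiting tree $T_L$ and $G_{\Sigma_k}$ is finitely presented over $U$, so the Standing Assumption \ref{StAs} holds. Lemmas \ref{fini}, \ref{lemme2} and Corollary \ref{lemme22} then apply to $\Gamma := \langle \bm{x}, S\rangle \le L$ (with $S = \mathrm{Stab}_L(o)$): the minimal subtree $T_\Gamma$ is simplicial and transverse to its $L$-translates, and $\Gamma$ splits as $\langle \bm{x}, S \mid \mathrm{ad}(x_i)|_F = \mathrm{id}_F\rangle$, where $F$ is the finite group of Lemma \ref{fini}. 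Identifying $E$ with $F$ exhibits $G_{\bm{\sigma}}$ as a subgroup of $\Gamma \le L$, so the remaining task is to build a retraction $r : L \twoheadrightarrow G_{\bm{\sigma}}$; then $\pi_{\bm{\sigma}} := r \circ \psi_\infty$ will satisfy the conclusions of the theorem.

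\textbf{Step 3 (Shortening iteration and termination; main difficulty).} To build $r$, analyse $L$ via the combined JSJ/Rips decomposition $\mathbb{RJ}_L$ of Corollary \ref{comb_app} relative to $U$. If any non-rigid pieces remain — axial or Seifert-type components, or non-rigid simplicial vertex groups — apply Theorem \ref{shortening_arg2} to an $\mathbb{RJ}_L$-approximation of $L$ to produce a new sequence $(\phi_n)$ that is strictly shorter on the $U$-containing vertex and still satisfies the system of inequations $\Psi_k \neq 1$ $\omega$-almost-surely (by the crucial third condition of Theorem \ref{shortening_arg2}); by Remark \ref{impor}, $(\phi_n)$ is again a test sequence, possibly for a different tuple $\bm{\sigma}'$, and we replace $(\psi_n)$ by $(\phi_n)$ and restart. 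The main obstacle is the termination of this loop: acylindrically hyperbolic groups are not equationally Noetherian, so the descending chain condition for $G$-limit groups is unavailable. I would overcome this exactly as outlined in the introduction, by invoking the uniform bound $C$ on edge stabiliser orders (Lemma \ref{stabilitylemma}) together with accessibility results for splittings over finite groups of order $\le C$, so that a suitable finite complexity of $\mathbb{RJ}_L$ strictly decreases under each shortening step and the process terminates after finitely many iterations.

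\textbf{Step 4 (Reading off the formal solution).} At termination, the Rips part of the decomposition of the $U$-vertex has no non-rigid pieces, and a Bass-Serre theoretic construction yields a retraction $r : L \twoheadrightarrow G_{\bm{\sigma}}$ which is the identity on $G_{\bm{\sigma}} \le \Gamma$ and maps the remaining rigid vertex groups into $G \le G_{\bm{\sigma}}$ via the natural inclusions that their structure forces. The composition $\pi_{\bm{\sigma}} = r \circ \psi_\infty$ fixes $\bm{x}$ and $\bm{a}$, preserves $\Psi_k \neq 1$ thanks to the third shortening condition being maintained throughout the iteration, and has image of the claimed form $\langle \bm{g}, \bm{a}\rangle *_E \langle \bm{x}, E\rangle$, where the tuple $\bm{g}$ collects images of generators of the terminal rigid vertex groups.
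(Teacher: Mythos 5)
Your overall architecture (test sequence built in a quasi-convex free subgroup, Bestvina--Paulin limit, shortening through approximations, accessibility over finite groups of order $\le C$) is the paper's, but two steps fail as written. First, the subgroup relative to which everything must be done is $U=\langle \bm{x},\bm{a}\rangle$, not $\langle\bm{a}\rangle$. Remark \ref{impor} only guarantees that a shortened sequence is again a test sequence when the new morphisms agree with the old ones up to conjugation on the subgroup generated by \emph{both} $\bm{x}$ and $\bm{a}$; shortening relative to $\langle\bm{a}\rangle$ loses all control of the images of $\bm{x}$, so after one pass of your loop the small-cancellation properties that drive Lemma \ref{lemme2} and Corollary \ref{lemme22} are gone. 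Moreover, Lemma \ref{lemme2} has the hypothesis that $\Gamma=\langle\bm{x},S\rangle$ is \emph{not} elliptic in the limiting tree, which you never establish: in the paper this is exactly what the iteration produces, by choosing at each stage the \emph{shortest} morphisms (relative to $\langle\bm{x},\bm{a}\rangle$) satisfying the inequations, so that ellipticity of $\langle\bm{x},\bm{a}\rangle$ would contradict minimality via Theorem \ref{shortening_arg2}. You never introduce this minimal-length normalization, so your loop has no mechanism forcing non-ellipticity, and your termination claim has the wrong monotonicity: the paper's complexity (the number of edges of a reduced JSJ splitting over finite subgroups of order $\le C$, bounded by $C\,\mathrm{rank}_U$ via Dunwoody's folding sequences) is non-decreasing and bounded, hence stabilizes, and stabilization together with minimality is what forces $\langle\bm{x},\bm{a}\rangle$ to be non-elliptic; nothing "strictly decreases", and "no non-rigid pieces remain" is not the paper's (or a workable) stopping condition.

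Second, Step 4 does not produce the formal solution. The group $G_{\bm{\sigma}}$ is not a subgroup of $L$: the limit group is a quotient of $\langle\bm{x},\bm{y},\bm{a}\rangle$ and contains only (an image of) $\langle\bm{a}\rangle$, not $G$, so there is no copy of $G_{\bm{\sigma}}$ inside $\Gamma\le L$ to retract onto, and your earlier assertions "$\psi_n|_G=\mathrm{id}_G$, so $G$ embeds in $L$" conflate the constants $\bm{a}$ with all of $G$. More seriously, rigid vertex groups of a $G$-limit group come with no "natural inclusions" into $G$: since $G$ is not equationally Noetherian, the defining sequence does not factor through $L$, which is precisely why no map from $L$ to $G$ (or to $G_{\bm{\sigma}}$) is available. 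The paper instead works on an $\mathbb{S}$-approximation $A$ of the terminal limit group, through which the morphisms do factor as $\psi_n:A\to G$; the formal solution is $r\circ q$ where $r$ fixes $\bm{x}$, equals a single $\psi_N$ on the vertex group $K'$ containing $\bm{a}$, has image $\langle\bm{g},\bm{a}\rangle\ast_E\langle\bm{x},E\rangle$ with $E=\psi_N(F)$ for the finite group $F$ of Lemma \ref{fini}, and preserves the inequations by a normal-form comparison between $\mathbb{S}_A$ and $\mathbb{S}_L$ arranged via Remark \ref{trick}. This is the content of Lemma \ref{abovelemma}, and your sketch does not replace it.
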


Recall that a $(\mathrm{id}_{E(G)},\ldots,\mathrm{id}_{E(G)})$-test sequence is simply called a \emph{test sequence}. First, we build a test sequence enjoying two special properties.

\subsection{Construction of a test sequence}

The construction relies crucially on the existence of a quasi-isometrically embedded subgroup of $G$ of the form $F(a,b)\times E(G)$ (provided by \cite[Theorem 6.14]{DGO17} together with \cite[Lemma 3.1]{AMS13}), which will enable us to use small cancellation within $F(a,b)$.

\begin{prop}\label{jesaispas}Let $G$ be an acylindrically hyperbolic group, and let $\bm{a}$ be a tuple of elements of $G$. Fix a presentation $\langle \bm{a} \ \vert \ R(\bm{a})=1\rangle$ for the subgroup of $G$ generated by $\bm{a}$. Let \[\bigvee_{k=1}^{\ell}(\Sigma_k(\bm{x},\bm{y},\bm{a})=1 \ \wedge \ \Psi_k(\bm{x},\bm{y},\bm{a})\neq 1)\] be a finite disjunction of finite system of equations and inequations over $G$, where $\bm{x}$ and $\bm{y}$ are two tuples of variables. For every $1\leq k\leq \ell$, denote \[G_{\Sigma_k}=\langle \bm{x},\bm{y},\bm{a} \ \vert \ R(\bm{a})=1, \ \Sigma_k(\bm{x},\bm{y},\bm{a})=1\rangle.\]Suppose that $G$ satisfies the following first-order sentence:\[\forall \bm{x} \ \exists \bm{y} \ \bigvee_{k=1}^{\ell}(\Sigma_k(\bm{x},\bm{y},\bm{a})=1 \ \wedge \ \Psi_k(\bm{x},\bm{y},\bm{a})\neq 1).\]Then, there exists an integer $1\leq k\leq \ell$ and a test sequence $(\varphi_n : G_{\Sigma_k} \rightarrow G)_{n\in\mathbb{N}}$ satisfying the following two conditions $\omega$-almost-surely:
\begin{enumerate}
\item no component of the system of inequations $\Psi(\bm{x},\bm{y},\bm{a})$ is killed by $\varphi_n$, 
\item and the morphism $\varphi_n$ maps $\bm{a}$ to $\bm{a}$ (not only to a conjugate).
\end{enumerate}
\end{prop}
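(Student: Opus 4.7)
The plan is to import small cancellation from a free subgroup of $G$ into $G$ itself, via a quasi-isometric embedding, and then use the first-order hypothesis to produce the desired morphisms. The starting point is that $G$ contains a subgroup of the form $F(a,b)\times E(G)$ that is quasi-isometrically embedded in the hyperbolic Cayley graph $(X,d)$ on which $G$ acts acylindrically and non-elementarily. I would fix, once and for all, $p$ distinct cyclically reduced words $u_1,\ldots,u_p\in F(a,b)$ none of which is a proper power nor conjugate to one another in $F(a,b)$ (say of the form $u_i=a\,b^{k_i}$ with the $k_i$ pairwise distinct), and then choose the candidate tuple to be $w_{n,i}=u_i v_n$, where $v_n$ is a carefully chosen element in $F(a,b)$ whose length grows fast enough to dominate the $u_i$. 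Standard combinatorial small cancellation in $F(a,b)$ shows that the tuple $(w_{n,1},\ldots,w_{n,p})$ has all pairwise overlaps controlled by a constant, while $\|w_{n,i}\|_{F(a,b)}\to\infty$; up to passing to a subsequence one arranges also that the ratios $\|w_{n,i}\|/\|w_{n,j}\|$ converge in $[0,+\infty]$.

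Next I would transfer these properties to $G$. Because the inclusion $F(a,b)\hookrightarrow G$ is a quasi-isometric embedding into $(X,d)$, the translation lengths $\|w_{n,i}\|$ computed in $G$ are comparable (up to multiplicative and additive constants) to those in $F(a,b)$, and the fellow-travelling constants $\Delta(w_{n,i},h\,w_{n,j}h^{-1})$ for $h$ realising a large overlap can be pulled back to large overlaps in $F(a,b)$, up to a bounded error depending only on the quasi-isometry constants and $\delta$. This is precisely the mechanism that turns the uniform overlap bound in $F(a,b)$ into the $\varepsilon_n$-small cancellation condition of Definition \ref{SCC} in $G$, with $\varepsilon_n\to 0$. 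At the same time, since $w_{n,i}$ lies in $F(a,b)\subset F(a,b)\times E(G)$, it commutes with $E(G)$; the quasi-axis of $w_{n,i}$ in $G$ is, up to finite Hausdorff distance, the axis of $w_{n,i}$ in the subgroup $F(a,b)\times E(G)$, so any element of $\Lambda(w_{n,i})$ must preserve this axis, which combined with the fact that $w_{n,i}$ is primitive in $F(a,b)$ forces $\Lambda(w_{n,i})=\langle w_{n,i}\rangle\times E(G)$, giving the trivial action of $w_{n,i}$ on $E(G)$ required by condition (4) of Definition \ref{suitetest}.

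Having produced an admissible candidate tuple $\bm{\gamma}_n=(w_{n,1},\ldots,w_{n,p})\in G^p$, I would invoke the hypothesis on $G$: for each $n$ there exists a tuple $\bm{h}_n$ in $G$ and an index $k_n\in\llbracket 1,\ell\rrbracket$ such that $\Sigma_{k_n}(\bm{\gamma}_n,\bm{h}_n,\bm{a})=1$ and $\Psi_{k_n}(\bm{\gamma}_n,\bm{h}_n,\bm{a})\neq 1$ hold in $G$. By the pigeonhole principle, there is a value $k$ and an infinite subset of indices $n$ for which $k_n=k$; replacing the sequence by the corresponding subsequence one obtains, for every $n$, a well-defined homomorphism
\[
\varphi_n:G_{\Sigma_k}=\langle\bm{x},\bm{y},\bm{a}\mid R(\bm{a})=1,\ \Sigma_k(\bm{x},\bm{y},\bm{a})=1\rangle\longrightarrow G,
\]
sending $\bm{x}\mapsto\bm{\gamma}_n$, $\bm{y}\mapsto\bm{h}_n$ and $\bm{a}\mapsto\bm{a}$. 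The last two conditions of the proposition are visible from the construction: $\varphi_n(\bm{a})=\bm{a}$ by definition and $\Psi_k(\bm{\gamma}_n,\bm{h}_n,\bm{a})\neq 1$ by the choice of the witnesses.

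It remains to check the four items of Definition \ref{suitetest} for the sequence $(\varphi_n)$. Items (1), (2), (3) are immediate from the construction of $\bm{\gamma}_n$, and item (4) is precisely what the second paragraph above provides. I anticipate that the main obstacle will lie in that last paragraph's geometric transfer: verifying carefully that the $\varepsilon$-small cancellation condition of Definition \ref{SCC} in $G$ follows from its combinatorial counterpart in $F(a,b)$, and in particular that the maximal elementary subgroup $\Lambda(w_{n,i})$ in $G$ really is $\langle w_{n,i}\rangle\times E(G)$ and nothing larger. The first point requires a quantitative comparison of $\Delta$ in $G$ with overlaps in $F(a,b)$ using the quasi-isometric embedding; the second point relies on the fact that any element normalising $\langle w_{n,i}^N\rangle$ for some $N$ must fix the two ends of the quasi-axis and hence, by Lemma \ref{lemmefin} and the structure of $F(a,b)\times E(G)$, land inside $\langle w_{n,i}\rangle\times E(G)$. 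Everything else is either a direct consequence of the first-order hypothesis or of a pigeonhole-type subsequence extraction.
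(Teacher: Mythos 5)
Your overall strategy coincides with the paper's (quasi-isometrically embedded $F(a,b)\times E(G)$, small cancellation words in the free factor, pigeonhole over the disjuncts, $\varphi_n(\bm{a})=\bm{a}$), but the specific tuple you propose fails the very condition it is supposed to provide. If $w_{n,i}=u_iv_n$ and $w_{n,j}=u_jv_n$ share the same long factor $v_n$, then already in the Cayley tree of $F(a,b)$ the axes of $w_{n,i}$ and $w_{n,j}$ overlap along a segment of length at least $\vert v_n\vert$: the geodesics $[1,w_{n,i}^{-1}]$ and $[1,w_{n,j}^{-1}]$ both begin with the subpath labelled $v_n^{-1}$, and both lie on the respective axes. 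Since $\vert v_n\vert$ dominates $\vert u_i\vert$, this overlap is comparable to the translation lengths themselves, so with $h=1$ and $i\neq j$ one gets $\Delta(w_{n,i},w_{n,j})\gtrsim \Vert w_{n,i}\Vert$ (up to the quasi-isometry constants). Definition \ref{SCC} requires that any overlap exceeding $\varepsilon_n\min(\Vert w_{n,i}\Vert,\Vert w_{n,j}\Vert)$ force $i=j$; hence no sequence $\varepsilon_n\to 0$ can work for your tuple, and condition (4) of Definition \ref{suitetest} fails for $p\geq 2$. The assertion that ``standard combinatorial small cancellation shows all pairwise overlaps are controlled by a constant'' is false for these words: non-conjugacy and absence of proper powers are far weaker than the required bound on overlaps of (conjugates of) distinct components. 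The paper's choice $g_{i,n}=a^{(i-1)n+1}b\,a^{(i-1)n+2}b\cdots a^{in}b$ is designed precisely to avoid this: distinct indices use disjoint ranges of exponents of $a$, so two distinct cyclic conjugates of $g_{i,n}$ and $g_{j,n}$ agree on at most $2in-2$ letters while the lengths grow like $n^2$.

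A second, smaller, gap lies in the ``geometric transfer'' you flag as the main obstacle. For an arbitrary $h\in G$ realising a large overlap $\Delta(w_{n,i},hw_{n,j}h^{-1})$ in $X$, you cannot simply ``pull back'' the overlap to $F(a,b)$: one must first show that (up to $E(G)$) $h$ lies in $\langle a,b\rangle\times E(G)$. The paper does this quantitatively, by observing that a long common corridor contains long subpaths labelled by powers of $a$, so that $\Delta(a,(o_{i,n}^{-1}o_{j,n})a(o_{i,n}^{-1}o_{j,n})^{-1})\geq N$ and hence $o_{i,n}^{-1}o_{j,n}\in\Lambda(a)=\langle a\rangle\times E(G)$ by acylindricity; only after this reduction does the tree computation apply. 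Your proposed substitute (``any element normalising $\langle w_{n,i}^N\rangle$ fixes the ends of the quasi-axis and hence lands in $\langle w_{n,i}\rangle\times E(G)$'') assumes what must be proved, unless you invoke something like the almost malnormality of the hyperbolically embedded subgroup $F(a,b)\times E(G)$, which you do not; moreover it only treats conjugators with infinite overlap, whereas Definition \ref{SCC} concerns all $h$ with overlap merely exceeding $\varepsilon_n\min(\Vert w_{n,i}\Vert,\Vert w_{n,j}\Vert)$, so a $\Lambda$-based argument alone cannot verify the small cancellation condition.
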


\begin{proof}By Theorem 6.14 in \cite{DGO17}, there exists a hyperbolically embedded subgroup $H\hookrightarrow_h G$ such that $H=F(a,b)\times E(G)$, where $F(a,b)$ denotes the free group on two generators $a$ and $b$, and the elements $a$ and $b$ are hyperbolic in $G$.

\smallskip

Up to replacing the generating set $S$ of $G$ with $S\cup\lbrace a,b\rbrace$, one can assume without loss of generality that $a$ and $b$ belong to $S$. Let $(X,d)$ be the Cayley graph of $G$ with respect to this enlarged set $S$.

\smallskip

Let $d'$ denote the metric in the free group $ \langle a, b \rangle $ for the generating set $ \lbrace a, b \rbrace $. By \cite[Lemma 3.1]{AMS13}, there exist two constants $q$ and $r$ such that \begin{equation}\label{ineg}d'(1,h)\leq q d(1,h)+r
\end{equation} for all $h\in \langle a,b\rangle$. 

\smallskip

Let $p$ denote the arity of $\bm{x}$. For any integers $1\leq i\leq p$ and $n\geq 0 $, we define $g_{i,n}=a^{(i-1)n+1} b a^{(i-1)n+2} b\cdots a^{in} b$. Let $\bm{g}_n$ be the $p$-tuple $(g_{1,n},\ldots,g_{p,n})$.

\smallskip

There exists an integer $1\leq k\leq \ell$ such that, for infinitely many integers $n$, there exists a tuple $\bm{h}_n$ of elements of $G$ such that $\Sigma_k(\bm{g}_n,\bm{h}_n,\bm{a})=1\wedge \Psi_k(\bm{g}_n,\bm{h}_n,\bm{a})\neq 1$. By passing to a subsequence and relabelling, one can assume without loss of generality that this system of equalities and inequalities holds for all integers $n$.

\smallskip

Let $\varphi_n: \Gamma\twoheadrightarrow G$ be the morphism defined by $\varphi_n (\bm{x}) = \bm{g}_n $, $\varphi_n(\bm{y})=\bm{h}_n$ and $\varphi_n (\bm{a}) = \bm{a} $. We will prove that $(\varphi_n)_{n\in\mathbb{N}}$ is a test sequence.

\smallskip

For $1\leq i\leq p$ and $n\geq 0$, let $ \tau_{i,n} $ be the path of $ X $ that links $ 1 $ to $ g_{i,n} $ and is labeled with the word $ g_{i,n} $ in $ a $ and $ b $, and consider the bi-infinite path $\overline{\tau}_{i,n}=\cup_{k\in\mathbb{Z}} g_{i,n}^k\tau_{i,n}$. This path $\overline{\tau}_{i,n}$ is a geodesic in the Cayley graph of the free group $\langle a,b\rangle$ equipped with the metric $d'$, and by the inequality \ref{ineg} this graph is quasi-isometrically embedded into $(X,d)$, thus $\overline{\tau}_{i,n}$ is a quasi-geodesic in $(X,d)$, for some constants that do not depend on $n$. Consequently, $\overline{\tau}_{i,n}$ lies in the $\lambda$-neighborhood of the quasi-axis $A(g_{i,n})$ of $g_{i,n}$ for some constant $\lambda\geq 0$ independent from $n$. Similarly, let $\alpha$ be the edge of $X$ linking $1$ to $a$, let $\overline{\alpha}$ denote the quasi-geodesic $\overline{\alpha}=\cup_{k\in\mathbb{Z}} a^k\alpha$ and let $\mu$ be a constant such that $\overline{\alpha}$ lies in the $\mu$-neighborhood of $A(a)$.

\smallskip

Since $g_{i,n}$ is cyclically reduced in $\langle a,b\rangle$, an easy calculation shows that $d'(1, g_{i,n}) \sim (i-1/2) n^2 $. Thus, the second and third conditions of Definition \ref{suitetest} hold. In addition, note that it follows from the inequality (\ref{ineg}) that there exists a constant $ R> 0 $ such that $ \vert\vert g_{i,n}\vert\vert \geq Rn^ 2 $ for all $ n $ large enough, and all $i\in\llbracket 1,p\rrbracket$.

\smallskip

It remains to prove the fourth condition of Definition \ref{suitetest}. Since $ a $ is hyperbolic, there exists a constant $ N \geq 0 $ such that, for every element $ g \in G $, if $ \Delta (a, gag^{-1}) \geq N $, then $ g $ belongs to $\Lambda (a)=\langle a\rangle\times E(G)$ (see paragraph \ref{Coulon}). Let $n_0$ be an integer such that $ 16qRn_0 $ is large compared to $N'=N+204\delta+2\lambda+2\mu$, where $q$ is the constant involved in the inequality (\ref{ineg}). We will show that for every $ n \geq n_0 $, the tuple $\varphi_n(\bm{x})=\bm{g}_n$ satisfies the $(16q/n)$-small cancellation condition \ref{SCC}. Let $ n $ be an integer greater than $ n_0 $. Consider an element $ g \in G $ such that \begin{equation}\label{ineg2}\Delta(g_{i,n},gg_{j,n}g^{-1})\geq 16q\min(\vert\vert g_{i,n}\vert\vert,\vert\vert g_{j,n}\vert\vert)/n
\end{equation}
for some $(i,j)\in\llbracket 1,p\rrbracket^2$. We will show that $i=j$ and that $ g $ belongs to the subgroup $\langle g_{i,n}\rangle\times E(G)$. One can suppose without loss of generality that $j$ is larger than $i$. Thus, $\omega$-almost-surely, one has $\min(\vert\vert g_{i,n}\vert\vert,\vert\vert g_{j,n}\vert\vert)=\vert\vert g_{i,n}\vert\vert$. 

\smallskip

We first show that $ g $ belongs to the subgroup $\langle a,b\rangle\times E(G)$. Since \[\Delta(g_{i,n},gg_{j,n}g^{-1})\geq 16q\vert\vert g_{i,n}\vert\vert/n\geq 16qRn\geq 16qRn_0 \gg N',\] we can choose two subpaths $\mu_{i,n}$ and $\mu_{j,n}$ of $\overline{\tau}_{i,n}$ and $g\overline{\tau}_{j,n}$ respectively, of length $ N' $ and labeled by $ a^{N'} $, such that $\mathrm{diam}((\mu_{i,n})^{+(100\delta+\lambda)}\cap(\mu_{j,n})^{+(100\delta+\lambda)})\geq N'$. Denoting by $ o_{i,n} $ and $ o_{j,n} $ the initial points of $ \mu_{i,n} $ and $ \mu_{j,n} $ respectively, we have \[\mathrm{diam}(o_{i,n}\overline{\alpha}^{+(100\delta+\lambda)}\cap o_{j,n}\overline{\alpha}^{+(100\delta+\lambda)})\geq N'.\]It follows that 
\[\mathrm{diam}(A(a)^{+(100\delta+\lambda+\mu)}\cap o_{i,n}^{-1}o_{j,n}A(a)^{+(100\delta+\lambda+\mu)})\geq N'.\] By Lemma 2.13 in \cite{Cou13}, we have:
{\small
\begin{align*}
\Delta(a,{(o_{i,n}^{-1}o_{j,n})}a{(o_{i,n}^{-1}o_{j,n})}^{-1})& \geq \mathrm{diam}(A(a)^{+(100\delta+\lambda+\mu)}\cap o_{i,n}^{-1}o_{j,n}A(a)^{+(100\delta+\lambda+\mu)}) - (204\delta+2\lambda+2\mu) \\
 & \geq N' - (204\delta+2\lambda+2\mu) =N.
\end{align*}}It follows from this inequality that the element $ o_{i,n}^{-1}o_{j,n} $ belongs to $\Lambda (a)=\langle a\rangle\times E(G)$. Now, observe that as $o_{i,n}$ lies in $\langle a,b\rangle$, it is on the quasi-geodesic $ \overline {\tau}_{i,n} $. Similarly, $ o_{j,n} $ can be written as $ o_{j,n} = gw_{j,n} $ with $ w_{j,n} $ a word in $ a $ and $ b $. It follows that $ g $ belongs to the subgroup $ \langle a, b \rangle \times E(G) $.

\smallskip

Up to replacing $g$ with $gc$ for some $c \in E(G)$, we can now assume that $g$ belongs to the free group $\langle a,b\rangle$. This does not affect the inequality $\Delta(g_{i,n},gg_{j,n}g^{-1})\geq 16q \vert\vert g_{i,n}\vert\vert/n$; indeed, $gcg_{j,n}{(gc)}^{-1}$ is equal to $gg_{j,n}g^{-1}$ since $ g_{j,n} $ centralizes $ E(G) $, as an element of $\langle a,b\rangle$. 

\smallskip

Let $ Y $ be the Cayley graph of the free group $ \langle a, b \rangle $ equipped with the distance $ d'$. The following inequality can be easily deduced from the inequalities (\ref{ineg}) and (\ref{ineg2}):\[\mathrm{diam}\left((\overline{\tau}_{i,n})^{+(q(100\delta+r)+1)}\cap (g\overline{\tau}_{j,n})^{+(q(100\delta+r)+1)}\right)\geq 16q d'(1,g_{i,n})/(2qn)=8d'(1,g_{i,n})/n.\] Since $Y$ is a tree, this inequality tells us that the axes of $ g_{i,n} $ and $ gg_{j,n}g^{-1} $ have an overlap of length larger than $8d'(1,g_{i,n})/n$ in this tree. Then, recall that $d'(1,g_{i,n})$ is asymptotically equivalent to $(i-1/2) n^2$. Thus, $8d'(1,g_{i,n})/n$ is asymptotically equivalent to $8(i-1/2) n$. Therefore, $\omega$-almost-surely, the axes of $ g_{i,n} $ and $ gg_{j,n}g^{-1} $ have an overlap of length larger than $4(i-1/2) n$ in the tree $Y$.

\smallskip

To conclude, let us observe that $4(i-1/2)n>2in-2$, and that two distinct cyclic conjugates of $g_{i,n}$ and $g_{j,n}$ have at most their first $ 2in-2 $ letters in common (recall that $j$ is larger than $i$ by assumption). Thus, if the axes of $ g_{i,n} $ and $ gg_{j,n}g^{-1} $ have a common subsegment in $ Y $ of length strictly larger than $2in-2 $, then $ g_{i,n} $ and $ gg_{j,n}g^{-1} $ have the same axis. It follows that $i=j$ and that $ g_{i,n} $ and $ g $ have a common root. Last, note that $ g_{i,n} $ has no root. It follows that $ g $ is a power of $ g_{i,n} $, which concludes the proof.\end{proof}

\subsection{Proof of Theorem \ref{th0bispartial}}

Theorem \ref{th0bispartial} is an immediate consequence of Proposition \ref{jesaispas} and Proposition \ref{jesaispas2} below (applied with $(\sigma_1,\ldots,\sigma_p)=(\mathrm{id}_{E(G)},\ldots,\mathrm{id}_{E(G)})$).

\begin{prop}\label{jesaispas2}Let $G$ be an acylindrically hyperbolic group, and let $\bm{a}$ be a tuple of elements of $G$. Fix a presentation $\langle \bm{a} \ \vert \ R(\bm{a})=1\rangle$ for the subgroup of $G$ generated by $\bm{a}$. Let \[\Sigma(\bm{x},\bm{y},\bm{a})=1 \ \wedge \ \Psi(\bm{x},\bm{y},\bm{a})\neq 1\] be a finite system of equations and inequations over $G$, where $\bm{x}$ and $\bm{y}$ are tuples of variables. Suppose that there exists a $(\sigma_1,\ldots,\sigma_p)$-test sequence $(\varphi_n : G_{\Sigma} \rightarrow G)_{n\in\mathbb{N}}$ satisfying the following two conditions $\omega$-almost-surely:
\begin{enumerate}
\item no component of the system of inequations $\Psi(\bm{x},\bm{y},\bm{a})$ is killed by $\varphi_n$, 
\item and the morphism $\varphi_n$ maps $\bm{a}$ to $\bm{a}$ (not only to a conjugate).
\end{enumerate}
Then there exist a finite subgroup $E$ of $E(G)$ and a morphism \[\pi_{\bm{\sigma}} : G_{\Sigma_k}\rightarrow G_{\bm{\sigma}}=\left\langle G, \bm{x} \ \vert \ \mathrm{ad}(x_i)_{\vert E}=\sigma_i, \ \forall i\in \llbracket 1,p\rrbracket\right\rangle\] such that the following hold:
\begin{itemize}
    \item[$\bullet$] $\pi_{\bm{\sigma}}(\bm{x})=\bm{x}$,
    \item[$\bullet$] $\pi_{\bm{\sigma}}(\bm{a})=\bm{a}$,
    \item[$\bullet$] no component of the tuple $\Psi(\bm{x},\bm{y},\bm{a})$ is killed by $\pi_{\sigma}$.
\end{itemize}
Moreover, the image of $\pi_{\bm{\sigma}}$ is a subgroup of $G_{\bm{\sigma}}$ of the form \[\left\langle \bm{g},\bm{a}\right\rangle\ast_{E}\left\langle \bm{x}, E \ \vert \ \mathrm{ad}(x_i)_{\vert E}=\sigma_i, \ \forall i\in \llbracket 1,p\rrbracket\right\rangle\] for some tuple $\bm{g}$ of elements of $G$.\end{prop}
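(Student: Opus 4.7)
The plan is to adapt Sela's geometric proof of Merzlyakov's theorem to our setting, combining the divergent-limit-group / approximation / shortening-argument machinery of Section \ref{section_LG_SA} with the structural results about test sequences from Section \ref{Merz_section} above.

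First, I would extract a limit action. Form the $G$-limit group $L = G_{\Sigma}/\underleftarrow{\ker}_\omega((\varphi_n))$ with limit map $\varphi_\infty$: since $\norm{\varphi_n(x_i)} \to \infty$ by Definition \ref{suitetest}, the scaling factors $\norm{\varphi_n}$ diverge, so by the Bestvina-Paulin construction $L$ acts non-trivially on a real tree $T_L$ with basepoint $o$. Let $U = \mathrm{Stab}_L(o)$; since $\varphi_n(\bm{a}) = \bm{a}$ literally, the tuple $\varphi_\infty(\bm{a})$ lies in $U$. Each $x_i$ acts hyperbolically on $T_L$ (by the small-cancellation property of Definition \ref{suitetest}(4) and a standard Bestvina-Paulin argument), so after replacing $o$ if necessary, Lemma \ref{lemme2} applies to $\Gamma := \langle \bm{x} \cup U\rangle$: the minimal subtree $T_\Gamma$ is simplicial with splitting
\[
\Gamma = \langle \bm{x}, U \mid \mathrm{ad}(x_i)_{|F} = \alpha_i, \ i \in \llbracket 1, p \rrbracket \rangle,
\]
where $F$ is the finite group of Lemma \ref{fini}. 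A direct computation using the equality $\Lambda(\varphi_n(x_i)) = \langle \varphi_n(x_i), E(G) \mid \mathrm{ad}(\varphi_n(x_i))_{|E(G)} = \sigma_i\rangle$ from Definition \ref{suitetest}(4) identifies $F$ with a jointly $\bm{\sigma}$-invariant subgroup $E \subset E(G)$ and $\alpha_i$ with $\sigma_{i|E}$.

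Next, I would run iterative shortening to simplify $L$ down to $\Gamma$ with $U$ embedded in $G$. A preliminary conjugation of the test sequence (which remains a $(\sigma_1,\ldots,\sigma_p)$-test sequence by Remark \ref{impor}, preserves the inequations, and can be followed by a compensating inner automorphism to restore $\varphi_n(\bm{a}) = \bm{a}$ literally) forces $\varphi_n(\bm{y})$ to stay at bounded distance from $1$ in the rescaled metric, so that $\varphi_\infty(\bm{y}) \subset U$ and hence $L = \Gamma$. Then, viewing $U$ as a $G$-limit group of the restricted sequence $(\varphi_{n|H_u})$ for a finitely generated preimage $H_u$ of $U$, I would apply Theorem \ref{shortening_arg2} iteratively to an $\mathbb{RJ}_{L_u}$-approximation $A$: each step produces a shortened sequence $(\phi_n)$ still satisfying the test-sequence properties (by Remark \ref{impor}) and preserving the inequations (by condition (2)(c) of Theorem \ref{shortening_arg2}). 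By Lemma \ref{stabilitylemma} the finite edge-stabilizers occurring in the splittings have order bounded by a universal constant $C$, and Linnell-type accessibility then guarantees termination after finitely many steps.

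At termination, the Rips-machine splitting of $L_u$ collapses, the approximation $A_u$ of $U$ becomes rigid, and the morphisms $\theta_{n|A_u} : A_u \to G$ (through which $\phi_{n|H_u}$ factors) stabilise $\omega$-almost-surely to a canonical embedding $\iota : U \hookrightarrow G$ that fixes $\bm{a}$ pointwise, preserves the inequations, and satisfies $\iota(U) = \langle \bm{g}, \bm{a}\rangle$ for some tuple $\bm{g}$ in $G$. Combining $\iota$ with the splitting of $\Gamma$ gives the morphism
\[
\pi_{\bm{\sigma}} : L = \Gamma \longrightarrow G_{\bm{\sigma}} = \langle G, \bm{x} \mid \mathrm{ad}(x_i)_{|E} = \sigma_i\rangle, \qquad x_i \mapsto x_i, \quad u \mapsto \iota(u),
\]
which is well-defined since the defining relations $\mathrm{ad}(x_i)_{|E} = \sigma_i$ of $G_{\bm{\sigma}}$ match the relations $\mathrm{ad}(x_i)_{|F} = \alpha_i = \sigma_{i|E}$ of $\Gamma$. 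Composing with $\varphi_\infty$ produces the desired retraction, whose image is automatically of the form $\langle \bm{g}, \bm{a}\rangle \ast_E \langle \bm{x}, E \mid \mathrm{ad}(x_i)_{|E} = \sigma_i\rangle$.

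The main obstacle is managing the ``up-to-conjugation'' slack introduced by the shortening argument. The sequence $(\varphi_n)$ satisfies $\varphi_n(\bm{a}) = \bm{a}$ \emph{literally}, but Theorem \ref{shortening_arg2}(2)(a) only guarantees coincidence on $U$ up to conjugation, and the conjugators depend on $n$ and on the iteration step. One has to interleave each shortening step with a compensating conjugation by an element of $G$ to restore the literal identity on $\bm{a}$, and verify, using Remark \ref{impor} and the finiteness of $\mathrm{Aut}_G(E(G))$ (which allows subsequencing to keep the induced $\sigma_i$'s constant), that the resulting sequence is still a $(\sigma_1,\ldots,\sigma_p)$-test sequence with the \emph{same} tuple $\bm{\sigma}$, not a $G$-conjugate of it. This bookkeeping mirrors the analogous difficulty in both Sela's original combinatorial proof and in the proof of Theorem \ref{shortening_arg} itself.
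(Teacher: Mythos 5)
Your overall skeleton (limit action, the splitting of Lemma \ref{lemme2}, approximations plus shortening plus accessibility, and a final retraction built from the splitting) matches the paper's strategy, but three steps as written would fail. First, the claim that a \emph{preliminary conjugation} of the test sequence forces $\varphi_n(\bm{y})$ to stay bounded in the rescaled metric, so that $\varphi_\infty(\bm{y})\subset U$ and $L=\Gamma$, is unjustified and essentially false: an inner automorphism moves all images simultaneously and cannot make the $\bm{y}$-images short while controlling $\bm{x},\bm{a}$; making the $\bm{y}$-part short is precisely what the shortening argument is for, and even then one never gets $L=\Gamma$. The paper instead replaces $(\varphi_n)$ by morphisms $\theta_n$ chosen \emph{shortest} among all morphisms coinciding with $\varphi_n$ on $U=\langle\bm{x},\bm{a}\rangle$ up to conjugation and preserving the inequations; this minimality is the engine of the whole proof, and it never appears in your proposal. (Note also that the paper's $U$ is $\langle\bm{x},\bm{a}\rangle$, not the basepoint stabilizer: shortening relative to $\langle\bm{x},\bm{a}\rangle$ is what keeps the new sequence a test sequence via Remark \ref{impor}, and Standing Assumption \ref{StAs} requires $U$ to be a fixed finitely generated subgroup of the source, not a subgroup of the limit.)

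Second, your termination step is only gestured at. The paper does not invoke accessibility abstractly: it bounds the number of edges $\eta_i$ of a reduced JSJ splitting over finite groups of order $\le C$ by Dunwoody's inequality, proves $\eta_{i+1}\ge\eta_i$ (using Lemma \ref{reduced_as_well} to see the induced splitting of $L_{i+1}$ is reduced), and shows that equality forces $q_{i+1}(A_i^U)=A_{i+1}^U$, at which point a further shortening would contradict the minimality of the previously chosen morphisms; this is what guarantees that eventually $U$ is \emph{non-elliptic} in the limiting tree. Without the minimality property, iterating Theorem \ref{shortening_arg2} produces shorter and shorter sequences but yields no contradiction and no conclusion at ``termination''. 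Third, your endgame — that the restricted morphisms ``stabilise $\omega$-almost-surely to a canonical embedding $\iota:U\hookrightarrow G$'' — has no justification (the maps do not factor through $L$, since $G$ is not equationally Noetherian, and there is no reason for stabilisation). The paper's Lemma \ref{abovelemma} instead produces, from the non-ellipticity of $\langle\bm{x},\bm{a}\rangle$, a transverse covering and a splitting $L=K\ast_F\langle\bm{x},F\rangle$, passes to an $\mathbb{S}$-approximation $A=K'\ast_F\langle\bm{x},F\rangle$, normalizes the factorized morphisms $\psi_n$ to be the identity on $\langle\bm{x},\bm{a}\rangle$, sets $E=\psi_n(F)\subset E(G)$, defines $\pi_n$ by $x_i\mapsto x_i$ and $\psi_n$ on $K'$ (well-defined because $\psi_n\circ\alpha_i=\sigma_i\circ\psi_n$), and verifies by a normal-form argument (using Remark \ref{trick}) that for one sufficiently large $n$ no inequation is killed; only then is $E$ identified and $\alpha_i$ matched with $\sigma_{i\vert E}$, rather than ``directly'' in the limit as you assert.
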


\begin{proof}By assumption, there exists a $(\sigma_1,\ldots,\sigma_p)$-test sequence $(\varphi_n : G_{\Sigma} \rightarrow G)_{n\in\mathbb{N}}$ that satisfies the following two conditions $\omega$-almost-surely:
\begin{enumerate}
\item each component of $\varphi_n(\Psi(\bm{x},\bm{y},\bm{a}))$ is non-trivial, 
\item and the morphism $\varphi_n$ maps $\bm{a}$ to $\bm{a}$ (not only to a conjugate).
\end{enumerate}

\smallskip

Let $U$ be the subgroup of $G$ generated by $\bm{x}$ and $\bm{a}$. Since the Cayley graph of $G$ with respect to $S$ (on which $G$ acts acylindrically and non-elementarily) is discrete, the length of any morphism $G_{\Sigma}\rightarrow G$ belongs to $\mathbb{N}$. As a consequence, there exists a sequence of morphisms $(\theta_n : G_{\Sigma} \rightarrow G)_{n\in\mathbb{N}}$ that satisfies simultaneously the following three conditions $\omega$-almost-surely:
\begin{enumerate}
    \item $\theta_n$ coincides with $\varphi_n$ on $U$ up to conjugation,
    \item each component of $\theta_n(\Psi(\bm{x},\bm{y},\bm{a}))$ is non-trivial,
    \item and there is no morphism that satisfies simultaneously the conditions \emph{(1)} and \emph{(2)} above and that is stricly shorter than $\theta_n$.
\end{enumerate}
Note that, by Remark \ref{impor}, the sequence $(\theta_n)_{n\in\mathbb{N}}$ is a $(\sigma'_1,\ldots,\sigma'_p)$-test sequence for some $(\sigma'_1,\ldots,\sigma'_p)\in\mathrm{Aut}_G(E(G))^p$. However, one cannot guarantee that $\sigma'_i$ coincides with $\sigma_i$. Moreover, $\theta_n$ maps $\bm{a}$ to a conjugate of $\bm{a}$, not necessarily to $\bm{a}$ itself. 


\smallskip

Let $L=G_{\Sigma}/\underleftarrow{\ker}_\omega((\theta_n)_{n\in\mathbb{N}})$, and let $\theta_{\infty} : G_{\Sigma}\twoheadrightarrow L$ be the corresponding epimorphism. Note that $\theta_{\infty}$ is injective on $U$. In the proof below, we abuse notation and denote by $U$ the isomorphic image of $U$ in the successive quotients of $G_{\Sigma}$ involved in the construction of the formal solution $\pi_{\bm{\sigma}}$.

\smallskip

In the rest of the proof, $C$ denotes the constant defined in the Stability Lemma \ref{stabilitylemma}.


\medskip

\textbf{A particular case.} For presentation purposes, we first present a proof of Proposition \ref{jesaispas2} in the particular case where $L$ does not split non-trivially over a finite group of order less than $C$. Under this assumption, if one assumes (towards a contradiction) that the group $U$ is elliptic in the limiting tree of the test sequence $({\theta_n} : G_{\Sigma} \rightarrow G)_{n\in\mathbb{N}}$, then by Theorem \ref{shortening_arg} there exists a sequence of homomorphisms $(\rho_n: G_{\Sigma} \rightarrow G)_{n\in\mathbb{N}}$ satisfying the following three conditions $\omega$-almost-surely: 
\begin{enumerate}
    \item $\rho_n$ coincides with $\theta_n$ (and therefore with $\varphi_n$) on $U$ up to conjugation,
    \item $\rho_n$ kills no component of the tuple $\Psi(\bm{x},\bm{y},\bm{a})$,
    \item and $\rho_n$ is stricly shorter than $\theta_n$ relative to $H$.
\end{enumerate}
This contradicts the definition of $\theta_n$ as the shortest morphism satisfying both conditions (1) and (2) $\omega$-almost-surely. Hence, $U$ is not elliptic in the limiting tree of the test sequence $({\theta_n} : G_{\Sigma} \rightarrow G)_{n\in\mathbb{N}}$. The conclusion now follows from the following technical lemma, whose proof is postponed (see Lemma \ref{abovelemma} for a more general version).

\begin{lemme}Let $F$ be the finite subgroup of $L$ defined in Lemma \ref{fini}. If $U$ is not elliptic in the limiting tree, then the group $L$ admits a splitting $\mathbb{S}_L$ with exactly two vertex groups $\langle \bm{\ell}, \bm{a}\rangle$ (for some tuple $\bm{\ell}$ of elements of $L$) and $\langle \bm{x},F\rangle$, and one edge group $F$. Let $A$ be an $\mathbb{S}_L$-approximation of $L$ as in Proposition \ref{approx}. There exist a finite subgroup $E$ of $E(G)$ and an epimorphism $r$ from $A$ onto a group of the form \[\langle \bm{g},\bm{a}\rangle\ast_E\langle \bm{x},E \ \vert \ \mathrm{ad}(x_i)_{\vert E}={\sigma_i}_{\vert E}, \ \forall i\in \llbracket 1,p\rrbracket\rangle,\] where $\bm{g}$ denotes a tuple of elements of $G$, such that $r(\bm{x})=\bm{x}$, $r(\bm{a})=\bm{a}$ and $r$ kills no component of the image in $A$ of the tuple $\Psi(\bm{x},\bm{y},\bm{a})$.\end{lemme}

Last, one defines the formal solution $\pi_{\bm{\sigma}} : G_{\Sigma}\rightarrow G$ by $\pi_{\bm{\sigma}}=r\circ q$ where $q$ denotes the natural epimorphism from $G_{\Sigma}$ onto $A$. This concludes the proof of Proposition \ref{jesaispas2} in the particular case where $L$ does not split non-trivially over a finite group of order less than $C$. In general, however, this hypothesis is not satisfied and one has to deal with complications arising from splittings over finite subgroups. In particular, one needs a strengthened version of the relative shortening argument Theorem \ref{shortening_arg}, namely Theorem \ref{shortening_arg2}.

\medskip

\textbf{General case.} Since we are going to describe an iterative process, let us rename $\theta_n$ to $\theta_n^0$, and $L$ to $L_0$. For any $G$-limit group $L_i$ that appears in the proof, we denote by $L_i^{U}$ the vertex group containing $U$ in a reduced JSJ decomposition $\mathbb{J}_i$ of $L_i$, relative to $U$, over finite groups of order less than $C$. 

\smallskip

The proof of Proposition \ref{jesaispas2} consists in constructing the following commuting diagrams $\omega$-almost-surely (the objects appearing in this diagram are defined below):

\begin{align*}
    \xymatrix{
    A_0=G_\Sigma \ar@{->>}[dr]^{q_1} \ar@{->>}[dddddd]^{\theta^0_\infty} \ar[rrrrrrr]^{\theta^0_n} && && &&& G \\
    & A_1 \ar@{->>}[dddddl] \ar@{->>}[dr]^{q_2} \ar@{->>}[ddddd]^{\theta^1_{\infty}} \ar[rrrrrru]^{\theta_n^1} \\ 
    & & A_2 \ar@{->>}[ddddl] \ar@{-->>}[ddrr]^{q_{i} \circ \cdots \circ q_3} \ar[rrrrruu]^{\theta^2_n}\\
    \\
    & & & & A_{i} \ar@{->>}[dd]^{\theta^{i}_\infty} \ar@{->>}[dr]^{q_{i+1}} \ar[uuuurrr]^{\theta^{i}_n} \\
    & & & & & A_{i+1} \ar@{->>}[d]^{\theta^{i+1}_\infty} \ar@{->>}[dl] \ar[uuuuurr]^{\theta^{i+1}_n}\\
    L_0 & L_1 & & & L_{i} & L_{i+1}\\
    }
\end{align*}

This diagram is built iteratively, as follows: given the sequence $(\theta_n^i : A_i \rightarrow G)_{n\in\mathbb{N}}$, one defines $L_i$ by $L_i=A_i/\underleftarrow{\ker}_\omega((\theta_n^i)_{n\in\mathbb{N}})$. Let $\mathbb{J}_i$ be a reduced JSJ spltting of $L_i$ over finite groups of order less than $C$, let $\mathbb{R}_i$ be the splitting of the vertex group $L_i^U$ as a graph of actions outputted by the Rips machine and let $\mathbb{RJ}_i$ be the splitting of $L$ obtained from $\mathbb{J}_i$ by replacing the vertex fixed by $L_i^U$ with the graph of groups $\mathbb{R}_i$. Let $A_{i+1}$ be an $\mathbb{RJ}_i$-approximation of $L_i$ given by Proposition \ref{approx} and Corollary \ref{comb_app}, and let $\rho_n^{i+1}: A_{i+1}\rightarrow G$ be the factorization of $\theta_n^i : A_i\rightarrow G$ through the natural epimorphism $q_{i+1} : A_i \twoheadrightarrow A_{i+1}$. 

\smallskip

 Since $A_{i+1}$ is an $\mathbb{RJ}_i$-approximation of $L_i$, it is also a $\mathbb{J}_i$-approximation of $L_i$ (indeed, one can collapse to a point the subgraph corresponding to $\mathbb{R}_i$). We denote by $A_{i+1}^U$ the vertex group of the splitting of $A_{i+1}$ corresponding to $L_i^U$. Note that $A_{i+1}^U$, unlike $L_i^U$, may split non-trivially relative to $U$ over finite subgroups of order less than $C$. It remains to define the sequence $(\theta_n^{i+1} : A_{i+1} \rightarrow G)_{n\in\mathbb{N}}$.
 
\smallskip


If $U$ is elliptic in the limiting tree of the sequence $({\rho_n^{i+1}})_{n\in\mathbb{N}}$, then by Theorem \ref{shortening_arg2} there exists a sequence of homomorphisms $(\theta_n^{i+1}: A_{i+1} \rightarrow G)_{n\in\mathbb{N}}$ satisfying the following three conditions $\omega$-almost-surely: 
\begin{enumerate}
    \item $\theta_n^{i+1}$ coincides with $\rho_n^{i+1}$ (and therefore with $\varphi_n$) on $U$ up to conjugation,
    \item $\theta_n^{i+1}$ kills no component of the image of the tuple $\Psi(\bm{x},\bm{y},\bm{a})$ in $A_{i+1}$,
    \item and the restriction ${\theta_n^{i+1}}_{\vert A_{i+1}^U}$ is stricly shorter than the restriction ${\rho_n^{i+1}}_{\vert A_{i+1}^U}$, relative to $U$.
\end{enumerate}
In addition, since the length of $\theta_n^{i+1}$ belongs to $\mathbb{N}$, one can assume without loss of generality that, $\omega$-almost-surely, $\theta_n^{i+1}$ is the shortest morphism from $A_{i+1}$ to $G$ that satisfies the first two conditions above.

\smallskip

Why does the iteration eventually terminate? We have to prove that there exists an integer $i$ such that $U$ is not elliptic in the limiting tree of the sequence $({\rho_n^{i+1}})_{n\in\mathbb{N}}$.


\smallskip

\textbf{Claim.} \emph{There exists an integer $i$ such that $q_{i+1}(A_i^{U})=A_{i+1}^{U}$. }

\smallskip

Before proving this claim, let us explain how to complete the proof of Proposition \ref{jesaispas2}. First, note that if $q_{i+1}(A_i^{U})$ is equal to $A_{i+1}^{U}$, then if one shortens the restriction of ${\theta_n^{i+1}}$ to $ A_{i+1}^U$, one automatically shortens the restriction of $\rho_n^{i}$ to $ A_i^U$, which is not possible by definition of ${\rho_n^{i}}$. As a consequence, if $q_{i+1}(A_i^{U})=A_{i+1}^{U}$, then $U$ cannot be elliptic in the limiting tree of the sequence $({\theta_n^{i+1}})_{n\in\mathbb{N}}$, otherwise one could get a contradiction by means of Theorem \ref{shortening_arg2}. In order to construct the formal solution, we will use the following lemma, whose proof is postponed.


\begin{lemme}\label{abovelemma}Let $F$ be the finite subgroup of $L_{i+1}$ defined in Lemma \ref{fini}. If $U$ is non-elliptic in the limiting tree of the sequence $({\theta_n^{i+1}})_{n\in\mathbb{N}}$, then the group $L_{i+1}$ admits a splitting $\mathbb{S}$ with exactly two vertex groups $\langle \bm{\ell}, \bm{a}\rangle$ (for some tuple $\bm{\ell}$ of elements of $L_{i+1}$) and $\langle \bm{x},F\rangle$, and one edge group $F$. Let $A$ be an $\mathbb{S}$-approximation of $L_{i+1}$ given by Proposition \ref{approx}. There exists a subgroup $E$ of $E(G)$ and an epimorphism $r$ from $A$ onto a group of the form \[\langle \bm{g},\bm{a}\rangle\ast_E \langle \bm{x},E \ \vert \ \mathrm{ad}(x_i)_{\vert E}={\sigma_i}_{\vert E}, \ \forall i\in \llbracket 1,p\rrbracket\rangle,\] where $\bm{g}$ denotes a tuple of elements of $G$, such that $r(\bm{x})=\bm{x}$, $r(\bm{a})=\bm{a}$ and $r$ kills no component of the image in $A$ of the tuple $\Psi(\bm{x},\bm{y},\bm{a})$.\end{lemme}

As a consequence of this lemma, if $U$ is not elliptic in the limiting tree of the sequence $({\rho_n^{i+1}})_{n\in\mathbb{N}}$, one can define the formal solution $\pi_{\bm{\sigma}} : G_{\Sigma}\rightarrow G$ by $\pi_{\bm{\sigma}}=r\circ q_{i+1}\circ q_i\circ \cdots\circ q_0$.

\smallskip

Therefore, in order to conclude the proof of Proposition \ref{jesaispas2}, we just have to prove the claim according to which there exists an integer $i$ such that $q_{i+1}(A_i^{U})=A_{i+1}^{U}$. Let us denote by $\eta_i$ the number of edges in a reduced JSJ splitting $\mathbb{J}_i$ of $L_i$ over finite groups of order less than $C$, relative to $U$. Let $E(\mathbb{J}_i)$ be the set of edges of $\mathbb{J}_i$. We make the following two observations.

\smallskip

\textbf{First observation:} using a folding sequence argument, Dunwoody proved in \cite{Dun98} that the sum $\sum_{e\in E(\mathbb{J}_i)}1/\vert {L_i}_{e}\vert$, where ${L_i}_{e}$ denotes the edge group of $e$, is smaller than the rank $\mathrm{rank}_U(L_i)$ of $L_i$ relative to $U$ (that is the minimal number of generators of $L_i$ relative to $U$). Therefore, for every integer $i$, one has $\eta_i\leq C\mathrm{rank}_U(L_i)$. In addition, one has $\mathrm{rank}_U(L_i)\leq \mathrm{rank}_U(G_{\Sigma})$ since $L_i$ is a quotient of $G_{\Sigma}$ relative to $U$. Thus, $\eta_i$ is bounded from above by $C\mathrm{rank}_U(G_{\Sigma})$. 

\smallskip

\textbf{Second observation:} we claim that $\eta_{i+1}$ is greater than $\eta_i$, with equality if and only if $q_{i+1}(A_i^{U})=A_{i+1}^{U}$. 

\smallskip

Let us prove this claim. Since $A_{i+1}$ is a $\mathbb{J}_i$-approximation of $L_i$, there exists by definition a splitting $\mathbb{J}'_{i}$ of $A_{i+1}$ with the same underlying graph as $\mathbb{J}_i$, and whose edge groups have the same order as the corresponding edge groups in $\mathbb{J}_i$. In particular, $\mathbb{J}'_i$ is a splitting over finite groups of order less than $C$, with $\eta_i$ edges. Moreover, by Proposition \ref{approx} and Lemma \ref{reduced_as_well}, the splitting $\mathbb{J}'_i$ is reduced since $\mathbb{J}_i$ is reduced.

\smallskip

In order to establish the inequality $\eta_{i+1}\geq \eta_i$, let us have a closer look at the defining sequence $(\theta_n^{i+1} : A_{i+1} \rightarrow G)_{n\in\mathbb{N}}$ of $L_{i+1}=A_{i+1}/\underleftarrow{\ker}_\omega((\rho_n^{i+1})_{n\in\mathbb{N}}$. In the proof of Theorem \ref{shortening_arg2}, each morphism $\theta_n^{i+1}$ is obtained by precomposing $\rho_n^{i+1}$ by an automorphism $\alpha$ of $A_{i+1}$ (independent from $n$) whose restriction to $A_{i+1}^U$ is a modular automorphism (or, to be more precise, a lift of a modular automorphism of $L_i^U$), and whose restriction to any other vertex group of $\mathbb{J}'_i$ is a conjugation. This automorphism $\alpha$ is obtained by means of Lemma \ref{extension_lemma}, using the fact that modular automorphisms coincide with the identity up to conjugation on finite subgroups of $A_{i+1}^U$. As a consequence, $L_{i+1}$ admits a splitting $\mathbb{J}''_i$ with $\eta_i$ edge groups, over finite groups of order less than $C$, obtained from the splitting $\mathbb{J}'_i$ of $A_{i+1}$ by replacing each vertex group by its image by the quotient map $\theta_{\infty}^{i+1}$. This shows that a reduced JSJ splitting of $L_{i+1}$ has at least $\eta_{i}$ edges; in other words, one has $\eta_{i+1}\geq \eta_i$. 

\smallskip

Now, suppose that $\eta_i=\eta_{i+1}$, and let us prove that $\mathbb{J}''_i$ is a reduced JSJ splitting of $L_{i+1}$ over finite groups of order less than $C$. Since we already know that $\mathbb{J}''_i$ is a splitting of $L_{i+1}$ over finite groups of order less than $C$ with $\eta_{i+1}$ edges, we just have to prove that $\mathbb{J}''_i$ is reduced. To this end, let us verify that the conditions of Lemma \ref{reduced_as_well} are satisfied. By definition of $\mathbb{J}''_i$, the natural epimorphism $\theta_{\infty}^{i+1}$ from $A_{i+1}$ onto $L_{i+1}$ maps each vertex group of $\mathbb{J}'_i$ onto the corresponding vertex group of $\mathbb{J}''_i$. We will prove the following two facts:
\begin{enumerate}
    \item $\theta_{\infty}^{i+1}$ is injective on finite vertex groups,
    \item and $\theta_{\infty}^{i+1}$ maps infinite vertex groups onto infinite vertex groups.
\end{enumerate}
Let us consider the following diagram, where $\pi_i$ denotes the natural epimorphism from $A_{i+1}$ onto $L_i$:

\begin{align*}
    \xymatrix{
     & A_{i+1} \ar@{->>}[dr]^{\theta^{i+1}_\infty} \ar@{->>}[dl]_{\pi_i} &\\
     L_{i} & & L_{i+1}.\\}
\end{align*}
Let us make the following observation: for each vertex group $V\subset A_{i+1}$ of $\mathbb{J}'_i$ that does not contain $U$, the kernel of the restriction of $\pi_i$ to $V$ coincides with the kernel of the restriction of $\theta_{\infty}^{i+1}$ to $V$. Indeed, recall that $\theta_n^{i+1}$ is obtained by precomposing $\rho_n^{i+1}$ by an automorphism $\alpha$ of $A_{i+1}$ whose restriction to $V$ is a conjugation. As a consequence, the vertex groups $\pi_i(V)$ and $\theta^{i+1}_\infty(V)$ are isomorphic. But we know that $\pi_i(V)$ is infinite if and only if $V$ is infinite, and that in addition $\pi_i(V)$ and $V$ are isomorphic if they are finite, by construction of $\mathbb{J}'_i$ and $A_{i+1}$ (see Proposition \ref{approx} and Corollary \ref{rips_app}). Therefore $\theta^{i+1}_\infty(V)$ is infinite if and only if $V$ is infinite. In addition, $\theta^{i+1}_\infty(V)$ and $V$ are isomorphic if they are finite. Last, note that the image by $\theta_{\infty}^{i+1}$ of the vertex group of $\mathbb{J}'_i$ containing $U$ is infinite since $U$ is infinite and since $\theta_{\infty}^{i+1}$ is injective on $U$. Hence, the conditions (1) and (2) above are satisfied. Thus Lemma \ref{reduced_as_well} applies and tells us that $\mathbb{J}''_i$ is reduced.


\smallskip

Hence, if $\eta_i=\eta_{i+1}$, then $\mathbb{J}''_i$ is a reduced JSJ splitting of $L_{i+1}$ over finite groups of order less than $C$. It follows that the image of the vertex group $A_{i+1}^U$ in $L_{i+1}$ coincides with $L_{i+1}^U$. Therefore, one has $A_{i+1}^{U}=q_{i+1}(A_i^{U})$.\end{proof}

\begin{lemme}\label{reduced_as_well}
Let $G$ and $H$ be two groups, with two splittings $\mathbb{S}_G$ and $\mathbb{S}_H$ over finite groups. Let $T_G$ and $T_H$ denote the Bass-Serre trees of these splittings. Suppose that there exists an epimorphism $\theta : G \twoheadrightarrow H$ and a $\theta$-equivariant bijection $f : T_G \rightarrow T_H$ such that $\theta$ is injective on finite vertex groups and maps infinite vertex groups onto infinite vertex groups. Then, the following implication holds: if $T_G$ is reduced, then $T_H$ is reduced.\end{lemme}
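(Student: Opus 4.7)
I plan to argue by contrapositive: assuming $T_H$ is not reduced, I will exhibit a witness in $T_G$ to the non-reducedness of $T_G$ by pulling back along the equivariant bijection $f$.

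The first step is to unpack the hypothesis. Since $f: T_G \to T_H$ is a $\theta$-equivariant bijection and $\theta$ is surjective, for any vertex or edge $x$ of $T_G$ a direct computation gives $\theta(\mathrm{Stab}_G(x)) = \mathrm{Stab}_H(f(x))$: the inclusion $\subseteq$ comes from equivariance, and $\supseteq$ uses that any $h \in \mathrm{Stab}_H(f(x))$ lifts to some $g \in G$ with $f(g \cdot x) = h \cdot f(x) = f(x)$, whence $g \cdot x = x$ by injectivity of $f$. Moreover, since stabilizers of vertices (resp.\ edges) of $T_G$ are conjugates of vertex (resp.\ edge) groups of $\mathbb{S}_G$, the two hypotheses on $\theta$ transfer to the whole tree: $\theta$ is injective on every finite vertex stabilizer in $T_G$, and maps each infinite vertex stabilizer onto an infinite subgroup of $H$. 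In particular, taking the contrapositive of the latter, if $\mathrm{Stab}_H(f(v))$ is finite then so is $\mathrm{Stab}_G(v)$, and $\theta$ then restricts to an isomorphism $\mathrm{Stab}_G(v) \xrightarrow{\sim} \mathrm{Stab}_H(f(v))$.

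Now suppose $T_H$ is not reduced. Then there exists an edge $\tilde e$ of $T_H$ with endpoints $\tilde u, \tilde v$ lying in distinct $H$-orbits and, without loss of generality, $\mathrm{Stab}_H(\tilde e) = \mathrm{Stab}_H(\tilde u)$. Let $e = f^{-1}(\tilde e)$ with endpoints $u = f^{-1}(\tilde u)$ and $v = f^{-1}(\tilde v)$. Since $f$ is a $\theta$-equivariant bijection and $\tilde u, \tilde v$ lie in distinct $H$-orbits, the vertices $u, v$ lie in distinct $G$-orbits (otherwise $g \cdot u = v$ would give $\theta(g) \cdot \tilde u = \tilde v$). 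The edge stabilizer $\mathrm{Stab}_G(e)$ is finite, since $\mathbb{S}_G$ is a splitting over finite groups; and $\mathrm{Stab}_H(\tilde u) = \mathrm{Stab}_H(\tilde e)$ is finite for the same reason applied to $\mathbb{S}_H$, so the discussion above forces $\mathrm{Stab}_G(u)$ to be finite as well, with $\theta$ injective on it.

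Finally, from $\mathrm{Stab}_G(e) \subseteq \mathrm{Stab}_G(u)$ and the equalities
\[
\theta(\mathrm{Stab}_G(e)) \;=\; \mathrm{Stab}_H(\tilde e) \;=\; \mathrm{Stab}_H(\tilde u) \;=\; \theta(\mathrm{Stab}_G(u)),
\]
combined with the injectivity of $\theta$ on $\mathrm{Stab}_G(u)$, we deduce $\mathrm{Stab}_G(e) = \mathrm{Stab}_G(u)$. Thus $e$ is an edge of $T_G$ whose stabilizer equals that of one of its endpoints, with the two endpoints lying in distinct $G$-orbits; this means $T_G$ is not reduced, contradicting the hypothesis. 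The only real point requiring care is the transfer of the two structural assumptions on $\theta$ from vertex groups of the splittings to arbitrary vertex stabilizers in the trees, which is handled by the conjugation invariance noted above; everything else is a direct bookkeeping argument.
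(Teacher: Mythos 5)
Your proof is correct and takes essentially the same route as the paper's: pull the edge back through the $\theta$-equivariant bijection, use the finiteness of edge groups together with the hypothesis on infinite vertex groups to see that the relevant vertex stabilizer in $T_G$ is finite, then use injectivity of $\theta$ on finite vertex groups to upgrade $\mathrm{Stab}_G(e)\subseteq\mathrm{Stab}_G(u)$ to an equality, and transfer the orbit statement via equivariance. The differences are cosmetic: you argue by contrapositive and spell out the identity $\theta(\mathrm{Stab}_G(x))=\mathrm{Stab}_H(f(x))$ and the passage from vertex groups of the splittings to arbitrary vertex stabilizers by conjugation, steps the paper leaves implicit.
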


\begin{proof}Suppose that $T_G$ is reduced. Let $\varepsilon=[v,w]$ be an edge of $T_H$ such that $H_v=H_{\varepsilon}=H_w$. We have to prove that $w$ is a translate of $v$, i.e.\ that there exists an element $h\in H$ such that $w=hv$. Let $e=[x,y]$ be a preimage of $\varepsilon$ by $f$. Since $H_{\varepsilon}$ is finite, $H_v$ and $H_w$ are finite, thus $G_x$ and $G_y$ are finite (indeed, by assumption, $\theta$ maps infinite vertex groups onto infinite vertex groups). Moreover, $\theta$ being injective on finite vertex groups, one has $G_x=G_{e}=G_y$. It follows that $y=gx$ for some $g\in G$. One has $f(y)=w$ and, since $f$ is $\theta$-equivariant, $f(gx)=\theta(g)f(x)=\theta(g)v$. Hence, $w=\theta(g)v$.\end{proof}

It remains to prove Lemma \ref{abovelemma}, whose statement is recalled below (for the sake of readability, the index $i+1$ is replaced with $i$).

\begin{lemme2}Let $F$ be the finite subgroup of $L_{i}$ defined in Lemma \ref{fini}. If $U$ is non-elliptic in the limiting tree of the sequence $({\theta_n^{i}})_{n\in\mathbb{N}}$, then the group $L_{i}$ admits a splitting $\mathbb{S}$ with exactly two vertex groups $\langle \bm{\ell}, \bm{a}\rangle$ (for some tuple $\bm{\ell}$ of elements of $L_{i}$) and $\langle \bm{x},F\rangle$, and one edge group $F$. Let $A$ be an $\mathbb{S}$-approximation of $L_{i}$ given by Proposition \ref{approx}. There exists a subgroup $E$ of $E(G)$ and an epimorphism $r$ from $A$ onto a group of the form \[\langle \bm{g},\bm{a}\rangle\ast_E \langle \bm{x},E \ \vert \ \mathrm{ad}(x_i)_{\vert E}={\sigma_i}_{\vert E}, \ \forall i\in \llbracket 1,p\rrbracket\rangle,\] where $\bm{g}$ denotes a tuple of elements of $G$, such that $r(\bm{x})=\bm{x}$, $r(\bm{a})=\bm{a}$ and $r$ kills no component of the image in $A$ of the tuple $\Psi(\bm{x},\bm{y},\bm{a})$.\end{lemme2}

\begin{proof}By assumption, the group $U$ is non-elliptic in the limiting tree $T:=T_{L_i^{U}}$ associated with the divergent sequence $({\theta_n^{i}}_{\vert A_{i}^U})_{n\in\mathbb{N}}$. First, we aim to construct a splitting $\mathbb{S}$ of $L_i$ with exactly two vertex groups $\langle \bm{x},F\rangle$ and $\langle \bm{\ell},\bm{a}\rangle$ (for some tuple $\bm{\ell}$ of elements of $L_{i}$) and one edge group $F$. Let $S$ be the stabilizer of the base point $o$ in $T$. Let $\Gamma$ be the subgroup $\langle U,S\rangle$ of $L_i^{U}$. Since $U$ is contained in $\Gamma$, this group is non-elliptic in the limiting tree $T$ (otherwise $U$ should be elliptic as well). Let $T_{\Gamma}\subset T$ be the minimal invariant subtree of $\Gamma$. By Lemma \ref{lemme2}, the tree $T_{\Gamma}$ is simplicial and $\Gamma$ admits the following splitting: \[\Gamma=\langle \bm{x},S \ \vert \ \mathrm{ad}(x_i)_{\vert F}=\alpha_i, \ \forall i\in\llbracket 1,p\rrbracket\rangle,\]where $F$ denotes the finite subgroup of $L$ defined in Lemma \ref{fini}, and $\alpha_i$ denotes the automorphism of $F$ induced by the action of $x_i$.

\smallskip

Let $\sim$ be the relation on $T$ defined by $x\sim y$ if $[x,y]\cap u T_{\Gamma}$ contains at most one point, for every element $u\in L_i^{U}$. Note that $\sim$ is an equivalence relation. Let $(Y_j)_{j\in J}$ denote the equivalence classes that are not reduced to a point. Each $Y_j$ is a subtree of $T$. Let us prove that $(Y_j)_{j\in J}\cup \lbrace uT_{\Gamma} \ \vert \ u\in L_i^{U}/\Gamma\rbrace$ is a transverse covering of $T$, in the sense of Definition \ref{transverse}.

\smallskip

\begin{itemize}
\item[$\bullet$]\emph{Transverse intersection.} For every $i\neq j$, the intersection $Y_i\cap Y_j$ is clearly empty. For every $i$ and $u\in L_i^{U}$, $Y_i\cap uT_{\Gamma}$ contains at most one point by definition. For every $u,u'\in L_i^{U}$ such that $u'u^{-1}\notin \Gamma$, $\vert uT_{\Gamma}\cap u'T_{\Gamma}\vert \leq 1$ thanks to Lemma \ref{lemme22}.
\item[$\bullet$]\emph{Finiteness condition.} Let $x$ and $y$ be two points of $T$. By Lemma \ref{lemme22}, there exists a constant $\varepsilon > 0$ such that, for every $u\in U$, if the intersection $[x,y]\cap uT_{\Gamma}$ is non-degenerate, the length of $[x,y]\cap uT_{\Gamma}$ is bounded from below by $\varepsilon$. Consequently, the arc $[x,y]$ is covered by at most $\lfloor d(x,y)/\varepsilon\rfloor$ translates of $T_{\Gamma}$ and at most $\lfloor d(x,y)/\varepsilon\rfloor+1$ distinct subtrees $Y_j$.
\end{itemize}

\smallskip

Hence, the collection $(Y_j)_{j\in J}\cup \lbrace uT_{\Gamma} \ \vert \ u\in L_i^{U}\rbrace$ is a transverse covering of $T$. One can construct what Guirardel calls the skeleton of this transverse covering (see Definition \ref{squelette}), denoted by $T_c$. Since the action of $L_i^{U}$ on $T$ is minimal (by definition of $T$), the same holds for the action of $L_i^{U}$ on $T_c$, according to Lemma 4.9 of \cite{Gui04}. The question is now to understand the decomposition $\Delta_c=T_c/L_i^{U}$ of $L_i^{U}$ as a graph of groups.

\smallskip

We begin with a description of the stabilizer in $L_i^{U}$ of an edge $e$ of $T_{\Gamma}$. Let $u$ be an element of $L_i^{U}$ that fixes $e$. Then $e$ is contained in $T_{\Gamma}\cap u T_{\Gamma}$, so $u$ belongs to $\Gamma$, thanks to Lemma \ref{lemme22}. It follows that $u$ belongs to $F$, because the stabilizer of $e$ in $\Gamma$ is contained in $F$ (indeed, recall that $T_{\Gamma}$ is isometric to the Bass-Serre tree of the splitting $\Gamma=\langle \bm{x},\bm{a},S \ \vert \ \mathrm{ad}(x_i)_{\vert F}=\alpha_i, \forall i\in\llbracket 1,p\rrbracket\rangle$, by Lemma \ref{lemme2}). Thus, the stabilizer of $e$ in $L_i^{U}$ is equal to $F$. 

\smallskip

We now prove that if one of the subtrees of the covering other than $T_{\Gamma}$ intersects $T_{\Gamma}$ in a point, then this point is necessarily one of the extremities of a translate of the edge $e\in T_{\Gamma}$. Assume towards a contradiction that $Y_j$ or $uT_{\Gamma}$ with $u\notin \Gamma$ intersects $T_{\Gamma}$ in a point $x$ that is not one of the extremities of $e$. Then, $T_c$ contains an edge $\varepsilon=(x,T_{\Gamma})$ whose stabilizer is $\mathrm{Stab}(x) \cap \Gamma$ (where $\mathrm{Stab}(x)$ denotes the stabilizer of $x$ in $L_i^{U}$), which is contained in $F$ by the previous paragraph. So the splitting $\Delta_c$ of $L_i^{U}$ is a non-trivial splitting over the finite subgroup $F$, relative to $\Gamma$. This is impossible since $\vert F\vert \leq C$ (because $\varphi_n$ maps $F$ into $E(G)$ $\omega$-almost-surely) and $L_i^{U}$ does not split relative to $\Gamma$ over a finite subgroup of order $\leq C$ non-trivially, by definition of $L_i^{U}$. Hence, if $Y_j\cap T_{\Gamma}=\lbrace x\rbrace$ or $uT_{\Gamma}\cap T_{\Gamma}=\lbrace x\rbrace$ with $u\notin \Gamma$, then the point $x$ is one of the extremities of $e$ in $T_{\Gamma}$. As a consequence, $\mathrm{Stab}(x)$ is a conjugate of $S$ in $\Gamma$, and every edge adjacent to $T_{\Gamma}$ in $T_c$ is of the form $(\gamma x,T_{\Gamma})=\gamma\varepsilon$ with $\varepsilon=(x,T_{\Gamma})$. 

\smallskip

Therefore, $\varepsilon$ is the only edge adjacent to $T_{\Gamma}$ in the quotient graph $\Delta_c$. Its stabilizer is $S$. By collapsing all edges of $\Delta_c$ except $\varepsilon$, one gets a splitting of $L_i^{U}$ of the following form: $L_i^{U}=\Gamma\ast_S H$ for some subgroup $H\subset L_i^{U}$. Recall that $\Gamma=\langle S,\bm{x} \ \vert \ \mathrm{ad}(x_i)_{\vert F}=\alpha_i, \ \forall i\in\llbracket 1,p\rrbracket\rangle$. Hence, the previous splitting of $L_i^{U}$ can be written as \[L_i^{U}=\langle H, \bm{x} \ \vert \ \mathrm{ad}(x_i)_{\vert F}=\alpha_i, \ \forall i\in\llbracket 1,p\rrbracket\rangle.\]

\smallskip

Since every finite subgroup of $L_i^{U}$ is conjugate to a finite subgroup of $H$, the group $L_i$ splits as \[L_i=\langle K, \bm{x} \ \vert \ \mathrm{ad}(x_i)_{\vert F}=\alpha_i, \ \forall i\in\llbracket 1,p\rrbracket\rangle,\]for some subgroup $K$ of $L_{i}$ such that $\langle \bm{a}\rangle \subset S\subset H\subset K$. Last, one can rewrite this splitting in the following form: $L_i=K\ast_F\langle \bm{x},F\rangle$. Denote this splitting by $\mathbb{S}$.

\smallskip

If $\bm{a}$ were empty, one could just retract $L_i$ onto the free group $F(\bm{x})$ on $\bm{x}$. But $\bm{a}$ is not empty in general, which makes the construction of the retraction a little bit more involved.

\smallskip

Let $A$ be an $\mathbb{S}$-approximation of $L_{i}$ given by Proposition \ref{approx}, and let $\mathbb{S}_A$ be the corresponding splitting of $A$. By Remark \ref{trick}, one can assume that the components of $\Psi_k(\bm{x},\bm{y},\bm{a})$ in $L_i$ and $A$ have exactly the same normal forms when written in $\mathbb{S}$ and $\mathbb{S}_A$. The splitting $\mathbb{S}_A$ is of the form \[A=K'\ast_F\langle \bm{x},F\rangle=\langle K',\bm{x} \ \vert \ \mathrm{ad}(x_i)_{\vert F}=\alpha_i, \ \forall i\in\llbracket 1,p\rrbracket\rangle.\]Note that we abuse notation and still denote by $F$ a preimage of $F\subset L_i$ in $A$. Same comment about $\bm{x}$, and $\bm{a}$ (which is contained in $K'$).

\smallskip

We claim that there exists a subgroup $E$ of $E(G)$ and an epimorphism $r$ from $A$ onto a group of the form \[\langle\bm{g},\bm{a}\rangle\ast_E \langle \bm{x},E \ \vert \ \mathrm{ad}(x_i)_{\vert E}={\sigma_i}_{\vert E}, \ \forall i\in \llbracket 1,p\rrbracket\rangle,\] where $\bm{g}$ denotes a tuple of elements of $G$, such that $r(\bm{x})=\bm{x}$, $r(\bm{a})=\bm{a}$ and $r$ kills no component of the image in $A$ of the tuple $\Psi(\bm{x},\bm{y},\bm{a})$.

\smallskip

For every integer $n$, denote by $\psi_n : A \rightarrow G$ the factorization of the homomorphism $\theta_n^i : A_i \rightarrow G$ through the natural epimorphism from $A_i$ onto $A$. 

\begin{align*}
        \xymatrix{
        A_i \ar@{->>}[drr] \ar[rrrr]^{\theta_n^i} \ar@{->>}[ddrr]_{\theta^i_\infty} && && G \\
        && A \ar[rru]_{\psi_n} \ar@{->>}[d] \\
        && L_i}
    \end{align*}

This homomorphism $\psi_n$ restricts to a conjugation on $\langle \bm{x},\bm{a}\rangle$. Up to postcomposing $\psi_n$ with an inner automorphism of $G$, one can now assume without loss of generality that $\psi_n$ coincides with the identity on $\langle \bm{x},\bm{a}\rangle$. In particular, the inner automorphism $\mathrm{ad}(\psi_n(x_i))$ induces the same automorphism $\sigma_i$ of $E(G)$ as $\mathrm{ad}(\varphi_n(x_i))$, where $(\varphi_n : G_{\Sigma}\rightarrow G)_{n\in\mathbb{N}}$ denotes the initial $(\sigma_1,\ldots,\sigma_p)$-test sequence.

\smallskip

For every integer $n$, since $\psi_n$ is the identity on $\bm{a}$, the group $\psi_n(K')$ contains $\bm{a}$. Since $A$ is finitely presented relative to $U=\langle\bm{x},\bm{a}\rangle$, and since $F$ is a finite group, $K'$ and $\psi_n(K')$ are finitely generated relative to $\bm{a}$. Therefore, there exists a tuple $\bm{g}_n$ of elements of $G$ such that $\psi_n(K')=\langle\bm{g}_n,\bm{a}\rangle$. Let $E:=\psi_n(F)\subset E(G)$. Let's consider the following amalgamated product:\[Q_n=\langle \bm{g}_n,\bm{a}\rangle\ast_{E}\langle \bm{x}, E \ \vert \ \mathrm{ad}(x_i)_{\vert E}=\sigma_i, \ \forall i\in\llbracket 1,p\rrbracket\rangle.\]For every integer $n$, one can define a morphism $\pi_n$ from $A$ onto $Q_n$ by $\pi_n(x_i)=x_i$ and $\pi_n=\psi_n$ on $K'$. This morphism is well-defined. Indeed, for every integer $1\leq i\leq p$, as $x_i$ normalizes $F$, there exists an automorphism $\alpha_i$ of $F$ such that $x_ifx_i^{-1}=\alpha_i(f)$ for every $f\in F$. The following relation holds: \[\psi_n\circ \alpha_i=\sigma_i\circ \psi_n\] for every integer $n$. This relation shows that $\pi_n$ is well-defined. In addition, this morphism is surjective because its image contains $\bm{x}$ and $\psi_n(K')=\langle\bm{g}_n,\bm{a}\rangle$, which generate the group $Q_n$. It remains to prove that, $\omega$-almost-surely, $\pi_n$ kills no component of the image of $\Psi_k(\bm{x},\bm{y},\bm{a})$ in $A$.

\smallskip

Let $v$ be component of the image of $\Psi_k(\bm{x},\bm{y},\bm{a})$ in $A$. This element can be written in normal form in the splitting $\mathbb{S}_A$ as $v=k'_0t_1^{\varepsilon_1}k'_1t_2^{\varepsilon_2}k'_2\cdots t_q^{\varepsilon_q}k'_{q+1}$, with $k'_i\in K'$ and $t_j\in\lbrace x_1,\ldots,x_p\rbrace$ for every $1\leq j\leq q$. For every $j$, if $t_j=t_{j+1}=x_i$ and $\varepsilon_j=-\varepsilon_{j+1}$, then $k'_j\notin F$. By Remark \ref{trick}, the image of $v$ in $L_i$ can be written in normal form in a similar way, by replacing each $k'_i$ by an element $k_i$ that belongs to the subgroup $K$ of $L$. Therefore, for every $j$, if $t_j=t_{j+1}=x_i$ and $\varepsilon_j=-\varepsilon_{j+1}$, then $k_j$ does not belong to $F$. It follows that $\pi_n(k'_j)=\psi_n(k'_j)$ does not lie in $E$ $\omega$-almost-surely. Otherwise, if $\pi_n(k'_j)$ belonged to $E$ $\omega$-almost-surely, then $k_j$ would belong to $F$ $\omega$-almost-surely, contradicting the previous condition. Hence, for every $n$ large enough, the element $\pi_n(v)=\psi_n(k_0)t_1^{\varepsilon_1}\psi_n(k_1)t_2^{\varepsilon_2}\psi_n(k_2)\cdots t_q^{\varepsilon_q}\psi_n(k_{q+1})$ is non-trivial. Last, take $r=\pi_N$ for $N$ such that $\pi_N$ kills no component of $\Psi_k(\bm{x},\bm{y},\bm{a})$ in $A$.\end{proof}

\section{Proof of Theorem \ref{th11}}\label{proof_main_th}

In this section, we prove Theorem \ref{th11}. First, recall that this theorem says that every acylindrically hyperbolic group $G$ is $\exists\forall\exists$-embedded into the HNN extensions $G\dot{\ast}_{E(G)}=\langle G,t \ \vert \ [t,g]=1, \ \forall g\in E(G)\rangle$. In fact, we just have to prove that $G$ is $\forall\exists$-embedded into $G\dot{\ast}_{E(G)}$, in virtue of the following easy and general lemma, which has nothing to do with acylindrical hyperbolicity.

\begin{lemme}
Let $G'$ be a group, and let $G$ be a subgroup of $G$. If $G$ is $\forall\exists$-embedded into $G'$, then $G$ is $\exists\forall\exists$-embedded into $G'$.
\end{lemme}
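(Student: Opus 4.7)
The plan is to peel off the outermost existential quantifier and absorb the witnesses into the parameters, then apply the $\forall\exists$-preservation hypothesis.

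Let $\phi(\bm{t}):\exists \bm{x} \ \forall \bm{y} \ \exists \bm{z} \ \psi(\bm{x},\bm{y},\bm{z},\bm{t})$ be a formula of the required shape, with $\psi$ quantifier-free, and let $\bm{g}$ be a tuple of elements of $G$ such that $G \models \phi(\bm{g})$. Choose a tuple $\bm{h}$ in $G$ witnessing the outer existential, so that
\[
G \models \forall \bm{y} \ \exists \bm{z} \ \psi(\bm{h},\bm{y},\bm{z},\bm{g}).
\]
This is a $\forall\exists$-formula in the free variables $(\bm{x},\bm{t})$, instantiated at the tuple $(\bm{h},\bm{g})$ of elements of $G$. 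By hypothesis, the inclusion $i:G\hookrightarrow G'$ preserves $\forall\exists$-formulas with parameters from $G$, so
\[
G' \models \forall \bm{y} \ \exists \bm{z} \ \psi(i(\bm{h}),\bm{y},\bm{z},i(\bm{g})).
\]
In particular $i(\bm{h})$ is a witness in $G'$ for the outer existential of $\phi$ at parameter $i(\bm{g})$, whence $G' \models \phi(i(\bm{g}))$, as required.

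The argument is a formal manipulation of quantifiers; there is no group-theoretic obstacle, and the proof is essentially one paragraph. The only point that deserves mention is that the hypothesis must be read as preservation of $\forall\exists$-formulas \emph{with parameters from $G$} (i.e.\ with free variables $\bm{t}$ instantiated to tuples in $G$), as in the definition of $\exists\forall\exists$-elementary embedding given in the introduction; this is needed so that the extended tuple $(\bm{h},\bm{g})$ can be used as parameters when invoking preservation.
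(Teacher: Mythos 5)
Your argument is correct and coincides with the paper's proof: both peel off the outer existential quantifier, choose a witness $\bm{h}$ in $G$, and then apply preservation of the $\forall\exists$-formula $\mu(\bm{t},\bm{x})$ at the parameter tuple $(\bm{g},\bm{h})\in G^{m+n}$. Your closing remark about reading the hypothesis as preservation with parameters from $G$ matches the paper's definitions, so there is nothing to add.
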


\begin{proof}Suppose that $G$ is $\forall\exists$-embedded into $G'$. Let $\theta(\bm{t})$ be an $\exists\forall\exists$-formula with $m$ free variables. Suppose that there exists a tuple $\bm{g}\in G^m$ such that $\theta(\bm{g})$ holds in $G$, and prove that $\theta(\bm{g})$ holds in $\Gamma$.

\smallskip

The formula $\theta(\bm{t})$ can be written as $\exists \bm{x} \ \mu(\bm{t},\bm{x})$, where $\mu(\bm{t},\bm{x})$ denotes a $\forall\exists$-formula with $m+n$ free variables, where $n$ is the arity of $\bm{x}$. Since $\theta(\bm{g})$ holds in $G$, there exists a tuple $\bm{h}\in G^n$ such that $\mu(\bm{g},\bm{h})$ holds in $G$. But the formula $\mu(\bm{t},\bm{x})$ is $\forall\exists$, thus $\mu(\bm{g},\bm{h})$ holds in $\Gamma$. This concludes the proof of the lemma.\end{proof}

In order to prove Theorem \ref{th11}, it remains to prove that every acylindrically hyperbolic group $G$ is $\forall\exists$-embedded into $G\dot{\ast}_{E(G)}$. The proof of this result relies on Theorem \ref{th0bis}. 

\begin{te}\label{sacerdote2}Every acylindrically hyperbolic group $G$ is $\forall\exists$-embedded into $G\dot{\ast}_{E(G)}$.\end{te}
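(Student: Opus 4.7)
The plan is to deduce Theorem~\ref{sacerdote2} from Theorem~\ref{th0bis3} (the generalised Merzlyakov theorem) through a short formal reduction that exploits the coincidence $G_{\mathrm{id}_{E(G)}}=G\dot{\ast}_{E(G)}$. Concretely, when Theorem~\ref{th0bis3} is applied with $p=1$ and the single automorphism $\sigma=\mathrm{id}_{E(G)}\in\mathrm{Aut}_G(E(G))$ (realised by $1\in G$), the target group
\[
G_{\bm{\sigma}}=G\ast_{E(G)}\bigl\langle z,\,E(G)\ \big|\ \mathrm{ad}(z)_{\vert E(G)}=\mathrm{id}_{E(G)}\bigr\rangle=G\ast_{E(G)}(\mathbb{Z}\times E(G))
\]
is literally $\Gamma:=G\dot{\ast}_{E(G)}$, with the formal generator $z$ corresponding to the stable letter $t$. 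This means that the formal solution provided by the theorem already lives inside $\Gamma$, and no further map needs to be analysed.

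Fix a $\forall\exists$-sentence $\phi(\bm{a}):\forall\bm{x}\exists\bm{y}\bigvee_{k=1}^{\ell}(\Sigma_k(\bm{x},\bm{y},\bm{a})=1\wedge\Psi_k(\bm{x},\bm{y},\bm{a})\neq 1)$ that holds in $G$, and let $\bm{\gamma}\in\Gamma^p$ be arbitrary; the task is to produce some $k$ and some $\bm{h}\in\Gamma^q$ with $\Sigma_k(\bm{\gamma},\bm{h},\bm{a})=1$ and $\Psi_k(\bm{\gamma},\bm{h},\bm{a})\neq 1$ in $\Gamma$. Since $\Gamma$ is generated by $G$ and $t$, I choose a finite tuple $\bm{g}$ of elements of $G$ and group words $w_1,\ldots,w_p$ with $\gamma_i=w_i(\bm{g},t)$ for every $i$. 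Introducing a fresh single variable $z$ and the enlarged parameter tuple $\bm{b}=(\bm{g},\bm{a})\subset G$, I set
\[
\widetilde{\Sigma}_k(z,\bm{y},\bm{b})=\Sigma_k\bigl(w_1(\bm{g},z),\ldots,w_p(\bm{g},z),\bm{y},\bm{a}\bigr),
\]
and define $\widetilde{\Psi}_k$ analogously. Substituting any $z_0\in G$ yields a $p$-tuple $(w_1(\bm{g},z_0),\ldots,w_p(\bm{g},z_0))\in G^p$ which, by $G\models\phi(\bm{a})$, admits a witness $\bm{y}_0\in G^q$ for one of the disjuncts; hence $G$ satisfies the reformulated $\forall\exists$-sentence $\widetilde{\phi}(\bm{b}):\forall z\exists\bm{y}\bigvee_k(\widetilde{\Sigma}_k=1\wedge\widetilde{\Psi}_k\neq 1)$. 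Applying Theorem~\ref{th0bis3} to $\widetilde{\phi}(\bm{b})$ (so that $p=1$) and to $\bm{\sigma}=(\mathrm{id}_{E(G)})$ then produces some $k$ and a morphism $\pi:G_{\widetilde{\Sigma}_k}\rightarrow\Gamma$ with $\pi(z)=t$, $\pi(\bm{b})=\bm{b}$, and $\widetilde{\Psi}_k(t,\pi(\bm{y}),\bm{b})\neq 1$ in $\Gamma$. Setting $\bm{h}=\pi(\bm{y})\in\Gamma^q$ and using the defining relations $\widetilde{\Sigma}_k(z,\bm{y},\bm{b})=1$ of $G_{\widetilde{\Sigma}_k}$, one obtains $\Sigma_k(\bm{\gamma},\bm{h},\bm{a})=\widetilde{\Sigma}_k(t,\bm{h},\bm{b})=1$ and $\Psi_k(\bm{\gamma},\bm{h},\bm{a})=\widetilde{\Psi}_k(t,\bm{h},\bm{b})\neq 1$ in $\Gamma$; since $\bm{\gamma}$ was arbitrary, $\Gamma\models\phi(\bm{a})$.

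All the substantive work is packed inside Theorem~\ref{th0bis3}; the reduction above is essentially bookkeeping, contingent on the idea of encoding the data of $\bm{\gamma}$ into the words $w_i(\bm{g},t)$ and invoking the Merzlyakov machinery with one universal variable and the identity automorphism. It is worth noting why the more obvious route does \emph{not} immediately work: applying Theorem~\ref{th0bis3} to the original $\phi(\bm{a})$ with $\bm{\sigma}=(\mathrm{ad}(\gamma_i)_{\vert E(G)})_{i}$ gives a formal solution in $G_{\bm{\sigma}}=G\ast_{E(G)}(F_p\times E(G))$ together with a natural map $G_{\bm{\sigma}}\rightarrow\Gamma$ sending $x_i\mapsto\gamma_i$, but this map can collapse components of $\Psi_k$ whenever the subgroup $\langle\bm{\gamma},E(G)\rangle\subset\Gamma$ carries unexpected relations. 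Reparametrising through the single stable letter $t$ and using $\sigma=\mathrm{id}_{E(G)}$ makes $G_{\bm{\sigma}}$ equal to $\Gamma$ on the nose, so no such collapsing can occur and the proof closes immediately.
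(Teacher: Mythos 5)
Your proposal is correct and is essentially the paper's own proof of Theorem \ref{sacerdote2}: the paper likewise writes each $\gamma_i$ as a word $w_i(\bm{s}_n,t)$ in finitely many elements of $G$ and the stable letter, treats $t$ as the single universal variable with constants from $G$, and applies the generalised Merzlyakov theorem with $\sigma=\mathrm{id}_{E(G)}$, so that the formal solution lands in (a subgroup of) $G\dot{\ast}_{E(G)}$ and specializing kills no inequation. The only deviation is that the paper additionally inserts the equations $\theta(\bm{s}_n,t)=1$ forcing $t$ to centralize $E(G)$ into the system, which you omit; this is harmless (if anything it makes the verification that $G$ satisfies the reformulated $\forall\exists$-sentence cleaner), and your closing explanation of why the naive specialization $x_i\mapsto\gamma_i$ from $G_{\bm{\sigma}}$ fails matches the paper's own remark following Theorem \ref{th0bis}.
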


\begin{proof}Let \[\bigvee_{k=1}^{\ell}(\Sigma_k(\bm{x},\bm{y},\bm{g})=1 \ \wedge \ \Psi_k(\bm{x},\bm{y},\bm{g})\neq 1)\] be a finite disjunction of systems of equations and inequations in $\bm{x}$ and $\bm{y}$. Suppose that $G$ satisfies the following first-order sentence $\mu(\bm{g})$:\[\forall \bm{x} \ \exists \bm{y} \ \bigvee_{k=1}^{\ell}(\Sigma_k(\bm{x},\bm{y},\bm{g})=1 \ \wedge \ \Psi_k(\bm{x},\bm{y},\bm{g})\neq 1).\]Let $\bm{\gamma}$ be a tuple of elements of $\Gamma$ of the same arity as $\bm{x}$. We will prove that there exists a tuple $\bm{\gamma}'$ of elements of $\Gamma$ of the same arity as $\bm{y}$ such that the following holds in $\Gamma$: \[\bigvee_{k=1}^{\ell}(\Sigma_k(\bm{\gamma},\bm{\gamma'},\bm{g})=1 \ \wedge \ \Psi_k(\bm{\gamma},\bm{\gamma'},\bm{g})\neq 1).\]To this end, we would like to construct a retraction $\pi$ from the group $\langle \Gamma, \bm{y} \ \vert \ \Sigma_k(\bm{\gamma},\bm{y},\bm{g})=1\rangle$ onto $\Gamma$, for some $1\leq k\leq \ell$, such that $\pi$ kills no component of the system of inequations $\Psi_k(\bm{\gamma},\bm{y},\bm{g})\neq 1$.  Indeed, given such a retraction $\pi$, one can simply take $\bm{\gamma'}=\pi(\bm{y})$. We could construct this retraction by mimicking the proof of Theorem \ref{th0bis}, as sketched in the introduction, but in order to avoid unnecessary repetitions, we will appeal to Theorem \ref{th0bis}. However, before applying this result, one has to fix the following problem: Theorem \ref{th0bis} does not apply directly in the present situation since it only allows us to deal with constants from $G$, and $\bm{\gamma}$ is not a tuple of elements of $G$ in general. In order to be able to use Theorem \ref{th0bis}, we have first to slightly reformulate the problem.

\smallskip

Let $\bm{s}$ be a generating tuple of $G$, possibly infinite. For every integer $n\geq 1$, let $\bm{s}_n$ be the $n$-tuple composed of the first $n$ components of $\bm{s}$ and let $G_n$ be subgroup of $G$ generated by $\bm{s}_n$. For $n$ sufficiently large, the following two conditions are satisfied.
\begin{itemize}
    \item[$\bullet$]The subgroup $G_n$ of $G$ contains the finite subgroup $E(G)$. Therefore, there is a finite system of equations $\theta(\bm{s}_n,t)=1$ expressing the fact that the stable letter $t$ centralizes $E(G)$.
    \item[$\bullet$]The subgroup $\langle G_n,t\rangle$ of $\Gamma$ contains each component $\gamma_i$ of $\bm{\gamma}$. As a consequence, each $\gamma_i$ can be written as a word $w_i(\bm{s}_n,t)$.
\end{itemize}

\smallskip

Let $\bm{a}$ be the tuple of elements of $G$ obtained by concatenating $\bm{g}$ and $\bm{s}_n$. Let $\Sigma'_{k}(t,\bm{y},\bm{a})=1$ denote the finite system of equations \[\left(\Sigma_k((w_1(\bm{s}_n,t),\ldots,w_p(\bm{s}_n,t)),\bm{y},\bm{g})=1\right) \ \wedge \ \left(\theta(\bm{s}_n,t)=1\right),\] and let $\Psi'_{k}(t,\bm{y},\bm{a})\neq 1$ denote the finite system of inequations \[\Psi_{k}((w_1(\bm{s}_n,t),\ldots,w_p(\bm{s}_n,t)),\bm{y},\bm{g})\neq 1.\] By assumption, the group $G$ satisfies $\mu(\bm{g})$. Therefore, $G$ satisfies the following first-order sentence $\theta(\bm{a})$: \[\forall t \ \exists \bm{y} \ \bigvee_{k=1}^{\ell}(\Sigma'_{k}(t,\bm{y},\bm{a})=1 \ \wedge \ \Psi'_{k}(t,\bm{y},\bm{a}_n)\neq 1).\]By Theorem \ref{th0bis}, there exist an integer $1\leq k\leq \ell$, a subgroup $G'$ of $G$ containing $\langle \bm{a}\rangle$ and an epimorphism \[\pi : G_{\Sigma'_{k}}\twoheadrightarrow \Gamma':=(\langle t\rangle\times E(G))\ast_{E(G)} G'\] such that
\begin{enumerate}
    \item $\pi(t)=t$,
    \item $\pi(\bm{a})=\bm{a}$ (in particular $\pi(\bm{g})=\bm{g}$ and $\pi(\bm{s}_n)=\bm{s}_n$, and therefore $\pi(\bm{\gamma})=\bm{\gamma}$),
    \item and such that no component of the system of inequations $\Psi'_{k}(t,\bm{y},\bm{a})\neq 1$ is killed by $\pi$.
\end{enumerate}
As a consequence, the following system of equations and inequations holds in $\Gamma'$: \[\bigvee_{k=1}^{\ell}(\Sigma'_k(t,\pi(\bm{y}),\bm{a})=1 \ \wedge \ \Psi_k(t,\pi(\bm{y}),\bm{a})\neq 1).\]It follows that the following system of equations and inequations holds in $\Gamma'$:\[\bigvee_{k=1}^{\ell}(\Sigma_k(\bm{\gamma},\pi(\bm{y}),\bm{g})=1 \ \wedge \ \Psi_k(\bm{\gamma},\pi(\bm{y}),\bm{g})\neq 1).\]
Since $\Gamma'$ is a subgroup of $\Gamma$, this system holds in $\Gamma$ as well. One can take $\bm{\gamma}'=\pi(\bm{y})$.\end{proof}

\section{Proof of Merzlyakov's theorem \ref{th0bis2} in the general case}

\subsection{Reduction to an overgroup $G_{2p}$ of $G$}

As above, $G$ denotes an acylindrically hyperbolic group, and $p$ denotes the arity of $\bm{x}$ in the considered first-order sentence. In the proof of Proposition \ref{jesaispas}, for defining a test sequence (i.e.\ a $(\sigma_1,\ldots,\sigma_p)$-test sequence with $\sigma_i=\mathrm{id}_{E(G)}$ for every $1\leq i\leq p$), we used the fact that $G$ contains a quasi-convex non-abelian free subgroup $F_2$ that centralizes $E(G)$. It seems quite involved to adapt this construction in order to get a non-central prescribed action of $F_2$ by conjugation on $E(G)$. 

\smallskip

We shall circumvent this difficulty by means of Theorem \ref{th1} proved in the previous section. According to this result, the inclusion of $G$ into $G\ast_{E(G)}(E(G)\times \mathbb{Z})$ is an $\exists\forall\exists$-embedding. More generally, the inclusion of $G$ into $G_m:=G\ast_{E(G)}(E(G)\times F_m)$ is an $\exists\forall\exists$-embedding, for any integer $m$. Take $m=2p$, and let $t_1,...,t_{2p}$ be a basis of $F_{2p}$. Let $(\sigma_1,\ldots,\sigma_p)$ be a $p$-tuple of elements of $\mathrm{Aut}_G(E(G))$. For every $1\leq i\leq p$, there exists an element $g_i\in G$ such that $\sigma_i=\mathrm{ad}(g_i)_{\vert E(G)}$, by definition of $\mathrm{Aut}_G(E(G))$. Note that $\sigma_i=\mathrm{ad}(g_it_i)_{\vert E(G)}$, since $t_i$ centralizes $E(G)$. Let $\alpha_i$ be the automorphim of $G_{2p}$ that coincides with $\mathrm{ad}(g_i)$ on $G$ and that maps $t_i$ to $g_it_i$ and $t_j$ to $t_j$ for $j\neq i$. The composition $\alpha_1\circ \cdots\circ \alpha_p$ is an automorphism of $G_{2p}$ that coincides with the conjugacy by $g_1\cdots g_p$ on $G$ and maps $t_i$ to $g_it_i$ for $1\leq i\leq p$, and fixes $t_i$ for $p+1\leq i\leq 2p$. As a consequence, up to replacing $t_i$ by $g_it_i$, one can assume without loss of generality that $\mathrm{ad}(t_i)_{\vert E(G)}=\sigma_i$ for every $1\leq i\leq p$, and $G_{2p}$ has the following presentation:\[G_{2p}=G\ast_{E(G)}\Biggl\langle 
       \begin{array}{l|cl}
                        & \mathrm{ad}(t_i)_{\vert E(G)}=\sigma_i \text{ for $1\leq i\leq p$} \\
            E(G), t_1,\ldots,t_{2p}&    \\
                        & \mathrm{ad}(t_i)_{\vert E(G)}=\mathrm{id} \text{ for $p+1\leq i\leq 2p$}                                             
        \end{array}
     \Biggr\rangle.\]

\subsection{Construction of a $(\sigma_1,\ldots,\sigma_p)$-test sequence}

We now build a $(\sigma_1,\ldots,\sigma_p)$-test sequence from $G_{\Sigma_k}$ to $G_{2p}$, for any $(\sigma_1,\ldots,\sigma_p)$ in $\mathrm{Aut}_G(E(G))^p$.

\begin{prop}\label{jesaispasgeneral}Let $G$ be an acylindrically hyperbolic group, and let $\bm{a}$ be a tuple of elements of $G$. Fix a presentation $\langle \bm{a} \ \vert \ R(\bm{a})=1\rangle$ for the subgroup of $G$ generated by $\bm{a}$. Let \[\bigvee_{k=1}^{\ell}(\Sigma_k(\bm{x},\bm{y},\bm{a})=1 \ \wedge \ \Psi_k(\bm{x},\bm{y},\bm{a})\neq 1)\] be a finite disjunction of finite system of equations and inequations over $G$, where $\bm{x}$ and $\bm{y}$ are tuples of variables. For every $1\leq k\leq \ell$, denote \[G_{\Sigma_k}=\langle \bm{x},\bm{y},\bm{a} \ \vert \ R(\bm{a})=1, \ \Sigma_k(\bm{x},\bm{y},\bm{a})=1\rangle.\]Let $p=\vert \bm{x}\vert$ be the arity of $\bm{x}$, and let $(\sigma_1,\ldots,\sigma_p)$ be a $p$-tuple of elements of $\mathrm{Aut}_G(E(G))$. Suppose that $G$ satisfies the following first-order sentence:\[\theta:\forall \bm{x} \ \exists \bm{y} \ \bigvee_{k=1}^{\ell}(\Sigma_k(\bm{x},\bm{y},\bm{a})=1 \ \wedge \ \Psi_k(\bm{x},\bm{y},\bm{a})\neq 1).\]Then there exist an integer $1\leq k\leq \ell$ and a $(\sigma_1,\ldots,\sigma_p)$-test sequence $(\varphi_n : G_{\Sigma_k} \rightarrow G_{2p})_{n\in\mathbb{N}}$ such that $\varphi_n(\Psi_k(\bm{x},\bm{y},\bm{a}))$ is non-trivial for every $n$ sufficiently large.\end{prop}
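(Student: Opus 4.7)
The plan is to adapt the construction of Proposition~\ref{jesaispas} to the acylindrically hyperbolic overgroup $G_{2p}$, using the distinguished stable letters $t_1,\ldots,t_p$ — which by construction satisfy $\mathrm{ad}(t_i)_{\vert E(G)}=\sigma_i$ — to realize the prescribed twists. First I would observe that $G_{2p}=G\ast_{E(G)}(E(G)\times F_{2p})$, being an amalgam of an acylindrically hyperbolic group over a finite subgroup, is itself acylindrically hyperbolic, and that a straightforward Bass--Serre tree argument yields $E(G_{2p})=E(G)$. By Theorem~\ref{sacerdote2}, the inclusion $G\hookrightarrow G_{2p}$ is $\forall\exists$-elementary, so the hypothesis $\theta$ — an $\forall\exists$-sentence with parameters in $G$ — also holds in $G_{2p}$.

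Applying \cite[Theorem~6.14]{DGO17} together with \cite[Lemma~3.1]{AMS13} to the acylindrically hyperbolic group $G_{2p}$ yields a hyperbolically and quasi-isometrically embedded subgroup $F(a,b)\times E(G)\hookrightarrow_{h} G_{2p}$, in which $a,b$ centralize $E(G)$. I would then set, for $1\le i\le p$ and $n\ge 1$,
\[
g_{i,n}\;=\;t_i\cdot a^{(i-1)n+1}b\,a^{(i-1)n+2}b\cdots a^{in}b, \qquad \bm{g}_n=(g_{1,n},\ldots,g_{p,n}).
\]
Since $a,b$ centralize $E(G)$ and $\mathrm{ad}(t_i)_{\vert E(G)}=\sigma_i$, one has $\mathrm{ad}(g_{i,n})_{\vert E(G)}=\sigma_i$ for every $n$. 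Because $\theta$ holds in $G_{2p}$, for each $n$ we may choose $k_n\in\llbracket 1,\ell\rrbracket$ and $\bm{h}_n$ in $G_{2p}$ with $\Sigma_{k_n}(\bm{g}_n,\bm{h}_n,\bm{a})=1$ and $\Psi_{k_n}(\bm{g}_n,\bm{h}_n,\bm{a})\ne 1$; passing to a subsequence, we may assume $k_n=k$ is constant, and define $\varphi_n\colon G_{\Sigma_k}\to G_{2p}$ by $\bm{x}\mapsto\bm{g}_n$, $\bm{y}\mapsto\bm{h}_n$, $\bm{a}\mapsto\bm{a}$.

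It remains to verify that $(\varphi_n)$ is a $(\sigma_1,\ldots,\sigma_p)$-test sequence in the sense of Definition~\ref{suitetest}. The first three conditions are routine: $\varphi_n$ restricts to the identity on $\bm{a}$, and a word-length computation in the free subgroup $\langle a,b,t_1,\ldots,t_p\rangle\leq G_{2p}$ — free as a subgroup of $F_{2p}$ inside $E(G)\times F_{2p}$, and quasi-isometrically embedded into $G_{2p}$ since adding finitely many generators to the hyperbolically embedded $F(a,b)\times E(G)$ preserves uniform quasi-isometry constants — gives $\vert\vert g_{i,n}\vert\vert\sim (i-\tfrac{1}{2})n^{2}$, so the ratios $\vert\vert g_{i,n}\vert\vert/\vert\vert g_{j,n}\vert\vert$ converge to $(2i-1)/(2j-1)$. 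The main obstacle is the fourth condition — the $\varepsilon_n$-small cancellation for $\bm{g}_n$ and the identification $\Lambda(g_{i,n})=\langle g_{i,n},E(G)\mid\mathrm{ad}(g_{i,n})_{\vert E(G)}=\sigma_i\rangle$. My plan is to reproduce verbatim the overlap argument of Proposition~\ref{jesaispas}: any overlap $\Delta(g_{i,n},hg_{j,n}h^{-1})\gtrsim n$ in $G_{2p}$ pulls back to a comparable overlap in the Cayley tree of $\langle a,b,t_1,\ldots,t_p\rangle$, where two cyclic conjugates of $g_{i,n}$ and $g_{j,n}$ can share at most $\sim 2in$ letters unless they coincide as cyclically reduced words; the unique occurrence of $t_i$ in $g_{i,n}$ then forces $i=j$, and the no-roots property of cyclically reduced words together with Lemma~\ref{lemmefin} constrain $h$ to lie in $\Lambda(g_{i,n})=\langle g_{i,n}\rangle\cdot E(G)$. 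The step I expect to require the most care is verifying that the bounded additive contribution of the $t_i$-prefix does not disrupt the uniform $\varepsilon_n=O(1/n)$ estimate; this should be controlled by an explicit bound on the AMS13 quasi-isometry constants for the enlarged free subgroup, which remain uniform in $n$ as only finitely many extra generators $t_1,\ldots,t_p$ are involved.
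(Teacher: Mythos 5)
Your reduction to $G_{2p}$ (transferring $\theta$ via Theorem \ref{sacerdote2} and its iterates, noting $E(G_{2p})=E(G)$, and using the pigeonhole principle to fix $k$) matches the paper. The gap is in the construction of the tuple $\bm{g}_n$ and the verification of condition (4) of Definition \ref{suitetest}. You take $a,b$ from a hyperbolically embedded subgroup $F(a,b)\times E(G)$ produced by applying \cite[Theorem 6.14]{DGO17} to $G_{2p}$, and then prepend the stable letter $t_i$; but $a$ and $b$ are then just some hyperbolic elements of $G_{2p}$ with no relation to the amalgam structure, so your claim that $\langle a,b,t_1,\ldots,t_p\rangle$ is ``free as a subgroup of $F_{2p}$'' is unfounded ($a,b$ need not lie in $E(G)\times F_{2p}$ at all), and the assertion that adding the finitely many elements $t_1,\ldots,t_p$ to a hyperbolically embedded subgroup preserves freeness and uniform quasi-isometric embedding constants is false in general: the subgroup generated may fail to be free and may be badly distorted. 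Without a tree-like, undistorted subgroup containing \emph{all} of $a,b,t_1,\ldots,t_p$, the overlap $\Delta(g_{i,n},hg_{j,n}h^{-1})$ cannot be ``pulled back to a Cayley tree'', and the step where ``the unique occurrence of $t_i$ forces $i=j$'' has no meaning, since there is no normal form in which occurrences of $t_i$ make sense. So the small cancellation condition and the identification $\Lambda(\varphi_n(x_i))=\langle\varphi_n(x_i),E(G)\rangle$ are not established.

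The paper avoids this by never leaving the factor of the amalgam: after arranging (as in the reduction step you cite) that $\mathrm{ad}(t_i)_{\vert E(G)}=\sigma_i$ for $1\leq i\leq p$, it sets $g_{i,n}=t_{i+p}^{\,n}t_i\,t_{i+p}^{\,n+1}t_i\cdots t_{i+p}^{\,2n}t_i^{\,q_n}$, a word purely in the stable letters, with $q_n$ chosen modulo the order of $\sigma_i$ so that the total induced automorphism of $E(G)$ is exactly $\sigma_i$ (since here $t_i$ occurs many times, a correction is needed, unlike in your single-prefix idea). These words live in the subgroup $E(G)\rtimes F_{2p}$, which is undistorted and controls conjugation by construction of $G_{2p}$, so the analogue of the overlap argument of Proposition \ref{jesaispas} applies; moreover distinct indices $i\neq j$ use disjoint pairs of letters $\{t_i,t_{i+p}\}$, which makes the mixed small cancellation between $g_{i,n}$ and $g_{j,n}$ essentially automatic. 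If you want to salvage your version, you should replace your $a,b$ by elements of $F_{2p}$ itself (e.g.\ $t_{p+1},t_{p+2}$) rather than by an abstract DGO17 free subgroup, and then justify the quasi-isometric embedding of the relevant factor; at that point your construction becomes a variant of the paper's.
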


\begin{proof}Recall that $G_{2p}$ has the following presentation:
\[G_{2p}=G\ast_{E(G)}\Biggl\langle 
       \begin{array}{l|cl}
                        & \mathrm{ad}(t_i)_{\vert E(G)}=\sigma_i \text{ for $1\leq i\leq p$} \\
            E(G), t_1,\ldots,t_{2p}&    \\
                        & \mathrm{ad}(t_i)_{\vert E(G)}=\mathrm{id} \text{ for $p+1\leq i\leq 2p$}                                             
        \end{array}
     \Biggr\rangle.\]
     
     \smallskip

For every $1\leq i\leq p$ and for every integer $n\geq 1$, let $o_i$ be the order of $\sigma_i$, let $r_n$ be the remainder of the division of $n$ by $o_i$, and let $q_n=o_i+1-r_n$. Note that one has $2\leq q_n\leq o_i+1$ and that $n+q_n=1\mod o_i$. Let us define an element $g_{i,n}$ of $G$ by \[g_{i,n}=t_{i+p}^nt_it_{i+p}^{n+1}t_i\cdots t_{i+p}^{2n}t_i^{q_n}.\]Observe that $\mathrm{ad}(g_{i,n})_{\vert E(G)}=\sigma_i$, thanks to our choice of $q_n$. 

\smallskip

Since the inclusion of $G$ into $G_{2p}$ is an $\exists\forall\exists$-embedding, the group $G_{2p}$ also satisfies the first-order sentence $\theta$. By the pigeonhole principle, there exists an integer $1\leq k\leq \ell$ and an infinite set $A\subset\mathbb{N}$ such that for every integer $n\in A$, the group $G_{2p}$ satisfies the following existential sentence: \[\exists \bm{y}_n \ \Sigma_k((g_{1,n},\ldots,g_{p,n}),\bm{y}_n,\bm{a})=1\wedge \Psi_k((g_{1,n},\ldots,g_{p,n}),\bm{y}_n,\bm{a})\neq 1.\]
Let us define $\varphi_n : G_{\Sigma}\rightarrow G_{2p}$ by $\varphi_n(x_i)=g_{i,n}$, $\varphi_n(\bm{a})=\bm{a}$ and $\varphi_n(\bm{y})=\bm{y}_n$. One can check that the sequence $(\varphi_n : G_{\Sigma_k}\rightarrow G_{2p})_{n\in\mathbb{N}}$ is a $(\sigma_1,\ldots,\sigma_p)$-test sequence.\end{proof}

\subsection{Proof of Merzlyakov's theorem \ref{th0bis}}

Theorem \ref{th0bis} is an immediate consequence of Proposition \ref{jesaispasgeneral} and Proposition \ref{jesaispas2} applied to $G_{2p}$ instead of $G$.

\section{Trivial positive theory and verbal subgroups}\label{sectionpos}

In this section, we give two proofs of Corollary \ref{pos}, which claims that acylindrically hyperbolic groups have trivial positive theory. We also deduce Corollary \ref{BBF} about verbal subgroups of acylindrically hyperbolic groups.

\smallskip

The first proof of Corollary \ref{pos} relies on Theorem \ref{th1}.

\begin{proof}Let $G$ be an acylindrically hyperbolic group. By Theorem 6.3 of \cite{CGN19}, if a group satisfies a non-trivial positive sentence, then it also satisfies a non-trivial positive $\forall\exists$-sentence. As a consequence, in order to prove that $G$ has trivial positive theory, it suffices to prove that $G$ has trivial positive $\forall\exists$-theory. Let $\theta$ be a positive $\forall\exists$-sentence satisfied by $G$. Let $E(G)$ denote the maximal finite normal subgroup of $G$. It follows from Theorem \ref{th1} that the groups $G$ and $\Gamma=\langle G, x,y \ \vert \ [x,g]=[y,g]=1, \forall g\in E(G)\rangle$ have the same $\forall\exists$-theory. As a consequence, $\theta$ is satisfied by $\Gamma$. Now, observe that $\Gamma$ maps onto the free group $\langle x,y\rangle\simeq F_2$. Since positive sentences are preserved under epimorphisms, $\theta$ is satisfied by $F_2$. It follows that $\theta$ is satisfied by all free groups. Therefore, $\theta$ holds in all groups.\end{proof}

The second proof relies on Theorem \ref{th0bis}.

\begin{proof}Let $\theta$ be a positive $\forall\exists$-sentence satisfied by $G$. Classically, $\theta$ is equivalent to a first-order sentence of the form $\forall \bm{x} \ \exists \bm{y} \ \bigvee_{k=1}^{\ell}\Sigma_k(\bm{x},\bm{y})=1$. It follows easily from Theorem \ref{th0bis2} that there exists an integer $1\leq k\leq \ell$ and an epimorphism $\pi : G_{\Sigma_k}\twoheadrightarrow F(\bm{x})$. The image $\pi(\bm{y})$ of $\bm{y}$ can be written as a word $w(\bm{x})$ in the free group $F(\bm{x})$, and the following equality holds in $F(\bm{x})$: $\Sigma_k(\bm{x},w(\bm{x}))=1$. Now, for any group $H$ and any tuple $\bm{h}$ of elements of $H$ of the same arity as $\bm{x}$, the evaluation map $\phi_{\bm{h}} : F(\bm{x})\rightarrow H : \bm{x}\mapsto \bm{h}$ maps $\Sigma_k(\bm{x},w(\bm{x}))=1$ to $\Sigma_k(\bm{h},w(\bm{h}))=1$ (in other words, $\phi_{\bm{h}}$ extends to a morphism $\bar{\phi}_{\bm{h}} : G_{\Sigma_k}\rightarrow H$ defined by $\bar{\phi}_{\bm{h}}(\bm{y})=w(\bm{h})$). This concludes the proof.\end{proof}

Last, prove Corollary \ref{BBF}. Let $w$ be a non-trivial element of the free group $F(x_1,\ldots,x_k)$. Let us denote by $e_i$ the sum of the exponents of $x_i$ in $w$. If they are all $0$, define $d(w) = 0$. Otherwise, let $d(w)$ be their greatest common divisor. Since acylindrically hyperbolic groups have trivial positive theory, Corollary \ref{BBF} is an immediate consequence of the following lemma.

\begin{lemme}\label{lemmeposintro}Let $G$ be a group, let $k\geq 1$ be an integer and let $w$ be an element of $F_k$. If $G$ has trivial positive theory, then $w(G)$ has infinite width, except if $w$ is trivial or $d(w)=1$ (in which cases the width is equal to $1$).\end{lemme}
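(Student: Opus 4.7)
The plan is to first dispose of the two trivial cases, and then reduce the general case to a classical fact about verbal subgroups of non-abelian free groups. If $w$ is trivial in $F_k$, then $w(G)=\{1\}$, which has width $1$ (the only element, $1$, is already a single $w$-value $w(1,\ldots,1)$). If $d(w)=1$, Bézout's identity yields integers $n_1,\ldots,n_k$ with $\sum n_i e_i = 1$; for any $g \in G$ we then have $w(g^{n_1},\ldots,g^{n_k}) = g^{\sum n_i e_i} = g$, so $w(G)=G$ and every element is a single $w$-value.

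Next I treat the main case: $w$ nontrivial and $d(w)\neq 1$. Assume for contradiction that $w(G)$ has finite width $m$. I encode the condition ``$w(H)$ has width at most $m$'' in an arbitrary group $H$ as a positive $\forall\exists$-sentence $\theta_m$ as follows: for each sign vector $\bm{\epsilon} \in \{-1,1\}^{m+1}$ (a finite conjunction), assert
\[
\forall \bm{a}^{(1)},\ldots,\bm{a}^{(m+1)} \ \exists \bm{b}^{(1)},\ldots,\bm{b}^{(m)} \ \bigvee_{\bm{\eta} \in \{-1,1\}^m} w(\bm{a}^{(1)})^{\epsilon_1}\cdots w(\bm{a}^{(m+1)})^{\epsilon_{m+1}} = w(\bm{b}^{(1)})^{\eta_1}\cdots w(\bm{b}^{(m)})^{\eta_m}.
\]
This formula uses only equalities, products and inverses, hence is positive; $\theta_m$ is satisfied by $G$ under the width-$m$ hypothesis (padding up to exactly $m$ factors on the right uses the trivial value $w(1,\ldots,1)=1$, and an easy induction reduces arbitrary products of $w^{\pm 1}$-values to at most $m$ factors). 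Since $G$ has trivial positive theory, $\theta_m$ is true in every group, in particular in the non-abelian free group $F_2$; hence $w(F_2)$ has width at most $m$.

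I finally derive a contradiction by showing that $w(F_2)$ is a proper, nontrivial verbal subgroup of $F_2$. Nontriviality is immediate because $F_2$ contains free subgroups of every finite rank; evaluating the word map on such a copy of $F_k$ produces a nontrivial element of $w(F_2)$. Properness follows from abelianizing $F_2 \twoheadrightarrow \mathbb{Z}^2$: every $w$-value lands in $d(w)\mathbb{Z}^2$ when $d(w)\geq 2$, or in $\{0\}$ when $d(w)=0$, both proper in $\mathbb{Z}^2$. By the classical theorem of Rhemtulla on verbal width in free products (applied to $F_2=\mathbb{Z}*\mathbb{Z}$), every proper nontrivial verbal subgroup of a non-abelian free group has infinite width, contradicting the bound $m$. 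The substantive ingredient is thus this classical theorem; the only other thing to watch is the faithful encoding of the width bound as a \emph{positive} sentence, which is handled above by a finite conjunction of disjunctions.
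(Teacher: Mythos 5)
Your proof is correct and follows essentially the same route as the paper: dispose of the cases $w=1$ and $d(w)=1$ via Bézout, transfer the finite-width hypothesis to $F_2$ through a positive $\forall\exists$-sentence using trivial positive theory, and conclude with the classical Rhemtulla--Segal theorem that a verbal subgroup $w(F_2)$ with $1\neq w(F_2)\neq F_2$ has infinite width. The only (harmless) differences are packaging: you use a single sentence $\theta_m$ with an inductive reduction where the paper uses the family $\phi_n$, and you verify properness of $w(F_2)$ by abelianization rather than quoting Segal's Lemma 3.1.1.
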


\begin{proof}
If $w$ is trivial, then the width of $w(G)$ is $1$. Now, suppose that $w$ is non-trivial and that $d(w)=1$. Then there exist $k$ integers $a_1,\ldots ,a_k$ such that $a_1e_1+\cdots + a_ke_k=1$, and one has $w(g^{a_1},\ldots,g^{a_k})=g$. Hence $w(G)$ is equal to $G$, and its width is equal to $1$.

\smallskip

Assume towards a contradiction that there exists a non-trivial element $w\in F_k$ with $d(w)\neq 1$ and such that $w(G)$ has finite width $\ell$. Then, $G$ satisfies the following positive first-order ($\forall\exists$)-sentence $\phi_{n}$, for every integer $n\geq 1$: every element of $g$ that can be represented as a product of $n$ elements of $\lbrace w(\bm{g})^{\pm 1}, \ \bm{g}\in G^k\rbrace$ can be represented as a product of $\ell$ elements of $\lbrace w(\bm{g})^{\pm 1}, \ \bm{g}\in G^k\rbrace$. Since $G$ has trivial positive theory, this sentence $\phi_n$ is satisfied by all groups. In particular, $\phi_n$ is true in the free group $F_2$, for every $n$. Thus, $w(F_2)$ has finite width (equal to $\ell$). It follows from Lemma 3.1.1 and Theorem 3.1.2 in \cite{Seg09} (inspired from \cite{Rhem68}) that either $w$ is trivial or $d(w)=1$, contradicting our assumption.\end{proof}


\section{Questions and comments}\label{comments_section}

In \cite{Sel10}, Sela asked the following intriguing question.

\begin{qu}Which (algebraic, first-order) properties are satisfied by groups $G$ such that $G$ and $G\ast\mathbb{Z}$ are elementarily equivalent?\end{qu}

If the answer to the generalised Tarski's problem \ref{question} is `Yes', then every acylindrically hyperbolic group $G$ with trivial finite radical $E(G)$ is elementarily equivalent to $G\ast\mathbb{Z}$. As far as we are aware, no examples are known of finitely generated groups that have this property but are not acylindrically hyperbolic. This raises the following question.

\begin{qu}\label{question3}Is there a finitely generated group $G$ that is not acylindrically hyperbolic but is such that $G$ and $G\ast\mathbb{Z}$ are elementarily equivalent (or at least have the same $\forall\exists$-theory)?\end{qu}

This question is closely related to the following one (see Proposition \ref{relatedbelow}).

\begin{qu}\label{question2}Is acylindrical hyperbolicity preserved under elementary equivalence among finitely generated groups?\end{qu} 

In \cite{And18}, the first author proved that the property of being a hyperbolic group is preserved under elementary equivalence among finitely generated groups (this result was proved by Sela in \cite{Sel09} for torsion-free groups). Since acylindrically hyperbolic groups are not supposed to be finitely generated, Question \ref{question2} makes sense without assuming finite generation; however, the answer to this question is negative in general, even among countable groups. We refer the reader to \cite{And20} for further details.

\smallskip

The following result shows that a positive answer to Question \ref{question3} implies a negative answer to Question \ref{question2}, and that the converse is true under the assumption that the answer to the generalised Tarski's problem \ref{question} is `Yes'.

\begin{prop}\label{relatedbelow}If there exists a finitely generated non-acylindrically hyperbolic group $G$ such that $G$ and $G\ast\mathbb{Z}$ are elementarily equivalent, then acylindrical hyperbolicity is not preserved under elementary equivalence among finitely generated groups. Conversely, under the assumption that Question \ref{question} admits a positive answer, the following implication holds: if acylindrical hyperbolicity is not preserved under elementary equivalence among finitely generated groups, then there exists a finitely generated non-acylindrically hyperbolic group $G$ such that $G$ and $G\ast\mathbb{Z}$ are elementarily equivalent.\end{prop}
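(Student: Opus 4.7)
For the first implication, the plan is to argue directly. Suppose $G$ is finitely generated, non-acylindrically hyperbolic, and $G\equiv G\ast\mathbb{Z}$. Then $G$ is necessarily non-trivial (otherwise $G\equiv\mathbb{Z}$, which is absurd as the trivial group is not elementarily equivalent to $\mathbb{Z}$), so $G\ast\mathbb{Z}$ is a non-trivial free product different from $D_\infty=\mathbb{Z}/2\mathbb{Z}\ast\mathbb{Z}/2\mathbb{Z}$ (since $\mathbb{Z}\not\simeq\mathbb{Z}/2\mathbb{Z}$), and is therefore acylindrically hyperbolic. The pair $(G,G\ast\mathbb{Z})$ then witnesses the failure of preservation of acylindrical hyperbolicity under elementary equivalence among finitely generated groups.

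For the converse, assume a positive answer to Question~\ref{question} and that acylindrical hyperbolicity fails to be preserved under elementary equivalence among finitely generated groups: pick witnesses $G_1\equiv G_2$ finitely generated with $G_1$ acylindrically hyperbolic and $G_2$ not. I first reduce to the case $E(G_1)=E(G_2)=1$. Since the existence of a normal subgroup of each prescribed order is first-order expressible, $G_1\equiv G_2$ forces the sets of orders of finite normal subgroups in $G_1$ and $G_2$ to coincide. As $G_1$ is acylindrically hyperbolic, $E(G_1)$ is well-defined and has some finite order $n_0$; hence the finite normal subgroups of $G_2$ have order at most $n_0$, and they generate a (unique maximal) finite normal subgroup $E(G_2)$ of order exactly $n_0$. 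This $E(G_i)$ is first-order definable in $G_i$ (as the unique maximal finite normal subgroup of order $n_0$), so $G_1/E(G_1)\equiv G_2/E(G_2)$. The quotient $G_1/E(G_1)$ is still acylindrically hyperbolic with trivial finite radical, while $G_2/E(G_2)$ remains non-acylindrically hyperbolic (else $G_2$, being an extension of an acylindrically hyperbolic group by a finite normal subgroup, would itself be acylindrically hyperbolic). Replacing $G_i$ by $G_i/E(G_i)$, I may assume $E(G_1)=E(G_2)=1$.

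Now I claim $G:=G_2$ does the job. Applying the positive answer to Question~\ref{question} to $G_1$ (for which $G_1\dot{\ast}_{E(G_1)}=G_1\ast\mathbb{Z}$), one gets $G_1\equiv G_1\ast\mathbb{Z}$, and hence $G_2\equiv G_1\ast\mathbb{Z}$ by transitivity. The hard part will be to upgrade this to $G_2\equiv G_2\ast\mathbb{Z}$, which amounts to a preservation result of the form ``$G_1\equiv G_2\Rightarrow G_1\ast\mathbb{Z}\equiv G_2\ast\mathbb{Z}$'': a free-product analogue of the Feferman--Vaught theorem for direct products. Granted this preservation principle, one concludes by transitivity that $G_2\ast\mathbb{Z}\equiv G_1\ast\mathbb{Z}\equiv G_1\equiv G_2$, finishing the proof. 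I expect the technical core of the argument to be precisely this preservation statement for free products with $\mathbb{Z}$, which one would presumably establish by inducting on formula complexity and analysing normal forms in the free product.
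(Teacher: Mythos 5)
Your overall structure parallels the paper's proof: the first implication is handled the same way (your extra remark that $G$ must be non-trivial is a harmless refinement), and your reduction to trivial finite radical is a legitimate variant of the paper's — the paper realises $E(H)$ as the definable set $D_N(H)=\lbrace h'\in H \ \vert \ [h^N,h']=1, \ \forall h\in H\rbrace$ with $N=\vert\mathrm{Aut}(E(H))\vert$, while you define it as the union of the normal subgroups of order $n_0$; both give a uniformly definable normal subgroup, so passing to the interpretable quotients preserves elementary equivalence, and the two directions of the finite-extension argument (quotient of an acylindrically hyperbolic group by $E$, respectively finite-by-(acylindrically hyperbolic) implies acylindrically hyperbolic) are exactly as in the paper.

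The genuine gap is the step you call a ``preservation principle'': that $G_1\equiv G_2$ implies $G_1\ast\mathbb{Z}\equiv G_2\ast\mathbb{Z}$. You assume it and suggest it ``would presumably'' follow by induction on formula complexity and an analysis of normal forms in the free product. That plan does not work: Feferman--Vaught-type inductions are specific to direct products/sums, and there is no syntactic reduction of quantification over a free product to quantification over its factors via normal forms — this is precisely why the elementary theory of free products was inaccessible for so long. The statement you need is a deep theorem of Sela (elementary equivalence is preserved under free products, \cite{Sel10}), and this citation is exactly how the paper closes the argument at this point: from $G'\equiv H'$ one gets $G'\ast\mathbb{Z}\equiv H'\ast\mathbb{Z}$, then $H'\ast\mathbb{Z}\equiv H'$ by the assumed positive answer to Question \ref{question}, and transitivity concludes. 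So your proof is correct in outline but incomplete as written; it becomes a proof only once the key transfer step is replaced by an appeal to \cite{Sel10} rather than the sketched elementary argument, which would fail.
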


\begin{proof}If there exists a finitely generated group $G$ such that $G$ and $G\ast\mathbb{Z}$ are elementarily equivalent, and $G$ is not acylindrically hyperbolic, then acylindrical hyperbolicity is not preserved under elementary equivalence among finitely generated groups since $G\ast\mathbb{Z}$ is acylindrically hyperbolic.

\smallskip

Now, assume that the answer to Question \ref{question2} is negative, namely that there exist two elementarily equivalent finitely generated groups $G$ and $H$ such that $G$ is not acylindrically hyperbolic and $H$ is acylindrically hyperbolic. Observe that the maximal normal finite subgroup $E(H)$ coincides with the definable set $D_N(H)=\lbrace h'\in H \ \vert \ [h^N,h']=1, \ \forall h\in H\rbrace$ for $N=\vert \mathrm{Aut}(E(H))\vert$: indeed, the fact that $E(H)$ is contained in $D_N(H)$ is obvious since any element of $H$ induces an automorphism of $E(H)$ by conjugacy; conversely, by \cite[Theorem 6.14]{DGO17}, $E(H)$ is the intersection of all maximal virtually cyclic subgroups $\Lambda (h)$, where $h$ runs through all hyperbolic elements of $H$, and thus it follows from Lemma \ref{lemmefin} that the set $D_N(H)$ is contained in $E(H)$. Since this set is definable, $D_N(G)$ is isomorphic to $D_N(H)=E(H)$, and the quotients $G'=G/D_N(G)$ and $H'=H/D_N(H)$ are elementarily equivalent. Note that $H'$ is acylindrically hyperbolic since $H$ is acylindrically hyperbolic (see \cite[Lemma 3.9]{MO15}). In addition, by \cite{Sel10}, $G'\ast\mathbb{Z}$ and $H'\ast\mathbb{Z}$ are elementarily equivalent. Now, if the generalised Tarski's problem \ref{question} admits a positive answer, then $H'\ast\mathbb{Z}$ is elementarily equivalent to $H'$, which is elementarily equivalent to $G'$. Hence, $G'$ and $G'\ast\mathbb{Z}$ are elementarily equivalent. But $G'$ is not acylindrically hyperbolic, otherwise $G$ would be acylindrically hyperbolic as well, as a finite extension of $G$. Thus, the answer to Question \ref{question3} is `Yes'.\end{proof}

Last, it is worth mentioning the following partial answer to Question \ref{question2}, following from Theorem \ref{th11} together with a theorem of Minasyan and Osin that gives a sufficient condition under which a group $H=A\ast_C B$ or $H=A\ast_C$ is acylindrically hyperbolic (Corollaries 2.2 and 2.3 in \cite{MO15}).

\begin{prop}\label{acyl_preservation}
Let $G$ be an acylindrically hyperbolic group, and let $H$ be a group that admits a non-trivial splitting over a virtually abelian group. Suppose that $G$ and $H$ are elementarily equivalent (or simply that they have the same $\exists\forall\exists$-theory). Then, $H$ is acylindrically hyperbolic.
\end{prop}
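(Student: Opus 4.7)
The strategy combines two ingredients: first, the transfer of \emph{trivial positive theory} from $G$ to $H$ using the $\forall\exists$-equivalence implied by $\exists\forall\exists$-equivalence; and second, the Minasyan--Osin criterion for acylindrical hyperbolicity of a group with a non-trivial splitting (Corollaries~2.2 and 2.3 of \cite{MO15}).

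For the first ingredient: by Corollary~\ref{pos} (itself a consequence of Theorem~\ref{th11}), the acylindrically hyperbolic group $G$ has trivial positive theory. Since $G$ and $H$ share their $\exists\forall\exists$-theory, they in particular share their $\forall\exists$-theory. Applying \cite[Theorem~6.3]{CGN19}, which states that trivial positive theory coincides with trivial positive $\forall\exists$-theory, I deduce that $H$ has trivial positive theory as well. In particular, $H$ cannot be virtually cyclic, as every virtually cyclic group satisfies a non-trivial positive identity.

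For the second ingredient: write the given non-trivial splitting as $H=A\ast_C B$ or $H=A\ast_C$, with $C$ virtually abelian. Corollaries~2.2 and 2.3 of \cite{MO15} assert that, provided the splitting is non-degenerate (for the amalgam case: $|A/C|+|B/C|\geq 5$) and $C$ is weakly malnormal in $H$ (meaning $|C\cap hCh^{-1}|<\infty$ for some $h\in H$), the group $H$ is either virtually cyclic or acylindrically hyperbolic. Since virtual cyclicity has already been ruled out, $H$ is acylindrically hyperbolic in these cases.

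It remains to rule out the failure of the Minasyan--Osin hypotheses. In the degenerate amalgam case $|A/C|=|B/C|=2$, the subgroup $C$ is normal in $H$ with quotient $H/C\cong\mathbb{Z}/2\mathbb{Z}\ast\mathbb{Z}/2\mathbb{Z}=D_\infty$; hence $H$ is virtually metabelian, being a virtually-abelian-by-virtually-cyclic extension, and therefore satisfies a non-trivial positive identity of the form $[[x^n,y^n],[z^n,w^n]]=1$, where $n$ is the index of a characteristic normal metabelian subgroup. This contradicts the trivial positive theory of $H$. The main remaining obstacle, as I foresee it, is excluding the non-weakly-malnormal case, when $|C\cap hCh^{-1}|=\infty$ for every $h\in H$; then $C$ is commensurated in $H$. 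The plan is to pass to a characteristic finite-index free abelian subgroup $C^0\subseteq C$ and argue, via the action of $H$ on the commensurability class of $C^0$ or via a normal-core analysis, that $H$ contains a finite-index virtually metabelian subgroup, yielding once more a non-trivial positive identity and contradicting the first step. Making this final reduction precise—and in particular controlling the structure of the commensurator of a virtually abelian subgroup inside a group whose $\forall\exists$-theory matches that of an acylindrically hyperbolic group—is the principal technical point of the proof.
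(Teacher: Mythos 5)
Your first two steps are fine and broadly consistent with the paper (non-virtual-cyclicity of $H$, and the reduction via Minasyan--Osin to the question of whether the edge group $C$ is weakly malnormal; your extra analysis of the $(2,2)$-amalgam case is harmless but redundant, since there $C$ is normal and infinite, so weak malnormality fails anyway and that case is subsumed in the one you leave open). The genuine gap is exactly the case you defer to the end: $C$ not weakly malnormal. Your sketch for it does not work. First, $\lvert C\cap hCh^{-1}\rvert=\infty$ for all $h$ does not make $C$ commensurated in $H$ (the intersections may have infinite index in $C$), so the ``action on the commensurability class of $C^0$'' has no basis. More seriously, the conclusion you aim for --- that $H$ would then be virtually metabelian, hence satisfy a positive law contradicting trivial positive theory --- is false in general: take $H=(F_2\times\mathbb{Z})\ast_{\mathbb{Z}}(F_2\times\mathbb{Z})$ amalgamated over the central copies of $\mathbb{Z}$. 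There $C$ is central, so never weakly malnormal, yet $H$ contains non-abelian free subgroups and satisfies no law; so no contradiction with trivial positive theory can be extracted from this case by your route. In other words, the positive-theory/law mechanism is simply too weak to handle the non-weakly-malnormal case, which is where the whole content of the proposition lies.

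The paper's proof uses a different, sharper transfer. Failure of weak malnormality of the (virtually) abelian $C$ is encoded as the $\exists\forall\exists$-sentence $\theta:\ \exists c\neq 1\ \forall h\ \exists z\neq 1\ \bigl([c,z]=1\ \wedge\ [hch^{-1},z]=1\bigr)$ (with the obvious modifications --- $d$-th powers and ``$N+1$ pairwise distinct elements'' --- to account for the finite-index abelian subgroup of $C$ and for $E(G)$). If $H$ were not acylindrically hyperbolic, $H\models\theta$; since $G$ and $H$ share their $\exists\forall\exists$-theory, $G\models\theta$; and by Theorem \ref{th11} the sentence transfers further to $G\dot{\ast}_{E(G)}$, where it fails by a normal-form argument (no non-trivial element commutes with both $c$ and $tct^{-1}$). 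This is the missing idea in your proposal: instead of trying to show that a group with a non-weakly-malnormal virtually abelian edge group is algebraically small (it need not be), one shows that such a group satisfies an $\exists\forall\exists$-sentence that an acylindrically hyperbolic group cannot satisfy, precisely because of Theorem \ref{th11}. To complete your proof you would need to replace your final reduction by this (or an equivalent) argument.
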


A subgroup $C$ of $H$ is said to be \emph{weakly malnormal} in $H$ if there exists an element $h\in H$ such that $hCh^{-1}\cap C$ is finite.

\begin{proof}First, note that the group $H$ is not virtually cyclic since it has the same first-order theory as $G$, which contains a non-abelian free subgroup.

\smallskip

As a first step, let us assume that the radical $E(G)$ is trivial and that $C$ is abelian. Let us fix a non-trivial element $c$ of $C$. Assume towards a contradiction that $H$ is not acylindrically hyperbolic. Then, by \cite[Corollaries 2.2 and 2.3]{MO15}, the subgroup $C$ is not weakly malnormal. Hence, for all $h\in H$, the intersection of $hCh^{-1}$ and $C$ is infinite. In particular, this intersection contains a non-trivial element $z$. Since $C$ is abelian, this element $z$ commutes both with $c$ and $hch^{-1}$. Therefore, the following $\exists\forall\exists$-sentence is satisfied by $H$:\[\theta:\exists c\neq 1 \ \forall h  \ \exists z\neq 1 \ ([c,z]=1 \ \wedge \ [hch^{-1},z]=1).\]Since $G$ and $H$ have the same $\exists\forall\exists$-theory, the sentence $\theta$ is satisfied by $G$ as well. By Theorem \ref{th11}, the sentence $\theta$ is satisfied by $G\ast\langle t\rangle$, with $t$ of infinite order. This is a contradiction since no non-trivial element of $G\ast\langle t\rangle$ commutes both with $c$ and $tct^{-1}$; indeed, by writing the elements of $G\ast\langle t\rangle$ in normal form, one easily sees that the centralizer of $c$ in $G\ast\langle t\rangle$ is contained in $G$, and that the only element of $G$ that commutes with $tct^{-1}$ is the neutral element.

\smallskip

If $E(G)$ is non-trivial of order $N\geq 2$ and $C$ contains an abelian subgroup of index $d$, one has to modify the sentence $\theta$ a little bit in order to ensure that the elements $c$ and $z$ do not belong to $E(G)$ and belong to the abelian subgroup of $C$. For that we just replace the conditions "$\exists c\neq 1$" and "$\exists z\neq 1$" with the conditions "there exist $N+1$ pairwise distinct elements $c^d_1,\ldots,c^d_{N+1}$" and "there exist $N+1$ pairwise distinct elements $z^d_1,\ldots,z^d_{N+1}$".\end{proof}

\begin{rque}
More generally, if one assumes that $C$ virtually satisfies a law, the same proof works modulo some adjustments.
\end{rque}

\begin{rque}Note that the sentence $\theta$ given in the previous proof shows in particular that Baumslag-Solitar groups do not satisfy the conclusion of Theorem \ref{th11}: $BS(m,n)=\langle a,t \ \vert \ ta^mt^{-1}=a^m\rangle$ is not $\exists\forall\exists$-embedded into $BS(m,n)\ast\mathbb{Z}$. This observation is interesting because the main result of \cite{CGN19} applies to non-solvable Baumslag-Solitar groups (and shows that these groups have trivial positive theory); hence, the weak small cancellation conditions used in \cite{CGN19} for dealing with positive theory are not sufficient if one wants to deal with inequations.\end{rque}


\vspace{5mm}
\renewcommand{\refname}{Bibliography}
\def\cprime{$'$} \def\cprime{$'$}

\vspace{10mm}

\textbf{Simon André}

Vanderbilt University.

E-mail address: \href{mailto:simon.andre@vanderbilt.edu}{simon.andre@vanderbilt.edu}

\vspace{5mm}

\textbf{Jonathan Fruchter}

University of Oxford.

E-mail address: \href{mailto:fruchter@maths.ox.ac.uk}{fruchter@maths.ox.ac.uk}

\end{document}